\numberwithin{equation}{section}
\DeclareMathOperator{\diver}{div}
\newcommand{\io}{\int_{\Omega}}
\newcommand{\dx}{\,\mathrm{d}x}
\newcommand{\dt}{\,\mathrm{d}t}
\newcommand{\es}{\mathcal{S}}
\def\Xint#1{\mathchoice
{\XXint\displaystyle\textstyle{#1}}%
{\XXint\textstyle\scriptstyle{#1}}%
{\XXint\scriptstyle\scriptscriptstyle{#1}}%
{\XXint\scriptscriptstyle\scriptscriptstyle{#1}}%
\!\int}
\def\XXint#1#2#3{{\setbox0=\hbox{$#1{#2#3}{\int}$ }
\vcenter{\hbox{$#2#3$ }}\kern-.6\wd0}}
\def\dashint{\Xint-}
\newtheorem{theorem}{Theorem}[section]
\newtheorem{lemma}[theorem]{Lemma}
\newtheorem{definition}[theorem]{Definition}
\newtheorem{remark}[theorem]{Remark}
\newtheorem{ass}[theorem]{Assumption}
\newtheorem{observation}[theorem]{Observation}
\newtheorem{corollary}[theorem]{Corollary}
\newtheorem{proposition}[theorem]{Proposition}
\numberwithin{equation}{section}
\newcommand{\seb}[1]{{#1}}
\def\Xint#1{\mathchoice
{\XXint\displaystyle\textstyle{#1}}%
{\XXint\textstyle\scriptstyle{#1}}%
{\XXint\scriptstyle\scriptscriptstyle{#1}}%
{\XXint\scriptscriptstyle\scriptscriptstyle{#1}}%
\!\int}
\def\XXint#1#2#3{{\setbox0=\hbox{$#1{#2#3}{\int}$ }
\vcenter{\hbox{$#2#3$ }}\kern-.6\wd0}}
\def\dashint{\Xint-}
\DeclareMathOperator{\diam}{diam}
\DeclareMathOperator{\stopa}{Tr}
\DeclareMathOperator{\deter}{det}
\newtheoremstyle{dotless}{}{}{\itshape}{}{\bfseries}{}{ }{}
\theoremstyle{dotless}
\newtheorem{example}[theorem]{Example}
\newcommand{\meantmp}[2]{#1\langle{#2}#1\rangle}
\newcommand{\mean}[1]{\meantmp{}{#1}}
\newcommand{\Bog}{\ensuremath{\text{\rm Bog}}}
\newcommand{\pbar}{{\overline{p}}}
\newcommand{\qbar}{{\overline{q}}}
\renewcommand{\setBMO}{\rm{BMO}}
\newcommand{\diag}{\text{Diag}}
\title{Global continuity and BMO estimates for non-Newtonian fluids with perfect slip boundary conditions}
\author[M\' acha  \& Schwarzacher]{V\' aclav M\' acha and Sebastian Schwarzacher}
\address
{{\sc V\' aclav M\' acha}: Institute of Mathematics of the Czech Academy of Sciences, 
\v{Z}itn\'a 25,
115 67   Praha 1,
Czech Republic.\\macha@math.cas.cz}
\address
{{\sc Sebastian Schwarzacher}: Department of Mathematical Analysis, Charles University, Sokolovsk\'a, 83
186 75 Praha 8, Czech Republic.\\schwarz@karlin.mff.cuni.cz}
\keywords{Incompressible fluids, Generalized Stokes system; Boundary regularity; BMO estimates; Slip boundary conditions; Schauder estimates}
\begin{document}

%
%

\begin{abstract}
We study the generalized stationary Stokes system in a bounded domain in the plane equipped with perfect slip boundary conditions. We show natural stability results in oscillatory spaces, i.e. H\"older spaces and Campanato spaces including the border-line spaces of bounded mean oscillations (BMO) and vanishing mean oscillations (VMO). In particular, we show that under appropriate assumptions gradients of solutions are globally continuous. Since the stress tensor is assumed to be governed by a general Orlicz function, our theory includes various cases of (possibly degenerate) shear thickening and shear thinning fluids; including the model case of power law fluids. The global estimates seem to be new even in case of the linear Stokes system. We include counterexamples that demonstrate that our assumptions on the right hand side and on the boundary regularity are optimal.
\end{abstract}

%
%


\section{Introduction}

We investigate planar stationary flows of incompressible fluids in a bounded $\mathcal{C}^1$-domain $\Omega \subset \mathbb{R}^2$.
Explicitly, solutions to the following generalized Stokes system are investigated:
\begin{align}\label{em}
\begin{aligned}
-\diver \es (Du) + \nabla \pi &= -\diver F\quad  && \textrm{in}\,\,\Omega,\\
\diver u &= 0\quad && \textrm{in}\,\,\Omega,
\end{aligned}
\\
u\cdot\nu =0,\quad [S(Du)\, \nu] \cdot \tau=0=[F\nu]\cdot \tau, \qquad \textrm{on}\,\partial\Omega.\label{pss}
\end{align}
Here the unknowns are $u=(u_1,u_2)$ the velocity and $\pi$ the pressure of the fluid that is assumed to have mean value $0$. The density of the volume forces is given by $\diver F$, where $F:\Omega\to \setR^{2\times 2}_{\text{sym}}$. The extra stress tensor is denoted by $\es$ which is prescribed and depends non-linearly on $Du$, the symmetric part of the velocity gradient, $Du:=\frac{1}{2}[\nabla u + (\nabla u)^\top]$. By $\nu$ we denote the outward normal vector and $\tau$ stands for any unit tangent vector to $\partial\Omega$. Henceforth, we shall say that $[F\nu]\cdot\tau|_{\partial\Omega}=0$ in the weak sense if $\skp{ \diver F}{\varphi} = -\int_\Omega F \cdot\nabla\varphi\, dx$ for all $\varphi\in \mathcal{C}^\infty(\Omega)$ with $\varphi \cdot \nu_{\partial\Omega} = 0$. These conditions allow a natural concept of weak solutions to the above system which is introduced in Definition~\ref{rws} below. 

The precise assumptions on the stress tensor $\es$ will be given in the framework of general Orlicz growth that allows a wide flexibility of stress laws coming from various experimentally verified physical models (cf. Assumption~\ref{ass1} and Remark~\ref{rem:ass1}). At this point, we just mention that our theory includes the full class of (potentailly degenerate) power law fluids. Namely
\begin{align}
\label{power}
\es(Du)=\mu_0(1+|Du|^2)^{\frac{p-2}{2}}Du,\quad
\es(Du)=\mu_0|Du|^{p-2}Du,
\end{align}
for $\mu_0 \in (0,\infty)$ and $p\in(1,\infty)$. The class of power law fluids includes the case  $p=2$ which provides the classical Newtonian fluids, i.e. the case of constant viscosity. Of particular importance within this class are the shear thinning fluids, namely the case $p\in (1,2)$. This is due to the fact that many materials of interest (most prominently blood) inherit shear thinning properties; but also shear thickening fluids do appear in real world applications and are commonly modeled with exponents $p\in (2,\infty)$. For more information on the related non-Newtonian fluids we refer to \cite{MalR05} and \cite{MaNeRa}  where the authors provide a list of relevant references in the introduction.

The boundary conditions we analyze are the so-called perfect slip boundary conditions. These boundary conditions prescribe the idealized situation when the liquid is not slowed down by any friction on the boundary. Unlike standard Neumann boundary conditions $\partial_\nu u|_{\partial\Omega}= 0$, the boundary conditions~\eqref{pss} satisfy the well accepted hypothesis that the physical domain is impermeable which is exactly satisfied if $u\cdot \nu= 0$ on the boundary. The perfect slip (complete slip) boundary conditions~\eqref{pss} are a particular case of the Navier slip boundary conditions and have as such been proved physically relevant; see~\cite{BucFeiNec10,HenPie} and references therein.

The aim of the paper is to show global regularity results in oscillation spaces. Since we believe that our results and in particular the weakness of the assumption on the boundary is new for Newtonian fluids, namely the case of the steady Stokes equations we include the following two corollaries which collect some regularity results that can be deduced from our results. The system \eqref{em} becomes Stokes equation is case $p=2$ in \eqref{power}. Namely
\begin{align}\label{emNS}
\begin{aligned}
-\Delta u + \seb{\nabla \pi} &= -\diver F\quad  && \textrm{in}\,\,\Omega,\\
\diver u &= 0\quad && \textrm{in}\,\,\Omega,
\\
u\cdot \nu = 0 &= \seb{[Du\, \nu] \cdot \tau} \quad &&\textrm{on}\,\,\partial\Omega.
\end{aligned}
\end{align} 
For this system we will proof the following two corollaries.
\begin{corollary}
\label{cor:0}
Let $\beta\in (0,1)$,
 $\Omega$ be a bounded $\mathcal{C}^{2,\beta}$-domain and $F\in \mathcal{C}^\beta(\overline{\Omega};\setR^{2\times2}_{\sym})$ such that $[F \nu]\cdot \tau= 0$ on $\partial \Omega$. Then any weak solution $(u,\seb{\pi})\in W^{1,2}(\Omega;\setR^2)\times L^2_0(\Omega)$ to \eqref{emNS} satisfies that $ u\in \mathcal{C}^{1,\beta}(\overline{\Omega};\setR^{2\times 2})$ and $\seb{\pi}\in \mathcal{C}^{\beta}(\overline{\Omega})$. Moreover, $u\cdot \nu= 0= \seb{[Du\, \nu] \cdot \tau}$  on $\partial \Omega$.
\end{corollary}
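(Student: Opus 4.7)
The plan is to obtain Corollary~\ref{cor:0} as a direct specialization of the main global Campanato estimate that the paper will establish for the system \eqref{em}--\eqref{pss} under Assumption~\ref{ass1}. I choose the Orlicz function $\Phi(t)=\tfrac12 t^2$; then the admissible stress becomes $\es(Du)=Du$, the system reduces to \eqref{emNS}, and the natural nonlinear quantity $V(Du)$ associated to $\Phi$ coincides with $Du$ itself. Consequently, any Campanato-scale control on $V(Du)$ supplied by the main theorem reads as an identical control on $Du$ in the Newtonian setting.

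First, I feed the H\"older datum $F\in\mathcal{C}^\beta(\overline\Omega;\setR^{2\times2}_{\sym})$ satisfying $[F\nu]\cdot\tau=0$ on $\partial\Omega$ into the main theorem. The embedding of $\mathcal{C}^\beta(\overline\Omega)$ into the Campanato space of exponent $2+2\beta$ transfers through the estimate to $Du$, and the Campanato--Morrey characterization of H\"older spaces on Lipschitz domains then yields $Du\in\mathcal{C}^\beta(\overline\Omega;\setR^{2\times2}_{\sym})$. The assumption $\partial\Omega\in\mathcal{C}^{2,\beta}$ (as opposed to the weaker $\mathcal{C}^{1,\beta}$) enters precisely here: one flattens the boundary with a $\mathcal{C}^{2,\beta}$-diffeomorphism so that the coefficients of the transformed equation and the transformed boundary operator lie in $\mathcal{C}^{1,\beta}$, which is the minimal regularity compatible with a Schauder-type oscillation-decay estimate.

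Second, I upgrade from symmetric to full gradient regularity. In two dimensions the constraints $\diver u=0$ and $u\cdot\nu=0$ on a $\mathcal{C}^{2,\beta}$ boundary permit introducing a stream function $\psi$ with $u=\nabla^{\perp}\psi$ and $\psi$ constant on each boundary component. Since $\Delta\psi = 2(Du)_{12}\in\mathcal{C}^\beta$ and $\psi|_{\partial\Omega}$ is piecewise constant, standard Schauder theory for the Dirichlet Laplacian on a $\mathcal{C}^{2,\beta}$-domain gives $D^2\psi\in\mathcal{C}^\beta(\overline\Omega)$, so $\nabla u\in\mathcal{C}^\beta(\overline\Omega;\setR^{2\times2})$; alternatively a H\"older-Korn inequality on $\mathcal{C}^{2,\beta}$-domains achieves the same upgrade.

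Third, the pressure is recovered from $\nabla\pi=\Delta u-\diver F$ together with the normalization $\int_\Omega \pi\,\mathrm{d}x=0$. Since $\Delta u-\diver F$ is the divergence of a $\mathcal{C}^\beta$-tensor field, inversion of $\nabla$ on the mean-zero subspace via a Bogovski\u{\i}-type operator on a $\mathcal{C}^{2,\beta}$-domain yields $\pi\in\mathcal{C}^\beta(\overline\Omega)$; in practice the main theorem produces the pressure estimate in parallel with the one for $Du$, so this step is automatic. Pointwise validity of $u\cdot\nu=0$ and $[Du\,\nu]\cdot\tau=0$ on $\partial\Omega$ is then immediate because $u\in\mathcal{C}^1(\overline\Omega)$ and $Du\in\mathcal{C}(\overline\Omega)$, so the weak traces coincide with the classical ones. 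I expect the principal obstacle, which belongs to the body of the paper rather than to this corollary, to be the boundary Schauder/Campanato step for \eqref{em}--\eqref{pss}: the nonstandard coupling of $u\cdot\nu=0$ with $[\es(Du)\nu]\cdot\tau=0$ on a curved boundary makes the flattening and the subsequent localized oscillation-decay argument substantially more delicate than in the Dirichlet case.
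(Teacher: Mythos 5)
Your plan of specializing the main oscillation theorems to $\Phi(t)=\tfrac12 t^2$ is indeed the route the paper takes, and your handling of the pressure and the boundary conditions is fine. But there are two genuine problems.

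First, you never explain why the conclusion holds for the full range $\beta\in(0,1)$. The paper's global theorems (Theorems~\ref{thm:inc1}, \ref{thm:inc2}) are only stated under Assumption~\ref{ass2}, which caps $\beta$ by $\beta_0=\frac{2\gamma}{\max\{2,p'\}}$ with $\gamma$ the interior decay exponent from Theorem~\ref{odlarse}. For a generic nonlinearity $\gamma<1$ and the restriction is real. The paper's proof of Corollary~\ref{cor:0} hinges precisely on the observation that for the genuine Stokes system the comparison problem has Lipschitz solutions with linear decay (via~\cite{GiaMod}), so Theorem~\ref{odlarse} holds with $\gamma=1$, hence $\beta_0=1$ and no constraint on $\beta$ remains. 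Without this observation you have only proved the corollary for $\beta<\beta_0$, not for all $\beta\in(0,1)$.

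Second, your upgrade from $Du$ to $\nabla u$ via a stream function contains a computational error and is in any case unnecessary. If $u=\nabla^\perp\psi=(-\partial_2\psi,\partial_1\psi)$, then $2(Du)_{12}=\partial_1^2\psi-\partial_2^2\psi$, whereas $\Delta\psi=\partial_1^2\psi+\partial_2^2\psi=\operatorname{curl}u$, the vorticity — i.e.\ the \emph{skew}-symmetric part of $\nabla u$, which you do not control from $Du$ alone without invoking the equation or a Korn argument. As written, the claimed Schauder step for the Dirichlet Laplacian does not apply. Fortunately this step is not needed: for $p=2$ one is in the "shear thickening" branch and Theorems~\ref{thm:inc1}, \ref{thm:inc2} already produce estimates on $\nabla u$ directly (via the built-in Korn/reflection machinery), so the corollary follows from them without any separate symmetric-to-full-gradient upgrade.
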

The next corollary gives an outlook for the optimal boundary regularity to imply $\setBMO$ estimates.
\begin{corollary}
\label{cor:1}
 Let $\Omega$ be a bounded $\mathcal{C}^{1,1}$-domain, let $F\in \overline{\setBMO}(\Omega;\setR^{2\times2}_{\sym})$ (see Definition~\ref{def:macha-space}) and $[F \nu]\cdot \tau= 0$ on $\partial \Omega$ in the weak sense. Then any weak solution $(u,\seb{\pi})\in W^{1,2}(\Omega;\setR^2)\times L^2_0(\Omega)$ to \eqref{emNS} satisfies that $\nabla u\in\setBMO(\Omega;\setR^{2\times 2})$ and $\seb{\pi}\in \setBMO(\Omega)$.
\end{corollary}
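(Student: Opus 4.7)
The plan is to derive Corollary~\ref{cor:1} as the special case of our main global $\setBMO$-estimate for the generalized Stokes system~\eqref{em}, specialized to $\es(Du) = Du$ (i.e., $p=2$ in~\eqref{power}). This corresponds to the quadratic Orlicz function $\phi(t) = t^2/2$, which satisfies Assumption~\ref{ass1} trivially, so once the main theorem is proved for general $\phi$, the corollary follows immediately by unwinding the definitions.

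To establish the underlying $\setBMO$ estimate itself, I would proceed by a Campanato-type excess-decay argument. For interior balls $B_r(x_0) \Subset \Omega$, compare $u$ with the solution $v$ of the homogeneous Stokes system on $B_r(x_0)$ obtained by freezing $F$ to its mean $F_{B_r}$; standard Lipschitz bounds on $\nabla v$ together with the fact that the $L^2$-oscillation of $F$ over $B_r$ is controlled by $\|F\|_{\setBMO}$ yield the desired excess decay for $\nabla u$. For boundary balls, flatten the boundary by a $C^{1,1}$-diffeomorphism $\Phi$; since $\nabla \Phi$ is Lipschitz, this transformation preserves $\setBMO$-norms up to multiplicative constants, and the perfect slip boundary conditions transform into similar conditions on the flat part modulo lower-order Lipschitz terms. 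The reference constant-coefficient problem on the half-ball $B_r^+$ is then handled by reflection: extending the tangential velocity component evenly, the normal component oddly, together with the corresponding reflections of $\pi$ and $F$, produces a Stokes solution on the full ball, to which interior Lipschitz estimates apply.

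The main obstacle is the sharpness of the $C^{1,1}$-assumption. This regularity is exactly what guarantees bounded curvature of $\partial\Omega$ and hence bounded Lipschitz coefficients after flattening, which is required for the flattening diffeomorphism to preserve $\setBMO$. Any weaker boundary regularity breaks this structure, as the paper's counterexamples will demonstrate. A secondary technical point is that the compatibility condition $[F\nu]\cdot\tau = 0$ encoded by membership in $\overline{\setBMO}$ must be preserved by the flattening in an appropriate weak sense, so that the reflection step yields a well-posed reference problem; this is guaranteed by the algebraic structure of the perfect slip boundary conditions, which convert cleanly between curved and flat coordinates.
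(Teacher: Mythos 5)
Your first paragraph — reduce Corollary~\ref{cor:1} to the general global $\setBMO$ theorem by specializing to $\Phi(t)=t^2/2$, for which Assumption~\ref{ass1} holds trivially — is exactly what the paper does (the proof invokes Theorem~\ref{thm:inc1}, with the pressure bound supplied by Lemma~\ref{lem:pr}). So the overall strategy is correct.

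However, the sketch you give of the underlying boundary estimate glosses over the central technical difficulty of the paper. You say one should ``flatten the boundary by a $C^{1,1}$-diffeomorphism $\Phi$'' and that ``the perfect slip boundary conditions transform into similar conditions on the flat part modulo lower-order Lipschitz terms.'' This is not accurate. If you simply set $\overline u = u\circ T$ with $T$ the flattening map, then neither the impermeability condition $u\cdot\nu=0$ nor the solenoidality $\diver u = 0$ is preserved: $\overline u \cdot n \neq 0$ on the flat boundary, and $\diver_x \overline u \neq 0$. To restore both, the paper introduces a Piola-type modification $\overline u^Q$ of the transformed velocity \emph{and} a second-order corrector function $g$ (built from $h''$ and the mean of $u_1$) whose purpose is precisely to make $\overline u^Q - g$ an admissible test vector field (divergence-free with zero normal trace). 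The error terms this correction generates — the functional $\skp{E}{\cdot}$ in \eqref{transformed.system} and the curvature contributions $\Phi_{|P|}\big((\rho+\|h''-h''(0)\|_\infty)\mean{|\overline u_1|}\big)$ in Lemma~\ref{lemmacompar} — are the genuinely new obstacle relative to the Dirichlet/Neumann theory, and they are not ``lower-order Lipschitz terms'' that can be absorbed by standard arguments: controlling them requires Caccioppoli and reverse-H\"older inequalities developed specifically for this structure, and is the reason the boundary assumption cannot be weakened below $C^{1,1}$ even for $\setBMO$.

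A secondary omission: your proposal does not address the pressure at all, while the statement of Corollary~\ref{cor:1} asserts $\pi\in\setBMO(\Omega)$. In the paper this comes from the identity \eqref{eq:pressure} and Lemma~\ref{lem:pr}, transferring the $\setBMO$ bound on $Du$ and $F$ to $\pi$ via the Poisson problem with Neumann data; some such step is needed to close the argument.

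So: the reduction to the general theorem is right, and is the paper's route, but the account of the boundary flattening is too optimistic. If you invoke the paper's Theorem~\ref{thm:inc1} as a black box, the corollary follows; if you intend the second and third paragraphs to be an outline of its proof, they omit the corrected velocity transformation and the associated error analysis, which is where the real work lies.
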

The next theorem implies that our results are optimal with respect to right hand sides and the regularity of the boundary.
\begin{theorem}
\label{thm:sharp}
For every $\alpha\in (0,1)$ there exists $\partial \Omega \in \mathcal{C}^{2,\alpha}$ and a local solution to the homogeneous Stokes equation $(u,\pi)$, i.e.\ to~\eqref{emNS} with $F\equiv 0$ such that $\pi\equiv 0$ and $\nabla u\not\in \mathcal{C}^\beta_{\text{loc}}(\overline{\Omega})$ for all $\beta>\alpha$.

For every $\alpha\in (0,1)$ there exists $\partial \Omega \in \mathcal{C}^{1,\alpha}$ and a local solution $(u,\pi)$ to~\eqref{emNS} with $F\equiv 0$ such that $\pi=0$ and $\nabla u\not\in \setBMO_{\text{loc}}(\overline{\Omega})$.
\end{theorem}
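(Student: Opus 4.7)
The key is to reduce the slip boundary condition to a Robin-type condition involving the boundary curvature, and then construct local counterexamples by perturbing a trivial model flow. By a direct computation in boundary coordinates, using $\partial_\tau\nu = \kappa\tau$ (where $\kappa$ is the signed curvature of $\partial\Omega$) together with $\partial_\tau(u\cdot\nu)=0$ (a consequence of $u\cdot\nu\equiv 0$ on $\partial\Omega$), one obtains the pointwise identity
\[
2\,[Du\,\nu]\cdot\tau \;=\; \partial_\nu u_\tau - \kappa u_\tau \qquad\text{on }\partial\Omega,
\]
where $u_\tau := u\cdot\tau$. Hence the slip condition is equivalent to the Robin-type relation $\partial_\nu u_\tau = \kappa u_\tau$. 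Since $F\equiv 0$ and $\pi\equiv 0$, the system~\eqref{emNS} reduces to $\Delta u = 0$ with $\diver u = 0$, i.e., $u$ is a harmonic divergence-free vector field in $\Omega$.

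\textbf{Part~1.} Given $\alpha\in(0,1)$, I would pick a local boundary graph $x_2 = \phi(x_1)$ with $\phi\in \mathcal{C}^{2,\alpha}\setminus\bigcup_{\beta>\alpha}\mathcal{C}^{2,\beta}$ near the origin (concretely $\phi''(x_1)\sim|x_1|^\alpha$), so that the curvature $\kappa$ lies in $\mathcal{C}^{0,\alpha}\setminus\bigcup_{\beta>\alpha}\mathcal{C}^{0,\beta}$, and complete $\partial\Omega$ smoothly to a bounded $\mathcal{C}^{2,\alpha}$-domain. The trivial flow $u^{(0)}\equiv e_1$ is harmonic and divergence-free with $Du^{(0)}=0$ and hence satisfies the slip condition automatically, but violates $u^{(0)}\cdot\nu = 0$ on $\partial\Omega$ by $O(\phi')$. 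I would correct this deficit by a small perturbation $v$ (produced for instance by a Riemann--Hilbert representation of $u_1-iu_2$ as a holomorphic function, or by the standard Schauder theory for the slip-Stokes system), obtaining a local solution $u = u^{(0)}+v$ with the non-degeneracy $u_\tau(0)\neq 0$. Sharpness then follows by a trace argument: if $\nabla u\in \mathcal{C}^\beta_\text{loc}(\overline\Omega)$ for some $\beta>\alpha$ near the origin, then since $\partial\Omega\in\mathcal{C}^{2,\alpha}\subset \mathcal{C}^{1,\beta}$ locally and $\tau,\nu\in\mathcal{C}^{1,\alpha}(\partial\Omega)\subset \mathcal{C}^\beta(\partial\Omega)$, the boundary identity $\partial_\nu u_\tau = \kappa u_\tau$ combined with $u_\tau(0)\neq 0$ forces $\kappa\in\mathcal{C}^\beta(\partial\Omega)$ near the origin, hence $\phi\in\mathcal{C}^{2,\beta}$, contradicting the choice of $\phi$.

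\textbf{Part~2.} For the BMO statement I would take $\phi(x_1) = |x_1|^{1+\alpha}$ (smoothly extended away from the origin), which lies in $\mathcal{C}^{1,\alpha}$ but whose distributional curvature blows up like $|x_1|^{\alpha-1}$ at the origin. The same perturbative construction produces a local solution $u$ with $u_\tau(0)\neq 0$, and the boundary identity then yields $\partial_\nu u_\tau = \kappa u_\tau\sim |x_1|^{\alpha-1}$ along $\partial\Omega$ near the origin. A scaling/homogeneity argument applied to the harmonic extension of $\nabla u$ gives the pointwise lower bound $|\nabla u(x)|\gtrsim |x|^{\alpha-1}$ inside $\Omega\cap B_r(0)$ for small $r$. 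A direct computation of the mean oscillation of such a power-type singularity then produces a lower bound of order $r^{\alpha-1}\to\infty$ as $r\to 0$, ruling out $\nabla u\in\setBMO_\text{loc}$.

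\textbf{Main obstacle.} The most delicate step is the construction of the correction $v$ with the non-degeneracy $u_\tau(0)\neq 0$ and the verification of the sharp asymptotics of $\nabla u$ near the singular boundary point, especially in Part~2 where one must rule out that the harmonic extension softens the boundary singularity into a log-type, BMO-compatible blow-up. Both of these are most cleanly handled via the Goursat/Riemann--Hilbert representation of the harmonic divergence-free field $u_1-iu_2$ as a holomorphic function on $\Omega$, from which the sharp local asymptotics of $\nabla u$ at the origin can be read off directly.
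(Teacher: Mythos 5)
Your Robin reformulation of the slip condition, $2[Du\,\nu]\cdot\tau=\partial_\nu u_\tau-\kappa u_\tau$, together with the trace argument ``$\nabla u\in C^\beta$ and $u_\tau(0)\neq 0$ $\Rightarrow$ $\kappa=(\partial_\nu u\cdot\tau)/u_\tau\in C^\beta$'' is a genuinely different and conceptually cleaner route for Part~1 than the paper's. The paper instead works with the stream function $w$, so $u=(-\partial_2 w,\partial_1 w)$, takes $w=\operatorname{Im}(z)$ for a Riemann map $z:\Omega\to H^+$, obtains the non-degeneracy $u_\tau=\partial_\nu w>0$ on $\partial\Omega$ from Hopf's lemma, and reads $w\notin C^{2,\beta}$ directly from the formula $\nu=-\nabla w/|\nabla w|$ on the rough boundary. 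Both routes hinge on the same non-degeneracy, and this is exactly where your sketch is incomplete: you never actually produce a local slip-Stokes solution with $u_\tau(0)\neq 0$. Perturbing $u^{(0)}=e_1$ by invoking ``standard Schauder theory for the slip-Stokes system'' is circular, since the optimality of that very theory is what is being proved; the Riemann-map construction is the self-contained way to supply this ingredient.

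On that construction there is a subtlety that your own Robin identity exposes and that you should not gloss over. Since $\nabla w=\partial_\nu w\,\nu$ on $\{w=0\}$, we have $u=\partial_\nu w\,\tau$ on $\partial\Omega$, so $u_\tau=\partial_\nu w$, and the slip condition $[Du\,\nu]\cdot\tau=0$ is equivalent to $\nu^T\nabla^2 w\,\nu-\tfrac12\Delta w=0$ on $\partial\Omega$. Combined with $\tau^T\nabla^2 w\,\tau=-\kappa\,\partial_\nu w$ (from differentiating $w|_{\partial\Omega}=0$ twice in arclength) and $\nu^T\nabla^2 w\,\nu+\tau^T\nabla^2 w\,\tau=\Delta w$, this forces $\Delta w=-2\kappa\,\partial_\nu w$ on $\partial\Omega$ (up to the sign convention for $\kappa$). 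In particular a \emph{harmonic} $w$ vanishing on a \emph{curved} boundary with $\partial_\nu w\neq 0$ does not by itself satisfy the slip condition: one obtains $[Du\,\nu]\cdot\tau=\kappa\,\partial_\nu w\neq 0$, and the stream function must be corrected by a biharmonic term so that $\Delta w=-2\kappa\,\partial_\nu w$ on $\partial\Omega$. Any proof along these lines, yours or the paper's, must address this. Finally, for Part~2 your proposed interior lower bound $|\nabla u(x)|\gtrsim|x|^{\alpha-1}$ is asserted, not established, and is harder than necessary: the paper's path is to prove $u\notin C^{0,\beta}_{\rm loc}$ for some $\beta<1$ and then invoke the embedding $\nabla u\in\setBMO\Rightarrow u$ is log-Lipschitz $\Rightarrow u\in C^{0,\gamma}$ for every $\gamma<1$, which rules out $\nabla u\in\setBMO_{\rm loc}$ without any pointwise interior asymptotics or Kellogg-type estimates.
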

Theorem~\ref{thm:sharp} is proved by Example~\ref{ex:hoeld} and Example~\ref{ex:bmo} below. 
In the following we introduce the subject with a brief historical review. Estimates in oscillation spaces for elliptic systems are very well known and lead back to the fundamental Schauder theory, see~\cite{Scha34}. Schauder could prove that, provided that the boundary of the domain and the right hand side are H\"older continuous, then the solutions to the elliptic equation are in the respective optimal H\"older continuity class~\cite[p. 87--143]{GilT83}. Later, the regularity theory was extended by the celebrated \Calderon--Zygmund theory to Lebesgue spaces. Namely, it was shown that if the right hand side is in an $L^q$ space, so is its convolution with a singular kernel~\cite{CalZyg56}. Since linear elliptic operators can be described via singular kernels (in the whole space), the regularity theory of Schauder was extended to $L^q$-spaces (see~\cite{Ste93}). However, elliptic operators are not continuous in the border-line space $L^\infty$, as is demonstrated by straight forward counterexamples~\cite[Example 5.5]{BreCiaDieKuuSch17}. The gap between Lebesque spaces and H\"older spaces can only be closed by finding the correct substitute of the $L^\infty$ space: The space of bounded mean oscillations ($\setBMO$). 

By many authors the so-called Calderon-Zygmund theory and Schauder theory was made applicable to the stationary and evolutionary Stokes system, in global and local form, see for example~\cite{GiaMod,KocSol01},the monographs~\seb{\cite{LadySolUr,Lad69}} \seb{and the more recent (non-linear) contributions~\cite{BeKaRu,BrFu,DieKap12,fuchs}}.  However, \seb{to the best of our knowledge} slip boundary conditions where only studied under strong assumptions on the regularity of the boundary~\cite{KapMalSta99,KapMalSta02,KapTic,mt}. 

Please observe, that Corollary~\ref{cor:0} inherits the exact analogue of Schauders estimates for elliptic equations with one major difference: We assume a $\mathcal{C}^{2,\beta}$-boundary. This is one order of differentiability higher than the respective results for the Stokes system with Dirichlet or Neumann boundary condition where $\mathcal{C}^{1,\beta}$-assumptions on the boundary are known to be necessary and sufficient, see \cite[Part II, Theorem 1.4]{GiaMod} (compare also to the related elliptic results~\cite{adn,BreCiaDieKuuSch17nonlin}). We show below that this difference is natural to appear in the context of perfect slip boundary conditions. The mathematical reason is that the space of test functions is sensible to the curvature of the boundary by the condition that $u\cdot \nu|_{\partial\Omega}= 0$. \seb{Indeed, a second order change of variables is necessary which is achieved by introducing a corrector function. For the construction that might well be of independent interest see Section~\ref{flat}.} Certainly, this second order correction is not necessary in case of flattening in the framework of a Dirichlet or Neumann boundary condition.

In subsections~\ref{ssec:sharp} we introduce a method of constructing counterexamples that allow to highlight this curvature impact.
Related to this observation, we provide an example that excludes \seb{any theory of higher integrability} for Lipschitz domains and perfect slip boundary values. Indeed, as follows from Example~\ref{rem:boundary} no $L^q$-stability for (convex) Lipschitz domains for any exponent $q>2$ and any (small) Lipschitz constant $L>0$ is possible in case of perfect slip boundary conditions. This is in contrast to the elliptic theory for Dirichlet boundary conditions where both convexity as well as the smallness of the Lipschitz constant are known to have an significant effect on the regularity~\cite{JerKen95}.

The physical reason is that in case of perfect slip boundary conditions on a strongly curved boundary a rapid change of direction of the flow along the boundary is not excluded. This is in contrast to no-slip boundary conditions where the fluid is steady at the boundary and no such effect is possible. Our research on counterexamples was motiveated by some recent numerical experiments where this effect is visible; see Section 5 and in particular figures 5.8 and 5.9 in \cite{helena}.


Only recently, regularity results for non-linear forms of the stress tensor $\es(Du)$ were investigated. This is due to the fact that the respective regularity theory for non-linear elliptic operators, known as the non-linear \Calderon-Zygmund theory, was either missing or not applicable to fluid mechanics. However, parts of the non-linear \Calderon-Zygmund theory, namely higher integrability results developed in~\seb{\cite{dm,Iwa83}}, could be transferred to power law fluids and some generalizations in two and three space dimensions~\cite{DieKap12}. More regularity results are known in the planar case, see~\cite{KapMalSta99,KapMalSta02}. The latest novelty, which is the starting point of the present work, are local estimates in oscillation spaces in two space dimension \cite{DieKapSch14} where a local Schauder theory as well as estimates in more oscillation spaces including the space of bounded mean oscillation ($\setBMO$) were derived. The objective of the present paper is to show the respective global estimates for perfect slip boundary conditions. This includes estimates in H\"older spaces as well as in the borderline space of bounded mean oscillations.
It is well known that the regularity around boundary points depends on the regularity of the boundary. The main effort in this paper was to derive the regularity results under minimal regularity assumptions on the boundary. \seb{Simultaniously all non-linear effects have to be controlled accordingly. This on its own is a non-trivial task even in case of power law fluids~\cite{BeKaRu,KapMalSta99,KapMalSta02,KapTic,mt}.}

To illustrate the potential of our techniques we introduce the following theorem. For more general results and precise assumptions, please see the next subsection.
\begin{theorem}
\label{thm:0}
Let $p\in (1,\infty)$ and $\es$ be prescribed by either of the forms in \eqref{power}.
Let $\beta\in (0,\beta_0)$, where $\beta_0$ depends on $p$ alone.\footnote{Actually $\beta_0$ is the maximal local H\"older regularity of gradients to solutions to \eqref{em} with $F= 0$. Due to \cite{DieKapSch14} we know a $\beta_0>0$ just depending on $p$ exists.} Let $\Omega$ be a bounded $\mathcal{C}^{2,\beta}$-domain and $F\in \mathcal{C}^{\seb{\min\{1,\beta(p-1)\}}}(\overline{\Omega};\setR^{2\times2}_{\sym})$, such that $[F \nu]\cdot \tau=0$ on $\partial \Omega$. Then any weak solution\footnote{\seb{Here $L^{p'}_0(\Omega) = \{f\in L^{p'}(\Omega),\ \dashint_\Omega f\, dx = 0\}$}} $(u,\pi)\in W^{1,p}(\Omega;\setR^2)\times L^{p'}_0(\Omega)$ to \eqref{em}--\eqref{pss} satisfies that $\nabla u\in \mathcal{C}^{\tilde{\beta}}(\overline{\Omega};\setR^{2\times 2})$, with $\tilde{\beta}=\min\set{\beta, \beta(p-1)}$. Moreover, $u\cdot \nu= 0=\seb{[Du\, \nu] \cdot \tau}$  on $\partial \Omega$ and
\[
\norm{\nabla u}_{\mathcal{C}^{\tilde{\beta}}(\overline{\Omega};\setR^{2\times 2})}^{p-1}+\norm{\pi}_{\mathcal{C}^{\tilde \beta}(\overline{\Omega})}\leq c\norm{F}_{\mathcal{C}^{\beta(p-1)}(\overline{\Omega};\setR^{2\times 2})},
\]
where the constant $c$ depends on $\beta-\beta_0,p$ and the $\mathcal{C}^{2,{\beta}}-$ property of $\partial \Omega$.
\end{theorem}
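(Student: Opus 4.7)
The plan is to combine the interior Schauder theory of \cite{DieKapSch14} with a boundary flattening equipped with a second-order corrector, followed by a Campanato excess-decay iteration. The exponent $\tilde\beta=\min\{\beta,\beta(p-1)\}$ reflects the transfer from H\"older regularity of $\es(Du)$ to H\"older regularity of $Du$: no loss for $p\in(1,2]$ and a factor $p-1$ loss for $p>2$.

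Interior points are covered directly by \cite{DieKapSch14}, so the main work is near $\partial\Omega$. For $x_0\in\partial\Omega$ I localize in a small neighbourhood $U\cap\Omega$ and transport the problem onto a half-ball $B_r^+\subset\setR^2$ by a $\mathcal{C}^{2,\beta}$-diffeomorphism. A plain flattening is not enough because the constraint $u\cdot\nu=0$ is sensitive to the curvature of $\partial\Omega$; the diffeomorphism must therefore include a second-order corrector so that the pulled-back slip conditions on the flat part $\{x_2=0\}$ reduce to $u_2=0$ together with vanishing of the tangential component of the symmetric stress, modulo coefficients that are $\mathcal{C}^\beta$-close to the identity. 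The construction and its precise properties are precisely what is needed from Section~\ref{flat}. After this change of variables the system becomes a perturbed generalized Stokes system on $B_r^+$ whose right-hand side is H\"older continuous of order $\min\{1,\beta(p-1)\}$, with the coefficient and lower-order perturbations controlled in $\mathcal{C}^\beta$.

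The core step is a Campanato excess decay on the flat domain. For a ball $B_\rho(x_0)$ centred on the flat boundary, compare $u$ with the solution $v$ of the corresponding homogeneous flat-boundary problem whose stress tensor is frozen at the mean gradient $\dashint_{B_\rho^+}Du$, and whose boundary values agree with $u$ on the curved part of $\partial B_\rho^+$. The flat homogeneous problem admits an interior-type $\mathcal{C}^{1,\alpha_0}$ estimate (via reflection through the flat boundary and \cite{DieKapSch14}), which gives the decay
\[
\Phi(x_0,\theta\rho)\leq c\,\theta^{2\alpha_0}\,\Phi(x_0,\rho),
\]
where $\Phi(x_0,\rho)$ denotes the natural shifted Campanato excess $\dashint_{B_\rho^+}|V(Du)-\mean{V(Du)}|^2$. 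The error $u-v$ is bounded via the monotonicity of $\es$ and testing with $u-v$ itself; the resulting right-hand-side contributions come from the oscillation of $F$ and the $\mathcal{C}^\beta$-coefficient perturbation, producing a term of order $\rho^{2\tilde\beta}$. Combining yields $\Phi(x_0,\theta\rho)\leq c\theta^{2\alpha_0}\Phi(x_0,\rho)+c\rho^{2\tilde\beta}$, and the standard Campanato iteration with $\theta$ sufficiently small gives $\Phi(x_0,\rho)\leq c\rho^{2\tilde\beta}$. By Campanato's characterization this yields $V(Du)\in\mathcal{C}^{\tilde\beta}$ up to the boundary, and the conversion to $\nabla u$ again produces the $p-1$ factor when $p>2$.

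The main obstacle is the interplay between the non-linearity of $\es$ and the boundary curvature. In particular, controlling the perturbations produced by the flattening in shifted Orlicz norms, and verifying that the second-order corrector preserves the divergence-free constraint and the slip structure up to an error absorbable by the excess, are the delicate points. Once these are in place, the pressure estimate follows from $\nabla\pi=\diver F-\diver \es(Du)$ tested against admissible slip test functions, which yields $\pi\in\mathcal{C}^{\tilde\beta}$ with the stated quantitative dependence. A finite covering of $\partial\Omega$ by boundary charts and interior balls, together with the interior result of \cite{DieKapSch14}, then concludes the proof.
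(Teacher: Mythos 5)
Your high-level strategy --- flattening with a second-order corrector, reflection plus \cite{DieKapSch14} for the flat-boundary decay, and an excess-decay iteration --- matches the machinery the paper develops in Sections~\ref{flat}--\ref{potmt}, but the paper does not prove Theorem~\ref{thm:0} by running this argument directly for power laws. Instead the theorem is a short corollary of the general weighted $\setBMO_\omega$ estimates: for $p\geq 2$ one applies the shear-thickening weighted result with $\omega(r)=r^\alpha$ and uses Campanato's identification $\setBMO_{r^\alpha}=\mathcal{C}^\alpha$ (with $\omega^{q-1}=r^{\alpha(p-1)}$ giving the requirement on $F$); for $p\in(1,2)$ one applies Theorem~\ref{thm2} with $\omega(r)=r^{\alpha(p-1)}$ together with Proposition~\ref{pro:dini}, observing that power weights satisfy the Dini condition~\eqref{eq:dini}. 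Your proposal therefore re-derives ad hoc what is already packaged in those theorems, and in doing so it glosses over two features they handle carefully. First, the excess functional is not uniformly $|V(Du)-\mean{V(Du)}_{B_\rho^+}|^2$: the paper iterates the oscillation of $\nabla u$ in the shear-thickening case (Lemma~\ref{lem:1}) and of $\es(Du)$ in the shear-thinning case (Proposition~\ref{proportionality2}), and this split according to whether $\Phi''$ is almost increasing or almost decreasing is what reaches the sharp boundary-regularity assumptions. Second, passing from $\es(Du)$- or $V(Du)$-regularity to $\nabla u$-regularity is not a bare ``$p-1$ factor''; it requires first establishing boundedness of $\nabla u$ via the Dini condition and then linearizing $\es$ on bounded sets, which is the content of Proposition~\ref{pro:dini}. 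Also, your ``stress tensor frozen at the mean gradient'' is ambiguous: a genuine linearization of $\es$ would degenerate at critical points of $Du$, whereas the paper always compares against the full nonlinear homogeneous problem~\eqref{homosys}, controlled through the $V$-function and shifted $N$-functions rather than through frozen coefficients.
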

The theorem is sharp for exponents $\beta<\beta_0$ due to Theorem~\ref{thm:sharp}. 
The global regularity results contained in this paper are all consequences of local scaling invariant estimates around boundary points which might be of independent interest for further applications, see Theorem~\ref{thm:localdec} and Theorem~\ref{thm:localinc}. As a byproduct with the potential of independent applicability, we derive reverse H\"older inequalities and comparison estimates around boundary points of first order (see Section~\ref{sec:rh} and Section~\ref{sec:comp}). 

Finally we wish to point out that the estimates proved in this paper have natural implications to the steady and unsteady non-Newtonian Navier Stokes systems by treating time derivative and convective term as right hand sides. As a reference on how this can be achieved see~\cite[Section 4 \& 5]{DieKapSch14}.

\subsection{Main assumptions}
%

The constitutive relation for $\es$ in this paper is assumed to be of the form
\begin{equation}\label{constitutive.relation}
\es(Du)=\mu(|Du|)Du,
\end{equation}
where $\mu: [0,\infty)\mapsto[0,\infty)$ describes the generalized viscosity. As it is common, we suppose that the scalar potential $\Phi$ related to the stress tensor $\es$ is known and given by physical observations. Our assumptions on the generalized viscosity are as follows:

\begin{ass}[Stress tensor]
\label{ass1}
For a convex function $\Phi:[0,\infty) \mapsto [0,\infty)$ such that $\Phi\in \mathcal{C}^{2}(0,\infty)\cap\mathcal{C}^{1}[0,\infty)$, we define
\begin{equation}\label{potencial}
\es_{ij}(A)=\mu(\abs{A})A_{ij}, \quad \mu(|A|)=\frac{\Phi '(|A|)}{|A|} \quad \forall A\in\mathbb{R}^{2\times 2}_{sym}. 
\end{equation}
We require that $\Phi$ satisfies the following:
\begin{itemize}
\item It
is an N-function, i.e. $\Phi'$ is positive for $s>0$, non-decreasing and satisfies $\Phi '(0)=0$ and $\lim_{s\rightarrow \infty}\Phi ' (s)=\infty$. 
\item There are positive constants $c_1,c_2$, such that 
$c_1s\,\Phi ''(s)\leq \Phi '(s) \leq c_2 s\,\Phi ''(s)$ for all $s \in (0,\infty)$.
\item $\Phi ''(s)$ is almost monotone. This means that there exists a $C>0$ such that for all $s\in(0,t]$ either $\Phi ''(s) \le C\Phi ''(t)$ (which means that $\Phi''$ is almost increasing) or $\Phi ''(s) \ge C\Phi ''(t)$ (which means that $\Phi''$ is almost decreasing). 
\end{itemize}
In particular (cf. \cite{DieKapSch12}) we know that $\Phi$ is of type $T(p,q,K)$, meaning that there exist
$1 < p \leq q < \infty$ and $K>0$ such that
\begin{align}
  \label{eq:typeT}
  \begin{aligned}
  \Phi(st) &\leq K\, \max \set{s^p, s^q} \Phi(t),
  \\
    \min \bigset{ s^{p}, s^{q}} \Phi(t)&\leq K \Phi(st)
\end{aligned}
\end{align}
for all $s,t \geq 0$. Henceforth, the exponents $p$ and $q$ are called the lower
and upper index of~$\Phi$, respectively. 
\end{ass}
Please observe that in case of power law fluids $\Phi(t)=\frac{\mu_0}{p}t^p$,  which is obviously a member of $T(p,p,1)$.
For a \seb{measurable} function $f$ we can define the gauge norm as
$$
\|f\|_{L^\Phi(\Omega)}:=\inf\left\{\lambda>0: \io\Phi\left(\frac{|f(x)|}{\lambda}\right)\, dx\le 1\right\}.
$$
The Orlicz space $L^{\Phi}(\Omega)$ is defined via this norm as $\{f:\|f\|_{L^\Phi(\Omega)} <\infty\}$. 
We define
\[
W^{1,\Phi}(\Omega) :=\{\varphi \in W^{1,1}(\Omega),\,\nabla \phi\in L^\Phi(\Omega),\,\phi\in L^\Phi(\Omega)\},
\]
and by density
\begin{displaymath}
\begin{split}
W^{1,\Phi}_\nu(\Omega;\setR^2) &:=\{\varphi_i \in W^{1,\Phi}(\Omega),\,i=1,2,\,\varphi\cdot \nu=0\mbox{ on }\partial\Omega\}
:=\overline{\{\phi\in \mathcal{C}^\infty(\Omega;\setR^2):\phi\cdot \nu = 0 \mbox{ on }\partial\Omega\}}^{\norm{\cdot}_{W^{1,\Phi}(\Omega;\setR^2)}},
\\
W^{1,\Phi}_{0}(\Omega;\setR^2) &:=\{\varphi_i \in W^{1,\Phi}(\Omega),\,i=1,2,\,\varphi=0\mbox{ on }\partial\Omega\}
:=\overline{\mathcal{C}^\infty_0(\Omega;\setR^2)}^{\norm{\cdot}_{W^{1,\Phi}(\Omega;\setR^2)}}.
\end{split}
\end{displaymath}
Moreover, we define the respective closed subspaces of solenoidal functions as
\begin{displaymath}
\begin{split}
W^{1,\Phi}_{\sigma}(\Omega; \setR^2) &:= \{\varphi \in W^{1,\Phi}(\Omega;\setR^2),\,\,\diver \varphi = 0\mbox{ in }\Omega\},
\\
W^{1,\Phi}_{\sigma,\nu}(\Omega;\setR^2) &:= \{\varphi \in W^{1,\Phi}_\nu(\Omega;\setR^2),\,\,\diver \varphi = 0\mbox{ in }\Omega\},
\\
W^{1,\Phi}_{\sigma,0}(\Omega;\setR^2) &:= \{\varphi \in W^{1,\Phi}_0(\Omega;\setR^2),\,\,\diver \varphi = 0\mbox{ in }\Omega\}.
\end{split}
\end{displaymath}
The above spaces are endowed with natural norms. By our assumption they are reflexive and separable Banach spaces. For these results and more information on Orlicz spaces see~\cite{rr}. \seb{For a Young function $\Phi$ its conjugate $\Phi^*$ is given by
$$
\Phi^* (x) = \sup\{xy - \Phi(y), y\in \mathbb R^+_0\}.
$$} 
 Now we can define our notion of weak solutions.
\begin{definition} 
Let $\Omega$ be a bounded domain and $F\in L^{\Phi^*}(\Omega;\setR_{\sym}^{2\times 2})$.
We say that the pair 
$(u,\pi)\in W^{1,\Phi}_{\sigma,\nu}(\Omega;\setR^2)\times L^{\Phi^*}(\Omega)$ is a weak solution to \eqref{em} and \eqref{pss} if
\begin{equation}\label{rws}
\io\es(Du)\!:\!D\varphi\, dx - \io\pi\diver\varphi\, dx= \io F\!:\!\nabla\varphi\, dx
\end{equation}
holds for all $\varphi \in W^{1,\Phi}_\nu(\Omega;\setR^2)$.
\end{definition}
Oscillation estimates for partial differential equations are naturally quantified via Campanato spaces. For the sake of clarity we recall some definitions.
The space $\setBMO(\Omega)$ consists of $L^1(\Omega)$ functions whose $\setBMO $ seminorm, defined as
$$
\|f\|_{\setBMO (\Omega)} = \sup\limits_{Q\subset \Omega}\dashint_{Q} |f-\mean{f}_{Q}|\ {d}x
$$
is finite. Henceforth, we use the notation $\mean{g}_E:=\dashint_E g\, dx=\frac1{\abs{E}}\int_E g\, dx$ for a measurable set $E$, with $g\in L^1(E)$ and $\abs{E}\in (0,\infty)$. Furthermore, $Q$ is always a cube with sides parallel to the axis;  $Q_{x,R}$ is the cube with center $x$ and side length $R$. 

The natural refinement of $\setBMO$ can be achieved by adding some weight function depending on the radius of the cube. Therefore, let $\omega:(0,\infty)\mapsto(0,\infty)$ be the so called weight function. Then functions in $L^1(\Omega)$ with finite $\setBMO_\omega$ seminorm form the $\setBMO_\omega$ space; here
$$
\|f\|_{\setBMO_\omega(\Omega)} = \sup\limits_{Q\subset \Omega}\frac{1}{\omega(\diam Q)}\dashint_{Q} |f-\mean{f}_{Q}|\ {d}x.
$$
Please observe that in case $\omega(r)=r^\alpha$ we regain the classical Campanato spaces; indeed it was shown in~\cite{Cam63} that $\setBMO_{r^\alpha}(\Omega)= \mathcal{C}^{\alpha}(\Omega)$.


\begin{ass}[Domain]
\label{ass:bound}
We assume that $\Omega$ is a bounded domain with $\partial \Omega\in \mathcal{C}^{1,1}$ uniformly. We use the following notation:

 For $x\in \partial\Omega$ we define $\tau(x)$ as a unit tangential direction at $x\in\partial \Omega$ and $\nu(x)$ as its unit outer normal. Moreover, we fix $r_\Omega \in (0,\infty)$ and $0<\lambda\leq 1\leq \Lambda$ such that for every $x\in \partial \Omega$ there is $h_x:\, (-r_\Omega,r_\Omega)\to \setR$ such that:

$\partial \Omega\cap Q_{x,r_\Omega}\subset\set{x+s\tau(x) + h_x(s)\nu(x):s\in (-r_\Omega,r_\Omega)}$ 

\noindent
and for all $R\in (0,r_\Omega]$

$\Omega\cap  Q_{x,\lambda R}\subset\set{x+s\tau(x)- a\nu(x): (s,a)\in (-R,R)\times ( h_x(s),h_x(s)+R)}\subset \Omega\cap  Q_{x,\Lambda R}$. 
\end{ass}
Since we are interested in global estimates, it is part of the regularity theory to show how the boundary values \eqref{pss} are attained. 
This task is a little delicate as continuous functions are not dense in \seb{the space $\setBMO$} and thus the weak understanding of \eqref{pss} is insufficient. Hence, we need a new perspective to understand the attainment of the boundary values. Here we introduce a condition which generalizes \eqref{pss} to the $\setBMO $ setting in such a way that once the function $F$ is continuous we find $[F\nu]\cdot \tau=0$:
\begin{definition}
\label{def:macha-space}
Let $\Omega $ satisfy \eqref{ass:bound}. 
Then we define\footnote{For $E\subset \setR^2$ we denote by $\chi_E:\setR^2\to \set{0,1}$ the characteristic function on $E$.}
$$
\overline{\setBMO }(\Omega;\setR^{2\times 2}) := \left\{F\in \setBMO (\Omega;\setR^{2\times 2})\,:\, \sup\limits_{x \in \partial \Omega}\sup\limits_{r>0}\dashint_{Q_{x,r}}\chi_\Omega(y)|[F(y)\nu(x)]\cdot \tau(x)|\, dy <\infty\right\}.
$$
The space $\overline{\setBMO }(\Omega;\setR^{2\times 2})$ is a Banach space where the semi-norm is defined as

$
\|F\|_{\overline{\setBMO }(\Omega;\setR^{2\times 2})}:= \|F\|_{\setBMO (\Omega;\setR^{2\times 2})}+ \sup\limits_{x \in \partial \Omega}\sup\limits_{r>0}\dashint_{Q_{x,r}}\chi_\Omega(y)|[F(y)\nu(x)]\cdot \tau(x)|\, dy.
$

\noindent
Analogously, we define weighted oscillation spaces. For any non-decreasing function $\omega:(0,\infty)\mapsto(0,\infty)$ we define
$$
\overline{\setBMO }_\omega(\Omega;\setR^{2\times 2}) := \bigg\{F\in \setBMO_\omega(\Omega;\setR^{2\times 2})\,: \ \sup\limits_{x \in \partial \Omega}\sup\limits_{r>0}\frac{1}{\omega(r)}\dashint_{Q_{x,r}}\chi_\Omega(y)|[F(y)\nu(x)]\cdot \tau(x)|\, dy <\infty\bigg\}.
$$
The space $\overline{\setBMO }_\omega(\Omega;\setR^{2\times 2})$ is a Banach space where the semi-norm is defined as

$
\|F\|_{\overline{\setBMO }_\omega(\Omega;\setR^{2\times 2})}:= \|F\|_{\setBMO_\omega(\Omega;\setR^{2\times 2})}+\sup\limits_{x \in \partial \Omega}\sup\limits_{r>0}\frac{1}{\omega(r)}\dashint_{Q_{x,r}}\chi_\Omega(y)|[F(y)\nu(x)]\cdot \tau(x)|\, dy.
$
\end{definition}
Concerning the weight $\omega$ we have the following restrictions:
\begin{ass}[Weight]
\label{ass2}
For $\es,\Phi$ satisfying Assumptions~\ref{ass1}, there exists a $\beta_0\in (0,1]$, such that $\beta_0:=\frac{2\gamma}{\max\set{2,p'}}$, where $\gamma$ is defined via Theorem~\ref{full.slip.flat}. Given $\beta_0$ we assume that $\omega \,:\, (0,\infty) \to
  (0,\infty)$ is bounded and non-decreasing with 
  \begin{align}
  \label{ass:om}
  \omega(r)r^{-\beta} \leq c_0\, \omega(s) s^{-\beta}\text{ for all }r>s>0\text{ and }\beta\in (0,\beta_0).
  \end{align}
  \end{ass}
We will prove theorems for both cases: the case when $\Phi''$ is almost decreasing and the case when $\Phi''$ is almost increasing. Although our techniques are mostly uniform with respect to the growth of $\Phi''$ by dividing the two cases, we were able to show regularity up to the borderline of the assumed boundary regularity of the domain $\Omega$.
\begin{remark}
\label{rem:ass1}
Note that Assumption~\ref{ass1} allows us to capture some of the well-known representatives of the constitutive relation \eqref{constitutive.relation}, e.g. all power-law models~\eqref{power}, but also some other models of fluid mechanics such as the Carreau type fluids. Examples are ketchup, nail polish, blood, but also industrial products like modern paints. For a physical background we refer to \cite{MalR05,malek} and references therein. 
\end{remark}
\subsection{Main results}
\label{ssec:main}
Our main theorems on shear thinning fluids are the following two stability results.
\begin{theorem}[Shear thinning $\setBMO$]
\label{thm1}
Let Assumption~\ref{ass1} be satisfied and $\Phi''$ be almost decreasing.
Let $\Omega \subset \mathbb R^2$ be a bounded $\mathcal{C}^{1,1}$ domain satisfying Assumption~\ref{ass:bound} and $F\in\overline{\setBMO }(\Omega;\setR^{2\times 2})$. Let $(u,\pi)$ be a weak solution of \eqref{em}--\eqref{pss}, then 
\begin{equation*}
\|\es(Du)\|_{\overline{\setBMO }(\Omega;\setR^{2\times 2})} +\norm{\pi}_{\setBMO(\Omega)}\le C \|F\|_{\overline{\setBMO }(\Omega;\setR^{2\times 2})}.
\end{equation*}
The constant $C$ depends only on the properties mentioned in the assumptions.
\end{theorem}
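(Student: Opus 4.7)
My plan is to reduce this global estimate to a scaling-invariant local oscillation decay estimate around boundary points, namely Theorem~\ref{thm:localdec}, combined with the analogous interior local decay from~\cite{DieKapSch14}. Using the standard Campanato-type characterization of $\setBMO$, it suffices to prove that for every cube $Q$ with $Q\cap\Omega\neq\emptyset$ one has
$$
\dashint_{Q\cap \Omega} \bigl|\es(Du) - \mean{\es(Du)}_{Q\cap \Omega}\bigr|\, dx \leq c\, \|F\|_{\overline{\setBMO}(\Omega;\setR^{2\times 2})}.
$$
The argument splits into an interior case ($2Q\subset\Omega$), which is handled directly by the interior decay of~\cite{DieKapSch14}, and a boundary case ($Q\cap\partial\Omega\neq\emptyset$) which is the substantial part.

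For the boundary case I localize around $x_0\in\partial\Omega$ and apply the flattening constructed in Section~\ref{flat}. Because the perfect slip condition $u\cdot\nu=0$ is coupled to the curvature of $\partial\Omega$, the change of variables cannot be a bare straightening of $\partial\Omega$ but must include a second-order corrector that absorbs the curvature term, producing a transformed system posed on a half-cube with exact perfect slip on the flat side. The Orlicz-Uhlenbeck structure from Assumption~\ref{ass1} is preserved, and the additional perturbation terms produced by the transformation are of order $\mathcal{C}^{0,1}$ since $\partial\Omega\in\mathcal{C}^{1,1}$. On this flat-boundary model I invoke Theorem~\ref{full.slip.flat}, which combined with the reverse Hölder inequality of Section~\ref{sec:rh} and the first-order comparison estimates of Section~\ref{sec:comp} yields a decay of the form
$$
\dashint_{Q_{\theta r}\cap\Omega}\bigl|\es(Du)-P_{\theta r}\bigr|\, dx \leq c\,\theta^{\beta_0}\dashint_{Q_r\cap\Omega}\bigl|\es(Du)-P_{r}\bigr|\, dx + c\,\theta^{-1}\,\|F\|_{\overline{\setBMO}(\Omega;\setR^{2\times 2})}
$$
for suitable $P_r,P_{\theta r}\in\setR^{2\times 2}$ and $\theta\in(0,1)$. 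The shear-thinning assumption that $\Phi''$ is almost decreasing is what makes the comparison error linear in the $\overline{\setBMO}$ datum uniformly in the reference stress value, so that the factor $\theta^{\beta_0}$ survives up to the borderline weight $\omega\equiv 1$.

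Iterating this decay, summing the resulting geometric series, and taking the supremum over cubes produces the desired uniform bound on the mean oscillation of $\es(Du)$. The pressure is recovered as follows: since $\nabla\pi=\diver(F-\es(Du))$ in the weak sense, testing the equation against a Bogovskii-type right inverse of the divergence localized on each cube, and using the $\setBMO$ bound on $\es(Du)$ together with the defining tangential-normal condition of the space $\overline{\setBMO}$ on $F$, transfers the oscillation control to $\pi$; the mean-value constraint $\pi\in L^{\Phi^*}_0(\Omega)$ fixes the arbitrary constant.

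The principal obstacle is the curvature-sensitive flattening together with the nonlinear perturbation error it generates: the corrector must preserve the perfect slip condition exactly, so that Theorem~\ref{full.slip.flat} becomes applicable, and the perturbation produced on the transformed equation must be controlled uniformly up to the $\setBMO$ endpoint rather than merely along a Hölder scale. This is what forces the use of the sharp reverse Hölder inequality of Section~\ref{sec:rh} and the almost-decreasing hypothesis on $\Phi''$ simultaneously, which together rule out the logarithmic loss that would otherwise appear in the nonlinear comparison step and spoil the BMO decay.
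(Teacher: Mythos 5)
Your proposal captures the paper's actual strategy: reduce to the local decay estimate at boundary points (Theorem~\ref{thm:localdec}, proved via Proposition~\ref{proportionality2}), which rests on the second-order-corrected flattening of Section~\ref{flat}, the reverse H\"older inequality of Proposition~\ref{corcompar}, the comparison estimate of Proposition~\ref{keylemma} with the flat-boundary decay (Theorem~\ref{odlarse}; note that \ref{full.slip.flat} is actually an equation label, so the $\gamma$ you need comes from Theorem~\ref{odlarse}), and an iteration that exploits the almost-decreasing hypothesis on $\Phi''$ precisely as you describe. You also correctly identify the endpoint role of the shear-thinning assumption.

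Two points where the paper does something different and where your sketch is thin. First, the pressure. You propose to test $\nabla\pi=\diver(F-\es(Du))$ against a Bogovskij-type right inverse of the divergence localized on cubes. This naturally gives $L^q$ or local $L^2$ control of $\pi-\mean{\pi}_Q$, but passing from there to a uniform $\setBMO$ seminorm bound is not automatic, because the Bogovskij operator is not bounded on the borderline space $L^\infty$ (or its dual) and one would risk the logarithmic loss you correctly worried about elsewhere. The paper instead observes (identity~\eqref{eq:pressure}) that $\pi$ solves a Neumann problem $\Delta\pi=\diver\diver(F-\es(Du))$, and invokes the linear $\setBMO$ theory for the Poisson problem (Lemma~\ref{lem:pr}, following~\cite{DieKapSch14}), which transfers the $\setBMO$ bound on $\es(Du)$ and on $F$ directly to $\pi$. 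Second, you gloss over how the lower-order terms in the local estimate of Theorem~\ref{thm:localdec}, namely $\dashint_Q\chi_\Omega(\abs{\es(Du)}+\abs{\Phi'(u/R)})\, dx$, get absorbed into $\|F\|_{\overline{\setBMO}(\Omega)}$ in the passage to the global statement. The paper handles this by combining the a-priori energy estimate~\eqref{eq:a-prior} (testing the weak formulation with $u$ itself) with the global Korn inequality for slip boundary conditions, Corollary~\ref{cor:korn}, and the seminorm-vs-norm Remark~\ref{normvssemi2}; you should add such a closing step to your ``taking the supremum over cubes'' conclusion, because without it the right-hand side retains a term in $u$ that is not visibly dominated by the $F$-seminorm.
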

\begin{theorem}[Shear thinning $\setBMO_\omega$]
\label{thm2}
Let Assumption \ref{ass1} be satisfied and $\Phi''$ be almost decreasing. Let $p,q$ be the lower and upper index of $\Phi$ introduced in \eqref{eq:typeT} and let the weight function $\omega$ satisfy Assumption~\ref{ass2}. Then the following holds:
If $\Omega \subset \mathbb R^2$ is a $\mathcal \mathcal{C}^{2,\omega^\frac{1}{p-1}}$ domain satisfying Assumption~\ref{ass:bound}, $F\in \overline{\setBMO }_\omega(\Omega;\setR^{2\times 2}_{\sym})$ and $(u,\pi)$ is a weak solution of \eqref{em}--\eqref{pss}, then
\begin{equation*}
\|\es(Du)\|_{\overline{\setBMO }_\omega(\Omega;\setR^{2\times 2})}+\norm{\pi}_{{\setBMO }_\omega(\Omega)} \le C \|F\|_{\overline{\setBMO }_\omega(\Omega;\setR^{2\times 2})}.
\end{equation*}
The constant $C$ depends only on the regularity of the boundary and the mentioned properties in the assumptions.
\end{theorem}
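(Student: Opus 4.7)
The plan is to reduce the global weighted Campanato estimate to a family of scaling invariant local oscillation estimates on cubes $Q\subset \Omega$, in the spirit of the Campanato characterization. I would distinguish two regimes: interior cubes $Q$ with $2Q\subset\Omega$, and boundary cubes $Q$ centered at (or close to) $\partial\Omega$. For interior cubes, the local Campanato decay results of \cite{DieKapSch14} together with the weight condition~\eqref{ass:om} already yield the $\setBMO_\omega$ estimate for $\es(Du)$ and $\pi$, and comparison estimates handle the inhomogeneity $F$. So the core of the proof is the boundary case.

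For a boundary cube $Q=Q_{x_0,r}$ with $x_0\in\partial\Omega$ and $r\leq r_\Omega$, I would first flatten the boundary via the non-linear corrector change of variables constructed in Section~\ref{flat}. The hypothesis $\partial\Omega\in\mathcal{C}^{2,\omega^{1/(p-1)}}$ is chosen exactly so that, after change of variables, the transformed system is of the form studied in the flat-boundary theory plus a perturbation whose coefficients have oscillation controlled by $\omega^{1/(p-1)}(r)$. Applying the nonlinear stress tensor $\es$ (which, in the shear thinning regime where $\Phi''$ is almost decreasing, has H\"older-type modulus of order $p-1$) turns the error term into one whose oscillation is controlled by $\omega(r)$, matching the target weight.

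Next I would apply the local boundary decay estimate (Theorem~\ref{thm:localdec}) to the $\es(Dv)$ associated to the solution $v$ of the homogeneous flattened problem on $\alpha Q$, obtaining
\[
\dashint_{\eta Q}\abs{\es(Dv) - \mean{\es(Dv)}_{\eta Q}}\dx \leq c\,\eta^{\beta}\dashint_{\alpha Q}\abs{\es(Dv) - \mean{\es(Dv)}_{\alpha Q}}\dx
\]
for any $\beta<\beta_0$ and small $\eta$. The difference $\es(Du)-\es(Dv)$ is then bounded via the comparison estimate of Section~\ref{sec:comp}, which in the shear thinning case controls it in terms of $F-\mean{F}_{\alpha Q}$ plus the extra boundary term $\dashint\chi_\Omega|[F\nu]\cdot\tau|\dx$ coming from the test space condition $\varphi\cdot\nu=0$. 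Using $F\in\overline{\setBMO}_\omega$ both quantities are bounded by $c\,\omega(r)\|F\|_{\overline{\setBMO}_\omega}$. Combining with the decay of $\es(Dv)$ and iterating on a dyadic sequence of cubes (using the reverse H\"older inequalities of Section~\ref{sec:rh} for the base case), condition~\eqref{ass:om} yields the pointwise oscillation bound
\[
\dashint_Q \bigabs{\es(Du)-\mean{\es(Du)}_Q}\dx \leq c\,\omega(r)\bigl(\|F\|_{\overline{\setBMO}_\omega(\Omega)} + \|\es(Du)\|_{L^1(\Omega)}\bigr),
\]
together with the corresponding estimate for $\pi$, whence Theorem~\ref{thm2} follows after absorbing the $L^1$ term via the energy estimate. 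The extra boundary quantity in $\overline{\setBMO}_\omega$ for $\es(Du)$ is obtained by the same argument applied to cubes whose center is taken at $\partial\Omega$.

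The main obstacle I would expect is matching the boundary regularity to the weight in the correct power. In the perfect slip setting the space of admissible test functions is tied to the curvature of $\partial\Omega$, so a second-order corrector must be introduced whose derivatives inherit the boundary modulus. Pushing the corrector through the nonlinear tensor $\es$ and ensuring that the resulting perturbation of the flattened system produces a right-hand side in $\overline{\setBMO}_\omega$ (not merely $\setBMO_\omega$) is the delicate point: this is where the precise exponent $1/(p-1)$ enters, and where the dichotomy between $\Phi''$ almost decreasing and almost increasing has to be carefully handled, so that the shear thinning case covered here admits boundary regularity up to the borderline assumed.
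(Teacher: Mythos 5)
Your proposal reproduces the paper's strategy: flatten with the second-order corrector of Section~\ref{flat}, compare with a reflected homogeneous problem, run a scaling-invariant dyadic telescoping argument driven by the decay of that homogeneous solution, and then pass from the resulting local boundary estimate to the global statement by a covering argument plus the a priori energy bound; the exponent $1/(p-1)$ matching the boundary modulus to the target weight through $\Phi'$ is exactly the delicate point you identify. Two small corrections: the decay estimate you apply to $\es(Dv)$ is Theorem~\ref{odlarse} (not Theorem~\ref{thm:localdec}, which is the desired local conclusion), and the boundary oscillations must be taken against $\diag\mean{\cdot}$ rather than the full mean, a structural consequence of the reflection symmetry of the perfect slip condition.
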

Observe, that in the shear thinning case estimates for the full gradient $\nabla u$ are available in case $\omega$ satisfies the Dini condition:
\begin{align}
\label{eq:dini}
\psi(r):=\int^1_r\frac{\omega(\rho)\, d\rho}{\rho}<\infty.
\end{align}
The respective estimates follow from Proposition~\ref{pro:dini}.

Our main results in the shear thickening case are the following:
\begin{theorem}[Shear thickening $\setBMO$]
\label{thm:inc1}
Let Assumption~\ref{ass1} be satisfied and $\Phi''$ be almost decreasing.
Let $\Omega \subset \mathbb R^2$ be a bounded $\mathcal{C}^{1,1}$ domain satisfying Assumption~\ref{ass:bound} and $F\in\overline{\setBMO }(\Omega;\setR^{2\times 2})$. Let $(u,\pi)$ be a weak solution of \eqref{em}--\eqref{pss}, then 
\begin{equation*}
\|\nabla u \|_{\overline{\setBMO }(\Omega;\setR^{2\times 2})} \le C (\Phi')^{-1} (\|F\|_{\overline{\setBMO }(\Omega;\setR^{2\times 2})}).
\end{equation*}
The constant $C$ depends only on the properties mentioned in the assumptions.
\end{theorem}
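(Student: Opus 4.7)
The plan is to reduce the global BMO estimate to two families of local scaling-invariant decay estimates on cubes: an interior one provided by the local theory of~\cite{DieKapSch14}, and a boundary one which is precisely the content of Theorem~\ref{thm:localinc}. Given such cube-wise decay rates for the excess of $\nabla u$, the global BMO bound follows by a Campanato-type summation argument together with the definition of the $\overline{\setBMO}$ seminorm at boundary cubes (so that the $\overline{\setBMO}$ control of $F$ feeds into the estimate not only via the interior oscillation but also via the control of $[F\nu]\cdot\tau$ at the boundary).

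\textbf{Boundary flattening and comparison.} For an arbitrary $x_0 \in \partial\Omega$ we apply the flattening of the boundary constructed in Section~\ref{flat}. Crucially, this change of variables incorporates a second-order corrector so that the perfect slip structure $u\cdot\nu=0$ and $[\es(Du)\nu]\cdot\tau=0$ is preserved up to controllable curvature-type error terms; this is precisely why $\mathcal{C}^{1,1}$-regularity of $\partial\Omega$ is sufficient. On the flattened domain we compare the transformed solution $u$ with the solution $h$ of the homogeneous problem with right hand side equal to a constant $F_0$ on the cube chosen so that $[F_0\nu]\cdot\tau=0$, using the comparison estimates from Section~\ref{sec:comp}. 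The function $h$ enjoys the improved regularity of Theorem~\ref{full.slip.flat}, yielding decay of the $\Phi$-excess of $\nabla h$ with exponent $\gamma$, while the error in the comparison is governed by the oscillation of $F$ on the cube. Combined with a boundary reverse H\"older inequality from Section~\ref{sec:rh} to upgrade integrability, these ingredients produce the local scaling-invariant decay estimate Theorem~\ref{thm:localinc} near boundary points.

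\textbf{From the $\Phi$-excess to $\nabla u$ in BMO.} In the shear thickening regime the natural object furnished by the decay estimate is the $V$-function excess, i.e.\ a decay of $\dashint_Q \Phi(|\nabla u - \mean{\nabla u}_Q|)\dx$, rather than of $\dashint_Q |\nabla u - \mean{\nabla u}_Q|\dx$ directly. Using the equivalence $\Phi'(s)s\approx \Phi(s)$ guaranteed by Assumption~\ref{ass1} and Jensen's inequality applied through $\Phi$, one transfers the $\Phi$-excess into the oscillation of $\nabla u$. Quantitatively $|\es(A)|\approx \Phi'(|A|)$, so the proof first yields a bound on $\es(Du)$ in $\overline{\setBMO}$ in terms of $\|F\|_{\overline{\setBMO}}$ and then inverts $\Phi'$ to pass to $\nabla u$; this is exactly where the factor $(\Phi')^{-1}(\|F\|_{\overline{\setBMO}})$ appears. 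Summing the boundary decay estimate with the interior Campanato characterization delivers the claimed bound.

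\textbf{Main obstacle.} The principal difficulty is to establish the boundary decay estimate Theorem~\ref{thm:localinc} with a rate large enough to yield a BMO bound while only requiring $\mathcal{C}^{1,1}$ boundary. The curvature terms produced by the second order corrector of the flattening have to be absorbed into controlled error terms, and the non-linear Calder\'on--Zygmund machinery for the reverse H\"older inequality up to the boundary under perfect slip conditions has to be carried out carefully. In the shear thickening case an additional subtlety is that $(\Phi')^{-1}$ is only H\"older continuous rather than Lipschitz, so the comparison estimates must be performed within the shifted N-function framework of~\cite{DieKap12}, and the final inversion of $\Phi'$ must be robust with respect to that shift in order to yield the stated estimate without any loss in the exponent.
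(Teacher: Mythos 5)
Your overall route is the paper's: flatten with the second-order corrector from Section~\ref{flat}, compare on half-cubes with a solution of a homogeneous system enjoying the decay of Theorem~\ref{odlarse}, combine with the boundary reverse H\"older inequalities from Section~\ref{sec:rh} to obtain the local decay estimate of Theorem~\ref{thm:localinc}, and then globalize by covering. Two points are imprecise. First, the comparison function in Section~\ref{sec:comp} solves \eqref{homosys} with \emph{zero} right-hand side; the matrix $F_0=\diag\langle\overline F\rangle_{Q^+}$ enters only as the reference value inside the oscillation term $\Phi^*_{|\es(P)|}(|\overline F-F_0|)$, not as data for the comparison problem, and the improved interior regularity you want to cite is Theorem~\ref{odlarse}, not \eqref{full.slip.flat}, which merely records that the reflection preserves the perfect slip structure. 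Second, in the shear thickening case the local iteration (Lemma~\ref{lem:1}) estimates the oscillation of $\nabla\overline u$ directly, distinguishing degenerate and non-degenerate regimes through the size of $d_i=\diag\langle\nabla\overline u\rangle_{Q_i^+}$; it does not first produce $\es(Du)\in\overline{\setBMO}$ and then invert $\Phi'$ — that is the shear-\emph{thinning} route of Theorem~\ref{thm1}.

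The more substantive gap is that your sketch does not say how the lower-order terms in the local estimate are closed at the global level. Theorem~\ref{thm:localinc} leaves the term $\tfrac{C}{\omega(R)}\dashint_Q\chi_\Omega(|\nabla u|+|u/R|)\,dx$, and for large cubes one cannot apply the local theorem at all. The paper handles both by the a priori energy estimate, obtained by testing \eqref{rws} with $u$ itself together with Corollary~\ref{cor:korn} (Korn under perfect slip) and Remark~\ref{normvssemi2} (which lets one replace $F$ by $F-cI$ to bound its mean by the $\overline{\setBMO}$ seminorm), plus the crude bound $\dashint_{Q}|g-\langle g\rangle_Q|\leq c\dashint_\Omega|g|$ for cubes of side $\geq r_\Omega$ after applying the covering Lemma~\ref{lem:cover}. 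Without this step the Campanato summation you invoke would give a bound involving $\|u\|_{W^{1,\Phi}}$ rather than one purely in terms of $\|F\|_{\overline{\setBMO}}$, so this is a necessary ingredient and should be made explicit.
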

\begin{theorem}[Shear thickening $\setBMO_\omega$]
\label{thm:inc2}
Let Assumption \ref{ass1} be satisfied and $\Phi''$ be almost decreasing. Let $p,q$ be the lower and upper index of $\Phi$ introduced in \eqref{eq:typeT} and let the weight function $\omega$ satisfy Assumption~\ref{ass2}. Then the following holds:
If $\Omega \subset \mathbb R^2$ a $\mathcal \mathcal{C}^{2,\omega}$ domain satisfying Assumption~\ref{ass:bound}, $F\in \overline{\setBMO }_{\omega^{q-1}}(\Omega)$ and $u$ is a weak solution of \eqref{em}--\eqref{pss}, then
\begin{equation*}
\|\nabla u\|_{\overline{\setBMO }_\omega(\Omega;\setR^{2\times 2})}
 \le C (\Phi')^{-1}(\|F\|_{\overline{\setBMO }_{\omega^{q-1}}(\Omega;\setR^{2\times 2})}).
\end{equation*}
 In case $\omega$ satisfies \eqref{eq:dini}, we additionally find that
\[
\norm{\pi}_{{\setBMO }_\omega(\Omega)}\leq C\|F\|_{\overline{\setBMO }_{\omega^{q-1}}(\Omega;\setR^{2\times 2})}.
\]
The constant $C$ depends only on the regularity of the boundary and the properties mentioned in the assumptions.
\end{theorem}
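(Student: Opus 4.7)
The plan parallels the one I expect for Theorem~\ref{thm1}, but with the two essential differences inherent to the shear thickening regime: the controlled quantity on the left is $\nabla u$ rather than $\es(Du)$, and the right hand side is measured in $\overline{\setBMO}_{\omega^{q-1}}$ to match the $(q{-}1)$-homogeneity of $\Phi'$. I would reduce the global weighted bound to a local scaling invariant excess decay for $\nabla u$, namely Theorem~\ref{thm:localinc} at boundary points combined with the analogous interior decay from \cite{DieKapSch14}. Once an estimate of the form
\[
\dashint_{\Omega\cap Q_{x_0,r}}\abs{\nabla u - \mean{\nabla u}_{\Omega\cap Q_{x_0,r}}}\,dx \leq c\,\omega(r)\,(\Phi')^{-1}\!\bigl(\|F\|_{\overline{\setBMO}_{\omega^{q-1}}(\Omega)}\bigr)
\]
is proved uniformly for $x_0\in\overline\Omega$ and $r\leq r_\Omega$, the $\overline{\setBMO}_\omega$ claim follows by a standard Campanato-type patching argument, with the tangential boundary condition required by Definition~\ref{def:macha-space} transported along via the pointwise identity $[\es(Du)\nu]\cdot\tau=[F\nu]\cdot\tau$ read off the equation.

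\textbf{Flattening with a second order corrector.} The heart of the proof is thus the local boundary decay. For $x_0\in\partial\Omega$ the chart $h_{x_0}$ is $\mathcal{C}^{2,\omega}$, and I would flatten $\Omega\cap Q_{x_0,r}$ to a half-cube using the corrected diffeomorphism constructed in Section~\ref{flat}. A first order change of variables alone does not carry $u\cdot\nu|_{\partial\Omega}=0$ to the flat slip condition but leaves a curvature residue; the corrector is designed to absorb this residue up to order $\omega$ while preserving the divergence-free constraint in a perturbed sense. The resulting $\tilde u$ then solves a perturbed generalized Stokes system on the half-cube in which all coefficient and commutator errors are bounded by $c\,\omega(r)$. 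This second order correction is precisely why the assumption $\partial\Omega\in\mathcal{C}^{2,\omega}$, as opposed to $\mathcal{C}^{1,\omega}$, cannot be relaxed.

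\textbf{Comparison and iteration.} I would then compare $\tilde u$ on the half-cube $Q^+_r$ with the solution $v$ of the flat, homogeneous perfect slip problem whose stress operator agrees with $\es$ and whose right hand side is the mean of $F$ on $Q_{x_0,r}$. For $v$, Theorem~\ref{full.slip.flat} yields the $\beta_0$-Campanato decay
\[
\dashint_{Q^+_\rho}\abs{\nabla v - \mean{\nabla v}_{Q^+_\rho}}\,dx \leq c\,(\rho/r)^{\beta_0}\dashint_{Q^+_r}\abs{\nabla v}\,dx
\]
for all $\rho\leq r$, with the exponent $\beta_0$ from Assumption~\ref{ass2}. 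The comparison error between $\nabla\tilde u$ and $\nabla v$ is controlled by the boundary reverse H\"older inequality of Section~\ref{sec:rh} and the first order comparison estimates of Section~\ref{sec:comp}; the resulting contributions are of size $\omega(r)\,(\Phi')^{-1}(\|F\|_{\overline{\setBMO}_{\omega^{q-1}}})$, since the oscillation of $F$ on $Q_{x_0,r}$ is bounded by $\omega(r)^{q-1}\|F\|_{\overline{\setBMO}_{\omega^{q-1}}}$ and the flattening perturbation by $c\,\omega(r)$. Iterating the resulting excess inequality along a geometric sequence of radii and using \eqref{ass:om} with $\beta<\beta_0$ closes the decay. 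Pulling back to $\Omega$ via the inverse of the corrected flattening costs only a bounded factor, because its Jacobian is $\mathcal{C}^{1,\omega}$.

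\textbf{Pressure under the Dini condition.} For $\pi$ I would use $\nabla\pi=\diver(\es(Du)-F)$ together with a local Bogovski\u{\i}/negative norm representation of $\pi-\mean{\pi}_Q$ on cubes $Q$ centred at boundary and interior points. The oscillation of $\pi$ on a cube of radius $r$ is then controlled by a scale integral of the oscillation of $\es(Du)-F$, and under \eqref{eq:dini} this integral is finite and yields $\norm{\pi}_{\setBMO_\omega(\Omega)}\leq c\,\|F\|_{\overline{\setBMO}_{\omega^{q-1}}(\Omega)}$. The main obstacle I anticipate is the boundary excess decay for $\tilde u$: keeping the non-linear Orlicz dependence on $\Phi$ consistent with the second order corrector while verifying that the perturbed divergence constraint and stress operator indeed produce errors of order $\omega(r)$ and not merely bounded ones. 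Once this quantitative flattening is established, the iteration, the comparison step, and the pressure reconstruction are essentially routine adaptations of the arguments used for Theorem~\ref{thm1}.
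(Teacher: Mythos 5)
Your overall route matches the paper's: the theorem is proved by invoking the local decay Theorem~\ref{thm:localinc} (whose proof uses the flattening of Section~\ref{flat}, the comparison estimates of Section~\ref{sec:comp}, and the iteration built on the half-space decay of Theorem~\ref{odlarse}) together with a covering of $\overline\Omega$ (Lemma~\ref{lem:cover}), and the pressure is reconstructed from $\nabla\pi = \diver(\es(Du)-F)$. However, two steps that you treat as routine are in fact where the work lies and your sketch glosses over them.

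First, the pointwise excess-decay estimate you write down,
$\dashint_{\Omega\cap Q_{x_0,r}}\abs{\nabla u - \mean{\nabla u}}\,dx \leq c\,\omega(r)\,(\Phi')^{-1}(\|F\|_{\overline{\setBMO}_{\omega^{q-1}}})$,
is not what Theorem~\ref{thm:localinc} produces: the local theorem carries an extra term $\frac{C}{\omega(R)}\dashint_Q \chi_\Omega(\abs{\nabla u}+\abs{u}/R)\,dx$ on the starting cube, which a ``standard Campanato patching argument'' does not remove. In the paper this residual is absorbed at scale $R=r_\Omega$ by the a-priori energy estimate obtained from testing~\eqref{rws} with $u$ (which, via Jensen's inequality and the Korn-type inequality of Corollary~\ref{cor:korn}, turns $\dashint_\Omega\abs{\nabla u}\,dx$ into $(\Phi')^{-1}(\|F\|_{\overline{\setBMO}_{\omega^{q-1}}})$), together with the trivial bound for cubes of side $\geq r_\Omega$ and the normalization of $F$ provided by Lemma~\ref{normvssemi}/Remark~\ref{normvssemi2}. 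Without this step you cannot close the argument.

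Second, the pressure bound in the shear-thickening case is not a direct consequence of $\nabla u \in \setBMO_\omega$: you need $\es(Du) - F$ in a $\setBMO$-type class, while the local theorem controls $\nabla u$. The passage from oscillations of $\nabla u$ to oscillations of $\es(Du)$ requires an $L^\infty$ bound on $\nabla u$, since one then uses $\abs{\es(Du)-\es(P)}\lesssim \Phi''(\norm{\nabla u}_\infty)\abs{Du-P}$. This is exactly why the Dini condition~\eqref{eq:dini} is hypothesized for the pressure part: via Proposition~\ref{pro:dini} it yields $\nabla u\in\mathcal{C}^\psi$ hence boundedness, and only then does the representation of $\pi$ (the paper's Lemma~\ref{lem:pr}, second assertion, using the weak Poisson relation~\eqref{eq:pressure}) close. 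Your phrase ``under~\eqref{eq:dini} this integral is finite'' points at the scale integral $\psi(r)$ but not at the actual role of Dini, which is the $L^\infty$ control needed for the stress transfer; stating it this way leaves a gap an expert reader will notice.
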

The global theorems are a consequence of the local estimates given in Theorem~\ref{thm:localdec} and Theorem~\ref{thm:localinc}. In accordance with the global theory we define local solutions:
\begin{definition}[local solutions]
We say that the pair $(u,\pi)\in W^{1,\Phi}_{\sigma}(\Omega\cap Q;\setR^2)\times L^{\Phi^*}(\Omega\cap Q)$ is a local weak solution to \eqref{em} and \eqref{pss} on $Q$ if
\begin{equation}\label{rws-loc}
\io\es(Du)\!:\!D\varphi\, dx - \io\pi\diver\varphi\, dx= \io F\!:\!\nabla\varphi\, dx
\end{equation}
holds for all $\varphi \in W^{1,\Phi}(\Omega\cap Q;\setR^2)$ such that $\phi(x)\cdot \nu(x)=0$ for all $x\in \partial\Omega\cap Q$ and $\phi(x)=0$ for all $x\in \partial Q\cap \Omega$.
\end{definition}
In accordance with the above definition we introduce the following local spaces:
\begin{definition}[$\overline{\setBMO}^*$]
\label{def:local-macha}
Let $\Omega $ satisfy Assumption~\ref{ass:bound}, $x\in \partial\Omega$, $R\in (0,r_\Omega)$ and $\omega:(0,\infty)\mapsto(0,\infty)$ be any non-decreasing function. 
Then we define the space $\overline{\setBMO}^*_\omega(Q_{x,R}\cap\Omega;\setR^{2\times 2})$ via the seminorm:

$
\|F\|_{\overline{\setBMO }_\omega^*(Q_{x,R}\cap\Omega;\setR^{2\times 2})}:= \|F\|_{\setBMO_\omega(Q_{x,R}\cap\Omega;\setR^{2\times 2})}+\sup\limits_{z \in \partial \Omega \cap Q_{x,R}}\sup\limits_{Q_{z,r}\subset Q_{x,R}}\frac{1}{\omega(r)}\dashint_{Q_{z,r}}\chi_{\Omega}(y)|[F(y)\nu(z)]\cdot \tau(z)|\, dy.
$
\end{definition}

\begin{theorem}[local shear thinning]
\label{thm:localdec}
Let Assumption \ref{ass1} hold and $p,q$ be the lower and upper index of $\Phi$ introduced in \eqref{eq:typeT} such that $\Phi''$ is almost decreasing and let the weight function $\omega$ satisfy Assumption~\ref{ass2}. Then for every cube $Q\subset\setR^2$ with side length $R\leq r_\Omega$ the following holds:
If $\partial\Omega\cap Q \in \mathcal \mathcal{C}^{2,\omega^\frac{1}{p-1}}$, $F\in \overline{\setBMO }^*_\omega(\Omega\cap Q ;\setR_{\sym}^{2\times 2})$ and $(u,\pi)$ is a local weak solution of \eqref{em}--\eqref{pss} on $\Omega\cap Q$, then
\begin{equation*}
\|\es(Du)\|_{\overline{\setBMO }_\omega^*(\frac12Q\cap\Omega;\setR^{2\times 2})}+\norm{\pi}_{\setBMO_\omega(\Omega\cap \frac12Q)} \le C \|F\|_{\overline{\setBMO }^*_\omega(\Omega\cap Q;\setR^{2\times 2})}+\frac{C}{\omega(R)}\dashint_{Q}\chi_{\Omega}(\abs{\es(Du)}+\abs{\Phi'(u/R)})\, dx.
\end{equation*}
 In the $\setBMO$ case of $\omega= 1$, the assumption on the boundary can be weakened to $\partial\Omega\cap Q \in \mathcal \mathcal{C}^{1,1}$. The constant $C$ depends only on the regularity of the boundary and the properties mentioned in the assumptions.
\end{theorem}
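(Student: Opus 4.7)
The plan is to prove this by a Campanato-type excess decay iteration localized at boundary cubes, where the excess is measured in terms of the natural non-linear quantity $\es(Du)$. The first move is a standard reduction: since interior estimates of this type are already available (they essentially follow from the local results in \cite{DieKapSch14}), it suffices to prove the decay on cubes $Q_{z,r}$ centered at $z\in\partial\Omega\cap Q$ with $2r\leq \text{dist}(Q_{z,r},\partial Q)$; the star seminorm in $\overline{\setBMO}^*_\omega$ is built precisely for this situation. The reverse H\"older estimate from Section~\ref{sec:rh} will be invoked at the end to absorb the lower order term $\dashint_Q(\abs{\es(Du)}+\abs{\Phi'(u/R)})\, dx$ into an $L^1$-mean.

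On a single boundary cube $Q_{z,r}$, the plan is to flatten $\partial\Omega\cap Q_{z,r}$ using the corrector diffeomorphism introduced in Section~\ref{flat}. The assumption $\partial\Omega\cap Q\in \mathcal{C}^{2,\omega^{1/(p-1)}}$ is tuned exactly so that the error terms generated by this second-order change of variables fit into the weight $\omega$ after applying the non-linear duality $\Phi^*(\Phi'(t))\sim \Phi(t)$; specifically, the Jacobian is a second-order perturbation of the identity whose H\"older-$\omega^{1/(p-1)}$ modulus translates, through $\es$, into an $\omega$-modulus on the stress side. Thus the transformed system takes the form of the model problem on $B_r^+$ with perfect slip on the flat part, with a new right hand side $\tilde F = F + E$ where $E$ carries the curvature corrections and satisfies $\omega$-weighted averages bounded by the boundary regularity and the lower order term in the statement. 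The key structural point is that the flattening preserves the tangential projection $[F\nu]\cdot\tau$ up to errors of size $\omega(r)$, so that the $\overline{\setBMO}^*_\omega$-norm of $\tilde F$ is controlled.

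On $B_r^+$ I split $u=v+w$, where $v$ is the solution of the homogeneous flat perfect-slip problem with $\tilde F$ replaced by its tangential-projection-free mean on $B_r^+$ and boundary data inherited from $u$. Theorem~\ref{full.slip.flat} gives the decay
\[
\dashint_{B_{\theta r}^+}\abs{\es(Dv)-\mean{\es(Dv)}_{B_{\theta r}^+}}\, dx\ \leq\ c\,\theta^{2\gamma}\dashint_{B_r^+}\abs{\es(Dv)-\mean{\es(Dv)}_{B_r^+}}\, dx,
\]
for $\theta\in (0,\tfrac12)$, which translates via $\beta_0=\frac{2\gamma}{\max\{2,p'\}}$ (see Assumption~\ref{ass2}) and the shift-change property of the N-functions $\phi_a$ (almost decreasing $\Phi''$ is precisely what allows the exponent $\beta_0$ to be used in the natural way on the stress side) into an $\omega$-compatible decay. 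The comparison estimate for $u-v$ from Section~\ref{sec:comp} bounds the resulting error by the tangential part of $\tilde F - \mean{\tilde F}$, which is exactly what the $\overline{\setBMO}^*_\omega$-seminorm measures. Adding the two contributions and choosing $\theta$ small (using Assumption~\ref{ass2}$(c_0)$) yields an excess decay
\[
\dashint_{Q_{z,\theta r}\cap\Omega}\abs{\es(Du)-\mean{\es(Du)}}\, dx\ \leq\ \tfrac{1}{2}\omega(\theta r)\,M(r) + C\,\omega(r)\,\norm{F}_{\overline{\setBMO}^*_\omega},
\]
where $M(r)$ denotes the supremum over $\rho\leq r$ of the corresponding $\omega(\rho)^{-1}$-weighted averages. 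Iterating and taking suprema absorbs the first term and produces the stated bound on $\es(Du)$.

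The pressure estimate is obtained in a final step: once $\es(Du)$ is controlled in $\overline{\setBMO}^*_\omega$, the equation $\nabla\pi = \diver(\es(Du)-F)$ combined with the Bogovski\u\i{} operator on half-cubes (applied to test functions $\varphi$ with $\varphi\cdot\nu=0$ on the flat part) produces the $\setBMO_\omega$ estimate for $\pi$ in the standard way. I expect the main obstacle to be the flattening step: keeping track of how the $\mathcal{C}^{2,\omega^{1/(p-1)}}$-curvature corrections interact with the non-linear stress tensor, so that the perturbation error on the right hand side lives in $\overline{\setBMO}^*_\omega$ with the correct norm — this is what forces the exponent $\frac{1}{p-1}$ on the weight in the boundary regularity assumption, and it is also what explains why in the pure $\setBMO$ case ($\omega\equiv 1$) the assumption degenerates to the weaker $\mathcal{C}^{1,1}$ hypothesis.
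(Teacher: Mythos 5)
Your proposal follows the same overall architecture as the paper: reduce to boundary cubes (interior follows from~\cite{DieKapSch14}), flatten via the second-order corrector of Section~\ref{flat}, compare with the homogeneous reflected problem and use the decay of Theorem~\ref{odlarse}, iterate an oscillation decay, and finish the pressure via the equation and Bogovski\u\i. Your reading of the exponent $\omega^{1/(p-1)}$ as the scaling needed so that the curvature modulus passes through $\Phi'$ into an $\omega$-modulus on the stress side is also the correct one, and so is the role of the almost-decreasing $\Phi''$.

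The place where the proposal glosses over a genuine difficulty is the iteration. You treat the curvature corrections as a perturbation $E$ of $F$ bounded by "the lower order term in the statement" and then say one simply iterates and takes suprema. But the error produced by the corrector $g$ at dyadic scale $2^{-k}$ is of the form $\Phi'\bigl(\|h''-h''(0)\|_{L^\infty(Q_k^+)}\,\langle|\overline u|\rangle_{Q_k^+}\bigr)$, i.e.\ it depends on the average of $u$ \emph{at that scale}, not on the fixed-scale quantity $\dashint_Q\Phi'(|u|/R)$ that appears in the statement. Closing the loop therefore requires expressing $\langle|\overline u|\rangle_{Q_k^+}$ and $\langle|\nabla\overline u|\rangle_{Q_k^+}$ back in terms of the very oscillation supremum $\max_j M^\sharp_{\omega,Q_j^+}(\es(D\overline u))$ one is trying to bound, together with a small coefficient that permits absorption. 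This is precisely what the telescope/reverse-H\"older argument in Lemma~\ref{lot} does (using Corollary~\ref{odhadprum} at every scale, not just "at the end"), and it is the step that makes the shear-thinning case non-circular. Your plan as written is correct in structure but, without an explicit version of this self-referential estimate, the decay inequality you write down is not in a form that can actually be iterated to a supremum bound.
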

\begin{theorem}[local shear thickening]
\label{thm:localinc}
Let Assumption \ref{ass1} hold and $p,q$ be the lower and upper index of $\Phi$ introduced in \eqref{eq:typeT}, such that $\Phi''$ is  almost increasing and let the weight function $\omega$ satisfy Assumption~\ref{ass2}. Then for every cube $Q\subset\setR^2$ with side length $R$ the following holds:
 If $\partial\Omega\cap Q \in \mathcal \mathcal{C}^{2,\omega}$, $F\in \overline{\setBMO }^*_{\omega^{q-1}}(Q\cap \Omega;\setR_{\sym}^{2\times 2})$ and $(u,\pi)$ is a local weak solution of \eqref{em}--\eqref{pss} on $\Omega\cap Q$, then
\begin{align*}
\|\nabla u\|_{\overline{\setBMO }_\omega^*(\Omega\cap \frac12Q;\setR^{2\times 2})}
 \le C (\Phi')^{-1}(\|F\|_{\overline{\setBMO }_{\omega^{q-1}}^*(\Omega\cap Q;\setR^{2\times 2})})
 +\frac{C}{\omega(R)}\dashint_{Q}\chi_{\Omega}(\abs{\nabla u}+\abs{u/R})\, dx .
\end{align*}
 In the $\setBMO$ case of $\omega= 1$, the assumption on the boundary can be weakened to $\partial\Omega\cap Q \in \mathcal \mathcal{C}^{1,1}$. 

In case $\omega$ satisfies \eqref{eq:dini}, we find
\[
\norm{\pi}_{{\setBMO }^*_{\omega}(\Omega\cap \frac12Q)}
\leq \|F\|_{\overline{\setBMO }_{\omega^{q-1}}^*(\Omega\cap Q;\setR^{2\times 2})}
+\frac{C}{\omega(R)}\dashint_{ Q}\chi_{\Omega}(\abs{\es(Du)}+\abs{\Phi'(u/R)})\, dx.
\]
The constant $C$ depends only on the regularity of the boundary and the properties mentioned in the assumptions only.
\end{theorem}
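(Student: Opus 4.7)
\medskip
\noindent\textbf{Proof proposal for Theorem~\ref{thm:localinc}.}
My plan is to reduce the curved-boundary estimate to the flat case by a careful second-order flattening, then to run a Campanato-type excess decay for $\nabla u$, and finally to recover the pressure via the divergence equation together with the Dini condition. First, fix a boundary point $x_0\in\partial\Omega\cap Q$ and a small radius $r\le r_\Omega$. Using the $\mathcal{C}^{2,\omega}$-regularity of $\partial\Omega$, I would apply the flattening together with the corrector from Section~\ref{flat} so that in the new variables the domain becomes a half-cube $Q_{x_0,r}^+$, the condition $u\cdot\nu=0$ becomes $\tilde u_2=0$ on the flat part, and the tangential slip condition $[\es(Du)\nu]\cdot\tau=[F\nu]\cdot\tau=0$ is preserved up to lower-order terms whose size is controlled by $\omega(r)$ (this is where $\mathcal{C}^{2,\omega}$ is exactly what is needed, since $\mathcal{C}^{1,1}$ only suffices in the $\omega\equiv 1$ case). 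The transformed system then reads as a perturbation of the flat-boundary perfect-slip system with an additional right-hand side of curvature type.

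Second, on $Q_{x_0,r}^+$ I compare $(\tilde u,\tilde\pi)$ with the solution $(v,\pi_v)$ of the homogeneous perfect-slip problem on the flat domain with boundary datum $\tilde u\cdot\nu=0$ and zero right-hand side, via the comparison estimates of Section~\ref{sec:comp}. The flat-boundary regularity result Theorem~\ref{full.slip.flat} then yields the decay
\begin{equation*}
\dashint_{Q_{x_0,\rho}^+}|\nabla v-\mean{\nabla v}_{Q_{x_0,\rho}^+}|\,dx \le c\Bigl(\frac{\rho}{r}\Bigr)^{\gamma}\dashint_{Q_{x_0,r}^+}|\nabla v|\,dx
\end{equation*}
for $\rho\le r$, with $\gamma$ as in Assumption~\ref{ass2}. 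The comparison error is estimated in terms of $(\Phi')^{-1}$ of the $\overline{\setBMO}^*_{\omega^{q-1}}$-norm of $F$ plus a curvature correction controlled by $\omega(r)$ times $\dashint|\nabla u|$; here I use the $T(p,q,K)$-structure of $\Phi$ so that $\Phi$-averages of $F$ on the right-hand side are converted to $\nabla u$-averages on the left-hand side through $(\Phi')^{-1}$, exploiting that $\Phi''$ is almost increasing (which makes $\nabla u$, not $\es(Du)$, the correct quantity to iterate, in contrast to Theorem~\ref{thm:localdec}). The reverse H\"older inequalities of Section~\ref{sec:rh} are used at this stage to upgrade $L^1$-averages of $\nabla u$ to $L^\Phi$-averages so that the comparison argument closes.

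Third, combining the decay of $\nabla v$ with the comparison error, I obtain the iterative excess-decay
\begin{equation*}
\dashint_{Q_{x_0,\rho}^+}|\nabla u-\mean{\nabla u}|\,dx
\le c\Bigl(\frac{\rho}{r}\Bigr)^{\gamma}\dashint_{Q_{x_0,r}^+}|\nabla u|\,dx
+ c\,(\Phi')^{-1}\bigl(\omega(\rho)^{q-1}\|F\|_{\overline{\setBMO}^*_{\omega^{q-1}}}\bigr)+c\,\omega(\rho)\dashint_{Q_{x_0,r}^+}|\nabla u|\,dx,
\end{equation*}
and an analogous estimate for interior cubes (from \cite{DieKapSch14}) and for the mixed cubes centered on $\partial\Omega$ but touching $\partial Q$ (where the $|u/R|$-term appears from the localization). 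Summing these two geometric decays against the weight $\omega$ and using Assumption~\ref{ass2} on $\omega$ (which is exactly the condition that makes the geometric series with exponent $\gamma$ dominate the weight), I arrive at the claimed bound on $\|\nabla u\|_{\overline{\setBMO}^*_\omega}$. The boundary term in the $\overline{\setBMO}^*_\omega$ seminorm is extracted in the same step, since the construction of $v$ already satisfies $[Dv\,\nu]\cdot\tau=0$ on the flat part, so the tangential trace of $\es(Du)\nu$ (and, through the structural inequalities, of $\nabla u$) can be estimated by the comparison error alone.

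Finally, for the pressure I would invoke the divergence equation: testing \eqref{rws-loc} against a Bogovskii field $\varphi\in W^{1,\Phi}_{0}(\Omega\cap\tfrac12 Q)$ with $\diver\varphi=\pi-\mean{\pi}$, I express the $\setBMO_\omega$-seminorm of $\pi$ by duality in terms of $\es(Du)-\mean{\es(Du)}$ and $F-\mean{F}$. Under the Dini condition \eqref{eq:dini}, Proposition~\ref{pro:dini} (applied to the already-established $\overline{\setBMO}^*_\omega$-bound on $\nabla u$) gives continuity of $\nabla u$, hence $\es(Du)\in \setBMO^*_\omega$, and the pressure estimate follows. The main technical obstacle is the careful bookkeeping of the curvature error terms under flattening, because perfect slip couples tangential and normal components through $\nu(x)$ itself; the $\mathcal{C}^{2,\omega}$-assumption (versus $\mathcal{C}^{1,\omega}$) is essential here, as Theorem~\ref{thm:sharp} shows that a weaker boundary regularity already destroys the $\setBMO$ estimate.
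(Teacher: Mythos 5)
Your outline (flatten with the second–order corrector, compare with the homogeneous flat perfect-slip solution, iterate an excess decay, recover $\pi$ from the divergence relation combined with Dini$\Rightarrow$Lipschitz) matches the paper's strategy at the top level, and the pressure part is essentially Lemma~\ref{lem:pr} applied after Proposition~\ref{pro:dini}. However, the central iteration step as you write it has a genuine gap. The flat-boundary decay Theorem~\ref{odlarse} and the comparison Corollary~\ref{Dibene} are both formulated in terms of the nonlinear quantity $V(Dv)$, $V(D\overline u)$, i.e.\ in shifted-Orlicz excess $\Phi_{|P|}(|D\overline u-P|)$, not in terms of $\dashint|\nabla u-\langle\nabla u\rangle|$. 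You nevertheless write a linear-looking excess decay directly for $\dashint|\nabla u-\langle\nabla u\rangle|$. Converting the $V$-excess decay into a decay for the $L^1$-oscillation of $\nabla u$ is precisely the technical heart of the matter: the factor $\Phi''(|d_k|+\mathrm{osc})$ from the monotonicity inequality \eqref{eq:hammer} must be divided out, and this is only possible after separating the nondegenerate regime (where the diagonal averages $d_k=\mathrm{diag}\langle\nabla\overline u\rangle_{Q_k^+}$ dominate the oscillation, so one can divide by $\Phi''(|d_k|)$) from the degenerate regime (where Lemma~\ref{lem:deg} together with almost-monotonicity of $\Phi''$ lets one replace the shift by the oscillation itself). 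This dichotomy is the content of Lemma~\ref{lem:1} and is not present in your argument; without it the iteration does not close for genuinely nonlinear shear-thickening $\Phi$ and your displayed excess-decay cannot be deduced from the comparison estimate alone.

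A second, smaller but still substantive point: the paper iterates the \emph{diagonal} oscillation $\dashint|\nabla\overline u-\mathrm{diag}\langle\nabla\overline u\rangle_{Q_k^+}|$ rather than the full-mean oscillation. This is forced by perfect slip, which prescribes the off-diagonal entry at the boundary, and it is exactly what makes the half-cube reflection and Lemma~\ref{lem:machas} work; it is also why the lower-order error terms can be absorbed via the telescoping estimates \eqref{iter} and \eqref{eq:graditer}. Your excess decay uses the full mean and so does not match the $\overline{\setBMO}^*_\omega$ seminorm you are trying to bound. (Also, you cite \eqref{full.slip.flat} for the decay of $v$; that label is the reflection boundary identity, and the decay estimate you need is Theorem~\ref{odlarse}.) To repair the proposal, replace your Step~3 display by a $V$-decay for the diagonal oscillations, insert the degenerate/nondegenerate case split from Lemma~\ref{lem:1}, and only then take $\Phi^{-1}$, using $\Phi''$ almost increasing to pass from $\Phi$ of the averaged oscillation to the oscillation itself.
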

The proofs of the Theorem~\ref{thm:0} and Theorem~\ref{thm1}--Theorem~\ref{thm:inc2}, Theorem~\ref{thm:localdec} as well as Corollary~\ref{cor:0} and Corollary~\ref{cor:1} can be found in Subsection~\ref{ssc:main}. The proof of Theorem~\ref{thm:localinc} can be found at the end of Subsection~\ref{ssc:inc}.

\begin{remark}
The weighted $\setBMO$ estimates imply the related $VMO$ regularity results. The proof is exactly as the proof of Corollary 5.4 in \cite{DieKapSch12}. 
\end{remark}
\begin{remark}
The stability in the spaces above appears naturally with respect to the perfect slip boundary condition. Indeed, if $\omega$ satisfies the Dini condition \eqref{eq:dini}, we find that if $F\in \overline{\setBMO}_\omega(\Omega;\setR^{2\times 2})$, then $\nabla u$ is actually continuous (see Proposition~\ref{pro:dini}). Moreover, we find that $\seb{[Du\, \nu] \cdot \tau}(x)=0=u(x)\cdot \nu(x)$ for $x\in \partial \Omega$ by the properties of $\overline{\setBMO}_\omega(\Omega;\setR^{2\times 2})$. Remarkably, even in the weakest case, the case of bounded mean oscillations, we find that $[\es(Du)\nu]\cdot \tau$ is bounded at the boundary and by the trace theorem that $u\cdot \nu=0$ along the boundary.
\end{remark}
\begin{remark}\label{kornrem}
Existence of solutions to \eqref{em}--\eqref{pss} is quite standard. It can be shown either by using methods from the calculus of variations or by monotone operator theory, see~\cite{Rou13}.
\end{remark}

\subsection{Technical novelties}
The first $\setBMO$ result for degenerate elliptic equations was published in \cite{dm} by E. DiBenedetto and J. Manfredi for the p-Laplace system in the case $p>2$. The theory was extended in \cite{DieKapSch12} to all $p\in(1,\infty)$ and to more general Orlicz growth satisfying Assumption~\ref{ass1}. $\setBMO$ estimates up to the boundary for the homogeneous Dirichlet boundary conditions are in preparation \cite{BreCiaDieKuuSch17nonlin,BreCiaDieSch19}. Otherwise, \seb{to the best of our knowledge}, the contribution presented in this paper seems to be the first global $\setBMO$-stability result in the framework of degenerated PDE's. Even in the case $p=2$, global $\setBMO$ results for slip boundary conditions seem to be missing. We intend the techniques presented here to be the starting point for future research, in particular for related results with different boundary conditions.

%

The advantage of perfect slip boundary conditions is the possibility that solutions in a half space can be reflected to solutions in the whole space. This allows to use flattening techniques. Hence, estimates around boundary points are possible by applying the local estimates available due to~\cite{DieKapSch14}, see Corollary~\ref{cor:hs}. The main technical effort of the present paper is twofold:
\begin{enumerate} 
\item To transfer the system locally around a boundary point to a system on a half-cube, such that the boundary data equals the perfect slip boundary condition on one side of the cube. 
\item To estimate the error and perturbation terms coming from the bending of the boundary, the change of boundary data, the change of the divergence (pressure) and the deficit between the symmetric and the full gradient.
\end{enumerate} 
At this point, we would like to emphasize that the perfect slip boundary conditions for the Stokes system are very sensitive to perturbations. Indeed, there are many more sources of potential error terms as for instance in the case of elliptic systems, where the problem can be reduced to a  perturbation of coefficients. In particular, to ensure the impermeability constraint ($u\cdot \nu|_{\partial\Omega}=0$), one needs a second order correction. The flattening introduced in this paper is inspired by the flattening in \cite{mt}. However, we introduce some novel refinements in order to be able to have optimal control on the lower order error terms in oscillation form.
It is noteworthy that in~\cite{mt} the assumption on the boundary regularity is $\mathcal{C}^{2,1}$ which is significantly higher than what we assume, even so we show higher order regularity than in~\cite{mt}. Finally, we wish to mention that all estimates obtained here are derived for the given weak solution directly without making use of an approximation of the solutions by a smooth sequence. 

In a future work, we plan to use the flattening and the respective estimates of the error terms to show higher integrability results for weaker, possibly optimal assumptions on the boundary regularity. This is particularly interesting, since it seems that the optimal regularity assumptions for perfect slip or Navier slip boundary conditions are not known even for the classical Stokes system. See Example~\ref{rem:boundary} for the respective counterexample. 

The estimates of the error terms are developed via a careful combination of newly developed oscillation estimates in combination with Caccioppoli type inequalities. The estimates are partially in connection with observations that where developed in~\cite{CiaSch18}. As a side product we derive first and second order reverse H\"older inequalities up to the boundary with respective error terms in natural (scaling invariant) form.

Our local regularity estimates and the reverse H\"older inequalities have the potential to be useful in the future for the theory \seb{of} non-linear Stokes or Navier-Stokes equations.  

A precise notation, basic facts and auxiliary lemmas are provided in the next section. Section \ref{flat} is devoted to the reverse H\"older inequality. In Section~\ref{sec:comp}, we introduce a decay estimate for the system that is achieved by comparison methods. Finally, proofs of the main theorems can be found in Section \ref{potmt}, where we first show local estimates around boundary points for shear thickening and then shear thinning fluids. We treat the two cases separately in order to cover the assumptions on $\partial \Omega$ that seem to be sharp at least with respect to the methods of this paper. Indeed, the unified approach developed in~\cite{DieKapSch12} seems only applicable in case of stronger assumptions on the boundary regularity.
 The section is concluded by deriving the global estimates from the local estimates.

%

\section{Preliminaries}
Unless stated otherwise, $Q\subset \mathbb R^2$ is reserved for a cube with sides parallel to axis. 
For a real number $r>0$ we define a cube $rQ$ as $r-$multiple of $Q$, i.e.
$$
rQ = \left\{x\in \mathbb R^2,\ x_0 + r^{-1} (x-x_0) \in Q,\ \mbox{where }x_0\mbox{ is the center of }Q\right\}.
$$

For the average integral we use the notation 
$$
\langle f \rangle_Q = \dashint_Q f \, dx = \frac{1}{|Q|}\int_Q f \, dx.
$$

For a general $G:\Omega\mapsto  \mathbb R^{2\times 2}$ and for a function $\omega:(0,\infty)\mapsto (0,\infty)$ we define
$$M^\sharp_{\omega,Q} (G) = \frac 1{\omega(\diam Q)}\dashint_Q |G - \langle G\rangle_Q| \, dx,$$
whenever $Q\subset \Omega$ and 
$$M^\sharp_{\omega,Q\cap \Omega} (G) = \frac 1{\omega(\diam Q)}\dashint_{Q\cap\Omega} |G - \diag \langle G\rangle_{Q\cap\Omega}|\, dx,$$
whenever $Q$ is a cube centered at a point $x\in \partial \Omega$. \seb{The reason, why the oscillation at a boundary point are measured only w.r.t the diagonal is due to the fact that these are the natural oscillations to be considered under perfect slip boundary values. For more explanations see Subsection~\ref{ssc:jn}.}

\subsection{Orlicz functions}
 A real function $\Phi:\mathbb{R}^+\rightarrow \mathbb{R}^+$ is said to satisfy the $\Delta_2-$condition, denoted by $\Phi \in \Delta_2$, if there exists a positive constant $C$, such that $\Phi(2s)\le C\Phi(s)$ for $s>0$. Any $\Phi\in T(p,q,K)$ satisfies automatically the $\Delta_2$ condition. It follows from~\cite[Section~2]{DieE08} that if $\Phi$ is differentiable and satisfies the $\Delta_2-$ condition then\footnote{We use the notation $a\sim b$ if $a$ and $b$ are of similar size. Namely, that there are two constants $c_1, c_2$ such that $a\leq c_1 b\leq c_1 c_2 a$.}
$
\Phi(s) \sim s\Phi ' (s).
$
By $(\Phi ')^{-1}: \mathbb{R}^+\rightarrow \mathbb{R}^+$ we denote the function
$$
(\Phi ')^{-1}(s) :=\sup\{t\in\mathbb{R}^+: \Phi '(t)\le s\}.
$$
The complementary function of $\Phi$ is defined as
$$
\Phi^*(s):=\int_0^s (\Phi ')^{-1}(t) \dt.
$$
Observe, that in case $\Phi(t)=\frac{t^p}{p}$ its complementary function is $\Phi^*(t)=\frac{t^{p'}}{p'}$ where $p':=\frac{p}{p-1}$ is the H\"older conjugate exponent. We collect a few more technical properties that are shown in~\cite[p.~4-5]{DieKapSch12}:
\begin{lemma} \label{lem.young}
 If $\Phi$ is of type $T(p,q,K_1)$, then the function $\Phi^*$ is again an N-function and $\Phi^* \in T(q',p',K_2)$
  for some $K_2=K_2(p,q,K_1)$. 
Moreover, for all $\delta>0$ and all $s,t\ge 0$, there holds the so called Young's inequality
\begin{align}
  \label{eq:young}
  \begin{aligned}
    t s &\leq K_1\, K_2^{q-1}\, \delta^{1-q}\, \Phi(t) +
    \delta\, \Phi^\ast(s),
    \\
    t s &\leq \delta\, \Phi(t) + K_2 K_1^{p'-1}\,
    \delta^{1-p'}\, \Phi^\ast(s).
  \end{aligned}.
\end{align}
\end{lemma}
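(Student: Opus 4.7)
The proof has two parts: properties of $\Phi^*$, and the two parameterized Young inequalities, which are deduced by a rescaling argument. That $\Phi^*$ is again an N-function follows from its integral representation $\Phi^*(s)=\int_0^s (\Phi')^{-1}(t)\,dt$: the integrand inherits positivity on $(0,\infty)$, vanishing at $0$, and divergence at $\infty$ directly from the assumed properties of $\Phi'$. To obtain $\Phi^*\in T(q',p',K_2)$, I would first observe that $T(p,q,K_1)$ is equivalent, up to universal constants, to the almost-monotonicity statements that $s\mapsto \Phi(s)/s^p$ is almost increasing and $s\mapsto \Phi(s)/s^q$ is almost decreasing; indeed, taking $s\geq 1$ in \eqref{eq:typeT} yields these one-sided bounds, and the converse is equally immediate. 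Under the Legendre transform these two monotonicity properties swap and the exponents pass to their H\"older conjugates: $s\mapsto \Phi^*(s)/s^{q'}$ becomes almost increasing and $s\mapsto \Phi^*(s)/s^{p'}$ becomes almost decreasing, which is exactly the statement $\Phi^*\in T(q',p',K_2)$; the constant $K_2$ depends only on $p$, $q$, $K_1$, via the equivalence $\Phi(s)\sim s\Phi'(s)$ that is available under the $\Delta_2$-condition.

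\textbf{Part 2 (the two Young inequalities).} The starting point is the classical Young inequality $ts \leq \Phi(t)+\Phi^*(s)$, which is immediate from the definition $\Phi^*(s)=\sup_\tau(s\tau-\Phi(\tau))$. Substituting $t\mapsto t/\lambda$ and $s\mapsto \lambda s$ for any $\lambda>0$ preserves the product and yields
\[
ts \,\leq\, \Phi(t/\lambda)+\Phi^*(\lambda s),
\]
so it remains to optimize $\lambda$ in each case. For the first inequality I would take $\lambda\leq 1$ and choose $\lambda=(\delta/K_2)^{1/q'}$, so that the growth bound for $\Phi^*\in T(q',p',K_2)$ gives $\Phi^*(\lambda s)\leq K_2 \lambda^{q'}\Phi^*(s)=\delta\,\Phi^*(s)$; since $q/q'=q-1$, the companion estimate $\Phi(t/\lambda)\leq K_1\lambda^{-q}\Phi(t)$ becomes $K_1 K_2^{q-1}\delta^{1-q}\Phi(t)$, which is exactly the coefficient required. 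For the second inequality I would instead take $\lambda\geq 1$ and choose $\lambda=(K_1/\delta)^{1/p}$, so that $\Phi(t/\lambda)\leq K_1\lambda^{-p}\Phi(t)=\delta\,\Phi(t)$; using $p'/p=p'-1$, the remaining term simplifies to $\Phi^*(\lambda s)\leq K_2\lambda^{p'}\Phi^*(s)=K_2 K_1^{p'-1}\delta^{1-p'}\Phi^*(s)$.

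\textbf{Main obstacle.} The algebraic bookkeeping in Part 2 is entirely routine once the dual growth of $\Phi^*$ is in hand. The only place where real care is required is the duality argument in Part 1: one must track the dependence of $K_2$ on $(p,q,K_1)$ explicitly. This is most efficiently done by first translating $T(p,q,K_1)$ into the pair of almost-monotonicity conditions on $\Phi(s)/s^p$ and $\Phi(s)/s^q$ and then verifying that the Legendre transform inverts these monotonicity properties and swaps the exponents for their H\"older conjugates, yielding $\Phi^*\in T(q',p',K_2)$ with a constant controlled in terms of $p$, $q$, and $K_1$.
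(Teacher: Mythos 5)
The paper does not prove this lemma itself; it is quoted verbatim from~\cite{DieKapSch12} (``shown in~[DKS12, p.~4--5]''), so there is no internal proof to compare against. Judged on its own merits, your argument is the standard one and the algebra checks out: the duality of indices under the Legendre transform and the one-parameter rescaling $ts=(t/\lambda)(\lambda s)\le \Phi(t/\lambda)+\Phi^*(\lambda s)$ with optimization over $\lambda$.

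One caveat is worth making explicit: your choice $\lambda=(\delta/K_2)^{1/q'}$ needs $\lambda\le1$, i.e.\ $\delta\le K_2$, and your choice $\lambda=(K_1/\delta)^{1/p}$ needs $\lambda\ge1$, i.e.\ $\delta\le K_1$. For $\delta$ outside these ranges the type inequality switches to the other exponent (e.g.\ $\Phi^*(\lambda s)\le K_2\lambda^{p'}\Phi^*(s)$ for $\lambda\ge1$), which yields $\delta^{1-p}$ instead of $\delta^{1-q}$ on the $\Phi(t)$-term, and the stated coefficients are no longer recovered. In fact for $p<q$ the displayed inequality with those explicit constants cannot hold for all $\delta>0$: taking $\Phi(t)=t^p+t^q$, fixing $t$, sending $s\to0$ and choosing $\delta$ to minimize the right-hand side, one finds the right-hand side scaling like $s^{(q+1)/q}\ll s\sim ts$, a contradiction. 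So the quoted "for all $\delta>0$" should really be read as "for $\delta$ in a bounded range determined by $K_1,K_2$", which is also the only regime in which the inequality is ever invoked in the paper (small $\delta$ for absorption). You should state this restriction; with it your proof is complete. The remaining point, that $\Phi^*\in T(q',p',K_2)$ with $K_2=K_2(p,q,K_1)$, is correct but is asserted rather than proved; the substitution $t=\lambda^{q'-1}\tau$ in $\Phi^*(\lambda s)=\sup_t(\lambda st-\Phi(t))$ together with the lower type bound $\Phi(\lambda^{q'-1}\tau)\ge K_1^{-1}\lambda^{q'}\Phi(\tau)$ (for $\lambda\le1$) is the cleanest route, and it produces the constant after one application of $\Delta_2$ for $\Phi^*$, which must itself be established first.
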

Next we introduce for $\Phi$ its so called shifted function $\Phi_a$ which is for $a\ge 0$ defined by 
\begin{equation*}
\Phi '_a(s):=\Phi '(a+s)\frac{s}{a+s}. 
\end{equation*}
This basically states that $\Phi_a ''(s) \sim \Phi ''(a+s)$. We repeat some algebraic estimates for the shifted functions. 
The family of shifted functions has many uniform properties. We will need the following lemmata.
     \begin{lemma}[{\cite[Lemma 2.2 \& 2.3]{DieKapSch12}}]
     \label{lem:Tpq}
     Let $\Phi$ satisfy Assumption~\ref{ass1}. Then $\{\Phi_a,(\Phi_a)^*,(\Phi^*)_a\}$ are N-functions and satisfy the $\Delta_2$ condition uniformly in $a\in \setR$. Moreover,
  $\Phi_{\abs{P}}$ is of type $T(\pbar, \qbar, \overline{K}$) and
  $(\Phi_{\abs{P}})^*\sim (\Phi^*)_{\abs{\es(P)}}$ are of type
  $T(\qbar', \pbar', \overline{K}')$, for
$\pbar := \min \set{p,2}\text{ and }\qbar := \max \set{q,2}$.
\end{lemma}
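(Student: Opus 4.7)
The plan is to derive all claims from the pointwise relation $\Phi_a''(s) \sim \Phi''(a+s)$, which follows by direct differentiation of $\Phi_a'(s)=\Phi'(a+s)\tfrac{s}{a+s}$ together with the second bullet of Assumption~\ref{ass1} (so that $\Phi'(a+s)\sim (a+s)\Phi''(a+s)$). First I would verify that each of $\Phi_a$, $(\Phi_a)^*$, $(\Phi^*)_a$ is an N-function: positivity and monotonicity of $\Phi_a'$ for $s>0$ is immediate from monotonicity of $\Phi'$ and of $s\mapsto s/(a+s)$; $\Phi_a'(0)=0$ and $\lim_{s\to\infty}\Phi_a'(s)=\infty$ are clear since $s/(a+s)\to 1$ and $\Phi'\to\infty$; convexity follows from $\Phi_a''>0$. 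The $\Delta_2$ property uniform in $a$ is obtained by splitting the integral $\Phi_a(2s)=\int_0^{2s}\Phi_a'(t)\,dt$ at $t=a$ and bounding $\Phi'(a+t)$ from above and below by its values at $a+s$ using the type bounds \eqref{eq:typeT}; the $s/(a+s)$ factor contributes a harmless additional constant. The same strategy, applied to the inverse $(\Phi_a')^{-1}$, gives uniform $\Delta_2$ for the conjugates.

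Next, for the type assertion $\Phi_{\abs{P}}\in T(\pbar,\qbar,\overline K)$, I would separate two regimes. In the subcritical regime $s\leq \abs{P}$ one has $\Phi_{\abs{P}}''(s)\sim \Phi''(\abs{P})$, so $\Phi_{\abs{P}}$ is essentially quadratic there, yielding the exponent pair $(2,2)$. In the supercritical regime $s\geq \abs{P}$ one has $\Phi_{\abs{P}}''(s)\sim \Phi''(s)$, and integration combined with \eqref{eq:typeT} recovers the original exponents $(p,q)$. Matching these two regimes at $s=\abs{P}$ and applying \eqref{eq:typeT} with the new multiplicative constants produces precisely the lower exponent $\pbar=\min\{p,2\}$ and upper exponent $\qbar=\max\{q,2\}$, with a constant $\overline K$ depending only on $K$, $c_1$, $c_2$.

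For the conjugate statement, the key identity to establish is $(\Phi_{\abs{P}})^*(t)\sim (\Phi^*)_{\abs{\es(P)}}(t)$. Since $\abs{\es(P)}=\mu(\abs P)\abs P\sim \Phi'(\abs P)$, this is a statement of the form $(\Phi_a)^*\sim (\Phi^*)_{\Phi'(a)}$. I would prove it by checking its derivative, namely that $(\Phi_a)'^{-1}(t)\sim (\Phi^*)'(\Phi'(a)+t)\cdot \tfrac{t}{\Phi'(a)+t}$, which reduces to inverting the shift formula $\Phi_a'(s)=\Phi'(a+s)\tfrac{s}{a+s}$ and using $(\Phi^*)'=(\Phi')^{-1}$. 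Once this equivalence is established, the type statement $(\Phi_{\abs{P}})^*\in T(\qbar',\pbar',\overline K')$ follows from the general duality fact that $F\in T(p,q,K)$ implies $F^*\in T(q',p',K')$ applied to $\Phi_{\abs{P}}$.

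The main obstacle I expect is the conjugate identification in the last step: the equivalence $(\Phi_a)^*\sim (\Phi^*)_{\Phi'(a)}$ is the only non-structural statement and requires the careful inversion computation, since both the shift and the $\tfrac{s}{a+s}$ weight interact non-trivially with Legendre duality. Once this is in place, everything else is an exercise in the $\Delta_2$ calculus for N-functions of type $T(p,q,K)$.
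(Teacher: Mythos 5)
The paper does not prove this lemma itself; it cites it verbatim from Diening--Kaplick\'{y}--Schwarzacher (Lemmas 2.2 and 2.3 there). Your reconstruction follows the same route as that reference: everything is driven by the pointwise relation $\Phi_a''(s)\sim\Phi''(a+s)$ (which the paper explicitly records after defining the shift), the type $T(\pbar,\qbar,\overline K)$ is extracted by splitting into the quadratic regime $s\lesssim a$ and the regime $s\gtrsim a$ where $\Phi_a\sim\Phi$, and the conjugate identification $(\Phi_a)^*\sim(\Phi^*)_{\Phi'(a)}$ (together with $\abs{\es(P)}=\Phi'(\abs P)$) is checked at the level of first derivatives using $(\Phi^*)'=(\Phi')^{-1}$, after which the duality $T(p,q,K)\Rightarrow T(q',p',K')$ of Lemma~\ref{lem.young} closes the argument. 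Your sketch is correct and is essentially the argument of the cited source; the only place that deserves a more explicit line in a full write-up is the matching constant at $s\sim a$ when gluing the two exponent regimes, but the mechanism you describe is the right one.
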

To $\Phi$ and $\es$ we associate the following natural quantity $V:\setR^{2\times 2}\to \setR^{2\times 2}$ as 
	\begin{align}
	\label{eq:V}
	V(P) = \sqrt{\Phi'(|P|)|P|} \frac{P}{|P|}.
	\end{align}
	Firstly, this function can be used to quantify a so-called "shift-change":
\begin{lemma}[Lemma 2.5~\cite{DieKapSch12}]
  \label{lem:shift2}
  For every $\epsilon \in (0,1]$, it holds
  \begin{align*}
    \Phi_{\abs{P}}(t) &\leq c\, \epsilon^{1-\pbar'} \Phi_{\abs{Q}}(t)
    + \epsilon \abs{V(P) - V(Q)}^2,
    \\
    (\Phi_{\abs{P}})^*(t)& \leq c\, \epsilon^{1-\qbar}
    (\Phi_{\abs{Q}})^*(t) + \epsilon \abs{V(P) - V(Q)}^2,
    \\
    (\Phi^*)_{\abs{\es(P)}}(t)& \leq c\, \epsilon^{1-\qbar}
    (\Phi^*)_{\abs{\es(Q)}}(t) + \epsilon \abs{V(P) - V(Q)}^2,
  \end{align*}
  for all $P,Q\in\setR^{2\times2}$ and all $t \geq 0$.
\end{lemma}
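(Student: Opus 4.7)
The plan is to reduce all three inequalities to a single shift-change estimate for the family $\{\Phi_a\}_{a\geq 0}$, using the uniform $T(\pbar,\qbar,\overline K)$ structure from Lemma~\ref{lem:Tpq} together with the well-known pointwise equivalence
\[
|V(P)-V(Q)|^2 \sim \Phi_{|P|}(|P-Q|) \sim \Phi_{|Q|}(|P-Q|),
\]
which under Assumption~\ref{ass1} follows from $s\,\Phi''(s) \sim \Phi'(s)$ by a direct integral computation; see~\cite{DieE08}. This identity is the bridge between the geometric term on the right of the claim and the Orlicz quantities on the left.

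For the first inequality I would proceed in two stages. Stage one establishes the weaker baseline
\[
\Phi_{|P|}(t) \leq c\,\Phi_{|Q|}(t) + c\,|V(P)-V(Q)|^2
\]
by a case distinction on whether $t \geq |P-Q|$. In the regime $t \geq |P-Q|$, one has $|P|+t \sim |Q|+t$, so the almost monotonicity of $\Phi''$ in Assumption~\ref{ass1} yields $\Phi_{|P|}(t) \sim \Phi_{|Q|}(t)$. In the regime $t < |P-Q|$, monotonicity of $\Phi_{|P|}$ combined with the fundamental equivalence gives $\Phi_{|P|}(t) \leq \Phi_{|P|}(|P-Q|) \sim |V(P)-V(Q)|^2$. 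Stage two is the $\epsilon$-refinement: by the $T(\pbar,\qbar,\overline K)$ scaling of $\Phi_{|P|}$, whenever $t \leq \epsilon^{1/\pbar}|P-Q|$ the inequality $\Phi_{|P|}(t) \leq \overline K (t/|P-Q|)^{\pbar}\Phi_{|P|}(|P-Q|)$ gives directly $\Phi_{|P|}(t) \leq c\,\epsilon\,|V(P)-V(Q)|^2$. In the complementary range, the bound from stage one is combined with Young's inequality~\eqref{eq:young} applied to the pair $(\Phi_{|Q|},(\Phi_{|Q|})^*)$, whose sharp form contributes precisely the factor $\epsilon^{1-\pbar'}$ in front of $\Phi_{|Q|}(t)$.

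The remaining two inequalities follow by duality. The second is obtained by running the same two-stage scheme with $(\Phi_{|P|})^*$ in place of $\Phi_{|P|}$, which by Lemma~\ref{lem:Tpq} is of type $T(\qbar',\pbar',\overline K')$; the role of the lower index is now played by $\qbar'$, and the exponent of $\epsilon$ becomes $1-(\qbar')'=1-\qbar$. The third inequality then reduces to the second via the standard equivalence $(\Phi^*)_{|\es(P)|}(t) \sim (\Phi_{|P|})^*(t)$, already recorded in Lemma~\ref{lem:Tpq}.

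The hard part will be the sharp $\epsilon$-dependence in stage two. The baseline of stage one cannot by itself be rescaled to an arbitrarily small coefficient of $|V(P)-V(Q)|^2$, since any attempt to do so scales both sides of the inequality simultaneously. The sharp exponent $1-\pbar'$ emerges only when the $T(\pbar,\qbar,\overline K)$ growth of $\Phi_{|Q|}$ is fed into Young's inequality~\eqref{eq:young}, which is the level at which the Orlicz conjugate exponent $\pbar'$ enters; this is the conceptual reason why the lower index of $\Phi$, and not the power $p$ itself, governs the stability of the shift change.
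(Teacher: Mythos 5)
The paper does not actually prove this lemma: Lemma~\ref{lem:shift2} is taken verbatim from~\cite{DieKapSch12}, so there is no in-paper argument to compare against. That said, your stage two has a genuine gap. You claim that, in the range $t>\epsilon^{1/\pbar}|P-Q|$, "the bound from stage one is combined with Young's inequality", but stage one yields the \emph{sum} $\Phi_{|P|}(t)\leq c\,\Phi_{|Q|}(t)+c\,|V(P)-V(Q)|^2$, and Young's inequality acts on a \emph{product}; there is no product in sight, so the sentence does not describe an actual step. If one carries out the case distinction you do describe, one gets, in the complementary range,
\[
|V(P)-V(Q)|^2\;\sim\;\Phi_{|Q|}(|P-Q|)\;\leq\;\Phi_{|Q|}(\epsilon^{-1/\pbar}t)\;\leq\;\overline K\,\epsilon^{-\qbar/\pbar}\,\Phi_{|Q|}(t),
\]
so the coefficient in front of $\Phi_{|Q|}(t)$ is $\epsilon^{-\qbar/\pbar}$, which is not $\epsilon^{1-\pbar'}$ in general (they agree only when $\qbar=\pbar'$). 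Thus the argument as written does not produce the claimed exponent, and the reference to Young's inequality is doing no work.

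The place where Young's inequality genuinely enters the shift-change estimate is one level earlier: one must first establish the \emph{derivative-level} comparison
\[
\Phi_{|P|}(t)\;\leq\;c\,\Phi_{|Q|}(t)+c\,t\,\Phi'_{|Q|}\big(\,\big||P|-|Q|\big|\,\big),
\]
which is proved by the same case distinction on the sizes of $t$, $\big||P|-|Q|\big|$ and the shifts, using $\Phi_a(t)\sim t\Phi'_a(t)\sim t^2\Phi''(a+t)$. This inequality contains a bona fide product $t\cdot\Phi'_{|Q|}(\big||P|-|Q|\big|)$, and applying~\eqref{eq:young} to that product, together with $(\Phi_{|Q|})^*\big(\Phi'_{|Q|}(s)\big)\sim\Phi_{|Q|}(s)$ and $\Phi_{|Q|}\big(\big||P|-|Q|\big|\big)\lesssim|V(P)-V(Q)|^2$, yields the $\epsilon$-refined estimate with a power of $\epsilon$ dictated by the indices of $\Phi_{|Q|}$ fed into Young's inequality. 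Your stage-one baseline, having already absorbed the cross term into $|V(P)-V(Q)|^2$, has discarded exactly the structure that Young's inequality needs. (A separate issue, not attributable to you: if you trace the exponents through Young's inequality in this corrected scheme, the factor that emerges in front of $\Phi_{|Q|}(t)$ is $\epsilon^{1-\qbar}$ rather than $\epsilon^{1-\pbar'}$, and one can check on a pure power $\Phi(t)=t^p$ with $p>4$ that $\epsilon^{1-\pbar'}=\epsilon^{-1}$ is in fact too small to make the first inequality true; the exponents in the first two displayed estimates appear to have been transposed when the lemma was transcribed.)
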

Secondly, the function $V$ can be used to {\em quantify} the monotonicity of $\es$. This is the content of the following lemma:
\begin{lemma}[{\cite[Lemma 3 \& 21 \& 26]{DieE08}}]
\label{lem:hammer}
  Let $\phi$ satisfy Assumption~\ref{ass1},
  then
    \begin{align}
      \label{eq:hammer}
      \begin{aligned}
      \big({\es}(P) - {\es}(Q)\big) \cdot \big(P-Q
      \big) &\sim \bigabs{ V(P) - V(Q)}^2
	\\
	&\sim\Phi_{\abs{Q}}\big(\abs{P-Q}\big)
      \\
	&\sim\big(\Phi^*\big)_{\abs{\es(Q)}}\big(\abs{\es(P)-\es(Q)}\big)
	 \\
	&\sim\Phi''\big(\abs{P}+\abs{Q}\big)\abs{P-Q}^2
\end{aligned}
	\end{align}
uniformly in $P, Q \in \setR^{2 \times 2}$.
        Moreover,
\begin{align}
      \label{eq:hammerd}
      \begin{aligned}
      \es(Q) \cdot Q = \abs{V(Q)}^2 \sim \Phi(\abs{Q}) 
      \text{ and }
      \abs{\es(P)-\es{(Q)}}\sim\big(\Phi_\abs{Q}\big)'\big(\abs{P-Q}\big).
      \end{aligned}
     \end{align}
     \end{lemma}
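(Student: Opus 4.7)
The plan is to reduce all the listed comparisons to a single reference quantity, namely $\Phi''(\abs{P}+\abs{Q})\abs{P-Q}^2$, exploiting two structural facts delivered by Assumption~\ref{ass1}: the chain $\Phi(s)\sim s\Phi'(s)\sim s^2\Phi''(s)$ coming from the $\Delta_2$ condition, and the almost monotonicity of $\Phi''$, which allows to replace comparable arguments inside $\Phi''$ such as $\abs{Q}+\abs{P-Q}$ by $\abs{P}+\abs{Q}$.

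For the first four-fold chain, I would start by differentiating $\es(A)=\mu(\abs{A})A$ with $\mu=\Phi'/\abs{\cdot}$ to get $D\es(A)[B,B]=\mu(\abs{A})\abs{B}^2+\mu'(\abs{A})(A\cdot B)^2/\abs{A}$, a positive symmetric bilinear form whose eigenvalues lie between $\mu(\abs{A})=\Phi'(\abs{A})/\abs{A}$ and $\mu(\abs{A})+\abs{A}\mu'(\abs{A})=\Phi''(\abs{A})$; both are $\sim\Phi''(\abs{A})$ by Assumption~\ref{ass1}. Integrating along the segment $A_t=Q+t(P-Q)$ gives
\[
\bigl(\es(P)-\es(Q)\bigr)\cdot(P-Q)\;\sim\;\Bigl(\int_0^1\Phi''(\abs{A_t})\,dt\Bigr)\abs{P-Q}^2\;\sim\;\Phi''(\abs{P}+\abs{Q})\abs{P-Q}^2,
\]
where the last step uses almost monotonicity of $\Phi''$. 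The identical mean-value calculation for $V$, combined with $\abs{DV(A)}^2\sim\Phi'(\abs{A})/\abs{A}\sim\Phi''(\abs{A})$, gives $\abs{V(P)-V(Q)}^2\sim\Phi''(\abs{P}+\abs{Q})\abs{P-Q}^2$; applied to $\es$ it furnishes $\abs{\es(P)-\es(Q)}\sim\Phi''(\abs{P}+\abs{Q})\abs{P-Q}$. Since $(\Phi_{\abs{Q}})'(t)=\Phi'(\abs{Q}+t)t/(\abs{Q}+t)\sim t\Phi''(\abs{Q}+t)$ and the $\Delta_2$ property yields $\Phi_{\abs{Q}}(t)\sim t(\Phi_{\abs{Q}})'(t)\sim t^2\Phi''(\abs{Q}+t)$, specializing to $t=\abs{P-Q}$ and invoking almost monotonicity once more delivers both $\Phi_{\abs{Q}}(\abs{P-Q})\sim\Phi''(\abs{P}+\abs{Q})\abs{P-Q}^2$ and $\abs{\es(P)-\es(Q)}\sim(\Phi_{\abs{Q}})'(\abs{P-Q})$.

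For the conjugate term I would apply the same procedure to $\Phi^*$, which by Lemma~\ref{lem.young} is again of the type required by Assumption~\ref{ass1}, obtaining $(\Phi^*)_{\abs{\es(Q)}}(s)\sim s^2(\Phi^*)''(\abs{\es(Q)}+s)$. The duality identity $(\Phi^*)''(\Phi'(r))\,\Phi''(r)=1$ converts this into a $\Phi''$-expression, and inserting $s=\abs{\es(P)-\es(Q)}\sim\Phi''(\abs{P}+\abs{Q})\abs{P-Q}$ from the previous step reduces it to the reference quantity. The ``Moreover'' identities are then immediate: $\es(Q)\cdot Q=\mu(\abs{Q})\abs{Q}^2=\Phi'(\abs{Q})\abs{Q}$ equals $\abs{V(Q)}^2$ directly from the definition \eqref{eq:V}, and is $\sim\Phi(\abs{Q})$ by $\Delta_2$. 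The chief technical subtlety throughout is the replacement of $\Phi''(\abs{Q}+\abs{P-Q})$ by $\Phi''(\abs{P}+\abs{Q})$: as $\Phi''$ need not admit any Lipschitz-like estimate, the almost monotonicity is the only tool at hand and must be invoked symmetrically in both the almost-increasing and almost-decreasing branches of Assumption~\ref{ass1}. A full treatment is given in \cite[Lemmata 3, 21 \& 26]{DieE08}.
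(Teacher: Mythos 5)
The paper gives no proof of this lemma: it is stated verbatim with the citation \cite[Lemma 3 \& 21 \& 26]{DieE08} and used throughout as an imported tool, so there is nothing internal to compare your argument against. Your sketch is a correct reconstruction of the argument in that reference: the differentiation of $\es(A)=\mu(\abs{A})A$ gives the Hessian with eigenvalues $\Phi'(\abs{A})/\abs{A}$ and $\Phi''(\abs{A})$, both $\sim\Phi''(\abs{A})$ by Assumption~\ref{ass1}; integrating along the segment and applying the almost-monotonicity-based estimate $\int_0^1\Phi''(\abs{Q+t(P-Q)})\,dt\sim\Phi''(\abs{P}+\abs{Q})$ gives the first equivalence; the $V$-case is the analogous mean-value computation with $DV(A)\sim\sqrt{\Phi''(\abs{A})}\,\mathrm{Id}$; the shifted-function and conjugate equivalences then follow from $\Phi_{a}(t)\sim t^2\Phi''(a+t)$, the triangle inequality $\abs{Q}+\abs{P-Q}\sim\abs{P}+\abs{Q}$, and Fenchel duality $(\Phi^*)''(\Phi'(r))\Phi''(r)=1$, as you say. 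You correctly identify the single step that carries all the weight — the integral mean estimate $\int_0^1\Phi''(\abs{Q+t(P-Q)})\,dt\sim\Phi''(\abs{P}+\abs{Q})$, which requires a careful case analysis near the zero of the segment in the almost-decreasing branch — and defer it to \cite{DieE08}, which is the same stance the paper itself takes.
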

Very often we will use the following inequality, which we will refer to as {\em best-constant property}:
For any convex $\Phi:[0,\infty)\to [0,\infty)$ that possesses a left inverse, any$f\in L^\Phi(E)$ and any $c\in \setR$, using Jensen's inequality we find that
\begin{align}
\label{eq:bcp}
\begin{aligned}
\Phi^{-1}\bigg(\dashint_{E}\Phi(\abs{f-\mean{f}_E})\, dx\bigg) 
&\leq \Phi^{-1}\bigg(\dashint_{E}\Phi(\abs{f-c})+\abs{c-\mean{f}_E})\, dx\bigg) 
\\
&\leq \Phi^{-1}\bigg(\dashint_{E}\frac{1}{2}\Phi(2\abs{f-c})\, dx+\frac12\Phi\bigg(2\dashint_E\abs{f-c}\, dx\bigg)\bigg)
\\
&\leq \Phi^{-1}\bigg(\dashint_{E}\Phi(2\abs{f-c})\, dx\bigg).
\end{aligned}
\end{align}
In particular for all $c\in \setR$ and $q\in [1,\infty)$
\[
\bigg(\dashint_E\abs{f-\mean{f}_E}^q\bigg)^\frac1q\leq 2\bigg(\dashint_E\abs{f-c}^q\bigg)^\frac1q.
\]
Hereinafter, $\diag M$ is the projection of a matrix to its diagonal:
 \[
\diag \left(\begin{matrix} a&b\\c&d\end{matrix}\right)  = \left(\begin{matrix}a&0\\0&d\end{matrix}\right).
\] 
\subsection{Local solutions in the half space}
\label{ssc:jn}
To introduce the method, we analyze the particular case of a local solution of \eqref{em},\eqref{pss} in case of a flat boundary.
We define
$$
Q_R^+(y):=\left\{(x_1,x_2)\in \mathbb R^2, x_1\in\left(y_1-\frac R2, y_1+\frac R2\right), x_2\in \left(y_2,y_2+\frac R2\right)\right\}.
$$
Moreover, we define
\[
\underline \partial Q_R^+:= \partial Q_R^+\cap \{x_2 = y_2\}\text{ and } \overline \partial Q_R^+:= \partial Q_R^+\setminus \{x_2 = y_2\}.
\]
Since the estimates we will derive are invariant under translation, we will often omit the center of the half cube; indeed, within this paper, we will mainly assume without loss of generality that $y=0$. Additionally, we omit the side length if it is not of particular importance and just use the notation $Q^+$ for any half cube with sides parallel to the axes. Finally, by a slight misuse of notation, we define for $r>0$
\[
rQ_R^+=(rQ_R)^+=Q_{rR}^+\text{ and } rQ^+=(rQ)^+.
\]
Let $(u,\pi)\in W^{1,\Phi}_\sigma(Q^+)\times L^{\Phi^*}(Q^+)$ be a local weak solution to
\begin{align}
\label{HS}
\begin{aligned}
-\diver \es (Du) + \nabla \pi &= -\diver F\quad   \textrm{in}\,\,Q^+,\\
\diver u &= 0\quad \textrm{in}\,\,Q^+,\\
u_2 =0,\quad \partial_2 u^1&=0=F_{1,2} \quad \textrm{on }\underline \partial Q^+,
\end{aligned}
\end{align}
i.e. 
$$
\int_{Q^+} \es (Du) D\varphi \, dx = \int_{Q^+} F D\varphi \, dx
$$
for every $\varphi\in W^{1,\Phi}_\sigma(\Omega)$ with $\varphi|_{\underline \partial Q^+}\cdot (0,-1) = 0$ and $\varphi|_{\overline \partial Q^+} = 0$. 

Simple calculations imply that the reflected function $\tilde{u}$ defined as
\begin{align*}
\begin{pmatrix}
\tilde u_1(x_1,x_2)
\\
\tilde u_2(x_1,x_2)
\end{pmatrix} 
:= \begin{pmatrix}
u_1(x_1,-x_2)
\\
-u_2(x_1,-x_2)
\end{pmatrix} \text{ for }x_2<0,\quad
\tilde u(x_1,x_2):=u(x_1,x_2) \text{ for }x_2\geq 0
\end{align*}
is a local solution in $Q$ with right hand side
\[
\tilde{F}(x_1,x_2):=
\begin{pmatrix}
&F_{1,1}(x_1,-x_2)&-F_{1,2}(x_1,-x_2)
\\
&-F_{1,2}(x_1,-x_2) &F_{2,2}(x_1,-x_2)
\end{pmatrix} \text{ for }x_2<0,\quad \tilde{F}(x_1,x_2):= F(x_1,x_2)  \text{ for }x_2\geq 0,
\]
Namely,
\[
\int_{Q} \es (D\tilde u) D\varphi \, dx = \int_{Q} \tilde F D\varphi \, dx\text{ for all }\phi\in W^{1,\Phi}_{\sigma,0}(Q).
\]
 We wish to apply the local theory on $\tilde{u}$, namely~\cite[Theorem~1.1]{DieKapSch14}. In order to do so, we have to check the conditions on $\tilde F: Q\to \setR^{2\times 2}$. We find using the symmetries that for all $\lambda\in (0,1]$
 \begin{align}
\label{boudnaryBMO0}
\mean{\tilde F}_{\lambda Q}
=\mean{\diag (\tilde{F})}_{\lambda Q}=
\begin{pmatrix}
\mean{F_{1,1}}_{\lambda Q^+}& &0 
\\
0& &\mean{F_{2,2}}_{\lambda Q^+} 
\end{pmatrix}
.
\end{align}
This motivates the definition of $\overline{\setBMO }^*(Q^+;\setR^{2\times 2})$ (see Definition~\ref{def:local-macha})
$$
\overline{\setBMO }^*(Q^+;\setR^{2\times 2}) := \left\{G \in \setBMO (Q^+;\setR^{2\times 2})\,:\, \sup\limits_{[a,a+r]\times [0,r]\subset Q^+} \dashint_0^r\dashint_a^{a+r}\abs{G_{1,2}}\, dx\,   <\infty\right\}.
$$
The respective semi-norm is gives as
\[
\norm{G}_{\overline{\setBMO }^*(Q^+;\setR^{2\times 2})}=\max\biggset{\norm{G}_{\setBMO (Q^+;\setR^{2\times 2})},\sup\limits_{[a,a+r]\times [0,r]\subset Q^+} \dashint_0^r\dashint_a^{a+r}\abs{G_{1,2}}\, dx\,  }. 
\]
It follows from \eqref{boudnaryBMO0} above that if $\tilde{F}\in \setBMO (Q)$, then $F\in\overline{\setBMO }^*(Q^+)$. Actually, also the reverse implication is true. 
\begin{lemma}
\label{lem:machas}
Let $F\in L^1(Q^+,\setR^{2\times 2}_{\sym})$ and let $\omega$ satisfy Assumption~\ref{ass:om}.
The following are equivalent:
\begin{enumerate}
\item[a)] \label{one} $F\in \overline{\setBMO}_\omega^*(Q^+;\setR^{2\times 2})$,
\item[b)] \label{two} $F\in {\setBMO}_\omega(Q^+;\setR^{2\times 2})$ and $\sup\limits_{x\in \underline{\partial} Q^+}\sup\limits_{Q_{x,r}^+\subset Q^+}\frac{1}{\omega(r)}\dashint_{Q_{x,r}^+}\abs{F-\diag\mean{F}_{Q_{x,r}}}\, dy<\infty$,
\item[c)] \label{three} $\tilde{F}\in \setBMO_\omega(Q;\setR^{2\times 2})$.
\end{enumerate}
Moreover, the respective seminorms are equivalent.
\end{lemma}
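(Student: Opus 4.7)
The plan is to prove the cyclic chain of implications $(c) \Rightarrow (a) \Rightarrow (b) \Rightarrow (c)$, each with quantitative control on the seminorms, so that equivalence of the norms follows automatically. Throughout, the two key structural facts I will exploit are: (i) the reflection $\tilde F$ has \emph{even} symmetry for the diagonal entries $\tilde F_{i,i}$ and \emph{odd} symmetry for the off-diagonal entry $\tilde F_{1,2}=\tilde F_{2,1}$ across $\{x_2=0\}$; (ii) the weight $\omega$ satisfies a doubling estimate $\omega(2r)\leq c\,\omega(r)$ as an immediate consequence of Assumption~\ref{ass2}.

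For $(c)\Rightarrow(a)$: since any cube $\tilde Q\subset Q^+$ is also a cube in $Q$, the $\setBMO_\omega(Q^+)$-part is trivial with the same constant. For the extra boundary term: given $x_0\in\underline\partial Q^+$ and $r>0$ with $Q^+_{x_0,r}\subset Q^+$, the full cube $Q_{x_0,r}$ is symmetric across $\{x_2=0\}$, hence by oddness $\mean{\tilde F_{1,2}}_{Q_{x_0,r}}=0$, and
\[
\dashint_{Q^+_{x_0,r}}|F_{1,2}|\,dy=\dashint_{Q_{x_0,r}}|\tilde F_{1,2}|\,dy=\dashint_{Q_{x_0,r}}|\tilde F_{1,2}-\mean{\tilde F_{1,2}}_{Q_{x_0,r}}|\,dy\leq \|\tilde F\|_{\setBMO_\omega(Q)}\omega(r).
\]

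For $(a)\Rightarrow(b)$: for $x_0\in\underline\partial Q^+$, I decompose componentwise
\[
|F-\diag\mean{F}_{Q^+_{x_0,r}}|\leq |F_{1,1}-\mean{F_{1,1}}_{Q^+_{x_0,r}}|+|F_{2,2}-\mean{F_{2,2}}_{Q^+_{x_0,r}}|+2|F_{1,2}|.
\]
The last summand is directly controlled by the boundary seminorm from~(a). The diagonal summands require estimating oscillations on the rectangle $Q^+_{x_0,r}$ (aspect ratio $2{:}1$) using the $\setBMO_\omega(Q^+)$-norm, which is phrased in terms of cubes contained in $Q^+$. I will cover $Q^+_{x_0,r}$ by two overlapping cubes $\tilde Q_1,\tilde Q_2\subset Q^+$ of side length $r/2$ sharing a third auxiliary sub-cube $\tilde Q_3$, apply the best-constant property~\eqref{eq:bcp} with $c=\mean{F_{i,i}}_{\tilde Q_1}$, and use the standard two-cube chaining $|\mean{F_{i,i}}_{\tilde Q_1}-\mean{F_{i,i}}_{\tilde Q_2}|\lesssim \|F\|_{\setBMO_\omega(Q^+)}\omega(r)$ via $\tilde Q_3$, combined with doubling of $\omega$.

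For $(b)\Rightarrow(c)$: I bound oscillations of $\tilde F$ on arbitrary cubes $Q_{x,r}\subset Q$ by splitting into cases. If $Q_{x,r}\subset Q^+$ the bound is immediate; if $Q_{x,r}\subset Q^-$ the same follows by the symmetry defining $\tilde F$, with the $\setBMO_\omega$-constant preserved. For a cube that crosses $\{x_2=0\}$, necessarily $|x_2|<r/2$, so $Q_{x,r}\subset Q_{x_0,2r}$ with $x_0:=(x_1,0)\in\underline\partial Q^+$ and a volume ratio at most $4$. Symmetry gives $\mean{\tilde F_{1,2}}_{Q_{x_0,2r}}=0$ and $\mean{\tilde F_{i,i}}_{Q_{x_0,2r}}=\mean{F_{i,i}}_{Q^+_{x_0,2r}}$, hence $\mean{\tilde F}_{Q_{x_0,2r}}=\diag\mean{F}_{Q^+_{x_0,2r}}$. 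Then, using~\eqref{eq:bcp} and symmetry,
\[
\dashint_{Q_{x,r}}|\tilde F-\mean{\tilde F}_{Q_{x,r}}|\,dy\leq 8\dashint_{Q^+_{x_0,2r}}|F-\diag\mean{F}_{Q^+_{x_0,2r}}|\,dy\leq C\,\omega(2r)\leq C'\omega(r)
\]
by (b) and the doubling of $\omega$.

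The main obstacle I anticipate is the implication $(a)\Rightarrow(b)$: the $\setBMO_\omega(Q^+)$-norm in~(a) only directly controls oscillations on \emph{full} cubes inside $Q^+$, while~(b) asks for an oscillation estimate on the boundary rectangle $Q^+_{x_0,r}$, together with a switch from the rectangle average to the cube/diagonal average. This must be done by a careful covering plus chaining argument, bearing in mind that the weight $\omega$ needs to be treated through its (quasi-)doubling property from Assumption~\ref{ass2}. Once this technical estimate is in place, the other two implications are more direct consequences of the reflection symmetries.
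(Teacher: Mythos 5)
Your proof proposal is essentially correct and takes the same overall route as the paper, but you supply more detail at exactly the right places: the paper simply asserts that a) and b) are ``exactly the same definition'' and proves only c)$\Rightarrow$a) and b)$\Rightarrow$c), whereas your cyclic chain (c)$\Rightarrow$(a)$\Rightarrow$(b)$\Rightarrow$(c) explicitly handles the passage from the pure off-diagonal boundary condition in a) to the full expression $|F-\diag\mean{F}_{Q^+_{x,r}}|$ in b). That step is genuinely non-trivial (a) only controls $\dashint|F_{12}|$; the diagonal oscillations on the boundary rectangle $Q^+_{x_0,r}$ must be recovered from $\setBMO_\omega(Q^+)$ for cubes by a covering/chaining argument, which you correctly identify as the heart of the matter.

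One small technical gap in your (b)$\Rightarrow$(c) step: you enlarge a crossing cube $Q_{x,r}$ to the symmetric cube $Q_{x_0,2r}$ centred at $x_0=(x_1,0)$, but this larger cube need not lie inside $Q$ when $Q_{x,r}$ is close to the lateral boundary $\overline\partial Q$. The paper's proof of b)$\Rightarrow$c) anticipates this by allowing the enlarged symmetric rectangle $B'=[a',a'+s]\times[-s,s]$ to be \emph{shifted} in the $x_1$-direction (with a free left endpoint $a'$ and a free side length $s\in[r,2r]$), rather than forcing it to be centred at $(x_1,0)$. You should do the same: choose $z\in\underline\partial Q^+$ and $s\in[r,2r]$ (not necessarily $z_1=x_1$) so that $Q_{x,r}\subset Q_{z,s}\subset Q$, and then run your argument with $Q_{z,s}$ in place of $Q_{x_0,2r}$. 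The volume ratio stays bounded by $4$, the symmetry identities $\mean{\tilde F_{1,2}}_{Q_{z,s}}=0$ and $\mean{\tilde F}_{Q_{z,s}}=\diag\mean{F}_{Q^+_{z,s}}$ hold unchanged, and the doubling estimate $\omega(s)\le\omega(2r)\le c\,\omega(r)$ from Assumption~\ref{ass2} closes the argument exactly as you wrote. With that correction, your proof is complete and slightly more careful than the paper's own.
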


\begin{proof}
Since b) is exactly the definition of a) and c) implies a) by the symmetry~\ref{boudnaryBMO0} we are left to show, that~b) implies c). We assume that $Q^+=Q^+(0)$. Now let $B=[a,a+r]\times [b,b+r]\subset Q$. We may assume that $0\in [b,b+r]$, since otherwise there is nothing to show. Hence, there is $a'\in \setR$ and $s\in [r, 2r]$, such that $B\subset [a', a'+s]\times [-s,s]=:B'\subset Q$. Moreover, by the best-constant property \eqref{eq:bcp} and by \eqref{boudnaryBMO0} we find
\[
\dashint_B\abs{\tilde{F}-\mean{\tilde{F}}_B}\, dx\,   
\leq 2 \dashint_B\abs{\tilde{F}-\mean{\diag\tilde{F}}_{B'}}\, dx\,   \leq 8 \dashint_{B'}\abs{\tilde{F}-\mean{\diag\tilde{F}}_{B'}}\, dx\, \leq 8\norm{F}_\set{\overline{\setBMO }^*(Q^+)}.
\]
Since Assumption~\ref{ass2} implies that $\frac{1}{\omega(r)}\leq \frac{c}{\omega{(s)}}$ for $s<r$ the result follows.
\end{proof}
By the above and~\cite[Theorem~1.1]{DieKapSch14} we find estimates for solutions to~\eqref{HS} in oscillatory spaces. 
\begin{corollary}
\label{cor:hs}
Let $u$ be a weak solution to \eqref{HS}
There is a constant just depending on the constants in Assumption~\ref{ass1} and Assumption~\ref{ass:om}, such that if $F\in\overline{\setBMO}^*_\omega(Q^+;\setR^{2\times 2})$, then $\es(Du)\in \overline{\setBMO}^*_\omega(\frac12 Q^+;\setR^{2\times 2})$. Moreover,
\begin{align*}
\sup\limits_{x\in \underline{\partial} Q^+}\sup\limits_{Q_{x,r}^+\subset \frac12Q^+}\frac{1}{\omega(r)}\dashint_{Q_{x,r}^+}\abs{\es(Du)-\diag\mean{\es(Du)}_{Q_{x,r}}}\, dy\leq c\norm{F}_{\overline{\setBMO}(Q^+)}+\frac{c}{\omega(r)}\dashint_{Q^+}\abs{\es(Du)-\diag\mean{\es(Du)}_{Q}}\, dy.
\end{align*}
\end{corollary}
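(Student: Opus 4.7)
The plan is to reduce the half-cube estimate to the already-known interior estimate of \cite[Theorem~1.1]{DieKapSch14} via the reflection introduced just before the statement, and then to translate the resulting $\setBMO_\omega(Q)$ bound back to an $\overline{\setBMO}^*_\omega(Q^+)$ bound using Lemma~\ref{lem:machas}.

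First, recall from the discussion preceding the corollary that the reflected pair $(\tilde u,\tilde\pi)$, with $\tilde\pi(x_1,x_2):=\pi(x_1,-x_2)$ for $x_2<0$ (extended evenly), is a local weak solution of the system
\[
-\diver \es(D\tilde u)+\nabla\tilde\pi=-\diver\tilde F,\qquad \diver\tilde u=0
\]
on the full cube $Q$. The verification uses the perfect slip boundary data on $\underline\partial Q^+$: any test function $\varphi\in W^{1,\Phi}_{\sigma,0}(Q)$ can be decomposed into its even/odd parts in the $x_2$-variable, and each part pairs with $\tilde u$ and $\tilde F$ exactly as an admissible test function on $Q^+$. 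Moreover, by Lemma~\ref{lem:machas}, the hypothesis $F\in\overline{\setBMO}^*_\omega(Q^+;\setR^{2\times 2})$ is equivalent to $\tilde F\in\setBMO_\omega(Q;\setR^{2\times 2})$, with comparable seminorms.

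Now apply the interior estimate \cite[Theorem~1.1]{DieKapSch14} to $\tilde u$ on $Q$. This yields
\[
\norm{\es(D\tilde u)}_{\setBMO_\omega(\frac12 Q)}\leq c\norm{\tilde F}_{\setBMO_\omega(Q)}+\frac{c}{\omega(R)}\dashint_Q\abs{\es(D\tilde u)-\mean{\es(D\tilde u)}_Q}\, dx,
\]
where $R$ is the side length of $Q$. The key symmetry observation is that because $\tilde u$ is obtained by even extension of $u_1$ and odd extension of $u_2$, the gradient $D\tilde u$ has off-diagonal entries that are odd in $x_2$ and diagonal entries that are even in $x_2$; since $\es(A)=\mu(|A|)A$ has the same block-symmetry structure, the same reflection rule applies to $\es(D\tilde u)$. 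Consequently, for every cube $B\subset Q$ that is symmetric with respect to $\{x_2=0\}$ (in particular for $Q$ itself and for every $Q_{x,r}$ with $x\in\underline\partial Q^+$), the average $\mean{\es(D\tilde u)}_B$ is diagonal and equals $\diag\mean{\es(Du)}_{B^+}$. In particular
\[
\dashint_Q\abs{\es(D\tilde u)-\mean{\es(D\tilde u)}_Q}\, dx=\dashint_{Q^+}\abs{\es(Du)-\diag\mean{\es(Du)}_{Q}}\, dx.
\]

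Finally, to obtain the claimed bound at boundary-centered subcubes, fix $x\in\underline\partial Q^+$ and $Q_{x,r}^+\subset\frac12Q^+$. The full cube $Q_{x,r}$ is symmetric about $\{x_2=0\}$, so by the previous symmetry $\mean{\es(D\tilde u)}_{Q_{x,r}}=\diag\mean{\es(Du)}_{Q_{x,r}^+}$ and
\[
\dashint_{Q_{x,r}^+}\abs{\es(Du)-\diag\mean{\es(Du)}_{Q_{x,r}}}\, dy=\dashint_{Q_{x,r}}\abs{\es(D\tilde u)-\mean{\es(D\tilde u)}_{Q_{x,r}}}\, dy\leq\omega(r)\norm{\es(D\tilde u)}_{\setBMO_\omega(\frac12Q)}.
\]
Combining with the two displayed bounds and the Lemma~\ref{lem:machas} equivalence $\norm{\tilde F}_{\setBMO_\omega(Q)}\sim\norm{F}_{\overline{\setBMO}^*_\omega(Q^+)}$ yields the estimate claimed in the corollary.

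The only non-routine point is the careful bookkeeping that the reflection rule for $\tilde u$ turns off-diagonal into odd and diagonal into even parts in $x_2$, so that $\mean{\es(D\tilde u)}_{B}$ is diagonal for symmetric $B$; everything else is an immediate application of the interior theorem and Lemma~\ref{lem:machas}.
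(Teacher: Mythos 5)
Your proof is correct and follows exactly the route taken in the paper: reflect $u$ to $\tilde u$ on the full cube $Q$, identify the data via Lemma~\ref{lem:machas}, apply the interior estimate of \cite[Theorem~1.1]{DieKapSch14} to $\tilde u$, and translate back to the half cube using the even/odd symmetry of the entries of $\es(D\tilde u)$. You spell out the symmetry bookkeeping (diagonal entries even, off-diagonal odd in $x_2$, so averages over symmetric cubes are diagonal and match $\diag\mean{\es(Du)}_{Q_{x,r}^+}$) which the paper leaves implicit; otherwise the argument is identical.
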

\begin{proof}
 Due to Lemma~\ref{lem:machas} we can estimate the mean oscillation of $\es(D\tilde{u})$ by the mean oscillations of $\tilde{F}$ due to~\cite[Theorem~1.1]{DieKapSch14}. The estimate on the half cubes then follows by the symmetries of $\es(D(\tilde u))$. Finally Lemma~\ref{lem:machas} implies that $\es(Du)\in \overline{\setBMO}^*_\omega(\frac12 Q^+;\setR^{2\times 2})$.
\end{proof}
The remainder of the paper is dedicated to prove the respective estimate for local solution with a bended boundary.

Using~\cite[Lemma 6.1]{DieKapSch12} and the Lemma~\ref{lem:machas}
we find the following estimate of John-Nirenberg type
\begin{align}
\label{jnhs}
\begin{aligned}
\dashint_{Q^+}\Phi(\abs{F-\mean{\diag F}_{Q^+}})\, dx\,  &= \dashint_{Q}\Phi(\abs{\tilde{F}-\mean{\tilde{F}}_{Q}})\, dx\,  
\\
&\leq c \Phi(\norm{\tilde{F}}_{\setBMO (Q;\setR^{2\times 2})})= c\Phi(\norm{F}_{\overline{\setBMO }^*(Q^+;\setR^{2\times 2})}),
\end{aligned}
\end{align} 
here the constant only depends on the $\Delta_2$-condition of $\Phi$.



\subsection{Korn and \Poincare type inequalities}
In this section we present a few analytic inequalities of Korn and \Poincare type inequalities in Orlicz spaces with perfect slip boundary conditions. We relay on~\cite{DiRS10} and~\cite{DieE08}. \seb{See also~\cite[Chapter 2]{breit1} and the references therein for more (optimal) Korn inequalities in the Orlicz regime.} The obstacle here is that the spaces are non-homogeneous. 

\begin{lemma}\label{korn}
Let $\Phi\in T(p,q,K)$. For every $s\in [1,2)$ there exists a $c>0$ independent of $R$, such that
\begin{equation}\label{kornlike}
\bigg(\dashint_{Q^+_R} \Phi^s\Big(\frac{\abs{f}}{R}\Big)\, dx\bigg)^\frac{1}{s}+\dashint_{Q^+_R}\Phi(|\nabla f|)\, dx\leq c\dashint_{Q^+_R} \Phi(|Df|)\, dx
\end{equation}
holds provided $f\in W^{1,\Phi}(Q^+_R;\mathbb R^2)$, $f_2|_{\underline \partial Q^+_R} = 0$ and $\dashint_{Q^+_R} f_1 = 0$ or $f_1|_{\overline \partial Q^+_R} = 0$.
\end{lemma}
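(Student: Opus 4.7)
The proof proceeds by reflecting $f$ across $\underline{\partial} Q^+_R$ and invoking the Korn and Sobolev--Poincar\'e inequalities in Orlicz spaces on the full cube. The substitution $\tilde f(y):= f(Ry)/R$ transforms the claimed inequality on $Q^+_R$ into the same inequality on $Q^+_1$, so it suffices to treat the case $R=1$. Following the reflection used in Subsection~\ref{ssc:jn}, I extend $f$ to $\tilde f:Q_1\to\setR^2$ by
\[
\tilde f_1(x_1,x_2):= f_1(x_1,|x_2|), \qquad \tilde f_2(x_1,x_2):= \operatorname{sgn}(x_2)\,f_2(x_1,|x_2|),
\]
so that $\tilde f_1$ is even and $\tilde f_2$ is odd in $x_2$. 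The hypothesis $f_2|_{\underline{\partial} Q_1^+}=0$ guarantees that the traces from the two half-cubes agree, hence $\tilde f \in W^{1,\Phi}(Q_1;\setR^2)$ with
\[
\int_{Q_1}\Phi(|D\tilde f|)\,dx = 2\int_{Q_1^+}\Phi(|Df|)\,dx
\]
and the analogous identities for $\nabla\tilde f$ and for $\Phi^s(|\tilde f|)$.

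Next I apply a Korn-type inequality in Orlicz spaces on the full cube $Q_1$ (see~\cite{DiRS10} or~\cite[Chapter~2]{breit1}) in the form that, modulo an antisymmetric matrix $A_0$,
\[
\int_{Q_1}\Phi(|\nabla\tilde f - A_0|)\,dx \leq c\int_{Q_1}\Phi(|D\tilde f|)\,dx,
\]
where in two dimensions $A_0 = m\,J$ with $J=\begin{pmatrix}0&-1\\1&0\end{pmatrix}$ and $m=\tfrac12\dashint_{Q_1}(\partial_1\tilde f_2-\partial_2\tilde f_1)\,dx$. Integration by parts reduces $m$ to boundary integrals: $\int_{\partial Q_1}\tilde f_2\,\nu_1\,d\sigma$ vanishes because $\tilde f_2$ is odd in $x_2$ (so its $x_2$-integrals over the left and right edges are zero), and $\int_{\partial Q_1}\tilde f_1\,\nu_2\,d\sigma$ vanishes because the values of $\tilde f_1$ on the top and bottom edges coincide by even reflection while the outward normals have opposite sign. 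Hence $m=0$ and Korn reduces to
\[
\int_{Q_1}\Phi(|\nabla\tilde f|)\,dx \leq c\int_{Q_1}\Phi(|D\tilde f|)\,dx.
\]

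For the zeroth-order term I invoke the Sobolev--Poincar\'e inequality in Orlicz spaces on $Q_1$. Under the first alternative $\mean{f_1}_{Q_1^+}=0$, even reflection gives $\mean{\tilde f_1}_{Q_1}=0$; under the second alternative $f_1|_{\overline{\partial}Q_1^+}=0$, even reflection extends this to $\tilde f_1|_{\partial Q_1}=0$ (the top, bottom and lateral faces of $Q_1$). In either case $\tilde f_2$ has zero mean on $Q_1$ by oddness in $x_2$. Hence, for each $s\in[1,2)$, the Orlicz--Sobolev--Poincar\'e inequality in the plane (available because $\Phi$ has lower index $p>1$; see e.g.~\cite{DieE08,breit1}) yields
\[
\bigg(\dashint_{Q_1}\Phi^s(|\tilde f|)\,dx\bigg)^{1/s} \leq c\dashint_{Q_1}\Phi(|\nabla\tilde f|)\,dx.
\]
Combining the two estimates and restricting the integrals back to $Q_1^+$ (each integrand is doubled by the parity of the reflection) yields~\eqref{kornlike}.

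The main technical point is showing that the skew-symmetric correction in Korn's inequality vanishes; this is handled cleanly by the parity of the reflection, which is why the choice of odd/even extensions of $f_2/f_1$ is essential. The availability of the range $s\in[1,2)$ in the Orlicz--Sobolev embedding is unconditional in two dimensions provided $\Phi$ has lower index $p>1$, as guaranteed by Assumption~\ref{ass1}.
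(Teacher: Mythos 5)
Your proof is correct, and it takes a genuinely different route from the paper's. The paper applies the Orlicz Korn--Sobolev--Poincar\'e inequality of~\cite{DiRS10} directly on the half-cube, which carries an additional lower-order term $c\dashint\Phi(|f-\mean{f}|/\diam)$ on the right-hand side; after an interpolation step this reduces the problem to the first-order estimate $\dashint|f|/R\lesssim(\dashint|Df|^p)^{1/p}$, which the paper then proves by a compactness--contradiction argument: a normalized sequence converges to a rigid motion $f=Ax+b$ with $A$ skew-symmetric, and the boundary constraints $f_2|_{\underline\partial Q^+}=0$ together with $\mean{f_1}=0$ or $f_1|_{\overline\partial Q^+}=0$ force $A=0,\ b=0$. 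You instead reflect $f$ to the full cube and observe that the parity of the extension kills the obstruction at its source: the mean of the skew part of $\nabla\tilde f$ vanishes by the boundary-integral computation, and the mean of $\tilde f$ (resp.\ the trace of $\tilde f_1$, in the second alternative) vanishes, so the full-cube Korn and Sobolev--Poincar\'e inequalities apply with no correction and no compactness is needed. Your route is more constructive and dovetails with the reflection machinery already set up in Subsection~\ref{ssc:jn}; the paper's argument is more robust in that it carries over verbatim to the curved-boundary analogue (Corollary~\ref{cor:korn}), which a reflection argument does not reach. One small remark: when invoking Korn with the specific correction $A_0=mJ$ rather than the infimizing skew-symmetric matrix, you implicitly use that the mean of the skew part is, up to a uniform constant, as good as the best choice; this is a best-constant argument of the same flavour as~\eqref{eq:bcp} and is worth spelling out to make the step airtight.
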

\begin{proof}
First by~\cite[Theorem 5.17 and Proposition 6.1]{DiRS10} and \cite[Theorem 7]{DieE08} we find that for any Lipschitz domain $\Omega$ and any $s\in [1,2)$
\begin{align}
\label{DRS1}
\bigg(\dashint_{\Omega} \Phi^s\Big(\frac{\abs{f-\mean{f}_\Omega}}{\diam(\Omega)}\Big)\, dx\bigg)^\frac1s+\dashint_{\Omega} \Phi(|\nabla f|)\, dx\leq c\dashint_{\Omega} \Phi(|Df|)\, dx+c\dashint_{\Omega} \Phi\Big(\frac{\abs{f-\mean{f}_\Omega}}{\diam(\Omega)}\Big)\, dx.
\end{align}
For $0<\sigma<1<s<\infty$ there is a $\theta\in (0,1)$ such that $\frac{\theta}{\sigma}+\frac{1-\theta}{s}=1$. H\"older's inequality implies
\[
\dashint_{\Omega} \Phi\bigg(\frac{\abs{f-\mean{f}_\Omega}}{\diam(\Omega)}\bigg)\, dx\leq c\bigg(\dashint_{\Omega} \Phi^s\bigg(\frac{\abs{f-\mean{f}_\Omega}}{\diam(\Omega)}\bigg)\, dx\bigg)^\frac{1-\theta}{s}\bigg(\dashint_{\Omega} \Phi^\sigma\bigg(\frac{\abs{f-\mean{f}_\Omega}}{\diam(\Omega)}\bigg)\, dx\bigg)^\frac{\theta}{\sigma}.
\]
Thanks to \cite[Lemma A3]{DieKapSch12}, we can choose $\sigma$ small enough such that $\Phi^\sigma$ is concave. 
Now the previous and Young's inequality applied to~\eqref{DRS1} implies
\begin{align}
\label{DRS2}
\bigg(\dashint_{\Omega} \Phi^s\Big(\frac{\abs{f}}{\diam(\Omega)}\Big)\, dx\bigg)^\frac1s+\dashint_{\Omega} \Phi(|\nabla f|)\, dx\leq c\dashint_{\Omega} \Phi(|Df|)\, dx+c\Phi\bigg(\dashint_{\Omega} \frac{\abs{f}}{\diam(\Omega)}\, dx\bigg).
\end{align}
This implies that we are left to show that
\[
\Phi\bigg(\dashint_{Q_R^+} \frac{\abs{f}}{R}\, dx\bigg)\leq c\dashint_{Q_R^+} \Phi(|Df|)\, dx.
\]
Since $\Phi^\frac{1}{p}$ is convex due to the fact that $\Phi\in T(p,q,K)$ with $p>1$, we find via Jensen's inequality that it suffices to prove 
\begin{align}
\label{eq:contr}
\dashint_{Q_R^+} \frac{\abs{f}}{R}\, dx\leq c\bigg(\dashint_{Q_R^+} |Df|^p\, dx\bigg)^\frac1p.
\end{align}
Assume the converse is true. Then since the spaces are homogeneous we find a sequence $f_n$ satisfying the assumed boundary condition such that 
\begin{equation}
\label{spornypred}
\dashint_{Q^+_R} \frac{\abs{f_n}}{\diam(\Omega)}\, dx = 1
\end{equation} and simultaneously 
\begin{equation}
\label{antisymetrie}
\bigg(\dashint_{Q^+_R} |D f_n|^p\, dx\bigg)^\frac1p\ \leq \frac 1n.
\end{equation} 
Since $\{f_n\}$ is equibounded in $W^{1,p}(Q_R^+)$ due to \eqref{DRS2} we get $f_n\rightharpoonup f$ for some $f$ in $W^{1,p}(Q_R^+;\mathbb R^2)$. As $W^{1,s}(Q_R^+;\mathbb R^2)\hookrightarrow\hookrightarrow L^p(Q_R^+;\mathbb R^2)$ we find a subsequence $f_n\rightarrow f$ in $L^p(Q_R^+;\mathbb R^2)$.
On the other hand, we deduce from \eqref{antisymetrie} that $Df=0$. Consequently, there exist $A\in \mathbb R^{2\times 2}$ skew symmetric and $b\in \mathbb R^2$ such that $f(x)= Ax + b$. The conditions imposed on $f$ yield that $A=0$ and $b=0$ which implies $f=0$ contradicting~\eqref{spornypred}.
The constant in \eqref{kornlike} is independent of the diameter of $Q^+_R$ since the estimate is invariant under rescaling.
\end{proof}
The following corollary will be needed for the global estimates.
\begin{corollary}
\label{cor:korn}
Assume that $\Omega$ is Lipschitz continuous and $\Phi\in T(p,q,K)$. Then \seb{there} exists a constant $c>0$ such that for all $f\in W^{1,\Phi}_{\nu}(\Omega;\setR^2)$ with $[D f\ \nu]\cdot \tau=0$ in the weak sense on $\partial \Omega$
\begin{align*}
\bigg(\dashint_{\Omega} \Phi^s\Big(\frac{\abs{f}}{\diam(\Omega)}\Big)\, dx\bigg)^\frac1s+\dashint_{\Omega} \Phi(|\nabla f|)\, dx\leq c\dashint_{\Omega} \Phi(|Df|)\, dx.
\end{align*}
\end{corollary}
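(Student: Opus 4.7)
The strategy is to mirror the argument of Lemma~\ref{korn}, replacing the half-cube by a general Lipschitz domain $\Omega$ and the mixed boundary data by the single perfect slip condition. The first step is to invoke the inhomogeneous Orlicz Korn--Poincar\'e inequality on Lipschitz domains (\cite[Theorem 5.17, Proposition 6.1]{DiRS10} together with~\cite[Theorem 7]{DieE08}), which gives for $s\in [1,2)$
\[
\bigg(\dashint_{\Omega} \Phi^s\Big(\tfrac{\abs{f-\mean{f}_\Omega}}{\diam(\Omega)}\Big)\, dx\bigg)^{1/s}+\dashint_{\Omega} \Phi(|\nabla f|)\, dx\leq c\dashint_{\Omega} \Phi(|Df|)\, dx+c\dashint_{\Omega}\Phi\Big(\tfrac{\abs{f-\mean{f}_\Omega}}{\diam(\Omega)}\Big)\, dx.
\]
Just as in Lemma~\ref{korn}, H\"older's inequality together with the concavity of $\Phi^\sigma$ for small $\sigma>0$ (\cite[Lemma A3]{DieKapSch12}) interpolates the last term into the $\Phi^s$-term and, after Young's inequality, yields the slightly weaker form
\[
\bigg(\dashint_\Omega \Phi^s\Big(\tfrac{|f|}{\diam\Omega}\Big)\,dx\bigg)^{1/s}+\dashint_\Omega \Phi(|\nabla f|)\,dx\le c\dashint_\Omega \Phi(|Df|)\,dx+c\,\Phi\bigg(\dashint_\Omega \tfrac{|f|}{\diam\Omega}\,dx\bigg),
\]
where the triangle inequality and the best-constant property \eqref{eq:bcp} are used to replace $f-\mean{f}_\Omega$ by $f$.

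It therefore suffices to show $\Phi(\dashint_\Omega |f|/\diam\Omega\,dx)\le c\dashint_\Omega\Phi(|Df|)\,dx$, and since $\Phi^{1/p}$ is convex due to $\Phi\in T(p,q,K)$, Jensen reduces this to the scale-invariant homogeneous inequality
\[
\dashint_\Omega\frac{|f|}{\diam\Omega}\,dx\le c\bigg(\dashint_\Omega |Df|^p\,dx\bigg)^{1/p}.
\]
I would establish this by the standard compactness/contradiction argument. Suppose a sequence $f_n\in W^{1,\Phi}_\nu(\Omega;\setR^2)$ with $[Df_n\,\nu]\cdot\tau=0$ weakly on $\partial\Omega$ were normalized by $\dashint_\Omega |f_n|/\diam\Omega\,dx=1$ and satisfied $(\dashint_\Omega |Df_n|^p\,dx)^{1/p}\le 1/n$. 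The reduced Korn--Poincar\'e inequality from the first step forces boundedness of $\{f_n\}$ in $W^{1,p}(\Omega;\setR^2)$, so by Rellich--Kondrachov a subsequence converges strongly in $L^p$ and weakly in $W^{1,p}$ to some $f$ with $Df=0$ a.e. Hence $f(x)=Ax+b$ with $A\in\setR^{2\times 2}$ skew and $b\in\setR^2$.

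The final step is to pass the boundary conditions to the limit and conclude $f\equiv 0$, contradicting the normalization. The weak condition $[Df_n\nu]\cdot\tau=0$ is trivially preserved in the limit since $Df=0$, and the continuity of the trace on $W^{1,p}$ gives $f\cdot\nu=0$ on $\partial\Omega$. Writing $A=\alpha J$ for the standard rotation $J$, the surviving boundary relation is
\[
\alpha\,x\cdot\tau(x)+b\cdot\nu(x)=0\quad\text{for a.e. }x\in\partial\Omega,
\]
a linear condition on $(\alpha,b)\in\setR^3$. The main obstacle I anticipate is precisely this geometric step: the argument goes through for domains $\Omega$ that are not invariant under any non-trivial one-parameter subgroup of rigid motions, which is the generic situation implicit here; if a rotational symmetry is present (e.g.\ a disc) one must first project $f$ onto the orthogonal complement of this finite-dimensional kernel, which is harmless because the kernel is finite-dimensional and stable under the imposed boundary conditions. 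In either case the contradiction is reached, and scaling invariance of the inequality under $x\mapsto x/\diam\Omega$ yields that the final constant depends only on $p,q,K$ and the shape (not the size) of $\Omega$.
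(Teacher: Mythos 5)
Your overall strategy (invoke the DRS-type Korn--Poincar\'e estimate \eqref{DRS2}, reduce to a homogeneous scale-invariant inequality, argue by compactness) coincides with the paper's, and the first steps through ``$f(x)=Ax+b$ with $A$ skew'' are carried out the same way. You also correctly isolate the genuine geometric difficulty: $f\cdot\nu=0$ on $\partial\Omega$ alone does not rule out a rotational rigid motion when $\Omega$ happens to be a disc.

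The gap is in how you resolve that difficulty. You write off the hypothesis $[Df_n\,\nu]\cdot\tau=0$ as ``trivially preserved in the limit since $Df=0$'' and then propose to project onto the orthogonal complement of the rigid kernel. That does not establish the corollary as stated: if a nonzero $f=Ax+b$ satisfying all imposed conditions existed, the inequality would be outright false (the left side is positive, the right side vanishes), so there is nothing to project away --- either the kernel is trivial and one must show it, or the statement is false. The paper's proof instead extracts information from the weak slip condition that you discarded: it first notes that $f\cdot\nu=0$ for $f=Ax+b$ with $A\neq0$ forces $\Omega$ to be a ball $B_R(b)$; it then passes the weak formulation to the limit to obtain $0=\int_\Omega A:\nabla\phi\,dx$ for all $\phi\in\mathcal{C}^\infty_\nu(\Omega)$, and integration by parts (using that $A$ is constant) gives $(A\nu)\cdot\tau=0$ on $\partial\Omega$. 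Since $A=\alpha J$ is skew, $(A\nu)\cdot\tau=\alpha(J\nu)\cdot\tau=\alpha$ identically, so $\alpha=0$, hence $A=0$, and then $b\cdot\nu\equiv0$ forces $b=0$, giving the desired contradiction. In short, the slip condition is precisely what kills the rotational kernel in the ball case; it is not a throwaway hypothesis. Your proof as written would need to supply that argument (or an equivalent one) rather than appeal to a projection, which changes the statement being proved.
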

\begin{proof}
As in the proof of the last lemma, we find by \eqref{DRS2} that it suffices to show
\begin{align*}
\dashint_{\Omega} \frac{\abs{f}}{\diam \Omega}\, dx\leq c\bigg(\dashint_{\Omega} |Df|^s\, dx\bigg)^\frac1s,
\end{align*}
using again that $\Phi^\frac{1}{p}$ is convex as $\Phi\in T(p,q,K)$. 
Assume the converse is true. Then since the spaces are homogeneous we find a sequence $f_n$ satisfying the assumed boundary condition such that that there exists a sequence $f_n$ such that 
\begin{equation}
\label{spornypred1}
\dashint_{\Omega} \frac{\abs{f_n}}{\diam(\Omega)}\, dx = 1
\end{equation} and simultaneously 
\begin{equation}
\label{antisymetrie1}
\bigg(\dashint_{\Omega} |D f_n|^p\, dx\bigg)^\frac1s\ \leq \frac 1n.
\end{equation} 
Since $\{f_n\}$ is equibounded in $W^{1,p}(\Omega;\mathbb R^2)$ due to \eqref{DRS2} we get $f_n\rightharpoonup f$ for some $f$ in $W^{1,p}(\Omega,\mathbb R^2)$. As $W^{1,p}(\Omega;\mathbb R^2)\hookrightarrow\hookrightarrow L^p(\Omega;\mathbb R^2)$, we find a subsequence $f_n\rightarrow f$ in $L^p(\Omega;\mathbb R^2)$.
On the other hand, we deduce from \eqref{antisymetrie1} that $Df=0$. Consequently, there exist $A\in \mathbb R^{2\times 2}$ skew symmetric and $b\in \mathbb R^2$ such that $f(x)= Ax + b$. The condition $f\cdot\nu|_{\partial\Omega}=0$ implies that $\Omega=B_R(b)$, for some $R>0$. Now, since for every $\phi\in \mathcal{C}^\infty_\nu(\Omega)$, we find that $\skp{\Delta f_n}{\phi}=-\int_\Omega \nabla f_n\cdot\nabla \phi\,dx$ and consequently $0=-\int_\Omega A\cdot\nabla \phi\,dx$. But this implies that $(A\nu)\cdot \tau=0$ and so $A=0$. Finally the condition $f\cdot\nu|_{\partial\Omega}=0$ implies that $b=0$ and so $f=0$. This contradicts \eqref{spornypred1}.
\end{proof}

\section{Reverse H\"older inequalities at the boundary}
\label{sec:rh}
\subsection{Flattening and transformation}
\label{flat}
In this section, we will analyze the solution around a given boundary point of which we assume without loss of generality that it is the origin. Moreover, we assume that $r_\Omega\in [0, 1]$.

 The boundary is assumed to be given as the graph of $h\in \mathcal{C}^{1,1}([-R/2, R/2])$ for some $R\leq r_\Omega$. 
Explicitly, let $\partial \Omega = \{(x_1, h(x_1)), x_1 \in (-R/2,R/2)\}$ on some neighborhood of $0\in \partial \Omega$. Since the system is translation- and rotation-invariant, we may also suppose $h'(0) = 0$. Recall the definition
$$
Q_R^+:=\left\{(x_1,x_2)\in \mathbb R^2, x_1\in\left(-\frac R2, \frac R2\right), x_2\in \left(0,\frac R2\right)\right\}.
$$
We define  $T:Q_R^+\mapsto \Omega$ as $T(x_1,x_2) = (x_1, x_2 + h(x_1))$. As an immediate consequence of the construction we get the tangential and outer normal vector via
 \begin{align*} 
\tau=\frac{1}{\sqrt{1+(h')^2}}
\begin{pmatrix}1\\h'
\end{pmatrix}\text{ and } \nu =\frac{1}{\sqrt{1+(h')^2}}
\begin{pmatrix}h'\\-1
\end{pmatrix}.
 \end{align*}
We define $n = (0,-1)$ which is indeed a unit outer normal vector on $\underline \partial Q_R^+$, where $\underline \partial Q_R^+:= \partial Q_R^+\cap \{x_2 = 0\}$. Further, we set $\Omega_R^+ := T(Q_R^+)$ and we use the following notation $y:=T(x)$ and $\underline \partial \Omega_R^+ = T(\underline \partial Q_R^+)$. To a function $f\in L^1(\Omega_R^+)$ we associate
\begin{equation}\label{bar.definition}
\overline f: Q_R^+\to \setR \, :\, \overline{f}(x) = f(T(x)).
\end{equation}

From now on, we assume that $u\in W^{1,\Phi}_\nu(\Omega^+_R)$ is a local weak solution to (\ref{em}--\ref{pss}) on $\Omega_R^+$ unless specified otherwise. This means that
\begin{equation}\label{weak.solution}
\int_{\Omega_R^+} \es(D_yu):D_y\varphi \ {d}y= \int_{\Omega_R^+} F:D_y\varphi \ {d}y,
\end{equation}
for every $\varphi \in W^{1,\Phi}(\Omega)$ with $\diver \varphi = 0$, $\varphi \cdot \nu = 0$ on $\underline \partial\Omega_R^+$ and $\varphi = 0$ on $\overline \partial\Omega_R^+$.

Let $Q$ be an arbitrary cube whose center is in $\underline \partial Q_R^+$. We seek to transfer $u$ into a local solution on $Q^+=Q\cap \{x:x_2>0\}$ {with perfect slip boundary values}. We define
$$
\left(\begin{matrix}\overline u^Q_1(x)\\
\overline u^Q_2(x)\end{matrix} \right):=
\begin{pmatrix}
u_1(T(x))
\\
u_2(T(x))
\end{pmatrix} 
- \begin{pmatrix}
\langle u_1\rangle_{T(Q^+)}
\\
 h'(x_1)(u_1(T(x))-\mean{u_1}_{T(Q^+)})
 \end{pmatrix}
$$
for all $x\in Q^+$. 
First, since $\deter \nabla T =1$, we have $\langle \overline u^Q_1 \rangle_{Q^+} = 0$. 
We calculate
\begin{equation}\label{definiceg}
\begin{split}
u_1(y) & = \overline u^Q_1(y_1, y_2 - h(y_1)) + \mean{u_1}_{T(Q^+)},\\
u_2(y) & = \overline{u}^Q_2(y_1, y_2 - h(y_1)) - h'(y_1)(\mean{u_1}_{T(Q^+)}-u_1(y)).
\end{split}
\end{equation}
Hence it follows that 
\[
\overline{u}^Q\cdot n= -\overline{u}^Q_2 = -h'(x_1) \langle u_1\rangle_{T(Q^+)} \mbox{ on }\underline \partial Q_R^+ \text{ as } u\cdot \nu = 0 \mbox{ on }\underline\partial \Omega_R^+.
\]
Further, for $(y_1,y_2)=(x_1,x_2+h(x_1))$ we find
\begin{equation*}
\begin{split}
\partial_1 u_1(y)& = \partial_1 \overline{u}^Q_1(x) - h'(x_1)\partial_2 \overline u_1^Q (x),
\\
\partial_2 u_1(y)& = \partial_2 \overline{u}^Q_1(x),
\\
\partial_1 u_2(y)
&= \partial_1 \overline{u}^Q_2(x) - h'(x_1) \partial_2 \overline{u}^Q_2(x) 
 + h''(x_1)\overline{u}^Q_1(x),
 \\
 &\quad  + h'(x_1) \partial_1 \overline{u}^Q_1(x) - (h'(x_1))^2 \partial_2 \overline{u}^Q_1(x),
 \\
\partial_2 u_2(y)&= \partial_2 \overline{u}^Q_2(x) + h'(x_1)\partial_2 \overline{u}^Q_1(x);
\end{split}
\end{equation*}
in particular
$$
0=\diver_y u = \diver_x \overline{u}^Q.
$$
As a shorthand
$$
H(x) = \left(\begin{matrix} 1& 0\\ -h'(x_1) & 1\end{matrix}\right)  = (\nabla T)^{-1}(Tx).
$$
Consequently, we find that
\[
\ H^{-1}(x) = \left(\begin{matrix}1&0\\h'(x_1)&1\end{matrix}\right) = \nabla T(x).
\]
For a general $g\in L^1(Q^+_R,\setR^2)$ we define the corrector matrix
$$H_{g}(x) := \left(\begin{matrix}0&0\\ h''(x_1) g_1(x) & 0\end{matrix}\right)\text{ and  }H_{g,\sym}(x) = (H_{g})_{\sym}(x).$$
With the defined quantities, we have the following identities
\begin{equation}
\label{eq:nabuUQ}
\begin{split}
(\nabla_y u) (T(x)) &= H^{-1}(x)\nabla_x \overline u^Q(x) H(x) + H_{\overline u^Q}(x),\\
(D_y u)(T(x)) &= (H^{-1}(x)\nabla_x \overline u^Q(x) H)_{\sym} + H_{\overline u^Q,\sym}(x),\\
\overline u^Q \cdot n& = -h'(x_1) \langle u_1\rangle_{T(Q^+)}.
\end{split}
\end{equation}
To any $\varphi \in W^{1,\Phi}(\Omega)$ with $\diver \varphi = 0$, $\varphi \cdot \nu = 0$ on $\underline{\partial}\Omega_R^+$ and $\varphi = 0$ on $\overline \partial\Omega_R^+$, we associate
$$
\left(\begin{matrix}\tilde{\phi}_1\\
\tilde{\phi}_2\end{matrix}\right) := \left(\begin{matrix} \varphi_1(T(x))\\ \varphi_2(T(x))\end{matrix}\right) - \left(\begin{matrix}0\\ h'(x_1)(\varphi_1 (T(x))\end{matrix}\right).
$$
By the above, this implies that $\tilde\varphi \in W^{1,\Phi}(Q^+)$, $\diver_x\tilde{\phi} = 0$, $\tilde\varphi \cdot n  = 0$ on $\underline \partial Q^+$ and $\tilde{\phi} = 0$ on $\overline \partial Q^+$. 

Hence we may rewrite \eqref{weak.solution} as
\begin{equation}\label{transformed.system}
\int_{Q^+} \es\left((H^{-1}\nabla\overline u^QH)_\sym\right): (H^{-1}\nabla\tilde{\phi} H)_\sym  =  \int_{Q^+} \overline F:(H^{-1}\nabla\tilde{\phi} H)_\sym + \skp{E}{\tilde\varphi},
\end{equation}
where $\overline F(x):= F(T(x))$ and
\begin{align*}
\skp{E}{\tilde{\phi}} &:=\sum_{k=1}^4\skp{E_k}{\tilde{\phi}} := \int_{Q^+} \overline F:H_{\tilde{\phi},\sym}\, dx-\int_{Q^+} \es((H^{-1}\nabla \overline u^Q H)_\sym): H_{\tilde{\phi},\sym}\, dx
\\
&\quad-\int_{Q^+} \Big(\es((H^{-1}\nabla \overline u^Q H)_\sym + H_{\overline u^Q, \sym}) - \es((H^{-1}\nabla \overline u^Q H)_\sym)\Big): H_{\tilde{\phi},\sym}\, dx
\\&\quad  - \int_{Q^+} \Big(\es((H^{-1}\nabla \overline u^Q H)_{\sym}+H_{\overline u^Q,\sym}) -\es((H^{-1}\nabla \overline u^Q H)_\sym)\Big): (H^{-1}\nabla \tilde{\phi} H)_\sym\, dx.
\end{align*}
Since the mapping $\phi\to \tilde{\phi}$ is \seb{one-to-one}, \eqref{transformed.system} is satisfied for all $\tilde\varphi \in W^{1,\Phi}(Q^+;\mathbb R^2)$, $\diver_x\tilde{\phi} = 0$, $\tilde\varphi \cdot n  = 0$ on $\underline \partial Q^+$ and $\tilde{\phi} = 0$ on $\overline \partial Q^+$.

Observe that $\overline{u}^Q(x)\cdot n= h'(x_1) \mean{u_1}_{T(Q^+)}$. In order to get an admissible test function to \eqref{transformed.system} we introduce the corrector function $g:Q^+\mapsto \mathbb R^2$ defined as
$$
g = \left(\begin{matrix}-h''(0)\langle u_1\rangle_{T(Q^+)}x_2
\\ h'(x_1)\langle u_1\rangle_{T(Q^+)}\end{matrix}\right).
$$
\seb{
By the calculations above, we find the following:
\begin{observation}
\label{obs:g} With the above notation, it holds that
\begin{enumerate}
\item $(\overline u^Q-g)_2=0$ on $\underline{\partial}Q^+$,
\item $\mean{\overline u^Q_1-g_1}_{Q^+}= -\mean{g_1}_{Q^+}$, 
\item $\divergence (\overline u^Q-g)=0$,
\item $\abs{Dg(x_1,x_2)}= \abs{h''(x_1)-h''(0)}\abs{\mean{u_1}_{T(Q^+)}}$,
\item $\abs{\nabla g(x_1,x_2)}\leq c\abs{\mean{u_1}_{T(Q^+)}}$.
\end{enumerate}
\end{observation}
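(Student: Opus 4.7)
My plan is to verify each of the five assertions by direct computation from the explicit formula for $g$ combined with the identities \eqref{definiceg} and \eqref{eq:nabuUQ}. The key structural observations are that $g_1$ depends only on $x_2$ while $g_2$ depends only on $x_1$, and that the coefficients $-h''(0)\langle u_1\rangle_{T(Q^+)}$ and $\langle u_1\rangle_{T(Q^+)}$ are chosen precisely to cancel the non-tangential trace of $\overline u^Q$ and the obstruction to divergence-freeness that one would otherwise encounter.

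For item (1), the idea is to evaluate $\overline u^Q_2$ at $x_2=0$. On $\underline\partial Q^+$ the point $T(x_1,0)=(x_1,h(x_1))$ lies on $\partial\Omega$, so the boundary condition $u\cdot\nu=0$ together with $\nu=(h',-1)/\sqrt{1+(h')^2}$ gives $u_2=h'\,u_1$ there. Substituting into the definition of $\overline u^Q_2$ collapses the expression to $h'(x_1)\langle u_1\rangle_{T(Q^+)}$, which is exactly $g_2(x_1,0)$. For item (2), since $\det\nabla T=1$ the change of variables yields $\langle \overline u^Q_1\rangle_{Q^+}=0$, and the claim is immediate by linearity of the mean. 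For item (3), the divergence of $\overline u^Q$ vanishes by \eqref{eq:nabuUQ}, while $\partial_1 g_1=0$ (since $g_1$ is $x_1$-independent) and $\partial_2 g_2=0$ (since $g_2$ is $x_2$-independent); these two combine to give $\divergence g=0$ and hence the claim.

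For items (4) and (5), I compute
\[
\nabla g(x_1,x_2)=\langle u_1\rangle_{T(Q^+)}\begin{pmatrix}0 & -h''(0)\\ h''(x_1) & 0\end{pmatrix},
\]
so the symmetric part is
\[
Dg(x_1,x_2)=\tfrac12\langle u_1\rangle_{T(Q^+)}\big(h''(x_1)-h''(0)\big)\begin{pmatrix}0 & 1\\ 1 & 0\end{pmatrix},
\]
which yields (4) up to the fixed matrix norm used throughout the paper. Estimate (5) follows immediately from the bound $\|h''\|_\infty\le c$ provided by $h\in\mathcal C^{1,1}$.

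There is no real obstacle here: the whole observation is essentially a bookkeeping lemma which records how the corrector $g$ was engineered, and its sole purpose is to allow $\overline u^Q-g$ to serve as an admissible (divergence-free, vanishing normal trace) competitor in the transformed system \eqref{transformed.system}. The only point that requires a moment of care is verifying (1), which needs the explicit use of the perfect slip boundary condition together with the formula for the normal $\nu$ along $\partial\Omega$ near the origin.
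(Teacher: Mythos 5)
Your verification is correct and follows exactly the path the paper intends: the observation is stated without proof as a consequence of the calculations in Subsection~\ref{flat}, and your item-by-item check (the perfect-slip identity $u_2=h'u_1$ on the image of $\underline\partial Q^+$, the fact that $\det\nabla T=1$ forces $\langle\overline u^Q_1\rangle_{Q^+}=0$, the independence of $g_1$ from $x_1$ and of $g_2$ from $x_2$, and the explicit Jacobian of $g$) is the intended argument. The only cosmetic point, which you flag correctly, is that the literal equality in item (4) holds only after fixing the normalization of the matrix norm — your computation gives $Dg=\tfrac12\langle u_1\rangle_{T(Q^+)}(h''(x_1)-h''(0))\bigl(\begin{smallmatrix}0&1\\1&0\end{smallmatrix}\bigr)$, so the $\tfrac12$ and the norm of $\bigl(\begin{smallmatrix}0&1\\1&0\end{smallmatrix}\bigr)$ get absorbed — but since only the upper bound (and the modulus of continuity of $h''$) is used downstream, this discrepancy is immaterial.
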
}

\subsection{A Caccioppoli inequality}
The construction above will be used to state the following Caccioppoli inequality. It is the main technical step towards the reverse H\"older inequality given in Proposition~\ref{corcompar}, we aim for. 

We will use an interpolation result on the cubes. Therefore, we define  $Q_s^+= sQ\cap \set{x:x_2\geq 0}$ for $s\in (0,1)$.
\begin{lemma}\label{lemmacompar} We assume the setting of Subsection~\ref{flat}. Namely, let $R<0$ and \eqref{transformed.system} be satisfied on $Q^+\subset Q_R^+$ \seb{where $Q^+$ has side-length $\rho$}. Let $\frac12<r<s\leq 1$ and $P, F_0\in \mathbb R^{2\times 2}_{\sym}$, $P_{12} = P_{21} = (F_0)_{12} = (F_0)_{21} = 0$. Then we define $Q_s=sQ^+$ and $z:Q_s\to \setR^2$ to be an affine linear function such that $\nabla z=Dz=P$, $z_2=0$ on $\underline \partial Q^+$ and $\dashint_{Q_s^+} z_1\, dx = \dashint_{Q_s^+}\overline u_1^Q  - g_1\, dx$.
For every $\delta\in(0,1)$ we find \seb{$\theta_0<1$ and a $\delta_0\in (0,1)$ such that for all $\rho<\delta_0$}
\begin{align}\label{eqcompar}
\begin{aligned}
&\dashint_{Q_r^+}|V(D\overline u^Q) -V(P)|^2\, dx \leq  \delta\dashint_{Q_s^+}|V(D\overline u^Q) - V(P)|^2\, dx 
+ c \dashint_{Q_s^+}\Phi^*_{|S(P)|}(|\overline F - F_0|)\, dx 
\\
&\quad 
+ c \dashint_{Q_s^+} \Phi_{|P|} \left(\frac {|\overline u^Q - z - g|}{|s-r|}\right)\, dx+ c \left(\dashint_{Q_s^+}\Phi_{|P|}^{\theta_0}(|D\overline u^Q - P|)\, dx\right)^{\frac 1{\theta_0}}
\\
 &\quad
 +c\Phi^*_{|\es(P)|}(\rho |F_0|)+c\Phi_{|P|} (\rho |P|) 
 +c\Phi_{\abs{P}}\Big(\big(\rho+\norm{h''-h''(0)}_{L^\infty(Q^+)}\big)\mean{\abs{\overline{u}^1}}_{Q^+}\Big),
 \end{aligned}
\end{align}
where the constant $c>0$ only depends on the constants in the Assumption~\ref{ass1}, \seb{$\delta$}, and the Lipschitz constant of $h'$.
\end{lemma}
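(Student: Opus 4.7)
The plan is to test the transformed system~\eqref{transformed.system} with a cutoff multiple of $w:=\overline u^Q-z-g$, corrected by a Bogovski operator. Choose $\eta\in \mathcal{C}^\infty_c(sQ^+)$ with $\eta\equiv 1$ on $rQ^+$, $\eta\equiv 0$ near $\overline\partial Q^+$ and $\abs{\nabla \eta}\lesssim 1/(s-r)$. By Observation~\ref{obs:g}, $w_2=0$ on $\underline\partial Q^+$, $\mean{w_1}_{Q_s^+}=0$, and $\diver w=-\tr P$ (since $\bar u^Q - g$ is divergence-free while $\diver z=\tr P$). In particular $\eta^k w$ has vanishing normal trace on $\partial Q^+$. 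Letting $\psi:=\Bog(\diver(\eta^k w))$, where $\Bog$ is the Bogovski operator on $sQ^+$ with vanishing normal trace on $\underline\partial(sQ^+)$, the function $\tilde\phi:=\eta^k w-\psi$ is admissible in~\eqref{transformed.system}. The nonzero-mean source $\eta^k\tr P$ is acceptable since $\int_{Q^+}\diver(\eta^k w)=0$ automatically.

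\textbf{Principal estimate.} Inserting $\tilde\phi$ into~\eqref{transformed.system} and subtracting $\es(P)$ (legitimate as $P$ is constant, $\es(P)_{12}=0$, and $\tilde\phi$ has vanishing normal trace), the monotonicity~\eqref{eq:hammer} of Lemma~\ref{lem:hammer} bounds the left-hand side from below by $\int_{Q^+}\eta^k\abs{V((H^{-1}\nabla \overline u^QH)_{\sym})-V(P)}^2\,dx$. Using $\norm{H-I}_\infty\lesssim \rho\norm{h'}_\infty$ together with Lemma~\ref{lem:shift2}, one passes to $\abs{V(D\overline u^Q)-V(P)}^2$ paying the shift-change error $\Phi_{\abs{P}}(\rho\abs{P})$ on the right. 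The $F$-contribution splits as $\int_{Q^+}(\overline F-F_0):D\tilde\phi+\int_{Q^+}F_0:D\tilde\phi$; Young's inequality (Lemma~\ref{lem.young}) converts the first into $\dashint_{Q_s^+}\Phi^*_{\abs{\es(P)}}(\abs{\overline F-F_0})$ after absorbing a small part of the LHS, while the second, since $(F_0)_{12}=0$ and $\tilde\phi$ vanishes normally on $\partial Q^+$, reduces via integration by parts to terms involving $(H^{-1}\nabla\tilde\phi H)_\sym-D\tilde\phi$ of size $\rho\abs{F_0}\abs{D\tilde\phi}$, yielding $\Phi^*_{\abs{\es(P)}}(\rho\abs{F_0})$ after another application of Young.

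\textbf{Bogovski term and reverse-H\"older exponent.} The main obstacle is controlling the divergence-deficit contribution $\int_{Q^+}(\es((H^{-1}\nabla\overline u^QH)_\sym)-\es(P)):D\psi$. Bogovski regularity in Orlicz spaces yields $\int \Phi^{\theta}_{\abs{P}}(\abs{D\psi})\lesssim \int\Phi^{\theta}_{\abs{P}}(\abs{w}/(s-r))+\Phi^\theta_{\abs{P}}(\abs{P})$ for any admissible exponent $\theta$. In two dimensions, the Sobolev--Poincar\'e inequality applied to $w$ (whose first component has zero mean on $Q_s^+$ and whose second component vanishes on $\underline\partial Q^+$) gives
\[
\dashint_{Q_s^+}\Phi_{\abs{P}}\!\left(\frac{\abs{w}}{s-r}\right)dx\,\lesssim\,\bigg(\dashint_{Q_s^+}\Phi^{\theta_0}_{\abs{P}}(\abs{\nabla w})\,dx\bigg)^{1/\theta_0}
\]
for some $\theta_0<1$ depending only on the constants of Assumption~\ref{ass1}. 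Pairing this with H\"older's inequality in the conjugate pair $(1/\theta_0,1/(1-\theta_0))$ and Lemma~\ref{lem:shift2} produces the $\delta$-absorbable term $\delta\dashint_{Q_s^+}\abs{V(D\overline u^Q)-V(P)}^2$ together with the reverse-H\"older piece $(\dashint_{Q_s^+}\Phi^{\theta_0}_{\abs{P}}(\abs{D\overline u^Q-P})\,dx)^{1/\theta_0}$ appearing in~\eqref{eqcompar}. Taking $\rho\leq\delta_0$ small is necessary so that the multiplicative errors from $H\neq I$ inside the nonlinearity can be absorbed via a sufficiently small $\epsilon$-parameter in Lemma~\ref{lem:shift2}.

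\textbf{Flattening errors $E_1,\ldots,E_4$ and the corrector $g$.} The four error terms in~\eqref{transformed.system} are estimated analogously. Each involves $H_{\tilde\phi,\sym}$ of size $\lesssim\norm{h''}_\infty\abs{\tilde\phi_1}$; since $\tilde\phi_1$ has zero mean on $Q_s^+$ by construction (cf. Observation~\ref{obs:g}(2) and the choice $\dashint_{Q_s^+} z_1=\dashint_{Q_s^+}(\overline u^Q_1-g_1)$), Poincar\'e--Korn (Lemma~\ref{korn}) converts $\abs{\tilde\phi_1}$ into derivative norms, which Young absorbs into the $V$-term. Differences of the form $\es(\,\cdot\,+H_{\overline u^Q,\sym})-\es(\,\cdot\,)$ are controlled via Lemma~\ref{lem:shift2} since $\abs{H_{\overline u^Q,\sym}}\lesssim\norm{h''}_\infty\abs{\overline u^Q_1}$. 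Combined with Observation~\ref{obs:g}(4)--(5), which gives $\abs{Dg}+\abs{\nabla g}\lesssim(\rho+\norm{h''-h''(0)}_\infty)\mean{\abs{\overline u^1}}_{Q^+}$, this produces the last error term $\Phi_{\abs{P}}\bigl((\rho+\norm{h''-h''(0)}_\infty)\mean{\abs{\overline u^1}}_{Q^+}\bigr)$ in~\eqref{eqcompar}, completing the proof.
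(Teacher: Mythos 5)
Your construction of the test function and the overall plan (cutoff, Bogovski correction, Young's inequality, Korn--Poincar\'e, shift-change) coincide with the paper's, so the architecture is sound. However, there are two genuine problems with the details.

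\emph{The divergence deficit $\tr P$.} Since $\diver\overline u^Q=0$ but $\diver z=\tr P$, the Bogovski estimate produces a term $\Phi_{\abs{P}}(\abs{\tr P})$ that is \emph{not} absorbable: $\Phi_{\abs{P}}(\abs{P})\sim\Phi(\abs{P})$ does not scale with $\rho$ and is not present on the right-hand side of~\eqref{eqcompar}. Your proposal leaves this piece as $\Phi^\theta_{\abs{P}}(\abs{P})$ and never converts it into the reverse-H\"older term $\bigl(\dashint_{Q_s^+}\Phi_{\abs{P}}^{\theta_0}(\abs{D\overline u^Q-P})\bigr)^{1/\theta_0}$. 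The paper's decisive observation is that, since $\diver\overline u^Q=0$,
\[
\Phi_{\abs{P}}(\abs{\tr P})
=\biggl(\dashint_{Q_s^+}\Phi^{\theta_0}_{\abs{P}}(\abs{\diver\overline u^Q-\diver z})\,dx\biggr)^{1/\theta_0}
\leq c\biggl(\dashint_{Q_s^+}\Phi^{\theta_0}_{\abs{P}}(\abs{D\overline u^Q-P})\,dx\biggr)^{1/\theta_0},
\]
and \emph{this} is the source of the $\theta_0$-average on the right of~\eqref{eqcompar}. You instead attribute that term to a Sobolev--Poincar\'e improvement of $\dashint\Phi_{\abs{P}}(\abs{\overline u^Q-z-g}/(s-r))$, which is both unnecessary (this Poincar\'e term appears verbatim on the right-hand side of~\eqref{eqcompar} and is simply kept, not improved) and not a substitute for handling $\tr P$. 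Relatedly, applying the Bogovski bound directly in the $\Phi^\theta_{\abs{P}}$ ``norm'' with $\theta<1$ is problematic: $\Phi^\theta$ may fail to be convex or to have lower index $>1$, so the operator's boundedness is not automatic; the paper applies Bogovski in $L^{\Phi_{\abs{P}}}$ and only afterwards introduces the sub-unit exponent $\theta_0$ via the algebraic step above.

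\emph{Poincar\'e for $\tilde\phi$.} Your justification that ``$\tilde\phi_1$ has zero mean on $Q_s^+$'' is incorrect: $\dashint_{Q_s^+}(\overline u^Q_1-z_1-g_1)=0$ by the normalization of $z$, but neither $\eta^k(\overline u^Q_1-z_1-g_1)$ nor the Bogovski corrector has zero mean in general. The Korn--Poincar\'e inequality (Lemma~\ref{korn}) is applicable because $\tilde\phi_1$ vanishes on $\overline\partial Q_s^+$ and $\tilde\phi_2$ vanishes on $\underline\partial Q_s^+$; you should invoke the boundary conditions rather than a mean-zero property. This is a small repair, but as written the Poincar\'e step rests on a false premise.
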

\begin{proof} 
At first we will construct an admissible test function for \eqref{transformed.system}.
The definitions of $g$ and $z$ imply $\diver g = 0$, $\dashint_{Q_s^+}\overline u_1^Q - z_1 - g_1\, dx = 0$ and $(\overline u^Q - g)\cdot n = 0$ on $\underline \partial Q_s^+$. In order to get zero boundary values on $\overline{\partial}Q_+$ we multiply the above by a smooth cut-off function $\eta:\mathcal{C}^\infty(Q^+)$, such that $\eta=0$ on $\overline{\partial}Q_+$ and $\chi_{Q_r}\leq \eta \leq \chi_{Q_s}$ with $\norm{\nabla \eta}_\infty \leq \frac{c}{(s-r)}$. The function $\psi:=(\overline u^Q - z - g)\eta$ now has to be modified once more in order to be solenoidal. For this we define $w=\Bog_{Q_s}(\divergence \psi)$, where the operator $\Bog_{Q_s}$ is the famous Bogovskij operator defined in~\cite[Theorem 6.6]{DiRS10}. Namely it satisfies:
\begin{equation*}
\begin{split}
\diver w &= \diver \psi,\ \mbox{in}\ Q_s^+\\
w|_{\partial Q_s^+} & = 0,\\
\dashint_{Q_s} \Phi_{|P|} (|\nabla w|)&\leq c \dashint_{Q_s^+} \Phi_{|P|}(|\divergence\psi|).
\end{split}
\end{equation*}
Since
$$
\diver \psi  = \nabla \eta (\overline u^Q - z - g)  - \eta \stopa (P),
$$
we find that
\begin{align}\label{bog.est}
\dashint_{Q_s} \Phi_{|P|} (|\nabla w|)&\leq c \dashint_{Q_s^+} \Phi_{|P|}(|\nabla \eta (\overline u^Q - z - g)|) + c\dashint_{Q_s^+} \Phi_{|P|}(|\stopa(P)|)
\\
&\leq c \dashint_{Q_s^+} \Phi_{|P|}\Big(\frac{|\overline u^Q - z - g)|}{\abs{s-r}}\Big) + c\dashint_{Q_s^+} \Phi_{|P|}(|\stopa(P)|).\nonumber
\end{align}

We use $\tilde\varphi:=\psi-w$ as a test function in \eqref{transformed.system} to get
\begin{align*}
\begin{aligned}
\mathcal I_0 &:= \dashint_{Q_s^+} (\es((H^{-1}\nabla \overline u^Q H)_\sym) - \es((H^{-1}PH)_\sym)): \eta (H^{-1}(\nabla\overline u^Q - P)H)_\sym \, dx
\\
&=\dashint_{Q_s^+} (\overline F - F_0):\eta (H^{-1}(\nabla \overline u^Q  - P)H)_\sym + \dashint_{Q_s^+}(F_0 - \es((H^{-1}PH)_\sym)):(H^{-1}\nabla\tilde{\phi} H)_{\sym}\, dx
\\
&\quad +\dashint_{Q_s^+} \left(\es((H^{-1}\nabla \overline u^Q H)_\sym) - \es((H^{-1}PH)_\sym)\right):(H^{-1}\nabla w H)_\sym\, dx
\\
&\quad-\dashint_{Q_s^+} \left(\es((H^{-1}\nabla \overline u^Q H)_\sym) - \es((H^{-1}PH)_\sym)\right):(\nabla \eta \otimes_\sym (\overline u^Q - z - g))\, dx
\\
&\quad- \dashint_{Q_s^+} \left(\overline F - F_0\right):(H^{-1}\nabla w H)_\sym\, dx
 + \dashint_{Q_s^+} \left(\overline F - F_0\right): (\nabla \eta \otimes_\sym (\overline u^Q - z - g))\, dx
 \\
&\quad +\dashint_{Q_s^+} \left(\es((H^{-1}\nabla \overline u^Q H)_\sym) - \es((H^{-1}PH)_\sym)\right):(H^{-1}\nabla g H)_\sym\eta\,dx\\
&\quad -\dashint_{Q_s^+} \left(\overline F - F_0\right):(H^{-1}\nabla g H)_\sym\eta \, dx
 + \skp{E}{\tilde{\phi}} =: \sum_{i=1}^{8} \mathcal I_i + \skp{E}{\overline\varphi}.
\end{aligned}
\end{align*}
By \eqref{eq:hammer}, we find
$$
\mathcal I_0 \geq c \dashint_{Q_r^+} |V((H^{-1}\nabla \overline u^Q H)_\sym) - V((H^{-1} P H)_\sym)|^2\, dx.
$$
It follows by \eqref{eq:young} that
\begin{align*}
\mathcal I_1 &\leq c\dashint_{Q_s^+} \Phi^*_{|S((H^{-1}PH)_\sym)|}(|\overline F - F_0|)\, dx
 + \delta \dashint_{Q_s^+} \Phi_{|(H^{-1}PH)_{\sym}|} \left((H^{-1}\nabla \overline u^Q H)_\sym - (H^{-1}PH)_\sym\right)\, dx
 \\&=: c(i) + \delta (ii),
\end{align*}
and 
\begin{align*}
\mathcal I_3&
\leq \delta \dashint_{Q_s^+} \Phi_{|S((H^{-1}PH)_\sym)|}^*(S((H^{-1}\nabla \overline u^Q H)_\sym) - S((H^{-1}PH)_\sym))\, dx\\ 
&\quad + c \dashint_{Q_s^+}\Phi_{|(H^{-1}PH)_\sym|}(\abs{H^{-1}\nabla w H)_\sym})=: \delta(iii) + c(iv),
\end{align*}
and
$$
\mathcal I_4\leq \delta (iii) + c\dashint_{Q_s^+} \Phi_{|(H^{-1}PH)_\sym|}\left(\frac 1{|s-r|}|\overline u^Q - z- g|\right) =:\delta(iii) + c(v).
$$
We analogously find that
$$
\mathcal I_5 \leq c(i) + c(iv),
\text{ and }
\mathcal I_6\leq c(i) + c(v).
$$
The term $\mathcal I_7$ is estimated as follows
\begin{align*}
\mathcal I_7&=\dashint_{Q_s^+} \left(\es((H^{-1}\nabla \overline u^Q H)_\sym) - \es((H^{-1}PH)_\sym)\right):\Big(((H^{-1}-I)\nabla g H)_\sym+(\nabla g (H-I))_\sym+Dg\Big)\,dx
\\
&=\dashint_{Q_s^+} \left(\es((H^{-1}\nabla \overline u^Q H)_\sym) - \es((H^{-1}PH)_\sym)\right):\Big(((H^{-1}-I)\nabla g H)_\sym+(\nabla g (H-I))_\sym\Big)
\,dx
\\
&\quad+\dashint_{Q_s^+}\left(\es((H^{-1}\nabla \overline u^Q H)_\sym) - \es((H^{-1}PH)_\sym)\right): Dg\, dx
\\
&\leq \delta \dashint_{Q_s^+} \Phi_{|S((H^{-1}PH)_\sym)|}^*(\abs{S((H^{-1}\nabla \overline u^Q H)_\sym) - S((H^{-1}PH)_\sym)})\, dx+c\Phi_{\abs{H^{-1}PH}}(\rho\mean{\abs{\overline{u}^1}}_{Q^+})
\\
&\quad+c\dashint_{Q_s^+}\Big(\Bigabs{\es(H^{-1}\nabla \overline u^Q H)_{1,2} - \es(H^{-1}PH)_{1,2}}
\\
&\qquad+\Bigabs{\es((H^{-1}\nabla \overline u^Q H)_{2,1} - \es(H^{-1}PH)_{2,1}}\Big)\abs{h''-h''(0)}\mean{\abs{\overline{u}^1}}_{Q^+} \, dx
\\
&\leq \delta(iii) +c\Phi_{\abs{P}}\Big(\big(\rho+\norm{h''-h''(0)}_{L^\infty(Q^+)}\big)\mean{\abs{\overline{u}^1}}_{Q^+}\Big)
\end{align*}
and, similarly,
$$
\mathcal I_8\leq c(i) + c\Phi_{\abs{P}}\Big(\big(\rho+\norm{h''-h''(0)}_{L^\infty(Q^+)}\big)\mean{\abs{\overline{u}^1}}_{Q^+}\Big).
$$
We estimate further
\begin{equation*}
\begin{split}
(i)&\leq c \dashint_{Q_s^+}\Phi^*_{|S(P)|}(|\overline F - F_0|)\, dx,\\
(ii)&\leq c \dashint_{Q_s^+} |V((H^{-1}\nabla \overline u^Q H)_\sym) - V((H^{-1}PH)_\sym)|^2\, dx\\
(iii)&\leq c \dashint_{Q_s^+} |V((H^{-1}\nabla \overline u^Q H)_\sym) - V((H^{-1}PH)_\sym)|^2\, dx\\
(v)&\leq c \dashint_{Q_s^+} \Phi_{|P|} \left(\frac {|\overline u^Q - z - g|}{|s-r|}\right)\, dx.
\end{split}
\end{equation*}
With the help of Young's inequality (Lemma \ref{lem.young}) and \eqref{bog.est} we deduce
\begin{align*}
(iv)&\leq c\dashint_{Q_s^+} \Phi_{|P|}(|\nabla w |)\, dx
\\
&\leq 
c\dashint_{Q_s^+}\Phi_{|P|}\left(\frac{|\overline u^Q - z - g|}{|s-r|}\, dx\right) + c\left(\dashint_{Q_s^+}\Phi_{|P|}^{\theta_0}(|D\overline u^Q - P|)\, dx\right)^{\frac 1{\theta_0}},
\end{align*}
where we used that
\begin{align*}
\int_{Q_s^+}\Phi_{|P|}(|Tr(P)|)\, dx
&= \left(\int_{Q_s^+}\Phi_{|P|}^{\theta_0}(|\diver z|)\, dx\right)^{\frac 1{\theta_0}} 
= \left(\dashint_{Q_s^+} \Phi_{|P|}^{\theta_0}(|\diver \overline u^Q - \diver z|)\, dx\right)^{\frac 1{\theta_0}}
\\
&\leq 2\left(\dashint_{Q_s^+}\Phi_{|P|}^{\theta_0} (|D\overline u^Q  - P|)\, dx\right)^{\frac 1{\theta_0}}
\end{align*}
for some $\theta_0<1$. 
To estimate $\mathcal{I}_2$ we divide it into two inequalities. First of all, Young's inequality (Lemma~\ref{lem.young}) and Korn's inequality (Lemma~\ref{kornlike}) imply:
\begin{align*}
&\dashint_{Q_s^+} F_0 : (H\nabla \tilde{\phi} H)_\sym\, dx \leq \underbrace{\dashint_{Q_s^+} F_0 : D\overline\varphi\, dx}_{=0} + c \dashint_{Q_s^+} |F_0| \|H - I\|_{\infty} |\nabla \tilde{\phi}| \, dx\\\ 
&\quad\leq c \Phi_{|\es (P)|}^*(\rho |F_0|) + \delta \dashint_{Q_s^+}\Phi_{|P|}(|D\tilde{\phi}|)\, dx.
\end{align*}
Secondly, Lemma~\ref{lem:Tpq} and Lemma~\ref{lem:hammer} yield
\begin{align*}
&\dashint_{Q_s^+} \es((H^{-1}PH)_\sym): (H^{-1}\nabla \tilde{\phi} H)_{\sym}\, dx=\dashint_{Q_s^+} \es((H^{-1}PH)_\sym): D\tilde{\phi} + \,(H^{-1}PH)_\sym^{1,2}(H^{-1}\nabla \tilde{\phi} H)_{\sym}^{1,2} dx
\\
&\quad \leq \underbrace{\dashint_{Q_s^+} \es(P):D\tilde{\phi}\, dx}_{=0} 
 + \dashint_{Q_s^+} |\es((H^{-1}PH)_\sym) - \es(P)| |H^{-1}\nabla \tilde{\phi} H|\, dx+ c\dashint_{Q_s^+} |\es(P)|\abs{h'}|(H^{-1}\nabla \tilde{\phi}H)_\sym|\, dx
 \\
 &\quad \leq c \Phi_{|P|}(\rho |P|) + \delta \dashint_{Q_s^+} \Phi_{|P|} (|(H^{-1}\nabla \tilde{\phi} H)_\sym|)\, dx.
\end{align*}
It holds by the various definitions of the quantities that
\begin{align}
\label{phib1}
\begin{aligned}
\Phi_{|P|}(|D\tilde\varphi|) &\leq c\Phi_{|P|}(|Dw|) + c\Phi_{|P|}(|D\overline u^Q - Dz|) + c\Phi_{|P|}(|Dg|) + c\Phi_{|P|}\left(\frac{|\overline u^Q - z - g|}{|s-r|}\right)
\\
&\leq c\Phi_{|P|}(|Dw|)+\Phi_{|P|} \left(\frac{|\overline u^Q - z - g|}{|s-r|}\right) +  c\Phi_{|P|}(\abs{D\overline u^Q - P})\, dx 
\\
&\qquad + c\Phi_{\abs{P}}\Big(\big(\rho+\norm{h''-h''(0)}_{L^\infty(Q^+)}\big)\mean{\abs{\overline{u}^1}}_{Q^+}\Big)+c\Phi_{|P|} (\rho |P|)
\end{aligned}
\end{align}
and (by Korn's inequality, Lemma~\ref{korn}) we find
\begin{align}
\label{nb0}
\begin{aligned}
&\dashint_{Q_s^+}\Phi_{|P|} (|(H^{-1}\nabla \tilde{\phi} H)_\sym|\, dx
\leq  c\dashint_{Q_s^+}\Phi_{|P|} (|D\tilde{\phi}|) +c\Phi_{|P|} (|\rho\nabla\tilde{\phi}|)\, dx \leq  c\dashint_{Q_s^+}\Phi_{|P|} (|D\tilde{\phi}|)\, dx.
%
\end{aligned}
\end{align} 
Hence we find using the estimate on $w$ (see the estimate on $(iv)$) that (by redefining the $\delta$ accordingly) that
\begin{align*}
\mathcal I_2&\leq \Phi^*_{|\es(P)|}(\rho |F_0|)+\Phi_{|P|} (\rho |P|) + \delta\dashint_{Q_s^+} \Phi_{|P|} \left(\frac{|\overline u^Q - z - g|}{|s-r|}\right)\, dx + \delta\left(\dashint_{Q_s^+}\Phi_{|P|}^{\theta_0} (|D\overline u^Q - P|)\right)^{\frac 1{\theta_0}}\, dx
\\ 
&\quad + \delta \dashint_{Q_s^+}|V(D\overline u^Q) - V(P)|^2\, dx 
+\delta\Phi_{\abs{P}}\Big(\big(\rho+\norm{h''-h''(0)}_{L^\infty(Q^+)}\big)\mean{\abs{\overline{u}^1}}_{Q^+}\Big)\, dx.
\end{align*}
Collecting all the estimates above, we get
\begin{align}
\label{eq:bitch1}
\begin{aligned}
&\dashint_{Q_r^+} |V((H^{-1}\nabla \overline u^Q H)_\sym) - V((H^{-1} P H)_\sym)|^2\, dx \leq \delta\dashint_{Q_s^+} |V((H^{-1}\nabla \overline u^Q H)_\sym) - V((H^{-1} P H)_\sym)|^2\, dx
\\
&\quad + c \dashint_{Q_s^+}\Phi^*_{|S(P)|}(|\overline F - F_0|)\, dx + c \dashint_{Q_s^+} \Phi_{|P|} \left(\frac {|\overline u^Q - z - g|}{|s-r|}\right)\, dx+ c \left(\dashint_{Q_s^+}\Phi_{|P|}^{\theta_0}(|D\overline u^Q - P|)\, dx\right)^{\frac 1{\theta_0}}
\\
 &\quad
 +\Phi^*_{|\es(P)|}(\rho |F_0|)+\Phi_{|P|} (\rho |P|)
 +c\Phi_{\abs{P}}\Big(\big(\rho+\norm{h''-h''(0)}_{L^\infty(Q^+)}\big)\mean{\abs{\overline{u}^1}}_{Q^+}\Big)+\abs{\skp{E}{\tilde{\phi}}}.
 \end{aligned}
\end{align}
The next goal is to estimate $\skp{E}{\tilde{\phi}}$. We will estimate all terms independently. Using the particular form of $H_{\tilde{\phi}}$, Lemma~\ref{lem.young}, Lemma~\ref{korn}, and \eqref{bog.est} we derive 
\begin{align}
\label{nb1}
\begin{aligned}
\skp{E_1}{\tilde{\phi}}&=\dashint_{Q_s^+}\overline F: H_{\tilde{\phi},\sym}\, dx
=\dashint_{Q_s^+}(\overline F-F_0): H_{\tilde{\phi},\sym}\, dx
\\
&\leq c\dashint_{Q_s^+}\Phi^*_{|S(P)|}(|\overline F - F_0|)\, dx + \delta\dashint_{Q_s^+} \Phi_{|P|} \abs{\tilde{\phi}}\, dx
\\
&\leq c\dashint_{Q_s^+}\Phi^*_{|S(P)|}(|\overline F - F_0|)\, dx + \delta\dashint_{Q_s^+} \Phi_{|P|} (\rho\abs{D\tilde{\phi}})\, dx \seb{+ \dashint_{Q_s^+} \Phi_{|P|}(\varrho|P|)\, dx}.
\end{aligned}
\end{align}
It holds
\begin{align}
\label{eq:neu1}
\Phi_{|P|}(\rho|D\tilde\varphi|) \leq c\left(\Phi_{|P|}(\rho|Dw|) + \Phi_{|P|}(\rho|D\overline u^Q - Dz|) + \Phi_{|P|}(\rho|Dg|) + \Phi_{|P|}\left(\rho\frac{|\overline u^Q - z - g|}{|s-r|}\right)\right).
\end{align}
Here we choose $\delta_0$ such that $\Phi_{|P|}(\rho t)\leq \delta \Phi_{|P|}(t)$ for every $t>0$. Thus
\begin{align}
\label{eq:neu2}
\Phi_{|P|}(\rho|D\overline u^Q - Dz|) \leq \delta \Phi_{|P|}(|D\overline u^Q - P|)\leq \delta |V(D\overline u^Q) - V(P)|^2.
\end{align}
Using the previous arguments  we find that
\begin{align*}
\Bigabs{\skp{E_1}{\tilde{\phi}}}&\leq 
c\dashint_{Q_s^+}\Phi^*_{|S(P)|}(|\overline F - F_0|)\, dx +\delta\dashint_{Q_s^+}|V(Du) - V(P)|^2\, dx\\
&\quad+c\Phi_{\abs{P}}\left(\rho|\langle \overline u_1\rangle_{Q^+}|\right)  + c\dashint_{Q_s^+}\Phi_{|P|}\left(\frac{|\overline u^Q - z - g|}{|s-r|}\right)\, dx. 
\end{align*}
%

Using  Young's inequality \eqref{eq:young} including a $\delta$ and Korn's inequality to find
\begin{align}
\label{nb2}
\begin{aligned}
-\skp{E_2}{\tilde{\phi}}&=\dashint_{Q_s^+} \es((H^{-1}\nabla \overline u^Q H)_\sym:H_{\tilde{\phi},\sym}\, dx
\\
& =\dashint_{Q_s^+} 
\big(\es((H^{-1}\nabla \overline u^Q H)_\sym-\es((H^{-1} P H)_\sym))\big) :H_{\tilde{\phi},\sym}\, dx +\int_{Q_s^+}\es((H^{-1} P H)_\sym))_{1,2}h''\tilde{\phi}_1\, dx
\\
& \leq  \delta\dashint_{Q_s^+} |V((H^{-1}\nabla \overline u^Q H)_\sym) - V((H^{-1} P H)_\sym)|^2\, dx + c\dashint_{Q_s^+} \Phi_{|P|} (\abs{\tilde{\phi}})+\Phi_{\abs{P}}(\rho \abs{P})\, dx
\\
& \leq  \delta\dashint_{Q_s^+} |V((H^{-1}\nabla \overline u^Q H)_\sym) - V((H^{-1} P H)_\sym)|^2\, dx + c\dashint_{Q_s^+} \Phi_{|P|} (\rho\abs{D\tilde{\phi}})+\Phi_{\abs{P}}(\rho \abs{P})\, dx
\end{aligned}
\end{align}
Using the fact, that by Lemma~\ref{lem:hammer} and the fact that $\abs{h'}\leq c\rho$ we find
\[
(\seb{\Phi_{\abs{P}}})^*(\abs{\es((H^{-1} P H)_\sym))_{1,2}})\leq\Phi_{\abs{P}}(\rho \abs{P}).
\]
By the estimates above on $\tilde{\phi}$ (namely \eqref{eq:neu1} and \eqref{eq:neu2}) we find that
\begin{align*}
 \abs{\skp{E_2}{\tilde{\phi}}}&\leq \delta\dashint_{Q_s^+} |V((H^{-1}\nabla \overline u^Q H)_\sym) - V((H^{-1} P H)_\sym)|^2\, dx  +\delta\dashint_{Q_s^+}|V(Du) - V(P)|^2\, dx\\
&\qquad+c\Phi_{\abs{P}}\left(\rho|\langle \overline u_1\rangle_{Q^+}|\right)  + c\dashint_{Q_s^+}\Phi_{|P|}\left(\frac{|\overline u^Q - z - g|}{|s-r|}\right)\, dx + c\phi_{\abs{P}}(\rho\abs{P}). 
\end{align*}
Here we emphasize that there is a second restriction of $\delta_0$ as $\rho$ should be such small that $C_\delta \rho^p\leq \delta$ where $p$ is from \eqref{eq:typeT} and $C_\delta$ comes from the Young inequality Lemma~\ref{lem.young}.

Due to the fact that by \eqref{eq:hammerd} we have $|\es(A) - \es(B)| \leq c \Phi_{|A|}' (|A-B|)$, we may deduce, using the shift change (Lemma~\ref{lem:shift2}, \Poincare's and Korn's inequality, that
\begin{align}
\label{nb3}
\begin{aligned}
\abs{\skp{E_3}{\tilde{\phi}}}&=\left|\dashint_{Q_s^+} \Big(\es\big((H^{-1}\nabla \overline u^Q H)_\sym+H_{\overline u^Q,\sym}\big)-\es(H^{-1}\nabla \overline u^Q H)_\sym\Big):H_{\tilde{\phi},\sym}\, dx\right|
\\
&\leq c\dashint_{Q_s^+}\bigabs{\es\big((H^{-1}\nabla \overline u^Q H)_\sym+H_{\overline u^Q,\sym}\big)-\es(H^{-1}\nabla \overline u^Q H)_{\sym}\Big)}\abs{\tilde{\phi}}\, dx
\\
&\quad \leq c \dashint_{Q_s^+}\Phi_{|(H^{-1}\nabla \overline u^Q H)_{\sym}|}'\big(\abs{H_{\overline u^Q,\sym}}\big)\abs{\tilde{\phi}_1}\, dx
\\
 &
 \leq  \delta\dashint_{Q_s^+} |V((H^{-1}\nabla \overline u^Q H)_{\sym}) - V(P)|^2\, dx 
 +
c\dashint_{Q_s^+}\Phi_{|P|}(|\tilde{\phi}|)+\Phi_{|P|}(\abs{H_{\overline u^Q,\sym}})\, dx
\\
 &\quad
 \leq  \delta\dashint_{Q_s^+} |V((H^{-1}\nabla \overline u^Q H)_{\sym}) - V(P)|^2\, dx 
 +
c\dashint_{Q_s^+}\Phi_{|P|}(\rho |D\tilde{\phi}|)+\Phi_{|P|}(\abs{H_{\overline u^Q,\sym}})\, dx.
\end{aligned}
\end{align}
The term containing $\Phi_{|P|}(\rho |D\tilde{\phi}|)$ can be handled as in the previous case. Furthermore, as $\Phi_{P}(|H_{\overline u^Q,\sym}|)\leq \Phi_{|P|}(|(\overline u_1^Q, 0)|)$, we may use \Poincare's inequality (Lemma~\ref{korn}) to deduce
\begin{align}
\label{eq:bitch3}
\begin{aligned}
\dashint_{Q_s^+}\Phi_{|P|}(\abs{H_{\overline u^Q,\sym}})\, dx&\leq \left(\dashint_{Q_s^+}\Phi_{|P|}^{\theta_0}(\rho \abs{\nabla \overline u^Q})\, dx\right)^{\frac 1{\theta_0}}
\\
&\leq c\left(\dashint_{Q_s^+}\Phi_{|P|}^{\theta_0}(\rho \abs{D (\overline u^Q-g-z)})\, dx\right)^{\frac 1{\theta_0}}
+c\dashint_{Q_s^+}\Phi_{|P|}(\rho (\abs{\nabla (g+z)}))\, dx
\\
&\leq \left(c\dashint_{Q_s^+}\Phi_{|P|}^{\theta_0}(|\rho D\overline u^Q - P|)\, dx\right)^{\frac 1{\theta_0}} +c\Phi_{|P|}(\rho \abs{P})+c\Phi_{|P|}(\rho \mean{\abs{\overline{u}}}_{Q^+}).
\end{aligned}
\end{align}
Again by \eqref{eq:hammerd} and also \cite[Lemma 2.5]{mt}, we have that
\begin{align*}
\abs{\skp{E_4}{\tilde{\phi}}}&=\left|\dashint_{Q_s^+} \left[\es((H^{-1}\nabla \overline u^Q H)_\sym + H_{\overline u^Q, \sym}) - \es((H^{-1}\nabla \overline u^Q H)_\sym)\right]:(H^{-1}\nabla \tilde{\phi} H)_\sym\, dx\right|
\\ 
& \leq c\dashint_{Q_s^+}\Phi'_{|(H^{-1}\nabla \overline u^Q H)_\sym|}(|H_{\overline u^Q, \sym}|) \left|(H^{-1}\nabla \tilde{\phi} H)_\sym\right|\, dx.
\end{align*}
Next, by the Young inequality \eqref{eq:young}, we find
\begin{align}
\label{nb4}
\abs{\skp{E_4}{\tilde{\phi}}}&\leq
c\dashint_{Q_s^+} \Phi_{|H^{-1}PH|}(|H_{\overline u^Q,\sym}|)\, dx
 + \delta\dashint_{Q_s^+} \Phi_{|H^{-1}PH|}(|H^{-1}\nabla \tilde{\phi} H|).
\end{align}
Hence, the above and the previous estimates on $H\nabla \tilde{\phi} H$, see \eqref{nb0} and $H_{\overline u^Q}$, see \eqref{eq:bitch3}  imply
\begin{align}
\label{eq:bitch2}
\begin{aligned}
\abs{\skp{E}{\tilde{\phi}}}&\leq c \dashint_{Q_s^+}\Phi^*_{|S(P)|}(|\overline F - F_0|)\, dx + c \dashint_{Q_s^+} \Phi_{|P|} \left(\frac {|\overline u^Q - z - g|}{|s-r|}\right)\, dx
 + c \left(\dashint_{Q_s^+}\Phi_{|P|}^{\theta_0}(|D\overline u^Q - P|)\, dx\right)^{\frac 1{\theta_0}} 
 \\
& \quad+ \delta \dashint_{Q_s^+} |V((H^{-1}\nabla\overline u^Q H)_{\sym}) - V((H^{-1}PH)_\sym)|^2 + |V(D\overline u^Q) - V(P)|^2\, dx\\ 
 &\quad
+c\Phi_{|P|}(\rho\abs{P})+c\Phi_{\abs{P}}\Big(\big(\rho+\norm{h''-h''(0)}_{L^\infty(Q^+)}\big)\mean{\abs{\overline{u}^1}}_{Q^+}\Big).
\end{aligned}
\end{align}
Next observe that there are constants $c_0,c_1$, such that
\begin{align*}
&c_0|V((H^{-1}\nabla \overline u^Q H)_\sym) - V((H^{-1}PH)_\sym)|^2 - c_0 |V(D\overline u^Q) - V((H^{-1}\nabla \overline u^Q H)_\sym)|^2 - c_0 |V(P) - V((H^{-1}PH)_\sym)|^2 
\\
 &\leq
|V(D\overline u^Q) - V(P)|^2
\\ 
&\leq c_1 |V((H^{-1}\nabla \overline u^Q H)_\sym) - V((H^{-1}PH)_\sym)|^2 + c_1 |V(D\overline u^Q) - V((H^{-1}\nabla \overline u^Q H)_\sym)|^2 + c_1 |V(P) - V((H^{-1}PH)_\sym)|^2
\end{align*}
and that by \eqref{eq:hammer},\eqref{eq:typeT} and Lemma~\ref{lem:shift2} we have
\begin{align*}
&|V((H^{-1}PH)_\sym) - V(P)|^2 + |V((H^{-1}\nabla \overline u^Q H)_\sym) - V(D\overline u^Q)|^2\\
&\leq c\Phi_{\abs{P}}\Big(\abs{H^{-1}PH-P}\Big)+\Phi_{\abs{D\overline u^Q}}\Big(\abs{H^{-1}\nabla \overline u^Q H-D\overline u^Q}\Big)
\\
&\leq c\Phi_{\abs{P}}\left(\rho |P|\right) + c\Phi_{\abs{P}}(\rho |\nabla \overline u^Q|). 
\end{align*}
Next we estimate by \eqref{kornlike}
\begin{align}\label{odhad.nabla.uq}
\begin{aligned}
\dashint_{Q_s^+} \Phi_{\abs{P}} ( \rho|\nabla \overline u^Q|)\, dx & \leq c\dashint_{Q_s^+}\Phi_{\abs{P}}(\rho|\nabla (\overline u^Q - g)|)\, dx + c\dashint_{Q_s^+} \Phi_{\abs{P}}(\rho\abs{\nabla g})\, dx
\\
&\leq c \dashint_{Q_s^+}\Phi_{\abs{P}}(\rho|D\overline u^Q - P|)\, dx +c\Phi_{\abs{P}}\left(\rho|\langle \overline u_1\rangle_{Q^+}|\right) + c\Phi_{|P|}(\rho|P|)\\
&\leq \delta \dashint_{Q_s^+}|V(D\overline u^Q) - V(P)|^2\, dx +c\Phi_{\abs{P}}\left(\rho|\langle \overline u_1\rangle_{Q^+}|\right) + c\Phi_{|P|}(\rho|P|)
\end{aligned}
\end{align}
Combining these two arguments with \eqref{eq:bitch1} and \eqref{eq:bitch2} yield the result.
\end{proof}
The next aim is to gain a reverse H\"older inequality of the following type.
\begin{proposition}\label{corcompar} 
Let the setting of Subsection~\ref{flat} be satisfied. Namely, let $R>0$ be sufficiently small and \eqref{transformed.system} be satisfied on $Q^+\subset Q_R^+$ \seb{where $Q^+$ has side-length $\rho$}. 
Then there exists a constant $c>0$ only depending on the constants in Assumption~\ref{ass1} and the Lipschitz constant of $h'$, such that for every $P\in \mathbb R^{2\times 2}_{\sym}$ with $P_{12} = P_{21} = 0$
\begin{align*}
&\dashint_{\frac 12Q^+}|V(D\overline u) - V(P)|^2\, dx \leq c(\Phi^*)_{|\es(P)|}\left(\dashint_{Q^+} |\es(D\overline u) - \es(P)|\, dx\right) + c(\Phi^*)_{|\es(P)|}\left(\|\overline F\|_{\overline{\setBMO }^*(Q^+)}\right) 
\\
 &\quad
 +c\Phi^*_{|\es(P)|}(\rho |F_0|)+c\Phi_{|P|} (\rho |P|) 
 +c\Phi_{\abs{P}}\Big(\big(\rho+\norm{h''-h''(0)}_{L^\infty(Q^+)}\big)\mean{\abs{\overline{u}^1}}_{Q^+}\Big),
\end{align*}
and
\begin{align*}
&\dashint_{\frac 12Q^+}|V(D\overline u) - V(P)|^2\, dx \leq c\Phi_{|P|}\left(\dashint_{Q^+} |D\overline u - P|\, dx\right) + c(\Phi^*)_{|\es(P)|}\left(\|\overline F\|_{\overline{\setBMO }^*(Q^+)}\right) 
\\
 &\quad
 +c\Phi^*_{|\es(P)|}(\rho |F_0|)+c\Phi_{|P|} (\rho |P|) 
 +c\Phi_{\abs{P}}\Big(\big(\rho+\norm{h''-h''(0)}_{L^\infty(Q^+)}\big)\mean{\abs{\overline{u}^1}}_{Q^+}\Big),
\end{align*}
where $\overline u$ is defined through \eqref{bar.definition} and $F_0:=\diag\mean{\overline{F}}_{Q^+}$.
\end{proposition}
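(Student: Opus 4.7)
The proof is a Caccioppoli-to-reverse-H\"older scheme. Starting from Lemma~\ref{lemmacompar}, the main steps are: (i) a sub-unit exponent Sobolev--Poincar\'e estimate on the corrected displacement $\overline u^Q - z - g$; (ii) the Giaquinta--Giusti iteration to absorb the $\delta$-term; (iii) a Jensen step based on the concavity of $\Phi_{|P|}^{\theta_0}$ for small $\theta_0$, with a parallel step for $(\Phi^*)_{|\es(P)|}$ yielding the first inequality.

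\textit{Step 1 (Korn--Poincar\'e on the displacement).} The obstruction to closing the estimate of Lemma~\ref{lemmacompar} is the term $\dashint_{Q_s^+}\Phi_{|P|}\big(|w|/|s-r|\big)dx$ with $w:=\overline u^Q-z-g$. By Observation~\ref{obs:g} we have $w_2=0$ on $\underline\partial Q_s^+$ and $\mean{w_1}_{Q_s^+}=0$, so Lemma~\ref{korn} applies. Combining this with a sub-unit exponent Sobolev--Poincar\'e inequality (available in the Orlicz setting thanks to $\Phi_{|P|}\in T(\pbar,\qbar,\overline K)$), we find $\theta_0\in(0,1)$ such that
\[
\dashint_{Q_s^+}\Phi_{|P|}\Big(\tfrac{|w|}{|s-r|}\Big)\,dx \;\le\; c\Big(\dashint_{Q_s^+}\Phi_{|P|}^{\theta_0}(|Dw|)\,dx\Big)^{1/\theta_0}.
\]
Since $Dw=D\overline u^Q-P-Dg$ and $|Dg|$ is controlled pointwise by $|h''-h''(0)|\mean{|\overline u_1|}_{Q^+}$ (Observation~\ref{obs:g}), the right-hand side splits into a term of the form $(\dashint\Phi_{|P|}^{\theta_0}(|D\overline u^Q-P|))^{1/\theta_0}$ plus the curvature correction $c\Phi_{|P|}\big((\rho+\|h''-h''(0)\|_\infty)\mean{|\overline u_1|}_{Q^+}\big)$ already present in the statement.

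\textit{Step 2 (Iteration and John--Nirenberg).} Plugging this back into Lemma~\ref{lemmacompar} and iterating over $r,s\in[1/2,1]$ by the Giaquinta--Giusti hole-filling lemma absorbs the $\delta\dashint_{Q_s^+}|V(D\overline u)-V(P)|^2$ term into the left-hand side. Applying the $\overline{\setBMO}^*$-John--Nirenberg bound~\eqref{jnhs} to $\overline F-F_0=\overline F-\diag\mean{\overline F}_{Q^+}$ produces $\dashint_{Q^+}(\Phi^*)_{|\es(P)|}(|\overline F-F_0|) \le c(\Phi^*)_{|\es(P)|}(\|\overline F\|_{\overline{\setBMO }^*(Q^+)})$, so that
\[
\dashint_{\frac12 Q^+}|V(D\overline u)-V(P)|^2\,dx \;\le\; c\,(\Phi^*)_{|\es(P)|}(\|\overline F\|_{\overline{\setBMO }^*(Q^+)}) + c\Big(\dashint_{Q^+}\Phi_{|P|}^{\theta_0}(|D\overline u-P|)\,dx\Big)^{1/\theta_0} + \mathcal R,
\]
where $\mathcal R$ collects the terms $c(\Phi^*)_{|\es(P)|}(\rho|F_0|)$, $c\Phi_{|P|}(\rho|P|)$ and the curvature correction, exactly as stated.

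\textit{Step 3 (Reverse Jensen via concavity; passage to the $\es$-form).} By further restricting $\theta_0$ so that $\Phi_{|P|}^{\theta_0}$ is concave on $[0,\infty)$ (possible for $\theta_0\le 1/\qbar$ thanks to the upper type of $\Phi_{|P|}$), Jensen's inequality yields
\[
\Big(\dashint_{Q^+}\Phi_{|P|}^{\theta_0}(|D\overline u-P|)\,dx\Big)^{1/\theta_0} \;\le\; \Phi_{|P|}\Big(\dashint_{Q^+}|D\overline u-P|\,dx\Big),
\]
which is the second inequality of the proposition. The first inequality is obtained by running the very same three steps with $\Phi_{|P|}(|\cdot|)$ replaced throughout by $(\Phi^*)_{|\es(P)|}(|\es(\cdot)|)$, using the equivalence $\Phi_{|P|}(|A-B|)\sim (\Phi^*)_{|\es(P)|}(|\es(A)-\es(B)|)$ from Lemma~\ref{lem:hammer} together with concavity of $(\Phi^*)_{|\es(P)|}^{\theta_0}$.

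\textbf{Main obstacle.} The crux is the simultaneous choice of $\theta_0$: Step~1 demands $\theta_0$ close enough to $1$ for the sub-unit Sobolev--Poincar\'e embedding in dimension two, while Step~3 demands $\theta_0\le 1/\qbar$ for the concavity of $\Phi_{|P|}^{\theta_0}$; that this window is non-empty relies decisively on the finite upper type of $\Phi$. A further delicate point is to carry the many lower-order contributions produced by the corrector $g$, by the boundary curvature of $h$, and by the shift-change between $V((H^{-1}\cdot H)_{\sym})$ and $V(D\cdot)$ through the Caccioppoli--Poincar\'e--iteration chain, so that each error emerges in precisely the (scale-invariant) oscillation form recorded in the statement.
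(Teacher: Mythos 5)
Your proposal captures the right overall scheme (Caccioppoli $\to$ Korn--Poincar\'e with sub-unit exponent $\to$ Giusti iteration $\to$ reverse Jensen), and the treatment of the curvature corrections from $g$ in Step~1 is sound. However, there are two genuine gaps.

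\textbf{First gap: the passage from $D\overline u^Q$ to $D\overline u$.} Lemma~\ref{lemmacompar} (and hence the whole chain of Steps 1--2) produces an estimate for $\dashint_{Q_r^+}|V(D\overline u^Q)-V(P)|^2$, whereas the Proposition is stated for $V(D\overline u)$. These symmetric gradients are not equal: from \eqref{definiceg} one computes (cf.\ \eqref{sym.grad.over.u})
\begin{align*}
D\overline u^Q - D\overline u = \begin{pmatrix}0 & \tfrac12\big(h''(x_1)(\overline u_1-\mean{\overline u_1}_{Q^+})+h'(x_1)\partial_1\overline u_1\big)\\
\tfrac12\big(h''(x_1)(\overline u_1-\mean{\overline u_1}_{Q^+})+h'(x_1)\partial_1\overline u_1\big) & -h'(x_1)\partial_2\overline u_1\end{pmatrix},
\end{align*}
and the entry $-h'\partial_2\overline u_1$ is controlled by the full gradient, not the symmetric one. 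The paper devotes a separate Korn-type argument to show that $\dashint_{Q^+}|V(D\overline u^Q)-V(D\overline u)|^2$ is of order $\rho^{\pbar}$ times $\dashint|V(D\overline u)-V(P)|^2$ plus the curvature term, so that it can be absorbed. Your proof silently replaces $\overline u^Q$ by $\overline u$ at the end of Step~3 without doing this conversion; as it stands, the conclusion does not follow from the iteration.

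\textbf{Second gap: the window for $\theta_0$ is empty when $\qbar>2$.} The sub-unit Sobolev--Poincar\'e on $Q_s^+$ (via Lemma~\ref{korn} applied to $\Phi_{|P|}^{\theta_0}$) needs $\theta_0>1/2$ in two dimensions, whereas concavity of $\Phi_{|P|}^{\theta_0}$ needs $\theta_0\le 1/\qbar$ with $\qbar=\max\{q,2\}$. When $q>2$ (shear thickening, e.g.\ $p$-Laplacian with $p>2$, which Assumption~\ref{ass1} allows), these two constraints are incompatible, so the ``window is non-empty by finite upper type'' claim is false. The paper resolves this by an additional H\"older--Young interpolation: for any $\theta\in(0,\theta_0)$ and any $\delta>0$ it writes
\[
\dashint_{Q_s^+}\Phi_{|P|}\Big(\tfrac{|\overline u^Q-z-g|}{s-r}\Big)\,dx \le c\Big(\tfrac{\rho}{s-r}\Big)^a\Big(\dashint_{Q_s^+}\Phi_{|P|}^{\theta}(|D\overline u^Q-P-Dg|)\,dx\Big)^{1/\theta}+\delta\dashint_{Q_s^+}\Phi_{|P|}(|D\overline u^Q-P-Dg|)\,dx,
\]
absorbs the $\delta$-term, and only \emph{then} invokes \cite[Corollary~3.4]{DieKapSch12} with $\theta$ as small as needed. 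Without this interpolation step the choice of $\theta_0$ cannot be made consistently for general $\Phi$ in the assumed class; your outline would need to incorporate it.
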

\begin{proof}
Let $\frac{1}{2}\leq r <s\leq 1$ and $g, z, u^Q$ as in Lemma~\ref{eqcompar}. Lemma \ref{korn} and Lemma~\ref{lem:Tpq} imply
$$
\dashint_{Q_s^+} \Phi_{|P|}\left(\frac{|\overline u^Q - z - g|}{s-r}\right) \leq c\left(\dashint_{Q_s^+}\left(\frac{\rho}{s-r}\right)^{\overline{p}\theta_0} \Phi_{|P|}^{\theta_0}(|D\overline u^Q - P - Dg|)\right)^{\frac 1{\theta_0}}.
$$
This also implies that for any $\theta\in (0,\theta_0)$ and any $\delta>0$ there exist a $c>0$ and an $a\in (1,\infty)$, such that
$$
\dashint_{Q_s^+} \Phi_{|P|}\left(\frac{|\overline u^Q - z - g|}{s-r}\right)\, dx\leq c\Big(\frac{\rho}{s-r}\Big)^a\left(\dashint_{Q_s^+}\Phi^{\theta}_{|P|}(|D\overline u^Q - P - Dg|)\, dx\right)^{\frac 1\theta} + \delta \dashint_{Q^+_s} \Phi_{|P|}\left(|D\overline u^Q - P - Dg|\, dx\right).
$$
Indeed, we verify it in the following calculation where we use $f:=\Phi_{|P|}(|D\overline u^Q - P - Dg|)$ for the sake of lucidity. By Lemma~\ref{korn}, \eqref{eq:typeT} and  H\"older's and Young's inequality, we deduce that
\seb{\begin{align*}
\dashint_{Q_s^+} \Phi_{|P|}\left(\frac{|\overline u^Q - z - g|}{s-r}\right)\, dx &\leq \left(\dashint_{Q_s^+} \left(\frac{\rho}{s-r}\right)^{\overline{p}\theta_0} f^{\theta_0}\, dx\right)^{\frac 1{\theta_0}}
\leq \left(\dashint_{Q_s^+}\left(\frac \rho{s-r}\right)^{\overline{p}\theta_0} f^{\frac{(1-\theta_0)\theta}{1-\theta}} f^{\frac{\theta_0-\theta}{1-\theta}}\, dx\right)^{\frac 1{\theta_0}}
\\
&
\leq \left(\dashint_{Q_s^+} \left(\frac{\rho}{s-r}\right)^{\frac{{\overline{p}\theta_0}(1-\theta)}{1-\theta_0}} f^\theta\, dx\right)^{\frac{1-\theta_0}{(1-\theta)\theta_0}} \left(\dashint f\, dx\right)^{\frac{\theta_0 - \theta}{(1-\theta)\theta_0}}
\\
&
\leq c \left(\frac{\rho}{s-r}\right)^a \left(\dashint_{Q_s^+} f^{\theta}\, dx\right)^{\frac 1{\theta}} + \delta \dashint_{Q_s^+} f\, dx,
\end{align*}}
where $a = \frac{\overline{p}\theta_0}{\theta} \left(\frac{1-\theta}{1-\theta_0}\right) >1$. Hence \eqref{eqcompar} implies, that for every $\theta\in (0,1)$, and every $\delta\in (0,1)$ there exist $c,a\in [1,\infty)$, such that
\begin{align*}
&\dashint_{Q_r^+}|V(D\overline u^Q) -V(P)|^2\, dx \leq  \delta\dashint_{Q_s^+}|V(D\overline u^Q) - V(P)|^2\, dx 
+ c \dashint_{Q_s^+}\Phi^*_{|S(P)|}(|\overline F - F_0|)\, dx 
\\
&\quad 
+  c \left(\frac{\rho}{s-r}\right)^a \left(\dashint_{Q_s^+}\Phi_{|P|}^{\theta}(|D\overline u^Q - P|)\, dx\right)^{\frac 1{\theta}}
\\
 &\quad
 +\Phi^*_{|\es(P)|}(\rho |F_0|)+\Phi_{|P|} (\rho |P|) \, dx
 +c\Phi_{\abs{P}}\Big(\big(\rho+\norm{h''-h''(0)}_{L^\infty(Q^+)}\big)\mean{\abs{\overline{u}^1}}_{Q^+}\Big).
\end{align*}
The definitions of $\overline u(x) =u(T(x))$ and $\overline u^Q$ yield
$$
\overline u^Q= \overline u + \left(\begin{matrix}-\langle \overline u_1\rangle_{Q^+}\\ h'(x_1) (\langle \overline u_1\rangle_{Q^+}- \overline u_1)\end{matrix}\right)
$$
and
\seb{
\begin{equation}\label{sym.grad.over.u}
D\overline u^Q - D\overline u =  \left(\begin{matrix}0& \frac 12 \left(h''(x_1)(\overline u_1 - \langle \overline u_1\rangle_{Q^+}) + h'(x_1) \partial_1\overline u_1\right)\\\frac 12 \left(h''(x_1)(\overline u_1 - \langle \overline u_1\rangle_{Q^+}) + h'(x_1) \partial_1\overline u_1\right) & -h'(x_1)\partial_2\overline{u}_1\end{matrix}\right).
\end{equation}
Hence, by \eqref{eq:hammer},  Lemma~\ref{lem:Tpq} and Lemma~\ref{lem:shift2} and \Poincare{} inequality we find 
\begin{align*}
\begin{aligned}
&\dashint_{Q^+}\abs{V(D\overline u^Q)-V(D\overline u)}^2\, dx\sim \dashint_{Q^+} \Phi_{|D\overline u|}(|D\overline u^Q - D\overline u|)\, dx
\\
&\quad \leq c\dashint_{Q^+}\Phi_{\abs{P}}\Big(\abs{h''(x_1)(\overline u_1 - \langle \overline u_1\rangle_{Q^+}) + h'(x_1) \partial_1\overline u_1}+\abs{h'(x_1)\partial_2\overline{u}_1}\Big) +\delta \abs{V(P)-V(D\overline u)}^2\, dx
\\
&\quad \leq c\dashint_{Q^+} \Phi_{\abs{P}}\Big(\rho\frac{\abs{\overline u_1 - \langle \overline u_1\rangle_{Q^+}}}{\rho}\Big)+c\Phi_{\abs{P}} (\rho\abs{\nabla \overline{u}_1})+
\delta\abs{V(P)-V(D\overline u)}^2\, dx
\\
&\quad \leq c\dashint_{Q^+} \Phi_{\abs{P}}\Big(\rho\abs{\nabla \overline{u}_1}\Big)+
\delta\abs{V(P)-V(D\overline u)}^2\, dx
\end{aligned}
\end{align*}
Since $\abs{\partial_1\overline{u}_1}\leq \abs{D\overline{u}}$ we are left to estimate $\abs{\partial_2\overline{u}_1}$. For this we use Korn's inequality for $\overline u^Q-g-z$. Indeed, since
$\partial_2(\overline u^Q-g)_1=\partial_2\overline{u}_1 - h''(0)\mean{\abs{\overline{u}_1}}_{Q^+}$, we find that Lemma~\ref{korn}  and Lemma~\ref{lem:shift2} 
imply
\begin{align*}
&\int_{Q^+}\Phi_{\abs{P}}(\rho\abs{\partial_2\overline u_1})\, dx\leq \int_{Q^+}\Phi_{\abs{P}}(\rho\abs{\nabla (\overline u^Q-g)}+c\rho\mean{\abs{\overline{u}_1}}_{Q^+})\, dx 
\\
&\quad\leq c \int_{Q^+}\Phi_{\abs{P}}(\rho \abs{D (\overline u^Q-g-z)}) +c\Phi_{\abs{P}}(\rho(\mean{\abs{\overline{u}_1}}_{Q^+}+\abs{\nabla z}))\, dx
\\
&\quad\leq c\int_{Q^+}\Phi_{\abs{P}}(\rho \abs{Dg})+\Phi_{\abs{D\overline{u}}}(\rho \abs{D \overline u^Q-D\overline{u})})+\Phi_{\abs{P}}(\rho \abs{D\overline{u}})+c\Phi_{\abs{P}}(\rho\mean{\abs{\overline{u}_1}}_{Q^+})+\Phi_{\abs{P}}(\rho\abs{P})\, dx 
\end{align*}
Using Observation~\ref{obs:g}, the smallness of $\rho$ and Lemma~\ref{lem:Tpq}, we may absorb and find that
\begin{align}
\label{eq:uqu}
\begin{aligned}
&(1-c\rho^{\pbar})\dashint_{Q^+}\abs{V(D\overline u^Q)-V(D\overline u)}^2\, dx
\\
&\leq 
c\dashint_{Q^+} c\Phi_{\abs{P}}(\rho\mean{\abs{\overline{u}_1}}_{Q^+})+
(\delta+c\rho^\pbar)\abs{V(P)-V(D\overline u)}^2\, dx.
\end{aligned}
\end{align}
Now for $\rho\leq \delta_0$ small enough we find by the above, Lemma~\eqref{lem:hammer}, \Poincare's inequality and \eqref{eq:bitch3} that}
\begin{align*}
&\dashint_{Q_r^+}|V(D\overline u) -V(P)|^2\, dx \leq  \delta\dashint_{Q_s^+}|V(D\overline u) - V(P)|^2\, dx 
+ c \dashint_{Q_s^+}\Phi^*_{|S(P)|}(|\overline F - F_0|)\, dx 
\\
&\quad 
+  c \left(\frac{\rho}{s-r}\right)^a \left(\dashint_{Q^+}|V(D\overline u) -V(P)|^{2\theta}\, dx\right)^{\frac 1{\theta}}
\\
 &\quad
 +\Phi^*_{|\es(P)|}(\rho |F_0|)+\Phi_{|P|} (\rho |P|) \, dx
 +c\Phi_{\abs{P}}\Big(\big(\rho+\norm{h''-h''(0)}_{L^\infty(Q^+)}\big)\mean{\abs{\overline{u}^1}}_{Q^+}\Big).
\end{align*}
Using the interpolation~\cite[Lemma 6.1]{giu}), we deduce for arbitrary $\theta>0$
\begin{align*}
&\dashint_{\frac12 Q^+}|V(D\overline u) -V(P)|^2\, dx \leq  c \dashint_{Q_s^+}\Phi^*_{|S(P)|}(|\overline F - F_0|)\, dx 
+  c  \left(\dashint_{Q^+}|V(D\overline u) -V(P)|^{2\theta}\, dx\right)^{\frac 1{\theta}}
\\
 &\quad
 +\Phi^*_{|\es(P)|}(\rho |F_0|)+\Phi_{|P|} (\rho |P|) 
 +c\Phi_{\abs{P}}\Big(\big(\rho+\norm{h''-h''(0)}_{L^\infty(Q^+)}\big)\mean{\abs{\overline{u}^1}}_{Q^+}\Big).
\end{align*}
 By~\eqref{eq:hammer}, we find that
\[
|V(D\overline u) -V(P)|^{2}\sim \Phi_{\abs{P}}(|D\overline u -P|
)\sim (\Phi^*)_{\abs{\es(P)}}(|\es(D\overline u) -\es(P)|).
\]
Thus we may apply \cite[Corollary 3.4 ]{DieKapSch12} in order to deduce 
\begin{align*}
&\dashint_{\frac 12Q^+}|V(D\overline u) - V(P)|^2\, dx \leq c(\Phi^*)_{|\es(P)|}\left(\dashint_{Q^+} |\es(D\overline u) - \es(P)|\, dx\right) +  c \dashint_{Q_s^+}\Phi^*_{|S(P)|}(|\overline F - F_0|)\, dx 
\\
 &\quad
 +c\Phi^*_{|\es(P)|}(\rho |F_0|)+c\Phi_{|P|} (\rho |P|) 
 +c\Phi_{\abs{P}}\Big(\big(\rho+\norm{h''-h''(0)}_{L^\infty(Q^+)}\big)\mean{\abs{\overline{u}^1}}_{Q^+}\Big).
\end{align*}
Now we may use the particular choice of $F_0$ in order to conclude the first estimate of the proposition by \cite[Lemma A.1]{DieKapSch12}. The second estimate follows in the same manner by applying \cite[Corollary 3.4 and  Lemma A.1]{DieKapSch12}.
\end{proof}

One direct application of the above is the following corollary.
\begin{corollary}\label{odhadprum}
Let the setting of Subsection~\ref{flat} be satisfied. Namely, let $R>0$ be sufficiently small and \eqref{transformed.system} be satisfied on $Q^+\subset Q_R^+$ \seb{where $Q^+$ has side-length $\rho$}.
There exists a constant $c>0$ only depending on the constants in Assumption~\ref{ass1} and the Lipschitz constant of $h'$, such that 
\begin{align*}
\dashint_{\frac12Q^+}\Phi(|\nabla \overline{u}|)
&\leq  c\Phi^*\bigg(\norm{\overline{F}}_{\overline{\setBMO }^*(Q^+)}+\dashint_{Q^+}|\es(D\overline u)|\, dx \,  \bigg)
\\
 &\quad
 +c\Phi^*(\rho \abs{\mean{\diag (\overline{F})}_{Q^+}}+c\Phi\Big(\big(\rho+\norm{h''-h''(0)}_{L^\infty(Q^+)}\big)\mean{\abs{\overline{u}^1}}_{Q^+}\Big).
\end{align*}
\end{corollary}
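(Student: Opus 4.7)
The plan is to reduce this to the first inequality of Proposition~\ref{corcompar} with the constant comparison field $P=0$, and then upgrade the resulting $D\overline u$ bound to a $\nabla \overline u$ bound via a Korn--Poincar\'e argument carried out on the auxiliary solenoidal function $\overline u^Q - g$ (which has the right boundary trace on $\underline{\partial}Q^+$ thanks to Observation~\ref{obs:g}).

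\medskip
\noindent\emph{Step 1 (Apply Proposition~\ref{corcompar} with $P=0$).}
Since $\es(0)=0$, $V(0)=0$, $\Phi_{|0|}=\Phi$, $(\Phi^*)_{|\es(0)|}=\Phi^*$, and the terms $\Phi_{|P|}(\rho|P|)$ vanish, the first inequality of Proposition~\ref{corcompar} combined with $|V(D\overline u)|^2\sim\Phi(|D\overline u|)$ (Lemma~\ref{lem:hammer}) yields
\[
\dashint_{\frac12 Q^+}\Phi(|D\overline u|)\, dx
 \leq  c\Phi^*\Big(\dashint_{Q^+}|\es(D\overline u)|\, dx\Big)
 + c\Phi^*\bigl(\|\overline F\|_{\overline{\setBMO}^*(Q^+)}\bigr)
 + c\Phi^*\bigl(\rho|\mean{\diag \overline F}_{Q^+}|\bigr)
 + c\Phi\bigl((\rho + \|h''-h''(0)\|_\infty)\mean{|\overline u_1|}_{Q^+}\bigr).
\]
The first two $\Phi^*$--terms collapse into $c\Phi^*\bigl(\|\overline F\|_{\overline\setBMO^*(Q^+)} + \dashint_{Q^+}|\es(D\overline u)|\,dx\bigr)$ by convexity and $\Delta_2$.

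\medskip
\noindent\emph{Step 2 (Passage from $D\overline u$ to $\nabla\overline u$).}
To introduce full gradients, I set $f:=\overline u^Q - g - (\mean{\overline u^Q_1 - g_1}_{Q^+},0)^\top$. By Observation~\ref{obs:g}, $f_2=0$ on $\underline\partial Q^+$ and $\mean{f_1}_{Q^+}=0$, so Lemma~\ref{korn} applies and gives
\[
\dashint_{Q^+}\Phi(|\nabla(\overline u^Q - g)|)\, dx \leq c\dashint_{Q^+}\Phi(|D(\overline u^Q - g)|)\, dx \leq c\dashint_{Q^+}\Phi(|D\overline u^Q|)\, dx + c\Phi\bigl(\|h''-h''(0)\|_\infty\mean{|\overline u_1|}_{Q^+}\bigr),
\]
using Observation~\ref{obs:g}(4) for $|Dg|$. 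Using the explicit change of variables $\overline u = \overline u^Q + (\mean{u_1}_{T(Q^+)},\,h'(x_1)\overline u_1^Q)^\top$, together with $|h'|\leq c\rho$ for small $\rho$ and $|\nabla g|\leq c\|h''\|_\infty\mean{|\overline u_1|}_{Q^+}$ (Observation~\ref{obs:g}(5)), the remainders of the form $\|h''\|_\infty|\overline u_1^Q|$ are absorbed by Poincar\'e (again from Lemma~\ref{korn}) as
\[
\dashint_{Q^+}\Phi(\|h''\|_\infty|\overline u_1^Q|)\,dx \leq c(\rho\|h''\|_\infty)^p\dashint_{Q^+}\Phi(|\nabla \overline u|)\,dx,
\]
which for $\rho$ sufficiently small is absorbed by the left-hand side.

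\medskip
\noindent\emph{Step 3 (Reduction of $D\overline u^Q$ to $D\overline u$, and conclusion).}
The identity \eqref{sym.grad.over.u} and the absorption argument leading to \eqref{eq:uqu} (applied with $P=0$) provide
\[
\dashint_{Q^+}\Phi(|D\overline u^Q|)\,dx \leq c\dashint_{Q^+}\Phi(|D\overline u|)\,dx + c\Phi\bigl(\rho\mean{|\overline u_1|}_{Q^+}\bigr).
\]
Chaining Steps 1--3 and collecting constants (which depend on $\|h''\|_\infty$, i.e.\ on the Lipschitz constant of $h'$, as permitted by the statement) yields the claimed inequality.

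\medskip
\noindent\emph{Main obstacle.}
The delicate point is the Korn--Poincar\'e upgrade in Step~2: a naive bound would produce an error term of the form $\Phi(\|h''\|_\infty\mean{|\overline u_1|}_{Q^+})$, which is strictly larger than the $\Phi((\rho+\|h''-h''(0)\|_\infty)\mean{|\overline u_1|}_{Q^+})$ appearing in the target. Exploiting the specific structure of $g$ (so that $|Dg|$ carries the small factor $\|h''-h''(0)\|_\infty$), using $|h'|\leq c\rho$, and absorbing $\Phi$--terms of order $(\rho\|h''\|_\infty)^p\dashint\Phi(|\nabla\overline u|)$ for $\rho$ small is what allows the sharp right-hand side to survive the passage from $D\overline u$ to $\nabla\overline u$.
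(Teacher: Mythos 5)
Your overall strategy --- Proposition~\ref{corcompar} with $P=0$, then a Korn--Poincar\'e upgrade from $D\overline u$ to $\nabla\overline u$ --- is exactly the paper's route, which the authors compress into a single line ``apply Proposition~\ref{corcompar} for $P=0$ and \eqref{eq:bitch3}''. Step~1 is correct, and Steps~2--3 reproduce the mechanism behind \eqref{eq:bitch3}. However, there are two issues worth flagging.

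First, a structural mismatch of domains: Step~1 (Proposition~\ref{corcompar}) delivers a bound on $\dashint_{\frac12 Q^+}\Phi(|D\overline u|)$, while in Steps~2--3 you apply Korn, Poincar\'e and \eqref{eq:uqu} on $Q^+$. You need to carry out the whole Korn--Poincar\'e passage on $\frac12 Q^+$ (adjusting the correcting constants to have mean zero on $\frac12 Q^+$ rather than $Q^+$; the discrepancy between $\mean{\cdot}_{\frac12 Q^+}$ and $\mean{\cdot}_{Q^+}$ is controllable by \eqref{eq:bcp}), otherwise the chain does not close.

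Second, and more substantively, the ``main obstacle'' you identify is real but not resolved by your argument. When reconstructing $\nabla\overline u$ from $\nabla(\overline u^Q - g)$, the correction splits as
\[
\nabla\overline u - \nabla(\overline u^Q - g)
=\begin{pmatrix}0 & -h''(0)\mean{u_1}_{T(Q^+)}\\[2pt]
h''(x_1)\overline u_1 + h'(x_1)\partial_1\overline u_1 & h'(x_1)\partial_2\overline u_1\end{pmatrix}.
\]
The entries proportional to $h'$ carry a factor $\rho$, and the $h''\,\overline u_1^Q$ pieces have zero mean and are absorbed exactly as you say via Poincar\'e. But the antisymmetric part retains the \emph{constant} $h''(0)\mean{u_1}_{T(Q^+)}$ (equivalently: $|\nabla g|\leq c\|h''\|_\infty\mean{|\overline u_1|}_{Q^+}$ by Observation~\ref{obs:g}(5), with no small factor, unlike $|Dg|$). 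Poincar\'e is of no help for a constant, and Korn only controls the vorticity of $\overline u^Q - g$, not the vorticity of $g$ itself. So the argument as written produces the error term $\Phi\bigl(c(\|h''\|_\infty)\mean{|\overline u_1|}_{Q^+}\bigr)$ rather than the $\Phi\bigl((\rho+\|h''-h''(0)\|_\infty)\mean{|\overline u_1|}_{Q^+}\bigr)$ appearing in the statement. This appears to be an imprecision shared with the paper's statement itself: the subsequent use of the corollary in Lemma~\ref{lot} only invokes the estimate with $\Phi(\mean{|\overline u|}_{Q^+})$, i.e.\ without exploiting the small prefactor, so the result as used is fine, but your claim that Poincar\'e absorption rescues the stated sharp prefactor is not justified.
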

\begin{proof}
We apply Proposition~\ref{corcompar} for $P=0$. The result follows then by \eqref{eq:bitch3} (which allows to estimate $\nabla \overline u$, by $D\overline u$).
\end{proof}

\section{Comparison}\label{sec:comp}
The second main ingredient for the non-linear Calderon Zygmund theory is the comparison of the given solution with a specially constructed comparison function which is known to have better regularity. 
 Again, we assume to have the setting of Subsection~\ref{flat}.

For any $Q^+\subset Q_R^+$ with side length $\rho$, we define the following reflection of a general function $w: Q^+\to \mathbb R^2$ to be the function $\hat{w}: Q\to \setR^2$ given by
\begin{align*}
\hat w_1(x_1,x_2) &= \left\{ \begin{array}{ll}
w_1(x_1,x_2) & \textrm{for}\,Q\cap\set{x_2\geq 0}, \\
w_1(x_1,-x_2) &  \textrm{for}\, Q\cap\set{x_2< 0},
\end{array} \right.
\\
\hat w_2(x_1,x_2) &= \left\{ \begin{array}{ll}
w_2(x_1,x_2) & \textrm{for}\,Q\cap\set{x_2\geq 0}, \\
-w_2(x_1,-x_2) & \textrm{for}\,Q\cap\set{x_2< 0}.
\end{array} \right.
\end{align*}
For $w\in W^{1,\Phi}_{\sigma}(Q^+)$ such that $w_2|_{\underline \partial Q^+} = 0$, we find  that $\hat w\in W^{1,\Phi}_\sigma(Q)$. Next we consider $(v,\hat{\pi})$ to be the solution to the system
\begin{equation}
\begin{split}\label{homosys}
-\diver \es( Dv ) + \nabla \hat \pi &= 0\quad \mbox{ on }Q,\\
\diver v &= 0 \quad \mbox{ on }Q,\\
v|_{\partial Q} & = \hat w|_{\partial Q}.
\end{split}
\end{equation}
A weak solution $(v,\hat{\pi})\in (W^{1,\Phi}_\sigma(Q)\times L^{\Phi^*}_0(\Omega))$ to such a system exists due to the standard theory of monotone operators~\cite[Section 3]{DieKap12}. We define $v'$ as
$$
v'_1(x_1, x_2) = v_1(x_1,-x_2),\quad v'_2(x_1,x_2) = -v_2(x_1,-x_2)\text{ for }x_2<0\text{ and }v'(x_1,x_2)=v(x_1,x_2)\text{ for }x_2\geq 0,
$$
 of which we will prove that it is a solution to \eqref{homosys}, as well. Indeed, the boundary condition as well as the divergence free constraint follow from the very definition of $v'$ by simple calculations. Furthermore, for $\varphi\in \mathcal{C}^\infty_{0,
\divergence}(Q)$ arbitrary we define
$$ 
\varphi_1'(x_1, x_2) = \varphi_1(x_1,-x_2),\quad \varphi_2'(x_1,x_2) = -\varphi_2(x_1,-x_2)\text{ for }x_2<0\text{ and }\phi'(x_1,x_2)=\phi(x_1,x_2)\text{ for }x_2\geq 0.
$$
we find that $\varphi'\in \mathcal{C}^{0,1}_{0,
\divergence}(Q)$ and
\begin{align*}
D_{12}\varphi'(x_1,x_2) = -D_{12}\varphi(x_1,-x_2), \quad D_{ii}\varphi'(x_1,x_2) = D_{ii}(x_1,-x_2)\varphi,\ i=1,2,\text{ for }x_2<0.
\end{align*}
The same is also true for $v'$. Thus, using that $\phi-\phi'\in \mathcal{C}^{0,1}(Q\setminus Q^+)$ we find by \eqref{potencial}
$$
\int_Q \es(Dv'):D\varphi'\, dx = \int_Q \es(Dv'):D\varphi\, dx = \int_Q \es(Dv):D\varphi'\, dx = 0.
$$
Hence, by the uniqueness it follows that $v= v'$. The symmetry of $v'$ implies that 
\begin{equation}\label{full.slip.flat}
v_2 = 0,\quad  \partial_2 u^1 = 0,\quad \mbox{on }\underline\partial Q^+.
\end{equation}
The above observation allows us to show local regularity of $v$ for half cubes:  
\begin{theorem} \label{odlarse}
Let $\Phi$ hold Assumption~\ref{ass1} and $Q^+$ be any half cube. Then there exist $C,\gamma > 0$, only depending on the constants in the Assumption~\ref{ass1} such that for any $w\in W^{1,\Phi}_\sigma(Q^+)$, with $w_2|_{\underline{\partial}Q^+}=0$ the solution $v$ of \eqref{homosys} satisfies
$$
\dashint_{\lambda Q^+} |V(Dv) - \diag\langle V(Dv)\rangle|^2\, dx \leq C\lambda^{2\gamma}\dashint_{Q^+}|V(Dv) - \diag\langle V(Dv)\rangle|^2\, dx,
$$
for every $\lambda \in (0,1]$.
\end{theorem}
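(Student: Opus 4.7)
The plan is to combine the reflection/symmetry argument set up in the discussion preceding the theorem with the interior oscillation decay for homogeneous Orlicz--Stokes solutions on full cubes that is available from the local theory of~\cite{DieKapSch14}. The whole point of the computation $v=v'$ carried out just above the statement is that it promotes $v$ from being merely a solution of the Dirichlet problem~\eqref{homosys} to one that enjoys the perfect slip conditions on $\underline{\partial}Q^+$, and, more importantly, to a local weak solution of the homogeneous system on the \emph{full} cube $Q$.

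First I would record the parities. The identity $v=v'$ says $v_1$ is even and $v_2$ is odd under $x_2\mapsto -x_2$. Differentiating, $(Dv)_{11}$ and $(Dv)_{22}$ are even in $x_2$, while $(Dv)_{12}=(Dv)_{21}$ is odd. Since $\abs{Dv}$ is then even, the definition~\eqref{eq:V} of $V$ gives the same parities for $V(Dv)$: diagonal entries even, off-diagonal entries odd. Averaging over the symmetric cube $\lambda Q$ centred on $\underline{\partial}Q^+$ therefore kills the off-diagonal part and keeps the diagonal part equal to its average over $\lambda Q^+$, that is
\[
\langle V(Dv)\rangle_{\lambda Q}=\diag\langle V(Dv)\rangle_{\lambda Q^+},
\]
and $\abs{\lambda Q}=2\abs{\lambda Q^+}$ together with the oddness of the off-diagonal part gives
\[
\dashint_{\lambda Q}\bigabs{V(Dv)-\langle V(Dv)\rangle_{\lambda Q}}^2\,dx
=\dashint_{\lambda Q^+}\bigabs{V(Dv)-\diag\langle V(Dv)\rangle_{\lambda Q^+}}^2\,dx,
\]
and similarly on $Q$ versus $Q^+$.

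Second, since $v$ is a local weak solution of $-\diver\es(Dv)+\nabla\hat\pi=0$ on all of $Q$ (as explained before the statement), the interior decay estimate for $V(Dv)$ from~\cite{DieKapSch14} applies. For any $\lambda\in(0,\tfrac12]$ it yields $\gamma>0$, depending only on the constants of Assumption~\ref{ass1}, and a constant $C$ such that
\[
\dashint_{\lambda Q}\bigabs{V(Dv)-\langle V(Dv)\rangle_{\lambda Q}}^2\,dx
\leq C\lambda^{2\gamma}\dashint_{Q}\bigabs{V(Dv)-\langle V(Dv)\rangle_{Q}}^2\,dx.
\]
Substituting the two symmetry identities from the first step into the two sides of this inequality produces the half-cube decay claimed in the theorem for $\lambda\in(0,\tfrac12]$; the remaining range $\lambda\in(\tfrac12,1]$ is handled trivially by absorbing $2^{2\gamma}$ into the constant.

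The only nontrivial point in this programme is making sure the reflected $v$ really is a local weak solution on $Q$ in the class required by~\cite{DieKapSch14}. This has in effect already been done by the uniqueness argument $v=v'$ in the paragraph above the statement, since it shows the reflected function coincides with the one constructed on $Q$ by solving~\eqref{homosys} on the symmetric cube. The rest is bookkeeping with parities and the trivial extension of the decay to $\lambda$ close to $1$.
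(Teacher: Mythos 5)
Your proof is correct and follows essentially the same route as the paper's: reflect $v$ to a solution of the homogeneous system on the full cube $Q$, invoke the interior decay estimate of \cite{DieKapSch14} (the paper cites Theorem 3.8 therein) for $V(Dv)$, and then use the symmetry $v=v'$ to translate the full-cube oscillation decay into the half-cube diagonal oscillation decay (the paper does this on balls/half-balls and then passes to cubes). You have simply spelled out in detail the parity bookkeeping that the paper compresses into the phrase ``the symmetry of $\hat w$ transfers to $v$.''
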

\begin{proof}
Since $v$ is a local solution to 
\begin{equation}\begin{split}
-\diver \mathcal S(Dv) + \nabla \rho & = 0,\\
\diver v &= 0
\end{split}
\end{equation}
 we may apply \cite[Theorem 3.8]{DieKapSch14} to $v$. Since by the above the symmetry of $\hat w$ transfers to $v$, we find the estimate above for half balls. The estimate for half cubes follows by enlarging the constant.
\end{proof}
Next, we introduce a comparison estimate for the solution $u$ on $\Omega_R^+$. 
Let $Q^+\subset Q^+_R$; then we define $\overline u' = \overline u^Q  - g$, where $\overline u^Q, g$ are defined in Subsection~\ref{flat}. Recall that $\overline u_2' = 0$ on $\underline \partial Q^+$ and $\diver \overline u' = 0$.
Let $v$ be the solution to \eqref{homosys} fulfilling $v=\hat{\overline u}'$ on $ \partial Q$. It follows from \eqref{full.slip.flat} that $v$ satisfies the perfect slip boundary conditions on $\underline \partial Q^+$. The distance between $\overline{u}$ and $v$ is quantified by the following proposition.
\begin{proposition}\label{keylemma}
Let the setting of Subsection~\ref{flat} be satisfied. Namely, let $R>0$ be sufficiently small and \eqref{transformed.system} be satisfied on $Q^+\subset Q_R^+$ with side-length $\rho$.
There exists a constant $c>0$ depending only on the constants in Assumption~\ref{ass1} and the Lipschitz constant of $h'$, such that for the weak solution $v$ to \eqref{homosys} with $w = \overline u^{Q}-g$ we find that
for every $P \in \mathbb R^{2\times 2}_{\sym}$ with $P_{12} = P_{21} = 0$
\begin{align*}
\dashint_{Q^+} |V(D\overline u) - V(Dv)|^2 
&\leq \delta (\Phi^*)_{|\es(P)|}\left(\dashint_{2Q^+} |\es(D\overline u) - \es(P)|\right) + c(\Phi^*)_{|\es(P)|}\left(\|\overline F\|_{\overline{\setBMO }^*(2Q^+)}\right) 
\\
 &\quad
 +c\Phi^*_{|\es(P)|}(\rho |F_0|)+c\Phi_{|P|} (\rho |P|) 
 +c\Phi_{\abs{P}}\Big(\big(\rho+\norm{h''-h''(0)}_{L^\infty(Q^+)}\big)\mean{\abs{\overline{u}^1}}_{Q^+}\Big),
\end{align*}
where $F_0:=\diag\mean{\overline{F}}_{Q^+}$.
\end{proposition}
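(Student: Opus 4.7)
The strategy is to use $\tilde\phi := \overline u^Q - g - v$ as a test function in both the weak formulation \eqref{transformed.system} of the equation for $\overline u^Q$ and the weak formulation of \eqref{homosys} for $v$. By Observation~\ref{obs:g}(i),(iii) and $\divergence v=0$, $v_2|_{\underline\partial Q^+}=0$ (via \eqref{full.slip.flat}), and $v=\overline u^Q-g$ on $\overline\partial Q^+$, the function $\tilde\phi$ is divergence free, satisfies $\tilde\phi_2=0$ on $\underline\partial Q^+$ and $\tilde\phi=0$ on $\overline\partial Q^+$; hence it is admissible in both equations. Crucially, no Bogovskij correction is needed here, in contrast to the proof of Lemma~\ref{lemmacompar}.

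\textbf{Step 1: Test and subtract.} Plugging $\tilde\phi$ into the two weak formulations and subtracting gives
\begin{equation*}
\int_{Q^+}\bigl[\es((H^{-1}\nabla\overline u^Q H)_\sym)-\es((H^{-1}\nabla v H)_\sym)\bigr]:(H^{-1}\nabla\tilde\phi H)_\sym\,dx = \sum_i \mathcal{J}_i,
\end{equation*}
where the $\mathcal{J}_i$ collect: the right-hand side $\int\overline F:(H^{-1}\nabla\tilde\phi H)_\sym$; the error $\langle E,\tilde\phi\rangle$ from \eqref{transformed.system}; the commutator $\int\es(Dv):[D\tilde\phi-(H^{-1}\nabla\tilde\phi H)_\sym]$; and the $Dg$ cross terms arising by bilinearity. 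On the left, the monotonicity inequality \eqref{eq:hammer} bounds the integrand from below by $c|V((H^{-1}\nabla\overline u^Q H)_\sym)-V((H^{-1}\nabla v H)_\sym)|^2$.

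\textbf{Step 2: From $H$-conjugated to true symmetric gradients.} We convert the $V$-distance of the conjugated gradients into $|V(D\overline u)-V(Dv)|^2$. Using $\|H-I\|_\infty\le c\rho$ together with Lemma~\ref{lem:shift2} (shift change) and \eqref{eq:typeT} yields
\begin{equation*}
|V(D\overline u)-V(Dv)|^2 \le c\,|V((H^{-1}\nabla\overline u^Q H)_\sym)-V((H^{-1}\nabla v H)_\sym)|^2 + c\,\Phi_{|P|}(\rho|\nabla\overline u^Q|) + c\,\Phi_{|P|}(\rho|\nabla v|),
\end{equation*}
together with the conversion $|V(D\overline u^Q)-V(D\overline u)|^2\le c\Phi_{|P|}(\rho|\nabla\overline u|)+\delta|V(D\overline u)-V(P)|^2$ which follows from \eqref{sym.grad.over.u} and \eqref{eq:uqu}. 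An energy estimate for the Stokes problem \eqref{homosys} (testing with $v-\hat w$ and using Lemma~\ref{lem:hammer}) bounds $\int\Phi(|Dv|)$ by $\int\Phi(|D(\overline u^Q-g)|)$, and therefore all $|Dv|$-terms are ultimately controlled by $|V(D\overline u)-V(P)|^2$ plus already-admissible terms.

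\textbf{Step 3: Estimating the right-hand side.} The $\overline F$-term is handled by Young's inequality \eqref{eq:young} applied exactly as for $\mathcal I_1$ in Lemma~\ref{lemmacompar}, using the best-constant property \eqref{eq:bcp} to replace $\overline F$ by $\overline F-F_0$ with $F_0=\diag\mean{\overline F}_{Q^+}$ (and the extra piece $\Phi^*_{|\es(P)|}(\rho|F_0|)$). The four error terms $\langle E_k,\tilde\phi\rangle$ are estimated verbatim as in \eqref{nb1}--\eqref{nb4}, yielding $\Phi_{|P|}((\rho+\|h''-h''(0)\|_\infty)\mean{|\overline u_1|}_{Q^+})$, $\Phi_{|P|}(\rho|P|)$ and absorbable multiples of $|V(D\overline u)-V(P)|^2$. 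The commutator involving $Dv$ is handled analogously via $\|H-I\|_\infty\le c\rho$. Collecting, for any small $\delta_1>0$,
\begin{equation*}
\dashint_{Q^+}|V(D\overline u)-V(Dv)|^2\,dx \le \delta_1\dashint_{Q^+}|V(D\overline u)-V(P)|^2\,dx + (\text{admissible terms})
\end{equation*}
where the admissible terms are precisely those appearing on the right-hand side of the proposition (excluding the $\delta$-term).

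\textbf{Step 4: Bootstrap via Proposition~\ref{corcompar}.} To convert the residual $\delta_1\dashint_{Q^+}|V(D\overline u)-V(P)|^2$ into the stated $(\Phi^*)_{|\es(P)|}$-term on $2Q^+$, we apply Proposition~\ref{corcompar} on the cube $2Q^+$ in place of $Q^+$; this bounds $\dashint_{Q^+}|V(D\overline u)-V(P)|^2$ by $c(\Phi^*)_{|\es(P)|}(\dashint_{2Q^+}|\es(D\overline u)-\es(P)|)$ plus the very same admissible terms (now on $2Q^+$). Absorbing and choosing $\delta_1$ small proves the claim.

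\textbf{Main obstacle.} The delicate point is Step~2: the test-function pairing produces $(H^{-1}\nabla\tilde\phi H)_\sym$ rather than $D\tilde\phi$, so the monotonicity inequality controls only the $H$-conjugated difference. Converting this to a genuine bound on $|V(D\overline u)-V(Dv)|^2$ produces commutator-type terms mixing $\rho\nabla v$ and $\rho\nabla\overline u^Q$, and these must in turn be dominated by $|V(D\overline u)-V(P)|^2$ modulo admissible $\Phi_{|P|}(\rho|P|)$, $\mean{|\overline u_1|}_{Q^+}$ and $F_0$-type corrections; the energy estimate for $v$, together with the Korn inequality from Lemma~\ref{korn}, is what makes this possible. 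Once this bookkeeping is done, the remainder is essentially the same as Lemma~\ref{lemmacompar} combined with the reverse Hölder from Proposition~\ref{corcompar}.
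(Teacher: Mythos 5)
Your overall strategy is the same as the paper's: test with $\tilde\phi=\overline u^Q-g-v$ (the paper writes $\varphi=\hat{\overline u}'-v$ and restricts to $Q^+$), decompose into error terms, estimate each, and then absorb the residual $\delta\,\dashint|V(D\overline u)-V(P)|^2$ via Proposition~\ref{corcompar}. Where you diverge is in the form of $\mathcal J_0$. You keep the $H$-conjugated difference $\int[\es((H^{-1}\nabla\overline u^Q H)_\sym)-\es((H^{-1}\nabla v H)_\sym)]:(H^{-1}\nabla\tilde\phi H)_\sym$ on the left. But this does \emph{not} come from ``plugging into the two weak formulations and subtracting'': the $v$-equation gives $\int\es(Dv):D\tilde\phi=0$ with no conjugation, so to bring it to your form you need \emph{two} commutators for $v$ — the one you list, $\int\es(Dv):[D\tilde\phi-(H^{-1}\nabla\tilde\phi H)_\sym]$, \emph{and} the missing one $\int[\es((H^{-1}\nabla v H)_\sym)-\es(Dv)]:(H^{-1}\nabla\tilde\phi H)_\sym$. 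That second term involves $\Phi'_{|Dv|}(\rho|\nabla v|)$, and taming it needs a shift-change to $|P|$ plus a Korn estimate for $v$ plus the $\rho$-smallness, all of which you only gesture at (``an energy estimate bounds $\int\Phi(|Dv|)$'' is not the right currency: you need $\int\Phi_{|P|}(\rho|\nabla v|)$, which requires the \emph{shifted} energy estimate $\int|V(Dv)-V(P)|^2\le c\int|V(D\hat w)-V(P)|^2$ followed by a shift-change argument). None of this is unworkable, but you have silently taken on extra bookkeeping. The paper sidesteps it entirely by choosing $\mathcal J_0=\dashint(\es(D\overline u^Q)-\es(Dv)):(D\overline u^Q-Dv)$ so that the identity $\int\es(Dv):D\varphi=0$ kills the $v$-contribution cleanly; the only conjugation commutators that survive ($\mathcal J_2,\mathcal J_3$) involve $\overline u^Q$, not $v$, and those are already controlled in Lemma~\ref{lemmacompar}. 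If you keep your decomposition, spell out the missing $v$-commutator and supply the shifted energy estimate for $v$; otherwise switch to the paper's $\mathcal J_0$, which makes Step~2 disappear.
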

\begin{proof}
We set 
\begin{equation*}
\varphi = \hat{\overline u}' - v.\end{equation*}
Using this $\varphi$ as a test function in \eqref{homosys}, we obtain by Lemma~\ref{lem:hammer}
\begin{equation*}
\int_{Q^+} |V(Dv)|^2\, dx\sim \int_Q\es(Dv)\cdot Dv\, dx=  \int_Q\es(Dv)\cdot D\overline u'\, dx.
\end{equation*}
Moreover, we may use $\varphi$ as a test function in \eqref{transformed.system} and find
\begin{align*}
\begin{aligned}
\mathcal J_0&:=\dashint_{Q^+} (\es(D\overline u^Q) - \es(Dv)):(D\overline u^Q - Dv)\, dx
\\
& = \dashint_{Q^+} \overline F (H^{-1}(\nabla\overline u' - \nabla v)H)_\sym  -\dashint_{Q^+}\es(D\overline u^Q):((H^{-1}\nabla \varphi H)_\sym - D\varphi)\, dx\\
&\quad  - \dashint_{Q^+}(\es(H^{-1} \nabla\overline u^Q H)_\sym - \es(D\overline u^Q)):(H^{-1}\nabla \varphi H)_\sym\, dx +\dashint_{Q^+} (\es(D\overline u^Q) - \es(Dv)):Dg\, dx\\
 &\quad+ \skp{E}{\varphi} =: \sum_{i=1}^4 \mathcal J_i + \skp{E}{\varphi}.
\end{aligned}
\end{align*}
By Lemma~\ref{lem:hammer}, we get
\begin{equation*}
\mathcal J_0\geq c\dashint_{Q^+} |V(D\overline u^Q) - V(Dv)|^2\, dx.
\end{equation*}
Since $\dashint_{Q^+} F_0 D\varphi\, dx = 0$, we use Young's inequality (Lemma~\ref{lem.young}) to get
\begin{align*}
\mathcal J_1&\leq \dashint_{Q^+} \abs{\overline F - F_0}\abs{ (H^{-1}(\nabla \varphi)H)_\sym}\, dx  + \Bigabs{\dashint_{Q^+} F_0 (H^{-1}(\nabla\varphi)H)_\sym\, dx} 
\\
&\leq c\dashint_{Q^+}\Phi^*_{|\es(P)|} (|F-F_0|)\, dx  + c \Phi^*_{|\es(P)|} (\rho |F_0|)  + \delta\dashint_{Q^+} \Phi_{|P|} (|D\varphi|)\, dx
\end{align*}
where the term involving $F_0$ is estimated as was done in Lemma~\ref{lemmacompar}.
Lemma~\ref{lem:hammer} implies
\begin{equation*}
\mathcal J_2 \leq c\dashint_{Q^+} |S(D\overline u^Q)| \|H - I\|_\infty |\nabla \varphi|\, dx\leq c\dashint_{Q^+}\Phi_{|P|}^*(\rho |\es(D\overline u^Q)|)\, dx + \delta \dashint_{Q^+} \Phi_{|P|}(|D \varphi|)\, dx.
\end{equation*}
Again by Lemma~\ref{lem:shift2}, \eqref{odhad.nabla.uq} and \Poincare, we find that
\begin{align*}
\mathcal J_3 &\leq c\dashint_{Q^+} \Phi'_{|D\overline u^Q|}(|(H^{-1}\nabla \overline u^Q H)_\sym - D\overline u^Q|) |H\nabla \varphi H|
 \\
&\leq c\dashint_{Q^+} \Phi_{|D\overline u^Q|} (\rho |\nabla \overline u^Q|)\, dx
 + \delta\dashint_{Q^+} \Phi_{|D\overline u^Q|}(|D\varphi|)
\\
&\leq   c \dashint_{Q_s^+}\Phi_{\abs{P}}(\rho |D\overline u|) +c\Phi_{\abs{P}}\Big(\big(\rho\big)\mean{\abs{\overline{u}^1}}_{Q^+}\Big) + c \Phi_{|P|}(\rho|P|)
\\ &\quad +\delta \dashint_{Q^+}\Phi_{|P|}(|D\varphi|)\, dx + \delta \dashint_{Q^+} |V(D\overline u^Q) - V(P)|^2\, dx.
\end{align*}

By the Korn inequality, Lemmas \ref{lem:shift2} and \ref{lem:hammer}
\begin{align*}
\mathcal J_4 &\leq \dashint_{Q^+} \Phi_{|D\overline u^Q|}(|Dg|)\, dx + \delta \dashint_{Q^+} \Phi^*_{|\es(D\overline u^Q)|}(\es(D\overline u^Q) - \es(Dv))\, dx\\
&\leq \delta \dashint_{Q^+}|V(D\overline u^Q) - V(Dv)|^2\, dx + \delta \dashint_{Q^+}|V(D\overline u^Q) - V(P)|^2\, dx + c\Phi_{|P|} (|h''(x_1) - h''(0)||\langle u_1\rangle_{T(Q^+)}|)
\end{align*}
Observe, that using the fact that we have by Lemma~\ref{lem:shift2} and \eqref{eq:hammer} that
\begin{align}
\label{nb6}
\Phi_{\abs{P}} (|D\varphi|)\leq c\dashint_{Q^+} |V(D\overline u') - V(P)|^2\, dx+c\dashint_{Q^+} |V(D\overline u') - V(Dv)|^2\, dx,
\end{align}
and 
\begin{align}
\label{nb5}
|V(D\overline u) - V(Dv)|^2 \leq |V(D\overline u^Q) - V(Dv)|^2 + |V(D\overline u) - V(D\overline u^Q)|^2.
\end{align}
Analogously to the proof of Lemma \ref{lemmacompar} on $E$. Indeed, since we may apply Korn's inequality on $\phi$ we find by \eqref{nb1}, \eqref{nb2},\eqref{nb3}, \eqref{eq:bitch3} (with $\theta=1$), \eqref{nb4}, \eqref{nb0} and \eqref{nb5}, that
\begin{align*}
|\langle E,\varphi\rangle|
&\leq  c \dashint_{Q^+}\Phi^*_{|S(P)|}(|\overline F - F_0|)\, dx +
\delta \dashint_{Q^+} \Phi_{\abs{P}} (|D\varphi|)\, dx +  \delta \dashint_{Q^+} |V(D\overline u) - V(P)|^2\, dx+\dashint_{Q^+}\Phi_{\abs{P}}(\rho\abs{\nabla \overline {u}})\, dx
\\
&\quad +c\Phi_{\abs{P}}(\rho\abs{P})+c\Phi_{\abs{P}}\Big(\big(\rho+\norm{h''-h''(0)}_{L^\infty(Q^+)}\big)\mean{\abs{\overline{u}^1}}_{Q^+}\Big).
\end{align*}
Hence by \eqref{nb6} and \eqref{nb5} we may absorb the delta parts and find by the definition of $\overline u'$, the estimates of $\overline u^Q$ and $g$, and \eqref{eq:uqu} that
\begin{align*}
&\dashint_{Q^+} |V(D\overline u) - V(Dv)|^2\, dx 
\\
&\quad\leq c\dashint_{Q^+} \Phi^*_{|\es(P)|}(|F - F_0|)\, dx +\delta \dashint_{Q^+} |V(D\overline u) - V(P)|^2+|V(D\overline u') - V(Dv)|^2\, dx +c \Phi^*_{|\es(P)|} (\rho |F_0|)
\\
&\qquad
+\dashint_{Q^+}\Phi_{\abs{P}}(\rho\abs{\nabla \overline {u}})\, dx
 +c\Phi_{\abs{P}}(\rho\abs{P})+c\Phi_{\abs{P}}\Big(\big(\rho+\norm{h''-h''(0)}_{L^\infty(Q^+)}\big)\mean{\abs{\overline{u}^1}}_{Q^+}\Big)
 \\
 &\quad \leq c\dashint_{Q^+} \Phi^*_{|\es(P)|}(|F - F_0|)\, dx +\delta \dashint_{Q^+} |V(D\overline u) - V(P)|^2\, dx +c \Phi^*_{|\es(P)|} (\rho |F_0|)
\\
&\qquad
+\dashint_{Q^+}\Phi_{\abs{P}}(\rho\abs{\nabla \overline {u}})\, dx
 +c\Phi_{\abs{P}}(\rho\abs{P})+c\Phi_{\abs{P}}\Big(\big(\rho+\norm{h''-h''(0)}_{L^\infty(Q^+)}\big)\mean{\abs{\overline{u}^1}}_{Q^+}\Big).
\end{align*}
The proposition now is a consequence of Proposition~\ref{corcompar} and Corollary~\ref{lemmacompar}.
\end{proof}
The following corollary will be needed in case $\Phi$ is almost increasing.
\begin{corollary}\label{Dibene} Let Assumption~\ref{ass1} be satisfied and $\Phi''$ be almost increasing.
Let the setting of Subsection~\ref{flat} be satisfied. Namely, let \eqref{transformed.system} be satisfied on $Q^+\subset Q_R^+$ with side-length $\rho$.

There exists a constant $c>0$ only depending on the constants in Assumption~\ref{ass1} and the Lipschitz constant of $h'$, such that for $P, F_0\in \mathbb R^{2\times 2}_{\sym}$ with $P_{12} = P_{21} = 0$
\begin{align*}
\dashint_{Q^+} |V(D\overline u) - V(Dv)|^2\, dx 
&\leq \delta \dashint_{2Q^+} |V(D\overline u) - V(P)|^2\, dx + c(\Phi^*)\left(\|\overline F\|_{\overline{\setBMO }^*(2Q^+)}\right) 
\\
 &\quad
 +\Phi^*(\rho |F_0|)+\Phi_{|P|} (\rho |P|) 
 +c\Phi_{\abs{P}}\Big(\big(\rho+\norm{h''-h''(0)}_{L^\infty(Q^+)}\big)\mean{\abs{\overline{u}^1}}_{Q^+}\Big).
\end{align*}
\end{corollary}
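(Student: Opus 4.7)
My plan is to derive Corollary~\ref{Dibene} directly from Proposition~\ref{keylemma}, since the conclusions of the two statements differ in only two places. First, the leading $\delta$-term of Proposition~\ref{keylemma} carries the shifted $(\Phi^*)_{|\es(P)|}$ applied to the integral $\dashint_{2Q^+}|\es(D\overline u)-\es(P)|\,dx$, while here it should simply be $\dashint_{2Q^+}|V(D\overline u)-V(P)|^2\,dx$. Second, the $\overline{\setBMO}$-term and the $\rho|F_0|$-term appear with shifts $(\Phi^*)_{|\es(P)|}$ in Proposition~\ref{keylemma} and without any shift here. All remaining lower-order terms coincide already, so I only need two reductions.

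For the first discrepancy I would apply Jensen's inequality to move $(\Phi^*)_{|\es(P)|}$ inside the integral and then invoke Lemma~\ref{lem:hammer}, which gives the pointwise equivalence $(\Phi^*)_{|\es(P)|}(|\es(R)-\es(P)|)\sim |V(R)-V(P)|^2$. This produces $\dashint_{2Q^+}|V(D\overline u)-V(P)|^2\,dx$ with a harmless multiplicative constant, which I absorb by passing to a slightly smaller $\delta$.

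The main non-trivial step, and the one that genuinely uses the shear thickening hypothesis, will be the de-shift estimate
\[
(\Phi^*)_a(t)\leq c\,\Phi^*(t)\qquad \text{for all } a,t\geq 0,
\]
valid whenever $\Phi''$ is almost increasing. I intend to justify it as follows: by Lemma~\ref{lem:hammer} one has $(\Phi^*)_{|\es(Q)|}\sim (\Phi_{|Q|})^*$, so it suffices to show $(\Phi_a)^*\lesssim \Phi^*$, which by duality is equivalent to $\Phi_a(t)\gtrsim \Phi(t)$ uniformly in $a\geq 0$. Using $\Phi'(s)\sim s\Phi''(s)$ from Assumption~\ref{ass1}, one rewrites
\[
\Phi_a(t)=\int_0^t \Phi'(a+s)\,\frac{s}{a+s}\,ds\sim \int_0^t s\,\Phi''(a+s)\,ds,
\]
and the almost monotonicity $\Phi''(a+s)\geq c\,\Phi''(s)$ gives $\Phi_a(t)\gtrsim \int_0^t s\,\Phi''(s)\,ds\sim \Phi(t)$, as required. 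Applying this with $a=|\es(P)|$ to the $\overline{\setBMO}$-term and to $\Phi^*_{|\es(P)|}(\rho|F_0|)$ in Proposition~\ref{keylemma} replaces the shifted complementary function by the unshifted $\Phi^*$ at the cost of a universal constant.

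Substituting the two reductions back into Proposition~\ref{keylemma} yields precisely the inequality of Corollary~\ref{Dibene}; the lower-order terms $\Phi_{|P|}(\rho|P|)$ and $\Phi_{|P|}\bigl((\rho+\|h''-h''(0)\|_\infty)\mean{|\overline u_1|}_{Q^+}\bigr)$ keep their shifts and pass through unchanged, since these are the natural shifts for $\Phi$ and are \emph{not} removable (the analogue of the de-shift estimate above fails for $\Phi$ itself in the shear thickening regime). The main conceptual obstacle is therefore not computational but rather recognising that the almost monotonicity of $\Phi''$ provides exactly the one-sided inequality needed to de-shift $\Phi^*$ while leaving the $\Phi$-terms intact; once this is in place, everything else is a direct book-keeping from Proposition~\ref{keylemma}.
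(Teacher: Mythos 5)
Your proof is correct and follows essentially the same route as the paper: apply Proposition~\ref{keylemma}, drop the shifts using the shear-thickening hypothesis, and convert the $\delta$-term via Jensen's inequality together with \eqref{eq:hammer}. The paper phrases the de-shift as ``$\Phi''$ almost increasing implies $(\Phi^*)''$ almost decreasing,'' whereas you derive $(\Phi^*)_{|\es(P)|}\lesssim\Phi^*$ by duality from $\Phi_{|P|}\gtrsim\Phi$; these are two formulations of the same observation, so the arguments coincide in content.
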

\begin{proof} Since $\Phi''$ is almost increasing, we find that $(\Phi^*)''$ is almost decreasing. Hence the result follows immediately by Proposition~\ref{keylemma} and \eqref{eq:hammer}. 
\end{proof}

\section{Oscillation estimates}
\label{potmt}
\subsection{Telescope sum arguments}
In this subsection we introduce properties of mean oscillations. Explicitly, we will introduce some telescope sum arguments closely related to the seminal observation of Campanato that $\setBMO_{r^\alpha}(Q)= \mathcal{C}^{\alpha}(Q)$, see~\cite{Cam63}.
 
Please observe the following basic calculations for $g:Q\to \setR$. 
\begin{align*}
  \abs{\mean{g}_{\frac 12 Q}-\mean{g}_{Q}}\leq
  \dashint_{\frac 12 Q}\abs{g-\mean{g}_{Q}}dx \leq  4\dashint_{Q}\abs{g-\mean{g}_{Q}}dx.
\end{align*}
Iterating this $m$ times implies
\begin{align*}
  \abs{\mean{g}_{2^{-m}Q}-\mean{g}_{Q}}\leq 4\sum_{i=0}^{m-1}
    \dashint_{2^{-i}Q}\abs{g-\mean{g}_{2^{-i}Q}}dx \leq m4\max_{0\leq i\leq m-1}\dashint_{2^{-i}Q}\abs{g-\mean{g}_{2^{-i}Q}}dx
 .
\end{align*}
Note also, that the best-constant property \eqref{eq:bcp} implies
\begin{align*}
 \dashint_Q\abs{\abs{g}-\mean{\abs{g}_Q}}\,dx\leq 2\dashint_Q\abs{\abs{g}-\abs{\mean{g}_Q}}\,dx\leq 2\dashint_Q\abs{g-\mean{g}_Q}\,dx
\end{align*}
and therefore
\begin{align}
  \label{eq:iteration}
  \abs{\mean{\abs{g}}_{2^{-m}Q}-\mean{\abs{g}}_{Q}}\leq 8\sum_{i=0}^{m-1}
    \dashint_{2^{-i}Q}\abs{g-\mean{g}_{2^{-i}Q}}dx \leq m8\max_{0\leq i\leq m-1}\dashint_{2^{-i}Q}\abs{g-\mean{g}_{2^{-i}Q}}dx
 .
\end{align}
Obviously, the very same holds for half cubes.
\begin{lemma}
For any $f\in L^q(Q^+)$, $q\in [1,\infty)$ and $\omega:(0,R]\to (0,K]$, we have that
\begin{align}
\label{iter}
\dashint_{2^{-m}Q^+}\abs{f}\, dx\leq cKm\max_{i\in\set{1,..,m}}\frac{1}{\omega(2^{-i})}\bigg(\dashint_{2^{-i}Q^+} \abs{f-\mean{f}_{2^{-i}Q^+}}^{q}\, dx\bigg)^\frac{1}{q} + c\mean{\abs{f}}_{Q^+}.
\end{align}
\end{lemma}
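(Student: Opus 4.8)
The plan is to read off \eqref{iter} directly from the telescope estimate \eqref{eq:iteration}, which, as remarked there, holds verbatim on the nested half cubes $2^{-m}Q^+\subset\cdots\subset 2^{-1}Q^+\subset Q^+$. Applying it with $g=f$, the first step is to write
\[
\dashint_{2^{-m}Q^+}\abs{f}\, dx=\mean{\abs{f}}_{2^{-m}Q^+}\le\mean{\abs{f}}_{Q^+}+8\sum_{i=0}^{m-1}\dashint_{2^{-i}Q^+}\abs{f-\mean{f}_{2^{-i}Q^+}}\, dx.
\]
The leading term $\mean{\abs{f}}_{Q^+}$ is already the last summand of \eqref{iter}. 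Moreover, the $i=0$ term of the sum is harmless: by the triangle inequality $\dashint_{Q^+}\abs{f-\mean{f}_{Q^+}}\, dx\le\mean{\abs{f}}_{Q^+}+\abs{\mean{f}_{Q^+}}\le2\mean{\abs{f}}_{Q^+}$, so it gets absorbed into the same summand.

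It then remains to control the scales $i=1,\dots,m-1$. For each of them I would first use Jensen's inequality (this is where $q\ge1$ enters) to pass from the $L^1$ to the $L^q$ oscillation, and then multiply and divide by $\omega(2^{-i})$, exploiting $0<\omega(2^{-i})\le K$:
\[
\dashint_{2^{-i}Q^+}\abs{f-\mean{f}_{2^{-i}Q^+}}\, dx\le\bigg(\dashint_{2^{-i}Q^+}\abs{f-\mean{f}_{2^{-i}Q^+}}^{q}\, dx\bigg)^{\frac1q}\le\frac{K}{\omega(2^{-i})}\bigg(\dashint_{2^{-i}Q^+}\abs{f-\mean{f}_{2^{-i}Q^+}}^{q}\, dx\bigg)^{\frac1q}.
\]
Summing the at most $m$ indices $i\in\set{1,\dots,m-1}$ and estimating the sum by $m$ times its maximum over $i\in\set{1,\dots,m}$ yields exactly the first term on the right-hand side of \eqref{iter}, the absolute constant $c$ absorbing the factor $8$ above; combining this with the previous paragraph finishes the argument.

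I do not expect any real obstacle here. The only points that deserve a little care are the index bookkeeping — after discarding the $i=0$ term, the remaining scales are $2^{-1},\dots,2^{-(m-1)}$, and these are all covered by the maximum over $i\in\set{1,\dots,m}$ appearing in \eqref{iter} — and the correct direction of Jensen's inequality, which requires the hypothesis $q\ge1$. The geometric fact that shrinking $2^{-i}Q^+$ to $2^{-(i+1)}Q^+$ divides the planar Lebesgue measure by $4$ is already baked into the constant of \eqref{eq:iteration}, so nothing else is needed.
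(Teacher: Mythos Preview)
Your proof is correct and follows essentially the same route as the paper: apply the telescope estimate \eqref{eq:iteration} on the nested half cubes, then insert the weight $\omega(2^{-i})\le K$ and bound the sum by $m$ times the maximum. Your treatment is in fact slightly more careful than the paper's, since you explicitly dispose of the $i=0$ term by absorbing it into $c\mean{\abs{f}}_{Q^+}$ and you spell out the use of Jensen's inequality to pass from the $L^1$ to the $L^q$ oscillation.
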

\begin{proof}
We use~\eqref{eq:iteration} to get that
\begin{align*}
\dashint_{2^{-m}Q^+}\abs{f}\, dx&\leq \abs{\mean{\abs{f}}_{2^{-m}Q^+}-\mean{\abs{f}}_{Q^+}}+\mean{\abs{f}}_{Q^+}
\\
&\leq  8\sum_{i=0}^{m-1}
    \dashint_{2^{-i}Q^+}\abs{f-\mean{f}_{2^{-i}Q^+}}dx + \mean{\abs{f}}_{Q^+}.
\\
&\leq 8\sum_{i=0}^{m-1}\omega(2^{-i})\max_{i\in\set{1,..,m}}\frac{1}{\omega(2^{-i})}\bigg(\dashint_{2^{-i}Q^+} \abs{f-\mean{f}_{2^{-i}Q^+}}\bigg)^{q}   + \mean{\abs{f}}_{Q^+}.
\end{align*}
This implies the result.
\end{proof}
\begin{lemma}
Let $f\in W^{1,q}(Q^+_R)$ and $\omega:(0,\infty)\to (0,K)$ satisfying Assumption~\ref{ass2}. Then for any $\delta>0$, there exist $k_\delta\in \setN$ and $c_\delta>0$ only depending on the constants in Assumption~\ref{ass2}, such that
\begin{align}
\label{eq:graditer}
\begin{aligned}
\dashint_{2^{-m}Q^+_R}\abs{f}\, dx
   &\leq \delta \max_{j\in\set{k_\delta,..,m}}\frac{R}{\omega(2^{-j})}\bigg(\dashint_{2^{-j}Q^+_R} \abs{\nabla f-\mean{\nabla f}_{2^{-j}Q^+}}^{q}\, dx\bigg)^\frac{1}{q} +c_\delta R \mean{\abs{\nabla f}}_{Q^+_R} + \mean{\abs{f}}_{Q^+_R}.  
   \end{aligned}
\end{align}
\end{lemma}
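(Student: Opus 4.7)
The plan is to mimic the telescope sum argument that produced \eqref{iter}, but at each scale replace the $L^1$-oscillation of $f$ by an $L^q$-oscillation of $\nabla f$ through a Poincar\'e-type estimate. Writing $Q_i:=2^{-i}Q^+_R$, the triangle inequality together with $\abs{\mean{f}_{Q_{i+1}}-\mean{f}_{Q_i}}\leq 4\dashint_{Q_i}\abs{f-\mean{f}_{Q_i}}\, dx$ (as in the proof of \eqref{iter}) reduces the task to estimating $\sum_{i=0}^{m-1}\dashint_{Q_i}\abs{f-\mean{f}_{Q_i}}\, dx$ plus the leftover $\dashint_{Q_m}\abs{f-\mean{f}_{Q_m}}\, dx$; the term $\abs{\mean{f}_{Q_0}}$ is then absorbed by $\mean{\abs{f}}_{Q^+_R}$.

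The scales $i$ are split at a threshold $k_\delta$ to be chosen later. For $i<k_\delta$ I use the crude Poincar\'e $\dashint_{Q_i}\abs{f-\mean{f}_{Q_i}}\, dx\leq c\,2^{-i}R\dashint_{Q_i}\abs{\nabla f}\, dx$ together with the volumetric comparison $\dashint_{Q_i}\abs{\nabla f}\leq c\,4^i\mean{\abs{\nabla f}}_{Q^+_R}$; summing the finitely many small-scale terms yields at most $c\,2^{k_\delta}R\mean{\abs{\nabla f}}_{Q^+_R}$, which contributes to $c_\delta R\mean{\abs{\nabla f}}_{Q^+_R}$. For $i\geq k_\delta$ I use the refined Poincar\'e
\begin{align*}
\biggl(\dashint_{Q_i}\abs{f-\mean{f}_{Q_i}}^q\, dx\biggr)^{1/q}&\leq c\,2^{-i}R\biggl(\dashint_{Q_i}\abs{\nabla f-\mean{\nabla f}_{Q_i}}^q\, dx\biggr)^{1/q}+c\,2^{-i}R\,\abs{\mean{\nabla f}_{Q_i}},
\end{align*}
which follows by subtracting the affine function $\ell_i(x):=\mean{f}_{Q_i}+\mean{\nabla f}_{Q_i}\cdot(x-x_{Q_i})$ (with $x_{Q_i}$ the centroid of $Q_i$, so that $\mean{\ell_i}_{Q_i}=\mean{f}_{Q_i}$) and applying standard Poincar\'e to $f-\ell_i$.

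The first piece of the refined Poincar\'e is summed via the identity $2^{-i}=2^{-i}\omega(2^{-i}R)/\omega(2^{-i}R)$; using the boundedness $\omega\leq K$ and $\sum_{i\geq k_\delta}2^{-i}\leq 2\cdot 2^{-k_\delta}$, this produces
\begin{align*}
R\sum_{i=k_\delta}^{m}2^{-i}\biggl(\dashint_{Q_i}\abs{\nabla f-\mean{\nabla f}_{Q_i}}^q\, dx\biggr)^{1/q}\leq cK\,2^{-k_\delta}\max_{j\in\set{k_\delta,\ldots,m}}\frac{R}{\omega(2^{-j}R)}\biggl(\dashint_{Q_j}\abs{\nabla f-\mean{\nabla f}_{Q_j}}^q\, dx\biggr)^{1/q}.
\end{align*}
The terms involving $\abs{\mean{\nabla f}_{Q_i}}$ are treated by a second telescope $\abs{\mean{\nabla f}_{Q_i}}\leq \mean{\abs{\nabla f}}_{Q^+_R}+c\sum_{k<i}\dashint_{Q_k}\abs{\nabla f-\mean{\nabla f}_{Q_k}}\, dx$ together with a swap of the order of summation; the $k<k_\delta$ tail is absorbed into $c_\delta R\mean{\abs{\nabla f}}_{Q^+_R}$ by the same volumetric bound, while the $k\geq k_\delta$ part feeds, by Jensen's inequality and the $\omega\leq K$ trick, into the same $L^q$-supremum with a further $2^{-k_\delta}$ prefactor. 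The leftover $\dashint_{Q_m}\abs{f-\mean{f}_{Q_m}}\, dx$ is handled by the same dichotomy depending on whether $m<k_\delta$ or $m\geq k_\delta$.

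Choosing $k_\delta$ so that all the $cK\,2^{-k_\delta}$ prefactors accumulate to at most $\delta$ yields \eqref{eq:graditer} with $c_\delta$ of order $2^{k_\delta}$. The main obstacle is bookkeeping: since the $L^q$-supremum in the target inequality ranges only over $j\geq k_\delta$, the small-scale Poincar\'e contributions cannot be absorbed into it and must be routed through $\mean{\abs{\nabla f}}_{Q^+_R}$ (at the cost of a factor $4^{k_\delta}$ carried by $c_\delta$), whereas the large-scale contributions have to remain independent of $m$, which forces an honest summation of the geometric series $\sum_{i\geq k_\delta}2^{-i}\omega(2^{-i}R)$ rather than the crude maximum bound used in \eqref{iter}.
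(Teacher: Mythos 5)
Your proof is correct and rests on the same double--telescope idea as the paper's, but it carries out the bookkeeping in a genuinely different way. The paper first applies the coarse Poincar\'e inequality $\dashint_{2^{-i}Q^+}\abs{f-\mean{f}_{2^{-i}Q^+}}\, dx\leq c\,2^{-i}\dashint_{2^{-i}Q^+}\abs{\nabla f}\, dx$ at every scale and then invokes the packaged estimate \eqref{iter} directly on $\nabla f$, which produces a summand $i\,2^{-i}\max_{j\leq i}\omega(2^{-j})^{-1}(\dashint_{2^{-j}Q^+}\abs{\nabla f-\mean{\nabla f}_{2^{-j}Q^+}}^q\,dx)^{1/q}$. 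You instead subtract the affine function $\ell_i$, so the Poincar\'e step yields the $L^q$ oscillation of $\nabla f$ and a separate term $2^{-i}R\abs{\mean{\nabla f}_{Q_i}}$, and you then run a second explicit telescope on $\abs{\mean{\nabla f}_{Q_i}}$ with a change of summation order. The net prefactor in either approach is a convergent tail of the form $\sum_{i\geq k_\delta}2^{-i}$ (with an extra factor $i$ in the paper's version), so both can be made smaller than $\delta$ by choosing $k_\delta$.

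One thing your route buys you is that every small-scale contribution ($i<k_\delta$, $k<k_\delta$) stays in $L^1$ throughout and is routed into $c_\delta R\,\mean{\abs{\nabla f}}_{Q^+_R}$ via the volumetric comparison, and only $j\geq k_\delta$ scales ever enter the $L^q$ supremum. The paper's proof instead asserts $\max_{j\leq k_\delta}\omega(2^{-j})^{-1}\big(\dashint_{2^{-j}Q^+}\abs{\nabla f-\mean{\nabla f}_{2^{-j}Q^+}}^q\,dx\big)^{1/q}\leq c_{k_\delta}\mean{\abs{\nabla f}}_{Q^+}$, which compares an $L^q$-oscillation to an $L^1$-average; this is not a valid pointwise inequality for $q>1$ for general $f\in W^{1,q}$ (it is valid for $q=1$, and $q=1$ is the only case the paper actually uses later). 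So your version is the cleaner derivation of the lemma as stated. One cosmetic remark: the ``leftover $\dashint_{Q_m}\abs{f-\mean{f}_{Q_m}}\,dx$'' you mention need not be tracked at all if you use the $L^1$ telescope in the form \eqref{eq:iteration} applied to $\abs{f}$, which controls $\dashint_{Q_m}\abs{f}\,dx$ directly by the sum of oscillations over $i\leq m-1$ plus $\mean{\abs{f}}_{Q^+}$; but the way you handle it is also fine.
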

\begin{proof}
First, we may set $R=1$ and $\omega(1)=1$, since otherwise the result follows by setting $f(x)= f(Rx)$ and $\omega(s) =\frac{\omega(sR)}{\omega(R)}$. By \eqref{eq:iteration}, \Poincare's inequality and \eqref{eq:iteration} one more time we find 
\begin{align*}
\dashint_{2^{-m}Q^+}\abs{f}\, dx&\leq c\sum_{i=0}^{m-1}
    \dashint_{2^{-i}Q^+}\abs{f-\mean{f}_{2^{-i}Q^+}}dx + \mean{\abs{f}}_{Q^+}
    \\
    &\leq c\sum_{i=0}^{m-1}2^{-i}
    \dashint_{2^{-i}Q^+}\abs{\nabla f}\, dx + \mean{\abs{f}}_{Q^+}
    \\
    &\leq c\sum_{i=0}^{m-1}2^{-i}
    Ki \max_{j\in\set{1,..,i}}\frac{1}{\omega(2^{-j})}\bigg(\dashint_{2^{-j}Q^+} \abs{\nabla f-\mean{\nabla f}_{2^{-j}Q^+}}^{q}\, dx\bigg)^\frac{1}{q} + c\mean{\abs{\nabla f}}_{Q^+} + \mean{\abs{f}}_{Q^+}.
\end{align*}
Now for every $\delta>0$, there exists a $k_\delta$, such that
$\sum_{i=k_\delta}^\infty 2^{-i}i\leq \frac{\delta}{cK}$. Moroever, by Assumption~\ref{ass2}, we find that the large cubes can be estimated by the largest cube
\[
\max_{j\in\set{1,..,k_\delta}}\frac{1}{\omega(2^{-j})}\bigg(\dashint_{2^{-j}Q^+} \abs{\nabla f-\mean{\nabla f}_{2^{-j}Q^+}}^{q}\, dx\bigg)^\frac{1}{q} \leq c_{k_\delta} \mean{\abs{\nabla f}}_{Q^+}.
\]
Hence, 
\begin{align*}
\begin{aligned}
\dashint_{2^{-m}Q^+}\abs{f}\, dx&\leq cK\max_{j\in\set{1,..,m}}\frac{1}{\omega(2^{-j})}\bigg(\dashint_{2^{-j}Q^+} \abs{\nabla f-\mean{\nabla f}_{2^{-j}Q^+}}^{q}\, dx\bigg)^\frac{1}{q}\sum_{i=k_\delta}^{\infty}i2^{-i}
\\
&\quad  + c_\delta K\max_{j\in\set{1,..,k_\delta}}\frac{1}{\omega(2^{-j})}\bigg(\dashint_{2^{-j}Q^+} \abs{\nabla f-\mean{\nabla f}_{2^{-j}Q^+}}^{q}\, dx\bigg)^\frac{1}{q}
   + c\mean{\abs{\nabla f}}_{Q^+} + \mean{\abs{f}}_{Q^+}
   \\   
   &\leq \delta \max_{j\in\set{k_\delta,..,m}}\frac{1}{\omega(2^{-j})}\bigg(\dashint_{2^{-j}Q^+} \abs{\nabla f-\mean{\nabla f}_{2^{-j}Q^+}}^{q}\, dx\bigg)^\frac{1}{q} +c_\delta \mean{\abs{\nabla f}}_{Q^+} + \mean{\abs{f}}_{Q^+}.  
   \end{aligned}
\end{align*}
\end{proof}

\subsection{Transformation and rescaling}
%

\begin{lemma}
\label{lemma:F} 
Given the parametrization
$h\in \mathcal{C}^{1,\beta}([-R/2,R/2])$ and $\omega:(0,R]\to (0,\infty)$ satisfying Assumption~\ref{ass:om} such that $\omega(r)\geq c r^{\sigma}$, for $\sigma\in [0,\beta)$, the following are equivalent:
\begin{enumerate}
\item $F\in \overline{\setBMO }_\omega^*(\Omega_R^+;\mathbb R^{2\times 2}),$ 
\item $\overline F\in \overline{\setBMO }_\omega^*(Q_R^+;\mathbb R^{2\times2})$ where $\overline{F}:=F\circ T$.
\end{enumerate}
In particular, for all $Q_y\cap \Omega \subset \Omega_R$ centered at $y\in \partial\Omega$ we find $Q^+\subset Q_R^+$, such that
\begin{align}
\label{eq:subest}
\frac{1}{\omega(\diam Q_y)}\dashint_{Q_y\cap \Omega}\abs{ F-\mean{ F}_{Q_y\cap \Omega}}+\abs{[F\nu(y)]\cdot \tau(y)}\, dx \leq c\sup_{k\in\setN }\frac{1}{\omega(2^{-k})}\dashint_{2^{-k}Q^+}\abs{\overline F-\diag\mean{\overline{F}}_{2^{-k} Q^+}}\, dy +\abs{\mean{\overline{F}}}_{Q_R^+}
\end{align}
and
\[
\norm{\overline F}_{\overline{\setBMO }^*_\omega(Q_R^+)}+\abs{\mean{\overline F}_{Q_R^+}}\leq c\norm{ F}_{\overline{\setBMO }^*_\omega(\Omega_R)}+c\abs{\mean{ F}_{\Omega_R}}\leq c_1\norm{\overline F}_{\overline{\setBMO }^*\omega(Q_R^+)}+c_1\abs{\mean{\overline F}_{Q_R^+}}.
\]
\end{lemma}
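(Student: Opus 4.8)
The plan is to exploit that $T(x_1,x_2)=(x_1,x_2+h(x_1))$ is a bi-Lipschitz diffeomorphism with $\deter\nabla T\equiv 1$, so that averages over cubes and over their $T$-images are comparable up to the distortion of $T$. First I would record the key geometric facts: since $h'(0)=0$ and $h\in\mathcal{C}^{1,\beta}$ (hence $h\in\mathcal{C}^{1,\omega}$ by the assumed lower bound $\omega(r)\geq cr^\sigma$, $\sigma<\beta$), one has $\|h'\|_{L^\infty([-R/2,R/2])}\leq cR$ and, more importantly, the oscillation bound $|h'(x_1)-h'(x_1')|\leq c\,\omega(|x_1-x_1'|)$ on the relevant scales. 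Consequently, for a cube $Q^+\subset Q^+_R$ of side length $\rho$ centered at a boundary point, the image $T(Q^+)$ is sandwiched between two half-cubes (resp. a half-ball and a half-cube) centered at the corresponding boundary point of $\Omega$, with comparable side lengths, the comparability constants depending only on the Lipschitz constant of $h$; this uses exactly Assumption~\ref{ass:bound}. Combined with $\deter\nabla T=1$, this gives $\dashint_{Q^+}|\psi\circ T|\,dx\sim \dashint_{T(Q^+)}|\psi|\,dy$ for any $\psi\in L^1$, with the understanding that interior cubes and boundary cubes must be treated as the corresponding families on each side.

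Next I would handle the two pieces of the $\overline{\setBMO}^*_\omega$ seminorm separately. For the interior $\setBMO_\omega$ part: using $\deter\nabla T=1$ and the best-constant property \eqref{eq:bcp}, the oscillation of $\overline F=F\circ T$ over a cube $Q'\subset Q_R^+$ is controlled by the oscillation of $F$ over $T(Q')$, which by the sandwiching is controlled by $\omega$ evaluated at a comparable radius times $\|F\|_{\setBMO_\omega}$; since $\omega$ satisfies Assumption~\ref{ass:om}, $\omega$ at comparable radii are comparable, so one absorbs the distortion into the constant. The reverse direction is symmetric using $T^{-1}$. For the boundary flux part: at a boundary point $y=T(x)$ with $x\in\underline{\partial}Q_R^+$, one must compare $\dashint_{Q^+_{x,r}}\chi_\Omega|[\overline F\nu(x)]\cdot\tau(x)|$ — here $\nu(x)=(0,-1)$, $\tau(x)=(1,0)$ on $\underline\partial Q^+_R$, so this is just $\dashint_{Q^+_{x,r}}|\overline F_{1,2}|$ — with the corresponding quantity $\dashint_{Q_{y,r'}\cap\Omega}\chi_\Omega|[F\nu(y)]\cdot\tau(y)|$ for $F$ on $\Omega$. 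The subtlety is that the normal/tangent of $\partial\Omega$ at $y$ is $\nu(y)=\frac{1}{\sqrt{1+h'(x_1)^2}}(h'(x_1),-1)$, $\tau(y)=\frac{1}{\sqrt{1+h'(x_1)^2}}(1,h'(x_1))$, so $[F\nu]\cdot\tau$ picks up, besides the off-diagonal entry, a term of order $h'$ times the diagonal entries of $F$. Thus $|\overline F_{1,2}(x)-[F(y)\nu(y)]\cdot\tau(y)| \leq |\overline F_{1,2}(x)-F_{1,2}(y)|/\text{(unimportant factor)} + c|h'(x_1)|(|F_{1,1}(y)|+|F_{2,2}(y)|)$; since $\overline F_{1,2}(x)=F_{1,2}(T(x))$ exactly, the first difference vanishes, and the error term is bounded by $c\rho(|F_{1,1}|+|F_{2,2}|)$, which after averaging and dividing by $\omega(r)$ contributes $c\frac{r}{\omega(r)}(|\mean{F}_{Q_{y,r}\cap\Omega}|+\|F\|_{\setBMO_\omega})$. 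Using the John–Nirenberg-type bound \eqref{jnhs} (or rather its $L^1$ analogue) to control the mean of $F$ on a cube by a telescoping sum plus the global mean $\mean{F}_{\Omega_R}$, and using $\frac{r}{\omega(r)}\leq c$ from Assumption~\ref{ass2} (which forces $\omega(r)\geq c r$ on bounded intervals? — more carefully, from $\omega(r)r^{-\beta}\leq c\omega(s)s^{-\beta}$ one only gets control at comparable scales, so I would instead simply absorb $r\leq R\leq 1$ and use that $\omega$ is bounded below on $[2^{-k}R,R]$ for the finitely many large scales and iterate \eqref{iter} for the small ones), one arrives at \eqref{eq:subest}. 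The last displayed chain of inequalities in the statement then follows by taking suprema over all boundary cubes and adding the interior $\setBMO_\omega$ estimate, the reverse inequality being obtained by running the same argument with $T$ replaced by $T^{-1}$ (whose regularity is the same since $\nabla T^{-1}=H$).

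The main obstacle I anticipate is the boundary flux comparison, specifically bookkeeping the error term coming from the bending of the normal vector: one needs that the extra contribution $c|h'|(|F_{1,1}|+|F_{2,2}|)$ is genuinely of lower order when divided by $\omega(r)$, and for this the precise relation between the required lower bound $\omega(r)\geq cr^\sigma$ and the smallness of $R$ (so that $\rho\leq r_\Omega\leq 1$ and $|h'|\lesssim\rho\lesssim\omega(\rho)^{1/\sigma}\cdot(\text{something})$) has to be used carefully; in particular one wants $\frac{\rho}{\omega(\rho)}$ to stay bounded, which is where $\sigma<\beta\leq 1$ and the assumption that the boundary is $\mathcal{C}^{1,\omega}$ rather than merely $\mathcal{C}^{1,1}$ enter. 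A secondary technical point is that $T(Q^+)$ is not itself a cube, so one cannot directly plug it into the definitions of the $\overline{\setBMO}^*_\omega$-seminorm on $\Omega_R$; this is resolved by the sandwiching between genuine (half-)cubes together with the best-constant property \eqref{eq:bcp} and the comparability of $\omega$ at comparable scales, but it requires care to ensure the comparing cubes are admissible (contained in $\Omega_R$, centered appropriately) — this is exactly what Assumption~\ref{ass:bound} is designed to provide.
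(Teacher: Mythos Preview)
Your plan is essentially the same as the paper's: area preservation of $T$ plus the sandwiching $Q_{\lambda r}(x)\cap\Omega\subset T(Q_r^+)\subset Q_{\Lambda r}(x)\cap\Omega$ handles the oscillation part via the best-constant property, the explicit expansion of $[F\nu]\cdot\tau$ handles the flux part, a telescoping (\eqref{iter}) controls the resulting full average of $\overline F$, and the reverse direction follows by applying the same argument to $T^{-1}$.

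There is one concrete slip in your flux computation that causes the confusion you flag as the ``main obstacle''. Under the stated hypothesis $h\in\mathcal{C}^{1,\beta}$ with $h'(0)=0$ you only get $|h'(x_1)|\leq [h']_{\mathcal{C}^{0,\beta}}\,|x_1|^\beta\leq c\rho^\beta$ on a half-cube of side $\rho$, not $|h'|\leq c\rho$. Consequently the error term is $c\rho^\beta\,\dashint|\overline F|$, and after dividing by $\omega(\rho)$ the quantity you must bound is $\frac{\rho^\beta}{\omega(\rho)}$, not $\frac{\rho}{\omega(\rho)}$. This is exactly where the hypothesis $\omega(r)\geq c\,r^\sigma$ with $\sigma<\beta$ is used: it gives $\frac{\rho^\beta}{\omega(\rho)}\leq c\,\rho^{\beta-\sigma}$, which is bounded (indeed decays) for $\rho\leq R\leq 1$, and in particular absorbs the logarithmic factor $m\sim\log(R/\rho)$ coming from the telescoping \eqref{iter}. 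With this correction your argument goes through; your remark that one needs ``$\mathcal{C}^{1,\omega}$ rather than merely $\mathcal{C}^{1,1}$'' has the hierarchy inverted --- $\mathcal{C}^{1,1}$ is the stronger assumption, and here only $\mathcal{C}^{1,\beta}$ is available, which is why the exponent $\beta$ (and the condition $\sigma<\beta$) matters.
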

\begin{proof}
We show the first inequality. Let us assume, that $\overline{F}\in \overline{\setBMO }_\omega^*(\Omega_R;\mathbb R^{2\times2})$.
In the following, we fix $x$ to be in $\overline\Omega_R$. Since the estimates are rotation- and translation-invariant we may suppose without loss of generality that $0\in \partial\Omega$ is the nearest point to $x$ from the boundary and that $\nu_{0}=(0,-1)$. 
 Let $Q\subset Q_R$ be a cube centered at $ T^{-1}(x)$ with sides parallel to the axes. Then we define for $k\in \setN$
$$
Q_k := \left\{
\begin{array}{ll}
T(2^{-k}Q) \ & \mbox{if } 2^{-k}Q\subset Q_R^+\\
T(2^{-k}Q^+) \ & \mbox{if } 2^{-k}Q\not\subset Q_R^+.
\end{array}
\right.
$$
We take $\overline F = F\circ T$. First of all, since $T$ is area preserving, we find that it is enough to estimate the oscillations on the cubes $Q^k$. Indeed, this follows by the fact that since $\partial \Omega$ is Lipschitz,
\begin{align}
\label{eq:Lip}
Q_{\lambda r}(x)\cap \Omega \subset T(Q_r^+)\subset Q_{\Lambda R}^+(x)\cap \Omega
\end{align}
with $\lambda\in (0,1]$ and $\Lambda\in [1,\infty)$
 just depending on $\norm{h'}_\infty$. Hence, for any $Q\cap \Omega \subset \Omega_R$ centered at $ T^{-1}(x)$, we find a $Q_k\supset Q\cap \Omega$, such that $\abs{Q_k}\sim \abs{Q}\sim \abs{Q\cap \Omega}$. By the best-constant property~\eqref{eq:bcp}, we find
\begin{align}
\label{eq:scal}
\frac{1}{\omega(\diam Q)}\dashint_{Q\cap \Omega}\abs{ F-\mean{ F}_{Q\cap \Omega}}\, dx
\leq \frac{2}{\omega(\diam Q)}\dashint_{Q\cap \Omega}\abs{ F-\mean{ F}_{Q_k}}\, dx \leq \frac{c}{\omega(\diam Q_k)}\dashint_{Q_k}\abs{ F-\mean{ F}_{Q_k}}\, dx.
\end{align}
Consequently, it suffices to estimate oscillations on $Q_k$.
By the properties of $\omega$ and the best-constant property~\eqref{eq:bcp} and Assumption~\ref{ass2}, we find in case $2^{-k}Q\subset Q_R^+$, that
\begin{align}
\label{eq:scal2}
\frac{1}{\omega(\diam Q_k)}\dashint_{Q_k}\abs{ F-\mean{ F}_{Q_k}}\, dx\leq \frac{2}{\omega(\diam Q_k)}\dashint_{Q_k}\abs{ F-\mean{ \overline{F}}_{2^{-k}Q^+}}\, dx
\leq \frac{c}{\omega(2^{-k})}\dashint_{2^{-k} Q^+}\!\!\!\!\!\abs{ \overline F-\mean{ \overline{F}}_{2^{-k} Q^+}}\, dx,
\end{align}
and analogously in case $2^{-k}Q\not \subset Q_R^+$ 
\[
\frac{1}{\omega(\diam Q_k)}\dashint_{Q_k}\abs{F-\mean{ F}_{Q_k}}\, dx
\leq \frac{c}{\omega(2^{-k})}\dashint_{2^{-k} Q^+}\!\!\!\!\!\abs{\overline F-\diag\mean{ \overline{F}}_{2^{-k} Q^+}}\, dx.
\]
This concludes the estimate on the first part of the seminorm. For the second part, we use the fact
$$
[\overline{F} \nu(x)]\cdot \tau(x)] = \frac{h'(x)}{\sqrt{1+\abs{h'(x)}^2}}\overline F_{11} +\frac{\abs{h'(x)}^2}{\sqrt{1+\abs{h'(x)}^2}} \overline F_{12} - \overline F_{12} -\frac{h'(x)}{\sqrt{1+\abs{h'(x)}^2}} \overline F_{22}.
$$
Since $h'\in L^\infty([-R/2,R/2])$, we get for $Q_r^+\subset Q_R^+$, that
$$
\dashint_{Q_{r}^+} |[\overline F \nu]\cdot\tau|\, dx \leq c \dashint_{Q_r^+}|\overline{F}_{1,2}|\, dx  + \|h' - h'(0)\|_{L^\infty(Q_r^+)} \dashint_{Q_r^+}|\overline F_{22} - \overline F_{11}|\, dx
$$
Provided that $h'\in \mathcal{C}^\beta([-R/2,R/2])$, this implies
\[
\frac{1}{\omega(r)}\dashint_{Q^+_r} |[\overline F \nu]\cdot\tau|\, dx 
\leq \frac{c}{\omega(r)}\dashint_{Q^+_r}\abs{\overline F-\diag\mean{\overline{F}}_{Q^+_r}}\, dy+\frac{r^\beta}{\omega(r)} \dashint_{Q^+_r}|\overline F|\, dx.
\]
Now, there is an $m\in \setN$, such that $2^{-m-1}R\leq r\leq 2^{-m}R$ and a $\tilde Q^+\subset Q_R^+$, such that $\abs{\tilde{Q}^+}\sim \abs{Q_R^+}$ and $2^{-m-1}\tilde{Q}^+\subset Q^+_r\subset2^{-m}\tilde{Q}^+$. 
Using \eqref{iter}, we find, since we assumed that $\omega(r)\geq cr^{\sigma}$, for $\sigma<\beta$, that
\[
\frac{1}{\omega(r))}\dashint_{Q^+_r}|[\overline F \nu]\cdot\tau|\, dx\leq c\sup_{k\in\setN }\frac{1}{\omega(2^{-k})}\dashint_{2^{-k}Q^+}\abs{\overline F-\diag\mean{\overline{F}}_{2^{-k} Q^+}}\, dy +\abs{\mean{\overline{F}}}_{Q_R^+}.
\]
Imitating \eqref{eq:scal} and \eqref{eq:scal2} concludes \eqref{eq:subest} Moreover, the estimate of the second part of the seminorm is proven.
The reverse inequality follows by applying line by line the same argument replacing $T$ by $T^{-1}$ and using Lemma~\ref{lem:machas}.
\end{proof}
%
%
In the remaining part of the section, we assume that Assumption~\ref{ass1}, Assumption~\ref{ass:bound} and Assumption~\ref{ass2} are satisfied for $A,\Phi,\Omega$ and $\omega$. Since the {\em interior estimates} are covered by~\cite[Theorem~1.1]{DieKapSch14} in combination with \cite[Remark~5.8]{DieKapSch12} we focus on the boundary estimates. Without loss of generality, we may assume the following simplifications:
\begin{enumerate}
\item[a)] \label{start} $r_\Omega=1$. Otherwise we use $\tilde{u} = \frac{u(r_\Omega \cdot)}{r_\Omega}$, which is a solution on the scaled PDE on $\frac1{r_\Omega}\Omega$. 
\item[b)] Since the solutions and the estimates are translation and rotation invariant we may assume the setting of Subsection~\ref{flat}. In particular, we assume that $0\in \partial\Omega$ and $\nu(0)=(0,-1)$ and that we have a coordinate map $h\in \mathcal{C}^{1,\omega}([-1,1],\setR)$ such that $(x,h(x)\in \partial\Omega$ for all $x\in [-1,1]$.
\item[c)] \label{end} We may assume that $\omega(1)=1$ and that $\omega$ is decreasing, since otherwise the result follows by redefining $\omega(s)= \frac{\max_{t\in[0,s]}\omega(t)}{\max_{ t\in[0,1]}\omega(t)}$.
\end{enumerate}
In the following, we will estimate the diagonal oscillations of $\overline u= u\circ T^{-1}$ over the following family of cubes  
$$
Q^+_k := 2^{-k} Q^+,\text{ where }Q^+:=Q_1^+(0).
$$
 \subsection{ The shear thickening case, $\Phi''$ almost increasing.}
 \label{ssc:inc}
  We define $d_k = Diag\langle \overline\nabla u\rangle_{Q_k^+}$ and $F_k = Diag \langle \overline F\rangle_{Q_k^+}$.
The core of the proof is the following lemma.
\begin{lemma}
\label{lem:1}
We find that there is an $m_0\in\setN$ and a constant $c$, just depending on the constants in the assumptions such that for every and every $k\geq m+2\geq m_0+2$, 
\begin{align*}
\begin{aligned}
\dashint_{Q_k^+}\abs{\nabla \overline u-\diag\mean{\nabla \overline u}_{Q_k^+}}\, dx
&\leq c2(8m+1)^\frac{q}{\qbar}2^{\frac{-m\gamma}{\qbar}}\max_{j\in \set{k-m,..,k}}\dashint_{Q_j^+}\abs{\nabla \overline u-\diag\mean{\nabla \overline u}_{Q_j^+}}\, dx +c\omega(2^{-j})\mean{\abs{\overline{u}^1}}_{Q_j^+}
\\
&\quad +c 2^{-j}\abs{d_j}+ c(\Phi')^{-1}\Big(\|\overline F\|_{\overline{\setBMO }^*(Q_j^+)}\Big) + c(\Phi')^{-1}(2^{-j}\abs{F_0})+c(\Phi')^{-1}\Big(2^{-j}\|\overline F\|_{\overline{\setBMO }_{\tilde{\omega}}^*(Q^+)}\Big),
\end{aligned}
\end{align*}
where $\gamma$ is a constant from Theorem~\ref{odlarse}.
\end{lemma}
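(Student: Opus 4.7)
The plan is to run a Campanato-type iteration at the $V$-level on half-cubes. For each $j\in\{k-m,\dots,k\}$ set $P_j := \diag\langle\nabla\overline u\rangle_{Q_j^+}$ and let $v_j$ solve the homogeneous system \eqref{homosys} on $Q_{j-1}^+$ with boundary data inherited from $\overline u^{Q_{j-1}}-g$. The reflection argument of Section~\ref{sec:comp} ensures that $\hat v_j$ is a solution of the homogeneous system on the full cube $Q_{j-1}$, so Theorem~\ref{odlarse} applied with $\lambda = 2^{-(k-j)}$ yields the geometric decay
\[
\dashint_{Q_k^+}\bigabs{V(Dv_j) - \diag\langle V(Dv_j)\rangle_{Q_k^+}}^2\, dx \leq C\,2^{-2(k-j)\gamma}\dashint_{Q_{j-1}^+}\bigabs{V(Dv_j) - \diag\langle V(Dv_j)\rangle_{Q_{j-1}^+}}^2\, dx.
\]

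\textbf{Comparison, iteration, and transfer.} The $L^2$-oscillation of $V(D\overline u)$ on $Q_k^+$ is split by the triangle inequality and the best-constant property \eqref{eq:bcp} into the decay term above plus $\dashint_{Q_k^+}|V(D\overline u)-V(Dv_j)|^2\, dx$. The latter is estimated by Corollary~\ref{Dibene} in the shear thickening case, which gives $\delta$ times the $V$-oscillation on the larger cube $Q_{j-1}^+$ plus the acceptable error terms $\Phi^*(\|\overline F\|_{\overline{\setBMO}^*(Q_{j-1}^+)})$, $\Phi^*(2^{-j}|F_j|)$, $\Phi_{|P_j|}(2^{-j}|P_j|)$, and the curvature defect $\Phi_{|P_j|}((2^{-j}+\|h''-h''(0)\|_{L^\infty(Q_{j-1}^+)})\langle|\overline u_1|\rangle)$. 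Maximising over $j$ produces a self-improving inequality at the $V$-oscillation level; to reach the stated $L^1$-oscillation of $\nabla\overline u$ one invokes $|V(A)-V(B)|^2\sim \Phi_{|B|}(|A-B|)$ from \eqref{eq:hammer} together with the $T(\pbar,\qbar,K)$ property (Lemma~\ref{lem:Tpq}) and Jensen's inequality to convert through $(\Phi')^{-1}$. The polynomial prefactor $(8m+1)^{q/\qbar}$ and the $\omega(2^{-j})$-weighted terms appear from telescoping the diagonal means $P_j$ across the $m$ dyadic scales via \eqref{eq:iteration}, which costs one power of $m$ per level and is then raised to the exponent $q/\qbar$ by the $(\Phi')^{-1}$-conversion.

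\textbf{Main obstacle.} The delicate point is precisely this transfer from the $V$-oscillation to the $\nabla\overline u$-oscillation: the shift-change Lemma~\ref{lem:shift2} introduces cross-terms $|V(P_j)-V(P_{j+1})|^2$ whose size depends on the diagonal means $P_j$ remaining comparable across scales, which in turn forces the smallness of $R$ (already required by Lemma~\ref{lemmacompar} and Proposition~\ref{keylemma}) and the choice of $m_0$ to be arranged so that the curvature defect $\|h''-h''(0)\|_\infty$ and the lower-order term $\langle|\overline u_1|\rangle$ are subordinate to $\omega(2^{-j})$ at every relevant scale. The $\delta$-term produced by Corollary~\ref{Dibene} is absorbed against the self-similar quantity on the right once $m_0$ is chosen so that $C\,2^{-m_0\gamma}\leq 1/2$, which fixes the threshold for the lemma.
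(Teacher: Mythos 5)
Your proposal correctly identifies the comparison and decay ingredients (Theorem~\ref{odlarse}, Corollary~\ref{Dibene}, Proposition~\ref{corcompar}) and the Campanato-iteration skeleton, but it misses the key difficulty of this lemma in the shear-thickening case and misattributes the origin of the $(8m+1)^{q/\qbar}$ prefactor.

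After running the comparison argument, one lands (as in the paper) on an inequality of the form
\begin{align*}
\Phi_{|d_k|}\!\Big(\dashint_{Q_k^+}\!\abs{\nabla\overline u - \diag\mean{\nabla\overline u}_{Q_k^+}}\Big)
\leq c\,2^{-m\gamma}\,\Phi_{|d_i|}\!\Big(\dashint_{Q_i^+}\!\abs{\nabla\overline u - \diag\mean{\nabla\overline u}_{Q_i^+}}\Big) + \text{errors},
\end{align*}
where $i=k-m$ and $d_j = \diag\mean{\nabla\overline u}_{Q_j^+}$. The left and right sides carry \emph{different} shifted functions, $\Phi_{|d_k|}$ and $\Phi_{|d_i|}$, and you cannot simply apply $\Phi_{|d_k|}^{-1}$ (or invoke $(\Phi')^{-1}$ via Jensen) unless $\abs{d_k}\sim\abs{d_i}$. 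Your proposal claims that the shift-change Lemma~\ref{lem:shift2} handles this at the price of cross-terms $\abs{V(P_j)-V(P_{j+1})}^2$ controlled by the smallness of $R$ — but the smallness of $R$ has no bearing on whether the diagonal means $d_j$ stay comparable across the $m$ dyadic scales; that is a property of the solution, not of the domain. This is precisely the gap the paper's proof fills by a case analysis, and it is where the factor $(8m+1)$ genuinely comes from: one splits into (a) the ``degenerate'' regime where $\abs{d_i}\leq 2(8m+1)\max_j\dashint_{Q_j^+}\abs{\nabla\overline u - \mean{\nabla\overline u}_{Q_j^+}}$, in which case the shift is dominated by the oscillation itself and one uses the elementary bound $\Phi''(s)t^2 \leq \delta_0\Phi(s)+\delta_0^{(2-q)/2}\Phi(t)$ with $\delta_0=2^{-m\gamma}$; and (b) the ``non-degenerate'' regime where the auxiliary Lemma~\ref{lem:deg} (an absorption on the telescoping of means) shows $\max\{\abs{d_i},\dashint_{Q_i^+}\abs{\nabla\overline u-\diag\mean{\nabla\overline u}_{Q_i^+}}\}\leq 4\abs{d_k}$, so the two shifted functions become comparable and one divides by $\Phi''(\abs{d_k})$ directly. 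Your account — that the prefactor comes from ``telescoping the diagonal means $P_j$ across the $m$ dyadic scales via \eqref{eq:iteration}'' — describes a different mechanism that only explains the $\omega(2^{-j})\mean{\abs{\overline u^1}}$ error terms, not the factor multiplying the main decay term. Without the dichotomy on $\abs{d_i}$, the transfer from the $\Phi_{|d_k|}$-oscillation of $V$ to the unshifted $L^1$-oscillation of $\nabla\overline u$ does not close.

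A secondary, structural point: the paper uses a single comparison function $v$ solving~\eqref{homosys} on $Q_i^+$ with $i=k-m$, then applies Theorem~\ref{odlarse} once with $\lambda=2^{-m}$, rather than a family $v_j$ on each intermediate scale. Your multi-scale family is not wrong per se, but it is unnecessary and would multiply the bookkeeping of the shift changes across scales, which — as noted above — is exactly the place that needs care.
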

For the proof of Lemma~\ref{lem:1} we need the following analytic argument that is a modification of \cite[Lemma A.2]{Sch14}.
 \begin{lemma}
 \label{lem:deg}
 Let $f\in L^q(Q^+)$, $m\in \setN$ and $\epsilon\in (0,\frac{1}{8m+1})$. If
 \[
 \max_{i\in \set{1,...,m}}\bigg(\dashint_{2^{-i}Q^+}\abs{f-\mean{f}_{2^{-i}Q^+}}^q\, dx\bigg)^\frac1q \leq \epsilon\bigg(\dashint_{Q^+}\abs{f}^q\, dx\bigg)^\frac1q,
 \]
 then
 \[
\bigg(\dashint_{Q^+}\abs{f}^q\, dx\bigg)^\frac1q\leq \frac{1}{1-(8m+1)\epsilon}\abs{\mean{f}_{2^{-m} Q^+}}.
 \]
 \end{lemma}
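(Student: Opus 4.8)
The plan is to run a telescope sum argument over the dyadic half-cubes $2^{-i}Q^+$, $i=0,\dots,m$, exactly of the type developed in the preceding subsection, and then solve the resulting inequality for $\big(\dashint_{Q^+}\abs{f}^q\big)^{1/q}$. First I would record the elementary inclusion-of-averages estimate: for each $i$,
\[
\abs{\mean{f}_{2^{-(i+1)}Q^+}-\mean{f}_{2^{-i}Q^+}}\leq \dashint_{2^{-(i+1)}Q^+}\abs{f-\mean{f}_{2^{-i}Q^+}}\,dx\leq 4\dashint_{2^{-i}Q^+}\abs{f-\mean{f}_{2^{-i}Q^+}}\,dx\leq 4\bigg(\dashint_{2^{-i}Q^+}\abs{f-\mean{f}_{2^{-i}Q^+}}^q\,dx\bigg)^{1/q},
\]
using that passing from $2^{-i}Q^+$ to $2^{-(i+1)}Q^+$ multiplies the measure by $4$ in the plane, together with Jensen/Hölder. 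Summing from $i=0$ to $m-1$ gives, by the triangle inequality,
\[
\abs{\mean{f}_{Q^+}}\leq \abs{\mean{f}_{2^{-m}Q^+}}+4\sum_{i=0}^{m-1}\bigg(\dashint_{2^{-i}Q^+}\abs{f-\mean{f}_{2^{-i}Q^+}}^q\,dx\bigg)^{1/q}\leq \abs{\mean{f}_{2^{-m}Q^+}}+4m\,\epsilon\bigg(\dashint_{Q^+}\abs{f}^q\,dx\bigg)^{1/q},
\]
where in the last step we inserted the hypothesis (the $i=0$ term is controlled by taking $\mean{f}_{Q^+}$ itself, or is simply bounded by $\big(\dashint_{Q^+}\abs f^q\big)^{1/q}$; here one must be a little careful with the $i=0$ contribution, but it is harmless since $\dashint_{Q^+}\abs{f-\mean f_{Q^+}}^q \le \dashint_{Q^+}\abs f^q$ trivially, or one starts the telescope at $i=1$ and pays one extra copy).

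Next I would bound $\big(\dashint_{Q^+}\abs{f}^q\big)^{1/q}$ itself by its average plus its oscillation:
\[
\bigg(\dashint_{Q^+}\abs{f}^q\,dx\bigg)^{1/q}\leq \abs{\mean{f}_{Q^+}}+\bigg(\dashint_{Q^+}\abs{f-\mean{f}_{Q^+}}^q\,dx\bigg)^{1/q}\leq \abs{\mean{f}_{Q^+}}+2\bigg(\dashint_{Q^+}\abs{f}^q\,dx\bigg)^{1/q}\!\!,
\]
which is not quite what is wanted directly; instead the cleaner route is to combine the two displays above. Writing $A:=\big(\dashint_{Q^+}\abs{f}^q\big)^{1/q}$ and using $A\le \abs{\mean f_{Q^+}} + \big(\dashint_{Q^+}\abs{f-\mean f_{Q^+}}^q\big)^{1/q}$, then controlling $\big(\dashint_{Q^+}\abs{f-\mean f_{Q^+}}^q\big)^{1/q}$ by the $i=0$ instance of the hypothesis and controlling $\abs{\mean f_{Q^+}}$ by the telescope estimate, one arrives at an inequality of the shape
\[
A\leq \abs{\mean{f}_{2^{-m}Q^+}}+(8m+1)\,\epsilon\,A,
\]
where the constant $8m+1$ collects the $4m$ from the telescope sum, one copy $\le\epsilon A$ from the $i=0$ oscillation term, and the remaining slack (a factor $2$ on the telescope contribution, say, coming from passing between $\abs{g}$- and $g$-oscillations via the best-constant property \eqref{eq:bcp}) is absorbed into the count to reach $8m$. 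Since $\epsilon<\tfrac1{8m+1}$ the coefficient $(8m+1)\epsilon<1$, so this can be rearranged to
\[
A\leq \frac{1}{1-(8m+1)\epsilon}\,\abs{\mean{f}_{2^{-m}Q^+}},
\]
which is the claim.

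The only genuinely delicate point is bookkeeping of the numerical constant: one must verify that the $i=0$ term, the transition from oscillations of $\abs f$ to oscillations of $f$ (handled by \eqref{eq:bcp} as in \eqref{eq:iteration}), and the geometric factor $4$ per dyadic step in two dimensions together really fit under the single constant $8m+1$ and do not, say, force $8m+4$ or $9m$. Everything else is a routine telescope-and-absorb. I expect no analytic obstacle; the proof is purely combinatorial once the inclusion estimate for nested half-cube averages is in place, and that estimate is identical to the one already used in \eqref{eq:iteration} (it holds verbatim for half cubes, as remarked there).
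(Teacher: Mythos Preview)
Your approach is essentially identical to the paper's: write $A:=\big(\dashint_{Q^+}\abs{f}^q\big)^{1/q}\le \big(\dashint_{Q^+}\abs{f-\mean f_{Q^+}}^q\big)^{1/q}+\abs{\mean f_{Q^+}}$, telescope $\abs{\mean f_{Q^+}-\mean f_{2^{-m}Q^+}}$ via \eqref{eq:iteration}, insert the hypothesis, and absorb. The one point you flag---the $i=0$ oscillation not being covered by the stated hypothesis---is exactly the same looseness present in the paper's own proof (its first displayed line tacitly uses the $i=0$ case); in the actual application in Lemma~\ref{lem:1} the oscillation bound is available at level $0$ as well, so the issue is purely cosmetic.
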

 \begin{proof}
By \eqref{eq:iteration} we find that 
\begin{align*}
\bigg(\dashint_{Q^+}\abs{f}^q\, dx\bigg)^\frac1q&\leq \epsilon\bigg(\dashint_{Q^+}\abs{f}^q\, dx\bigg)^\frac1q+\abs{\mean{f}_{Q^+}}
\\
&\leq
\epsilon\bigg(\dashint_{Q^+}\abs{f}^q\, dx\bigg)^\frac1q+\abs{\mean{f}_{Q^+}-\mean{f}_{2^{-m} Q^+}}+\abs{\mean{f}_{2^{-m} Q^+}}
\\
&\leq \epsilon\bigg(\dashint_{Q^+}\abs{f}^q\, dx\bigg)^\frac1q+8m\max_{i\in \set{1,...,m}}\bigg(\dashint_{2^{-i}Q^+}\abs{f-\mean{f}_{2^{-i}Q^+}}^q\, dx\bigg)^\frac1q + \abs{\mean{f}_{2^{-m} Q^+}}.
\end{align*}
The assumption implies the result using absorption.
 \end{proof}

\begin{proof}[Proof of Lemma~\ref{lem:1}]
 We use the comparison function $v$, which is defined in \eqref{homosys} on $Q_i^+$ for $i= k-m$ with $m\geq m_0$ and $m_0$ will be fixed below. 
We begin using the best-constant property~\eqref{eq:bcp}, Jensen's inequality Theorem~\ref{odlarse} and Corollary~\ref{Dibene} to estimate
 \begin{align*}
 (I)&:=\dashint_{Q_k^+}\abs{V (D\overline u)-\diag\mean{V(D\overline u)}_{Q_k^+}}^2\, dx
 \\
 &\leq  2\dashint_{Q_k^+}\abs{V (Dv)-\diag\mean{V(Dv)}_{Q_k^+}}^2
 +2\dashint_{Q_k^+}\abs{V (D\overline u)-V(Dv)}^2\, dx
 \\
 &\leq c2^{\gamma(i-k)}\dashint_{Q_{i+2}^+}\abs{V (Dv)-\diag\mean{V(Dv)}_{Q_{i+2}^+}}^2\, dx + 2^{2(i+2-k)}\dashint_{Q_{i+2}^+}\abs{V (D\overline u)-V(Dv)}^2\, dx
 \\
 &\leq c\Big(2^{\gamma(i-k)}+\delta\Big)\dashint_{Q_{i+1}^+}\abs{V (D\overline u)-\diag\mean{V(D\overline u)}_{Q_{i+1}^+}}^2\, dx
 \\
 &\quad +c(\Phi^*)\left(\|\overline F\|_{\overline{\setBMO }^*(Q^+_{i})}\right) 
 +\Phi^*(2^{-i} |F_i|)+\Phi_{|d_i|} (2^{-i} |d_i|) 
 +c\Phi_{\abs{P}}\Big(\big(2^{-i}+\norm{h''-h''(0)}_{L^\infty(Q_i^+)}\big)\mean{\abs{\overline{u}^1}}_{Q_i^+}\Big).
 \end{align*}
Fixing $\delta=2^{\gamma(i-k)}=2^{-m\gamma}$ implies
  \begin{align*}
 (I)
 &\leq c2^{-m\gamma}\dashint_{Q_{i+1}^+}\abs{V (D\overline u)-\diag\mean{V(D\overline u)}_{Q_{i+1}^+}}^2\, dx
 \\
 &\quad +c_m\bigg(c(\Phi^*)\left(\|\overline F\|_{\overline{\setBMO }^*(Q^+_i)}\right) 
 +\Phi^*(2^{-i} |F_i|)+\Phi_{|d_i|} (2^{-i} |d_i|) 
 +c\Phi_{\abs{d_i}}\Big(\big(2^{-i}+\norm{h''-h''(0)}_{L^\infty(Q_i^+)}\big)\mean{\abs{\overline{u}^1}}_{Q_i^+}\Big).
 \end{align*}
For the left hand side, we find by Jensen's inequality, Korn's inequality \cite[Theorem 6.13]{DiRS10} (on the reflected function), Lemma~\ref{lem:hammer} and \eqref{eq:bcp}  that
 \[
\Phi_{\abs{d_k}}\bigg(\dashint_{Q_k^+}\!\!\!\!\abs{\nabla\overline u-\diag\mean{\nabla\overline u}_{Q_k^+}}\, dx\bigg)
\leq \dashint_{Q_k^+}\!\!\!\!\Phi_{\abs{d_k}}\big(\abs{\nabla\overline u-\diag\mean{\nabla\overline u}_{Q_k^+}}\big)\, d
x\leq c\dashint_{Q_k^+}\!\!\!\!\Phi_{\abs{d_k}}\big(\abs{D\overline u-\diag\mean{D\overline u}_{Q_k^+}}\big)\, dx
\leq c(I)
 \]
 By applying Proposition~\ref{corcompar} on the right hand side, we find
   \begin{align*}
 (II)&:=\Phi_{\abs{d_k}}\bigg(\dashint_{Q_k^+}\abs{\nabla\overline u-\diag\mean{\nabla\overline u}_{Q_k^+}}\, dx\bigg)
 \\
 &\leq c2^{-m\gamma}\Phi_{\abs{d_{i}}}\bigg(\dashint_{Q_{i}^+}\abs{\nabla\overline u-\diag\mean{\nabla \overline u}_{Q_{i}^+}}\, dx\bigg)
 \\
 &\quad +c_m\bigg(c(\Phi^*)\left(\|\overline F\|_{\overline{\setBMO }^*(Q^+_i;\mathbb R^{2\times 2})}\right) 
 +\Phi^*(2^{-i} |F_i|)+\Phi_{|d_i|} (2^{-i} |d_i|) 
 \\
 &\quad
 +c\Phi_{\abs{d_i}}\Big(\big(2^{-i}+\norm{h''-h''(0)}_{L^\infty(Q_i^+)}\big)\mean{\abs{\overline{u}^1}}_{Q_i^+}\Big)\bigg).
 \end{align*}
 Observe that by \eqref{iter}
 \[
 \Phi^*(2^{-i} |F_i|)\leq c\Phi^*(2^{-i}\abs{F_0})+c\Phi^*\Big(2^{-i}\|\overline F\|_{\overline{\setBMO }^*_{\tilde{\omega}}(Q^+;\mathbb R^{2\times 2})}\Big).
 \]
Hence, using the fact that $\Phi''$ is almost increasing, \eqref{ass:om}, the assumption on $h$ and \eqref{eq:hammer}
 we get that
\begin{align*}
&\Phi_{|d_i|} (2^{-i} |d_i|) 
 +\Phi_{\abs{d_i}}\Big(\big(2^{-i}+\norm{h''-h''(0)}_{L^\infty(Q_i^+)}\big)\mean{\abs{\overline{u}^1}}_{Q_i^+}\Big)
 \\
 &\leq \Phi(2^{-i}\abs{d_i})+ \Phi''(\abs{d_i})\big((\omega(2^{-i})\mean{\abs{\overline{u}^1}}_{Q_i^+}\big)^2 + c\Phi\big((\omega(2^{-i})\mean{\abs{\overline{u}^1}}_{Q_i^+}\big).
\end{align*}
This implies using \eqref{eq:hammer} again that 
\begin{align*}
(II)&\sim\Phi''\bigg(\max\bigg\{\abs{d_k},\dashint_{Q_k^+}\abs{\nabla\overline u-\diag\mean{\nabla\overline u}_{Q_k^+}}\, dx \bigg\}\bigg)\bigg(\dashint_{Q_k^+}\abs{\nabla\overline u-\diag\mean{\nabla\overline u}_{Q_k^+}}\, dx\bigg)^2
\\
&\leq  c2^{-m\gamma}\Phi''\bigg(\max\bigg\{\abs{d_i},\dashint_{Q_i^+}\abs{\nabla\overline u-\diag\mean{\nabla\overline u}_{Q_i^+}}\, dx \bigg\}\bigg)\bigg(\dashint_{Q_i^+}\abs{\nabla \overline u-\diag\mean{\nabla \overline u}_{Q_i^+}}\, dx\bigg)^2+ c\Phi(2^{-i}\abs{d_i})\\
&\quad+
\Phi''\bigg(\max\bigg\{\abs{d_i},\dashint_{Q_i^+}\abs{\nabla\overline u-\diag\mean{\nabla\overline u}_{Q_i^+}}\, dx \bigg\}\bigg)\big((\omega(2^{-i})\mean{\abs{\overline{u}^1}}_{Q_i^+}\big)^2+c\Phi\big((\omega(2^{-i})\mean{\abs{\overline{u}^1}}_{Q_i^+}\big)
\\
&\quad +c\Phi^*\Big(\|\overline F\|_{\overline{\setBMO }^*(Q_i^+;\mathbb R^{2\times 2})}\Big) + c\Phi^*(2^{-i}\abs{F_0})+c\Phi^*\Big(2^{-i}\|\overline F\|_{\overline{\setBMO }^*_{\tilde{\omega}}(Q^+;\mathbb R^{2\times 2})}\Big)
\\
&=: (i)+(ii)+(iii)+(iv)+(v)+(vi)+(vii)\sim \max\set{(i),(ii),(iii),(iv),(v),(vi),(vii)}.
\end{align*}
In the case $\max\set{(ii),(iv),(v),(vi),(vii)}=\max\set{(i),(ii),(iii),(iv),(v),(vi),(vii)}$, we find the result by taking $\Phi^{-1}$ on both sides: Indeed, on the left hand side, using that $\Phi''$ is  almost increasing implies
\[
\Phi\bigg(\dashint_{Q_k^+}\abs{\nabla\overline u-\diag\mean{\nabla\overline u}_{Q_k^+}}\, dx \bigg)\leq (II).
\]
On the right hand side, we use the following identities~\cite[(2.3)]{DieE08}
\[
\Phi^{-1}(\Phi^*(t))\sim \Phi^{-1}(\Phi((\Phi^*)'(t)))=(\Phi^*)'(t)=(\Phi')^{-1}(t).
\]
 Hence, we assume in the following that $\max\set{(i),(iii)}=\max\set{(i),(ii),(iii),(iv),(v),(vi),(vii)}$ 

We distinguish two cases. Let us first assume, that
\begin{align}
\label{deg}
\abs{d_i}\leq 2(8m+1)\max_{j\in \set{k-m,...,k}}\dashint_{Q_j^+}\abs{\nabla\overline u-\mean{\nabla\overline u}_{Q_j^+}}\, dx.
\end{align}
The above and~\eqref{eq:bcp} imply 
\begin{align}
\label{deg2}
\max\bigg\{\abs{d_i},\dashint_{Q_i^+}\abs{\nabla\overline u-\diag\mean{\nabla\overline u}_{Q_i^+}}\, dx\bigg\} \leq 2(8m+1)\max_{j\in \set{i,...,k}}\dashint_{Q_j^+}\abs{\nabla\overline u-\diag\mean{\nabla\overline u}_{Q_j^+}}\, dx.
\end{align}
We further use the fact, that for $\delta_0\in (0,1)$, we find by \eqref{eq:typeT}
\[
\Phi''(s)t^2=\Phi''(s)t^2\chi_\set{t\leq \sqrt{\delta_0}s}+\Phi''(s)t^2\chi_\set{t>\sqrt{\delta_0}s}\leq \delta_0\Phi(s)+\Phi''(t/\sqrt{\delta_0})t^2\leq \delta_0\Phi(s)+ \delta_0^\frac{2-q}{2}\Phi(t).
\]
Then choosing {$\delta_0=2^{-m\gamma}$} and using~\eqref{deg2} implies
\begin{align}
(iii)\leq c2^{-m\gamma}\Phi\bigg(2(8m+1)\max_{j\in \set{k,...,i}}\dashint_{Q_j^+}\abs{\nabla \overline u-\diag\mean{\nabla \overline u}_{Q_j^+}}\, dx\, dx\bigg)
+
c2^{\frac{m\gamma(q-2)}{2}}\Phi\big(\omega(2^{-i})\mean{\abs{\overline{u}^1}}_{Q_i^+}\big).
\end{align}
And so by the fact that $\Phi''$ is almost increasing, \eqref{deg2} and \eqref{eq:typeT} we find
\begin{align*}
&\Phi\bigg(\dashint_{Q_k^+}\abs{\nabla\overline u-\diag\mean{\nabla\overline u}_{Q_k^+}}\, dx \bigg)\leq (II)\leq c((i)+(iii))
\\
&\quad \leq c2(8m+1)^q2^{-m\gamma}\Phi\bigg(\max_{j\in \set{k,...,i}}\dashint_{Q_j^+}\abs{\nabla \overline u-\diag\mean{\nabla \overline u}_{Q_j^+}}\, dx\, dx\bigg)
+
c2^{\frac{m\gamma(q-2)}{2}}\Phi\big(\omega(2^{-i})\mean{\abs{\overline{u}^1}}_{Q_i^+}\big)
\end{align*}
Let us fix $m_0$ large enough, such that $c2(8m+1)^q2^{-m\gamma}\leq 1$, then we conclude the estimate by taking $\Phi^{-1}$ on both sides.
So, finally, we are left with the case
\[
\abs{d_i}> 2(8m+1)\max_{j\in \set{i,...,k}}\dashint_{Q_j^+}\abs{\nabla\overline u-\mean{\nabla\overline u}_{Q_i^+}}\, dx.
\]
In this case we make use of Lemma~\ref{lem:deg}, which implies
\[
\max\bigg\{\abs{d_i},\dashint_{Q_i^+}\abs{\nabla\overline u-\diag\mean{\nabla\overline u}_{Q_i^+}}\, dx\bigg\} \leq 2\abs{d_i}\leq 4\abs{d_k}.
\]
Hence, we find that since $\Phi''$ is almost increasing
\begin{align*}
\Phi''(\abs{d_k})\bigg(\dashint_{Q_k^+}\abs{\nabla\overline u-\diag\mean{\nabla\overline u}_{Q_k^+}}\, dx\bigg)^2
&\leq  c2^{-m\gamma}\Phi''(\abs{d_k})\bigg(\dashint_{Q_i^+}\abs{\nabla \overline u-\diag\mean{\nabla \overline u}_{Q_i^+}}\, dx\bigg)^2
\\
&\quad+
c\Phi''(\abs{d_k})\big(\omega(2^{-i})\mean{\abs{\overline{u}^1}}_{Q_i^+}\big)^2,
\end{align*}
which implies
\begin{align*}
\bigg(\dashint_{Q_k^+}\abs{\nabla\overline u-\diag\mean{\nabla\overline u}_{Q_k^+}}\, dx\bigg)^2
&\leq  2^{-m\gamma}\bigg(\dashint_{Q_i^+}\abs{\nabla \overline u-\diag\mean{\nabla \overline u}_{Q_i^+}}\, dx\bigg)^2
+c\big(\omega(2^{-i})\mean{\abs{\overline{u}^1}}_{Q_i^+}\big)^2
\end{align*}
and the proof is complete.
\end{proof}

\begin{proof}[Proof of Theorem~\ref{thm:localinc}]
First we show the estimates holds at boundary points. The previous lemma, Lemma~\ref{lem:Tpq}, Lemma~\ref{lem:shift2} and \eqref{ass:om}, imply that
  \begin{align*}
&\dashint_{Q_k^+}\abs{\nabla \overline u-\diag\mean{\nabla \overline u}_{Q_k^+}}\, dx  
\\
   &\quad
      \leq 
    \frac{c2(8m+1)^\frac{q}{\qbar}2^{\frac{-m\gamma}{\qbar}}}{\omega(2^{-k})}\max_{j\in \set{k-m,..,k}}\dashint_{Q_j^+}\abs{\nabla \overline u-\diag\mean{\nabla \overline u}_{Q_j^+}}\, dx
   +c\mean{\abs{\overline{u}^1}}_{Q_i^+}+c\frac{2^{-i}}{\omega(2^{-k})}\mean{\abs{\nabla \overline{u}}}_{Q^+_i}
\\
&\qquad + \frac{c}{\omega(2^{-k})}(\Phi')^{-1}\Big(\|\overline F\|_{\overline{\setBMO }^*(Q_i^+)}\Big) + \frac{c}{\omega(2^{-k})}(\Phi')^{-1}(2^{-i}\abs{F_0})+\frac{c}{\omega(2^{-k})}(\Phi')^{-1}\Big(2^{-i}\|\overline F\|_{\overline{\setBMO }^*_{\tilde{\omega}}(Q^+)}\Big)
 \\
 &\quad  \leq 
  c2(8m+1)^\frac{q}{\qbar}2^{\frac{m(\beta-\gamma)}{\qbar}}\max_{j\in \set{k-m,..,k}}\frac{1}{\omega(2^{-j})}\dashint_{Q_j^+}\abs{\nabla \overline u-\diag\mean{\nabla \overline u}_{Q_j^+}}\, dx 
   +c{2^{i(\beta-1)}}\mean{\abs{\nabla \overline{u}}}_{Q^+_i}
\\
&\qquad +c\mean{\abs{\overline{u}^1}}_{Q_i^+}+ c(\Phi')^{-1}\Big(\|\overline F\|_{\overline{\setBMO }^*_{\tilde{\omega}(Q_i^+)}}\Big) + c\frac{2^{-\frac{i}{\pbar-1}}}{\omega(2^{-i})}(\Phi')^{-1}(\abs{F_0})+c\frac{2^{-\frac{i}{\pbar-1}}}{\omega(2^{-i})}(\Phi')^{-1}\Big(\|\overline F\|_{\overline{\setBMO }_{\tilde{\omega}}(Q^+)}\Big)
\\
&\quad  \leq 
  c2(8m+1)^\frac{q}{\qbar}2^{\frac{m(\beta-\gamma)}{\qbar}}\max_{j\in \set{k-m,..,k}}\frac{1}{\omega(2^{-j})}\dashint_{Q_j^+}\abs{\nabla \overline u-\diag\mean{\nabla \overline u}_{Q_j^+}}\, dx 
   +c\mean{\abs{\overline{u}^1}}_{Q_i^+}+c{2^{i(\beta-1)}}\mean{\abs{\nabla \overline{u}}}_{Q^+_i}
\\
&\qquad 
+c\mean{\abs{\overline{u}^1}}_{Q_i^+}+ c(\Phi')^{-1}\Big(\|\overline F\|_{\overline{\setBMO }_{\tilde{\omega}(Q_i^+)}}\Big) + c(\Phi')^{-1}(\abs{F_0}).
  \end{align*}
  Now, first, we fix $m$, such that $c2(8m+1)^\frac{q}{\qbar}2^{\frac{m(\beta-\gamma)}{\qbar}}\leq \frac{1}{4}$. 
We use \eqref{iter}, to find
   \begin{align*}
   c{2^{i(\beta-1)}}\mean{\abs{\nabla \overline{u}}}_{Q^+_i} \leq ci2^{i(\beta-1)} \max_{j\in \set{1,..,i}}\frac{1}{\omega(2^{-j})}\dashint_{Q_j^+}\abs{\nabla \overline u-\diag\mean{\nabla \overline u}_{Q_j^+}}\, dx + c2^{i(\beta-1)}\mean{\abs{\nabla \overline{u}}}_{Q^+}.
   \end{align*} 
   We estimate further, using \eqref{eq:graditer}, for $\delta=\frac14$ and $k_\delta=:k_0$. Then we find by the best-constant property~\eqref{eq:bcp} that
   \[
   c\mean{\abs{\overline{u}^1}}_{Q_i^+}\leq \frac14\max_{j\in \set{1,..,k}}\frac{1}{\omega(2^{-j})}\dashint_{Q_j^+}\abs{\nabla \overline u-\diag\mean{\nabla \overline u}_{Q_j^+}}\, dx+ c\mean{\abs{\nabla \overline{u}}}_{Q^+}+2^{k_0 2}\mean{\abs{\overline{u}}}_{Q^+}.
   \]
  We eventually enlarge $k_0$, such that $ck_02^{k_0(\beta-1)}\leq \frac{1}{4}$ and $k_0\geq m$. Then we find for $k\geq k_0$, that
    \begin{align*}
&\dashint_{Q_k^+}\abs{\nabla \overline u-\diag\mean{\nabla \overline u}_{Q_k^+}}\, dx   
\leq 
  \frac{3}{4}\max_{j\in \set{k_0,..,k}}\frac{1}{\omega(2^{-j})}\dashint_{Q_j^+}\abs{\nabla \overline u-\diag\mean{\nabla \overline u}_{Q_j^+}}\, dx + c\mean{\abs{\nabla \overline{u}}}_{Q^+}
  \\
  &\qquad +c\mean{\abs{\overline{u}}}_{Q^+}
 + c(\Phi')^{-1}\Big(\|\overline F\|_{\overline{\setBMO }_{\tilde{\omega}(Q_i^+)}}\Big) + c(\Phi')^{-1}(\abs{F_0}).
  \end{align*}
   We take the maximum on both sides over $k\in \set{k_0,...,N}$. Absorbing implies
     \begin{align*}
  \max_{k\in \set{k_0,...,N}} \dashint_{Q_k^+}\abs{\nabla \overline u-\diag\mean{\nabla \overline u}_{Q_k^+}}\, dx
  &\leq 
   c\mean{\abs{\nabla \overline{u}}}_{Q^+}
  +c\mean{\abs{\overline{u}}}_{Q^+}
 + c(\Phi')^{-1}\Big(\|\overline F\|_{\overline{\setBMO }_{\tilde{\omega}(Q_i^+)}}\Big) + c(\Phi')^{-1}(\abs{F_0}).
  \end{align*}
This shows the estimates at boundary points by letting $N\to \infty$, Lemma~\ref{lem:machas} and \eqref{eq:subest}.

For the interior estimates assume that the cube $\tilde{Q}\subset \frac12Q\cap\Omega$. In case $2\tilde{Q}\subset\frac12Q\cap\Omega$ estimates of $\nabla u$ follow by \cite[Theorem 1.1]{DieKapSch14} in combination with~\cite[Remark~5.8]{DieKapSch12} and \eqref{eq:hammer}. In case $\tilde{Q}\subset2\tilde{Q}\not\subset\frac12Q\cap\Omega$ there exists $x\in \partial \Omega$ such that $\tilde Q\subset Q_x^k $ where 
\[
Q_x^k:= \Bigset{s\tau(x)+a\nu(x)\,:\,(s,a)\in (-2^{-k},2^k)\times (h_{x}(s),h_{x}(s)+2^{k+1})}.
\]
Hence, we can use \eqref{eq:scal} and \eqref{eq:scal2} in order to reduce the problem to the estimate of boundary points.
  
The proof of Theorem~\ref{thm:localinc} is completed by the pressure estimate, Lemma~\ref{lem:pr} below.
\end{proof}



\subsection{The shear thinning case: $\Phi''$ almost decreasing}
This part follows the strategy of the estimates on oscillations invented in~\cite{DieKapSch12}. 
Again we focus on the case, when we have a point on the boundary and use the simplifications a)--c) introduced before Subsection~\ref{ssc:inc}. We use $Q_k^+=2^{-k}Q^+_1$, where $Q^+_1$ is the initial half cube around $0$. The main effort will be the following proposition that estimates the diagonal oscillations on $\es(D\overline{u})$ at a boundary point. We use the notation
\[
M^\sharp_{\omega,Q_{i}} (\es(D \overline{u})):=\frac{1}{\omega(2^{-i})}\dashint_{Q^+_{i}}|\es(D\overline u)-\diag\mean{ \es(D\overline u)}_{Q_i^+}|\, dx. 
\] 
\begin{proposition}
\label{proportionality2}
Assume $\Phi''$ is almost decreasing and $\omega$ satisfies \eqref{ass:om}.  If $h\in \mathcal{C}^{2,\omega^{q-1}}(-1,1)$, then
\begin{equation*}
\max_{i\geq 1}M^\sharp_{\omega,Q_{i}} (\es(D \overline{u}))\leq c M_{\omega,Q_1} (\es(D u)) + c \|\overline{F}\|_{\overline{\setBMO }_\omega(Q_{1}^+;\mathbb R^{2\times 2})} + c\langle \Phi'(\abs{\nabla  \overline u)})\rangle_{Q_1^+}+c\langle \abs{\Phi'( u)}\rangle_{Q_1^+} . 
\end{equation*}
 The constant $c$ depends on the characteristics of $\Phi$, $c_0,\beta-\beta_0$ and $\norm{h'}_{L^\infty(-1,1)}+\norm{h''}_{\mathcal{C}^{\omega^{q-1}}(-1,1)}$. In the special case, that $\omega= 1$, the above inequality is satisfied in case $h\in \mathcal{C}^{1,1}[-1,1]$ and the constant depends on $h$ only via $\norm{h''}_{L^\infty([-1,1])}$.
\end{proposition}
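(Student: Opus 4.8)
The plan is to mimic the iteration argument already carried out in the shear thickening case (Lemma~\ref{lem:1} and the proof of Theorem~\ref{thm:localinc}), but now working directly with the oscillations of $\es(D\overline u)$ rather than those of $\nabla\overline u$, which is natural since $\Phi''$ is almost decreasing (equivalently $(\Phi^*)''$ is almost increasing). First I would fix a boundary point, reduce via the simplifications a)--c) to the half-cube setting of Subsection~\ref{flat} with the transformed equation \eqref{transformed.system}, and work on the dyadic family $Q_k^+ = 2^{-k}Q_1^+$. The key one-step estimate is obtained by comparing $\overline u$ with the solution $v$ of the homogeneous reflected system \eqref{homosys} on $Q_{i}^+$ with $i = k-m$: by Theorem~\ref{odlarse} the quantity $|V(Dv) - \diag\mean{V(Dv)}|^2$ decays at rate $2^{-2\gamma m}$ (or the appropriate power, since we must relate $V$-oscillations to $\es$-oscillations via \eqref{eq:hammer}), and Proposition~\ref{keylemma} controls $\dashint|V(D\overline u)-V(Dv)|^2$ by a small multiple of $(\Phi^*)_{|\es(P)|}$ of the $\es$-oscillation on $2Q_i^+$ plus the error terms $(\Phi^*)_{|\es(P)|}(\norm{\overline F}_{\overline{\setBMO}^*})$, $\Phi^*_{|\es(P)|}(\rho|F_0|)$, $\Phi_{|P|}(\rho|P|)$ and the curvature term $\Phi_{|P|}((\rho + \norm{h''-h''(0)})\mean{|\overline u^1|})$. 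Choosing $P = d_i$ (or $\es^{-1}(\diag\mean{\es(D\overline u)}_{Q_i^+})$ appropriately) and reabsorbing, this gives the telescoping recursion for $M^\sharp_{\omega,Q_i}(\es(D\overline u))$.

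The next step is the algebraic manipulation of the error terms to make them fit the scale $\omega$. Here the assumption $h \in \mathcal{C}^{2,\omega^{q-1}}$ is exactly what is needed: $\norm{h''-h''(0)}_{L^\infty(Q_i^+)} \leq c\,\omega(2^{-i})^{q-1}$, so that $\Phi^*$-inverting (using $\Phi^{-1}(\Phi^*(t))\sim (\Phi')^{-1}(t)$ and $(\Phi^*)^{-1}(\Phi(t))\sim \Phi'(t)$ from \cite[(2.3)]{DieE08}) and applying \eqref{eq:typeT} turns the curvature contribution into a term of order $\omega(2^{-i})\cdot(\text{lower order averages})$ rather than something growing. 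Because $\Phi''$ is almost decreasing the shift arguments (Lemma~\ref{lem:shift2}, Lemma~\ref{lem:Tpq}) go through uniformly, and — this is the point of working with $\es(D\overline u)$ — one avoids the degenerate/non-degenerate case distinction that complicated the shear thickening proof, since $(\Phi^*)''$ being almost increasing makes $(\Phi^*)_{|\es(P)|}(|\es(D\overline u)-\es(P)|)$ comparable to $\Phi^*(|\es(D\overline u)-\es(P)|)$ up to controlled shifts. I would then fix $m$ large enough that $c\,(8m+1)^{q/\qbar}2^{-m\gamma/\qbar} \le 1/4$ (using \eqref{ass:om} to absorb the $\omega(2^{-i})/\omega(2^{-k})$ factors, which cost only a power $2^{m(\beta-\gamma)}$ with $\beta < \beta_0 \le \gamma\cdot(\text{something})$), apply the telescope-sum Lemmata \eqref{iter} and \eqref{eq:graditer} to the lower-order averages $\mean{|\overline u^1|}_{Q_i^+}$ and $\langle\Phi'(|\nabla\overline u|)\rangle$, enlarge the starting index $k_0$ to absorb the $2^{i(\beta-1)}$-type factors, take the supremum over $k\in\{k_0,\dots,N\}$ on both sides and absorb, exactly as in the proof of Theorem~\ref{thm:localinc}.

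Finally, the passage from the boundary estimate to the full statement uses Lemma~\ref{lemma:F} and the scaling estimates \eqref{eq:scal}, \eqref{eq:scal2} (transferring oscillations from $\Omega$-cubes to $Q^+$-cubes via the area-preserving map $T$), plus Lemma~\ref{lem:machas} to identify the diagonal oscillations on half-cubes with the $\overline{\setBMO}^*$ seminorm; the special case $\omega = 1$ only requires $h \in \mathcal{C}^{1,1}$ because then the curvature term involves $\norm{h''}_{L^\infty}$ directly with no modulus of continuity, and \eqref{ass:om} degenerates harmlessly. The main obstacle I anticipate is the careful bookkeeping in the error-term estimates: getting the shifted $\Phi^*$-terms to collapse to exactly the factor $\omega(2^{-i})$ (and not $\omega(2^{-i})^{1/(q-1)}$ or similar) requires threading the $\Phi$-versus-$\Phi^*$ duality and the type-$T(p,q,K)$ scaling \eqref{eq:typeT} through every occurrence of $h''-h''(0)$ and $F_i - F_0$, and ensuring all the $\delta$'s and enlargements of $m_0$, $k_0$ are taken in an admissible order so that every absorption is legitimate; this is the step where the precise exponent $\omega^{q-1}$ on the boundary regularity is pinned down.
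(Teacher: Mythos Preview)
Your plan is essentially the paper's approach: the comparison with $v$ via Proposition~\ref{keylemma} and the decay from Theorem~\ref{odlarse} give the one-step recursion, the regularity of $h''$ kills the curvature contribution after dividing by $\omega(2^{-k})$, and the shear-thinning assumption $\Phi'_a(t)\le c\,\Phi'(t)$ lets you drop shifts without the degenerate/non-degenerate split. Two organisational remarks: the paper outsources the first step to \cite[Proposition~5.1]{DieKapSch12} (with Proposition~\ref{keylemma} and Theorem~\ref{odlarse} plugged in for the boundary setting) rather than rerunning the $V$-argument by hand, and it packages the lower-order control---your telescope-sum handling of $\langle|\overline u|\rangle$ and $\langle\Phi'(|\nabla\overline u|)\rangle$---into a separate Lemma~\ref{lot}, so that the absorption of those terms and the main oscillation terms happen in one clean step at the end; also, the correct choice is $\es(P)=\diag\langle\es(D\overline u)\rangle_{Q_i^+}$, not $P=d_i$.
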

Before proving the proposition above, we need the following lemma, that controls the lower order terms.
\begin{lemma}
By the same assumptions and notations as in Proposition~\ref{proportionality2} but without the assumption on the monotonicity of $\Phi''$, we find that 
\label{lot}
\begin{align*}
 \max_{k\leq l}\Phi'\bigg(\dashint_{Q^+_k}|\nabla \overline{u}|\bigg)+\max_{k\leq 1}\mean{\Phi'(\abs{\nabla u})}_{Q^+_k}
&\leq  c\norm{\overline{F}}_{\overline{\setBMO }(Q^+_{1};\mathbb R^{2\times 2})}
+cl\max_{j\leq l}M_{Q_j}^\sharp(\es(D\overline u))
\\
&\quad  
 +c\Phi'(\abs{\mean{\overline{u}}_{Q_1^+}}) +c\Phi'(\mean{\abs{\nabla \overline u}}_{Q_1^+}) 
+ c\mean{\abs{\overline F}}_{Q^+_1}.
\end{align*}
and for $l\in \setN$,
\begin{align*}
\max_{k\leq l}\Phi'(\mean{\abs{\overline{u}}}_{Q^+_k})\leq \frac{cl}{2^{l(p-1)}}\max_{j\leq l}M_{Q_j}^\sharp(\es(D\overline u)) +c\Phi'(\abs{\mean{\overline{u}}_{Q_1^+}}) +c\Phi'(\mean{\abs{\nabla \overline u}}_{Q_1^+}) 
+ c\mean{\abs{\overline F}}_{Q^+_1}+c\norm{\overline{F}}_{\overline{\setBMO }(Q^+_{1};\mathbb R^{2\times 2})},
\end{align*}
where the constant depends only on the properties of Assumption~\ref{ass1} and the Lipschitz constant of $h'$.
\end{lemma}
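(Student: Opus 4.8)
The plan is to derive both inequalities from the transformed system \eqref{transformed.system}, tested against a cut-off corrector of the solution itself, exactly in the spirit of Lemma~\ref{lemmacompar} and Proposition~\ref{corcompar} but now with the \emph{affine reference $P=0$} and a \emph{genuine telescoping} over the dyadic half-cubes $Q_k^+$. First I would record the elementary consequence of the Caccioppoli-type estimate of Lemma~\ref{lemmacompar} (resp. of Corollary~\ref{odhadprum}) applied on $Q_k^+\subset Q_{k-1}^+$ with $P=0$, $F_0=\diag\mean{\overline F}_{Q_{k-1}^+}$: using \eqref{eq:hammerd}, $|V(D\overline u)|^2\sim \Phi(|D\overline u|)$ and $(\Phi^*)^{-1}(\Phi(t))\sim\Phi'(t)$, one gets a bound of the form
\[
\Phi'\Big(\dashint_{Q_k^+}|\nabla\overline u|\,dx\Big)\leq c\,\dashint_{Q_{k-1}^+}|\es(D\overline u)-\diag\mean{\es(D\overline u)}_{Q_{k-1}^+}|\,dx + c\,\norm{\overline F}_{\overline{\setBMO}^*(Q_{k-1}^+)}+c\,\dashint_{Q_{k-1}^+}|\overline F|\,dx + (\textrm{lower order}),
\]
where the lower order terms are $\Phi'(2^{-(k-1)}|\diag\mean{\overline F}_{Q_{k-1}^+}|)$, which is absorbed into $\mean{|\overline F|}_{Q_1^+}$ and the $\setBMO$-seminorm by the telescoping inequality \eqref{iter}, and $\Phi'((2^{-(k-1)}+\norm{h''-h''(0)}_{L^\infty})\mean{|\overline u_1|}_{Q_{k-1}^+})$. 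Here one invokes $h\in\mathcal C^{1,1}$ so that $\norm{h''-h''(0)}_{L^\infty(Q_k^+)}$ is uniformly bounded by $2\norm{h''}_{L^\infty}$, and then $\Phi'(c\,\mean{|\overline u_1|}_{Q_{k-1}^+})\leq \Phi'(c\,\mean{|\overline u|}_{Q_{k-1}^+})$, which is exactly the quantity to be controlled in the second inequality of the lemma.

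Second, I would convert the oscillation $\dashint_{Q_{k-1}^+}|\es(D\overline u)-\diag\mean{\es(D\overline u)}_{Q_{k-1}^+}|$ appearing on the right into $\omega(2^{-(k-1)})\,M^\sharp_{\omega,Q_{k-1}}(\es(D\overline u))\leq \omega(1)\,\max_{j\le l}M^\sharp_{\omega,Q_j}(\es(D\overline u))$ using that $\omega$ is bounded (Assumption~\ref{ass2}) — this is why the maximum over $j\le l$ of the sharp functional appears, without any $l$-factor yet. The $l$-factor in $cl\max_{j\le l}M^\sharp_{Q_j}(\es(D\overline u))$ enters precisely through \eqref{iter} (with $q=1$ and $\omega\equiv1$), which I would apply to $f=\overline F$ and to $f=\nabla\overline u$ to estimate $\mean{|\overline F|}_{Q_k^+}$ and $\mean{|\nabla\overline u|}_{Q_k^+}$ by $cl\max_{j\le l}$ of the corresponding mean oscillations plus the average on $Q_1^+$; and the oscillation of $\overline F$ is controlled by $\norm{\overline F}_{\overline{\setBMO}^*(Q_1^+)}$, while the oscillation of $\nabla\overline u$ is dominated by the oscillation of $D\overline u$ via Korn's inequality on the reflected function $\widehat{\overline u}$ (Lemma~\ref{lem:machas}, \cite[Theorem 6.13]{DiRS10}) and then by $M^\sharp_{Q_j}(\es(D\overline u))$ through the equivalence \eqref{eq:hammer}, since $|D\overline u-\diag\mean{D\overline u}|\sim$ (a monotone function of) $|\es(D\overline u)-\diag\mean{\es(D\overline u)}|$ after an application of the best-constant property \eqref{eq:bcp}. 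Taking $\max_{k\le l}$ on the left then gives the first displayed inequality; note no absorption is needed because the right-hand side already only involves $\max_{j\le l}M^\sharp_{Q_j}$, the fixed data and the averages on $Q_1^+$.

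Third, for the second displayed inequality I would run the same scheme but now tracking the $2^{-k}$-decay more carefully: the key point is that the lower-order contribution coming from $\Phi'(c(2^{-(k-1)}+\norm{h''-h''(0)})\mean{|\overline u|}_{Q_{k-1}^+})$, combined with the Poincaré–Korn chain, produces a \emph{self-improving} term $\Phi'(\mean{|\overline u|}_{Q_{k-1}^+})$ on the right which must be absorbed. To see the decay, pass from $\mean{|\overline u|}_{Q_{k-1}^+}$ to $\mean{|\overline u|}_{Q_k^+}$ via \eqref{eq:graditer} (with $q=1$): this bounds $\mean{|\overline u|}_{Q_k^+}$ by $\delta\max_{j}\omega(2^{-j})^{-1}\dashint_{Q_j^+}|\nabla\overline u-\diag\mean{\nabla\overline u}|\,dx$ plus $c_\delta\,2^{-(k-1)}\mean{|\nabla\overline u|}_{Q_1^+}+\mean{|\overline u|}_{Q_1^+}$; since $\Phi'$ is $\Delta_2$-controlled (Assumption~\ref{ass1}, $\Phi\in T(p,q,K)$, so $\Phi'(\lambda t)\le c\lambda^{p-1}\Phi'(t)$ for $\lambda\le1$) one gains the factor $(2^{-k})^{p-1}\sim 2^{-k(p-1)}$, which after the dyadic summation in the telescoping argument yields the stated prefactor $cl\,2^{-l(p-1)}$ in front of $\max_{j\le l}M^\sharp_{Q_j}(\es(D\overline u))$, while the remaining terms collapse to $\Phi'(|\mean{\overline u}_{Q_1^+}|)$, $\Phi'(\mean{|\nabla\overline u|}_{Q_1^+})$, $\mean{|\overline F|}_{Q_1^+}$ and $\norm{\overline F}_{\overline{\setBMO}^*(Q_1^+)}$.

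I expect the main obstacle to be the bookkeeping of the corrector $g$ (Observation~\ref{obs:g}) and the deficit between $\es(D\overline u^Q)$ and $\es(D\overline u)$, i.e. keeping every error term coming from \eqref{sym.grad.over.u} and from $\skp{E}{\tilde\phi}$ in the scaling-invariant form $\Phi'$ of something times $2^{-k}$ or $\norm{h''-h''(0)}_{L^\infty}$, and then verifying that each such term is genuinely of lower order — absorbable or summable — \emph{without} the monotonicity assumption on $\Phi''$, since this lemma (unlike Proposition~\ref{proportionality2}) must hold for both the shear thinning and shear thickening regimes. Concretely, the delicate point is that in Lemma~\ref{lemmacompar} the shift $\Phi_{|P|}$ with $P=0$ degenerates to $\Phi$, so the estimates $(ii),(iii)$ there, which were absorbed against $|V(D\overline u^Q)-V(P)|^2\sim\Phi(|D\overline u^Q|)$, now must be re-absorbed against $\Phi(|\nabla\overline u|)$ itself; this forces the use of the exponent $\theta_0<1$ reverse-Hölder improvement (the step producing $(\dashint\Phi^{\theta_0})^{1/\theta_0}$) together with Gehring/Giaquinta-type self-improvement \cite[Lemma 6.1]{giu} exactly as in Proposition~\ref{corcompar}, and I would cite Corollary~\ref{odhadprum} directly rather than redo it.
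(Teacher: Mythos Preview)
Your overall plan---apply Corollary~\ref{odhadprum} on $Q_k^+$ with $P=0$, then telescope---is the paper's route, but two concrete things go wrong.

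\textbf{First, an absorption step is missing.} Corollary~\ref{odhadprum} does not give the \emph{oscillation} $\dashint_{Q_{k-1}^+}|\es(D\overline u)-\diag\mean{\es(D\overline u)}|$ on the right as you wrote; it gives the \emph{full average} $\dashint_{Q_{k-1}^+}|\es(D\overline u)|$. The conversion to oscillations is done by the telescoping inequality \eqref{eq:iteration} applied to $\es(D\overline u)$ itself, and this is precisely where the factor $k\le l$ in front of $\max_j M^\sharp_{Q_j}(\es(D\overline u))$ enters---not from applying \eqref{iter} to $\nabla\overline u$ or $\overline F$ as you suggest. More importantly, the lower-order term $\Phi'(c\,\mean{|\overline u|}_{Q_{k-1}^+})$ cannot simply be ``deferred to the second inequality''. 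In the paper it is converted via Poincar\'e and \eqref{eq:iteration} to $\Phi'\big(\sum_{i=1}^{k-1}2^{-i}\dashint_{Q_i^+}|\nabla\overline u|+|\mean{\overline u}_{Q_1^+}|\big)$; the tail $\sum_{i=m}^{k-1}2^{-i}\dashint_{Q_i^+}|\nabla\overline u|$ reintroduces the left-hand quantity $\max_i\Phi'(\dashint_{Q_i^+}|\nabla\overline u|)$ with a small prefactor $\delta$ (choose $m$ so that $\sum_{i\ge m}2^{-i}\le\delta^{1/\overline q'}$), and this must be absorbed after taking $\max_{k\le l}$. Your claim that ``no absorption is needed'' is therefore incorrect.

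\textbf{Second, your detour through oscillations of $\nabla\overline u$ is both unnecessary and unjustified.} The step ``$|D\overline u-\diag\mean{D\overline u}|\sim$ (a monotone function of) $|\es(D\overline u)-\diag\mean{\es(D\overline u)}|$'' via \eqref{eq:hammer} does not yield a linear bound $\dashint|\nabla\overline u-\mean{\nabla\overline u}|\le c\,M^\sharp_{Q_j}(\es(D\overline u))$ without monotonicity of $\Phi''$, which the lemma explicitly forbids you to use. The paper never needs this: since Corollary~\ref{odhadprum} already produces $\dashint|\es(D\overline u)|$, the telescoping is applied directly to $\es(D\overline u)$.

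For the second inequality the paper's argument is also simpler than your sketch: starting from the Poincar\'e/iteration bound $\mean{|\overline u|}_{Q_k^+}\le c\sum_{i=1}^k 2^{-i}\dashint_{Q_i^+}|\nabla\overline u|+c|\mean{\overline u}_{Q_1^+}|$, split the sum at level $l$, use $\Phi'\in T(p-1,q-1,\cdot)$ to pull out the factor $2^{-l(p-1)}$ from the tail, and then insert the already-proven first inequality to bound $\max_j\Phi'(\dashint_{Q_j^+}|\nabla\overline u|)$. No direct appeal to \eqref{eq:graditer} is needed.
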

\begin{proof}
By \eqref{eq:iteration} and \Poincare, we find that
\begin{align}
\label{overlineu}
&\mean{\abs{\overline{u}}}_{Q^+_k}\leq c\sum_{i=1}^k2^{-i}\dashint_{Q^+_i}\abs{\nabla \overline u}\, dx\,  + c\abs{\mean{\overline{u}}_{Q_1^+}}
\end{align}
Using Corollary~\ref{odhadprum} this implies
\begin{align*}
\dashint_{Q^+_k}\Phi(|\nabla \overline{u}|)
&\leq  c\Phi^*
\bigg(\norm{\overline{F}}_{\overline{\setBMO }(Q^+_{k-1};\mathbb R^{2\times 2})}
+\dashint_{Q^+_{k-1}}|\es(D\overline u)|\, dx \,  \bigg)
\\
&\quad + c\Phi^*
(|2^{-k+1}\mean{\diag \overline{F}}_{Q_{k-1}^+}|)
\\
&\quad  + c\Phi\bigg(\sum_{i=1}^k2^{-i}\dashint_{Q^+_i}\abs{\nabla \overline u}\, dx\,  \bigg)+ c\Phi(\abs{\mean{\overline{u}}_{Q_1}}).
\end{align*}
By Jensen's inequality we deduce
\begin{align*}
\Phi'\bigg(\dashint_{Q^+_k}|\nabla \overline{u}|\, dx\bigg)
+\dashint_{Q^+_k}\Phi'(\abs{\nabla \overline{u}})\, dx
&\leq  c\norm{\overline{F}}_{\overline{\setBMO }(Q^+_{k-1};\mathbb R^{2\times 2})}+\dashint_{Q^+_{k-1}}|\es(D\overline u)|\, dx \,  
\\
&\quad +c \Phi'\Big(\sum_{i=1}^k2^{-i}\dashint_{Q^+_i}\abs{\nabla \overline u}\, dx\,  \Big)
+c(\Phi'(\abs{\mean{\overline{u}}_{Q_1}})
\\
&\quad + c2^{-k}|\mean{\diag \overline{F}}_{Q_{k-1}^+}|).
\end{align*}
Hence by \eqref{eq:iteration}, and by choosing $m$, such that $\sum_{i=m}^\infty 2^{-i}\leq \delta^\frac{1}{\overline q'}$, we find that
\begin{align*}
&\Phi'\bigg(\dashint_{Q^+_k}|\nabla \overline{u}|\bigg)+\dashint_{Q^+_k}\Phi'(\abs{\nabla \overline{u}})\, dx
\\
&\quad\leq  c\norm{\overline{F}}_{\overline{\setBMO }(Q^+_{k-1};\mathbb R^{2\times 2})}
+c k\max_{j\leq k}\dashint_{Q^+_{j}}|\es(D\overline u)-\mean{\diag \es(D\overline u)}_{Q_j^+}|\, dx \,  
+c\Phi'\Big(\sum_{i=m}^k 2^{-i}\dashint_{Q^+_i}\abs{\nabla \overline u}\, dx\,  \Big)\\
&\qquad 
+c\Phi'(\abs{\mean{\overline{u}}_{Q_1}})
 +c_m\Phi'\big(\mean{\abs{\nabla \overline u}}_{Q_1^+}\big) 
 + c\mean{\abs{\overline F}}_{Q^+_1}
+ c2^{-k}\max_{j\leq k}\dashint_{Q^+_j}|\overline F-\mean{\diag \overline{F}}_{Q_{j}^+}|
\\
&\quad\leq  c\norm{\overline{F}}_{\overline{\setBMO }(Q^+_{k-1};\mathbb R^{2\times 2})}+k\max_{j\leq k}\dashint_{Q^+_{j}}|\es(D\overline u)-\mean{\diag \es(D\overline u)}_{Q_j^+}|\, dx \,  
\\
&\qquad +\delta\max_{i\geq k}\Phi'\Big(\dashint_{Q^+_i}\abs{\nabla \overline u}\, dx\,  \Big)
+c\Phi'(\abs{\mean{\overline{u}}_{Q_1^+}}) 
+c_m\mean{\abs{\nabla \overline u}}_{Q_1^+} + \mean{\abs{\overline F}}_{Q^+_1}.
\end{align*}
By taking the maximum over $k\leq l$ on both sides we find
\begin{align*}
&\max_{k\leq 1}\dashint_{Q^+_k}\Phi'(\abs{\nabla \overline{u}})\, dx+ \max_{k\leq l}\Phi'\bigg(\dashint_{Q^+_k}|\nabla \overline{u}|\, dx\bigg)
\\
&\quad
\leq  c\norm{\overline{F}}_{\overline{\setBMO }(Q^+_{1};\mathbb R^{2\times 2})}
+cl\max_{j\leq l}\dashint_{Q^+_{j}}|\es(D\overline u)-\mean{\diag \es(D\overline u)}_{Q_j^+}|\, dx \,
\\
&\qquad  
 +c\Phi'(\abs{\mean{\overline{u}}_{Q_1^+}}) +c\Phi'(\mean{\abs{\nabla \overline u}}_{Q_1^+}) 
+ c\mean{\abs{\overline F}}_{Q^+_1};
\end{align*}
this completes the estimate on $\nabla u$. Concerning the estimate on $u$, we first find by \eqref{overlineu}, that
\begin{align*}
&\Phi(\mean{\abs{\overline{u}}}_{Q^+_k})\leq c\Phi\bigg(\sum_{i=1}^k2^{-i}\dashint_{Q^+_i}\abs{\nabla \overline u}\, dx\,  \bigg)+ \Phi(\abs{\mean{\overline{u}}_{Q_1^+}})
\\
&\quad \leq c\Phi\bigg(\max_{j=\set{l,...,k}}\dashint_{Q^+_j}\abs{\nabla \overline u}\, dx\sum_{i=l}^k2^{-i}\,  + c_l\dashint_{Q^+_1}\abs{\nabla \overline u}\, dx+\abs{\mean{\overline{u}}_{Q_1^+}})\bigg)
\\
&\quad \leq \Phi\bigg(\frac{c}{2^l}\max_{j=\set{l,...,k}}\dashint_{Q^+_j}\abs{\nabla \overline u}\, dx+ c_l\dashint_{Q^+_1}\abs{\nabla \overline u}\, dx+\Phi(\abs{\mean{\overline{u}}_{Q_1^+}})\bigg).
\end{align*}
Hence for $l\in \setN$,
\begin{align*}
\Phi'(\mean{\abs{\overline{u}}}_{Q^+_k})
&\leq \frac{c}{2^{l(p-1)}}\max_{j=\set{l,...,k}}\Phi'\bigg(\dashint_{Q^+_j}\abs{\nabla \overline u}\, dx\bigg)+ c_l\Phi'\bigg(\dashint_{Q^+_1}\abs{\nabla \overline u}\, dx+\abs{\mean{\overline{u}}_{Q_1^+}}\bigg).
\end{align*}
By using the previous estimate on $\nabla u$ we conclude
\begin{align*}
\max_{k\leq l}\Phi'(\mean{\abs{\overline{u}}}_{Q^+_k})\leq \frac{cl}{2^{l(p-1)}}\max_{j\leq l}M_{Q_j}^\sharp(\es(D\overline u)) +c\Phi'(\abs{\mean{\overline{u}}_{Q_1^+}}) +c\Phi'(\mean{\abs{\nabla \overline u}}_{Q_1^+}) 
+ c\mean{\abs{\overline F}}_{Q^+_1}+c\norm{\overline{F}}_{\overline{\setBMO }(Q^+_{1};\mathbb R^{2\times 2})}.
\end{align*}
\end{proof}
\begin{proof}[Proof of Proposition~\ref{proportionality2}]
We start by applying~\cite[Proposition~5.1]{DieKapSch12} on $ M^\sharp_{Q_k^+}(\es(D\overline{u}))$, which is $ M^\sharp_{Q_k}(A(\nabla u))$ in~\cite{DieKapSch12}. The argument can simply be adapted replacing~\cite[Lemma~4]{DieKapSch12} by Proposition~\ref{keylemma}, where $\es (P):=\mean{\diag \es (D\overline u)}_{Q^+_{k-m}}$ and $F_0:=\mean{\diag \overline F}_{Q^+_{k-m}}$ and replacing \cite[Theorem 4.1 ]{DieKapSch12} by Theorem~\ref{odlarse} here. We find that for $m\geq 4$ and $k\geq m$
 \begin{align*}
  M^\sharp_{Q_k^+}(\es (D\overline u))
&\leq cm 2^\frac{-\gamma m}{\overline{p}'} \max_{j\in \set{k-m,...,k}}M^\sharp_{Q_j^+}(\es (D\overline u)) + \frac{c_m }{\omega(2^{-k})}\norm{\overline{F}}_{\overline{\setBMO }(Q^+_{k-m};\mathbb R^{2\times 2})}
\\
&\quad + \frac{c_m}{2^{k-m}}\dashint_{Q^+_{k-m}}\abs{\es(D\overline u)}+\abs{\mean{\diag \overline F}_{Q^+_{k-m}}}\, dx
\\
&\quad +c
\Big(\Phi^*_{\abs{\es(P)}}\Big)^{-1}\Big(\Phi_{\abs{P}}\big(\norm{h''-h''(0)}_{L^\infty (Q^+_{k-m})}\langle |\overline u|\rangle_{Q^+_{k-m}}\big)\Big).
\end{align*}
We divide the estimate by $\omega(2^{-k})$. Since $\Phi(t)\sim\Phi^*(\Phi'(t))$ (see~\cite[(2.3)]{DieE08}), we find by Lemma~\ref{lem:Tpq} 
\begin{align*}
  M^\sharp_{\omega,Q_k^+}(\es (D\overline u))
&\leq \frac{cm 2^\frac{-2\gamma m}{\overline{p}'}}{\omega(2^{-k})} \max_{j\in \set{k-m,...,k}}M^\sharp_{Q_j^+}(\es (D\overline u)) + \frac{c_m }{\omega(2^{-k})}\norm{\overline{F}}_{\overline{\setBMO }(Q^+_{k-m})}
\\
&\quad + \frac{c_m}{\omega(2^{-k})2^{k-m}}\dashint_{Q^+_{k-m}}\abs{\es(D\overline u)}+\abs{\mean{\diag \overline F}_{Q^+_{k-m}}}\, dx
\\
&\quad +
c\frac{c }{\omega(2^{-k})}\Phi_{\abs{P}}'\bigg(\norm{h''-h''(0)}_{L^\infty (Q^+_{k-m})} \langle |\overline u|\rangle_{Q^+_{k-m}}\bigg).
\end{align*}
Using the assumption on $h''$ and $\omega$, we find by Lemma~\ref{lem:Tpq}, that
\begin{align*}
 M^\sharp_{\omega,Q_k^+}(\es (D\overline u))
&\leq cm 2^{(\beta-\frac{2\gamma }{\overline{p}'})m} \max_{j\in \set{k-m,...,k}}M^\sharp_{\omega,Q_j^+}(\es (D\overline u)) + c_m \norm{\overline{F}}_{\overline{\setBMO }_\omega(Q^+_1)}
\\
&\quad + \frac{c_m}{2^{(k-m)(1-\beta)}}\dashint_{Q^+_{k-m}}\abs{\es(D\overline u)}+\abs{\mean{\diag \overline F}_{Q^+_{k-m}}}\, dx
 +
c\Phi_{\abs{P}}'\Big(\langle |\overline u|\rangle_{Q^+_{k-m}}\Big).
\end{align*}
Next, by \eqref{iter}, we find that
\[
\abs{\mean{\diag \overline F}_{Q^+_{k-m}}}\leq c(k-m)\norm{\overline{F}}_{\overline{\setBMO }_\omega(Q^+_1;\mathbb R^{2\times 2})}+\mean{\abs{\overline{F}}}_{Q^+_1}.
\]
For $\delta\in (0,1)$ we fix $m$, such that $cm 2^{(\beta-\frac{\gamma }{\overline{p}'})m} =\frac{\delta}2$. This is possible since, $\beta_0:=\frac{2\gamma }{\overline{p}'}$ with $\gamma$ defined in Theorem~\ref{odlarse}.  
 By the above and the fact that $(k-m)2^{-(1-\beta)(k-m)}\leq c$, we find that
\begin{align}
\label{unified}
\begin{aligned}
  M^\sharp_{\omega,Q_k^+}(\es (D\overline u))
&\leq \frac\delta2 \max_{j\in \set{k-m,...,k}}M^\sharp_{Q_j^+,\Omega}(\es (D\overline u)) + c \norm{\overline{F}}_{\overline{\setBMO }_\omega(Q^+_1;\mathbb R^{2\times 2})}
\\
&\quad + \frac{c}{2^{(k-m)(1-\beta)}}\dashint_{Q^+_{k-m}}\abs{\es(D\overline u)}\, dx
 +
c\Phi_{\abs{P}}'\Big(\langle |\overline u|\rangle_{Q^+_{k-m}}\Big).
\end{aligned}
\end{align}
Up to this point, all our arguments are valid for $\Phi''$ almost decreasing or increasing. 

At this point we use that $\Phi''$ is almost decreasing (the shear thinning case) and find by \eqref{eq:hammerd} for all $t\in [0,\infty)$, that
$\Phi'_{a}(t)\leq c\Phi'(t)$. We fix $k_0$, such that  
\[
c\max\set{(k_0-m)2^{-(1-\beta)(k_0-m)},(k_0-m)2^{-(p-1)(k_0-m)}} \leq \frac{\delta}{4}.
\]
Hence \eqref{unified} and Lemma~\ref{lot} imply for all $k\in \setN$, that
\begin{align*}
M^\sharp_{\omega,Q_k^+}(\es (D\overline u))
&\leq \frac\delta2 \max_{j\in \set{k-m,...,k}}M^\sharp_{\omega,Q_j^+}(\es (D\overline u)) + c \norm{\overline{F}}_{\overline{\setBMO }_\omega(Q^+_1;\mathbb R^{2\times 2})}
\\
&\quad + \frac{c}{2^{(k-m)(1-\beta)}}\dashint_{Q^+_{k-m}}\Phi'(\abs{\nabla \overline u})\, dx
+c\Phi'\Big(\langle |\overline u|\rangle_{Q^+_{k-m}}\Big)
\\
&
\leq \delta\max_{j\in \set{k-m,...,1}}M^\sharp_{\omega, Q_j^+}(\es (D\overline u)) + c \norm{\overline{F}}_{\overline{\setBMO }_\omega(Q^+_1)}
\\
&\quad +c\Phi'(\abs{\mean{\overline{u}}_{Q_1^+}}) +c\Phi'(\mean{\abs{\nabla \overline u}}_{Q_1^+})+c_{k_0}\mean{\Phi'(\abs{\nabla \overline u})}_{Q_1^+}  
+ c\mean{\abs{\overline F}}_{Q^+_1}.
\end{align*}
Now the claim follows by taking the supremum over $k\in \set{1,...,N}$, by absorbing and letting $N\to \infty$.
\end{proof}

In general, the properties of $\es(Du)$ can only be partially transferred to $\nabla u$. The following proposition gives a condition on $\omega$ that implies that $u$ is Lipschitz continuous. It is the so called Dini condition on the modulus of continuity. It is a known sharp condition such that $\setBMO_\omega(\Omega)\subset L^\infty(\Omega)$. It is also known to be a sharp condition in order to have Lipschitz continuous solutions to elliptic PDE, see \cite[Example 5.5]{BreCiaDieKuuSch17}.
 \begin{proposition}
 \label{pro:dini}
 Let the assumption of Proposition~\ref{proportionality2} hold and $\omega$ additionally satisfy the Dini-condition \eqref{eq:dini}, then $\nabla u,\es(Du)\in \mathcal{C}^{\psi}(\overline{\Omega})$ with $\psi(r)=\int_0^r\frac{\omega(s)\, ds}{s}$. Moreover, $\nabla u\in \setBMO_{\omega}(\overline{\Omega})$.
 \end{proposition}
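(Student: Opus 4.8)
The plan is to combine the weighted oscillation estimates of Proposition~\ref{proportionality2} (and its interior counterpart) with a telescope-sum/Campanato argument. First I would fix a boundary point, assume (as throughout Section~\ref{potmt}) the normalizations a)--c), and work with the dyadic family of half-cubes $Q_k^+=2^{-k}Q_1^+(0)$ and the transformed solution $\overline u=u\circ T^{-1}$. Proposition~\ref{proportionality2} already gives the uniform bound $\sup_k M^\sharp_{\omega,Q_k^+}(\es(D\overline u))\le C\big(\cdots\big)$, i.e. $\es(D\overline u)\in\setBMO_\omega^*$ around the boundary; the corresponding interior estimate for $\nabla u$ is covered by~\cite[Theorem 1.1]{DieKapSch14} together with~\cite[Remark 5.8]{DieKapSch12}. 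The key new input is that the \emph{Dini condition} \eqref{eq:dini} upgrades $\setBMO_\omega$ to the Campanato-type class with modulus $\psi(r)=\int_0^r\omega(s)\,ds/s$: indeed, by the telescoping inequality \eqref{eq:iteration},
\[
\abs{\diag\mean{\es(D\overline u)}_{Q_{k+m}^+}-\diag\mean{\es(D\overline u)}_{Q_k^+}}
\le 8\sum_{i=k}^{k+m-1}\dashint_{Q_i^+}\abs{\es(D\overline u)-\diag\mean{\es(D\overline u)}_{Q_i^+}}\,dx
\le c\sum_{i=k}^{k+m-1}\omega(2^{-i})\,\sup_j M^\sharp_{\omega,Q_j^+}(\es(D\overline u)),
\]
and since $\sum_{i\ge k}\omega(2^{-i})\sim\int_0^{2^{-k}}\omega(s)\,ds/s=\psi(2^{-k})\to 0$ as $k\to\infty$ (finiteness being exactly \eqref{eq:dini}), the averages $\diag\mean{\es(D\overline u)}_{Q_k^+}$ form a Cauchy sequence. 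Its limit defines the value of $\es(D\overline u)$ at the boundary point, and the tail estimate $\sum_{i\ge k}\omega(2^{-i})\le c\,\psi(2^{-k})$ gives the modulus of continuity $\psi$ for $\es(D\overline u)$ at that point; a standard comparison of concentric cubes (as in \eqref{eq:scal}--\eqref{eq:scal2}) together with the interior Campanato estimate then yields $\es(D\overline u)\in\mathcal C^{\psi}$ on a neighborhood of $\partial\Omega$, hence globally. Pulling back through $T$ (using Lemma~\ref{lemma:F} and the $\mathcal C^{1,\omega^{1/(p-1)}}$ regularity of $h$) transfers this to $\es(Du)\in\mathcal C^\psi(\overline\Omega)$.

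Next I would pass from $\es(Du)$ to $\nabla u$. Since $\Phi''$ is almost decreasing (the shear thinning case), the map $P\mapsto \es(P)$ is not degenerate and one can invert the oscillation relation: by \eqref{eq:hammer}, $\abs{D\overline u-P}$ is controlled by $(\Phi^*)_{\abs{\es(P)}}$-type quantities of $\abs{\es(D\overline u)-\es(P)}$, which together with the shift-change Lemma~\ref{lem:shift2} and the already-established $\mathcal C^\psi$-continuity of $\es(D\overline u)$ shows that $D\overline u$ is $\mathcal C^\psi$ as well (one must be slightly careful because $\psi$ is not a pure power, but the arguments of \cite[Section 5]{DieKapSch12} adapt verbatim, since they only use monotonicity of $\omega$ and the Dini sum). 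To upgrade $D\overline u$ to the full gradient $\nabla\overline u$ and then to $\nabla u$, I would invoke Korn's inequality in the form of Lemma~\ref{korn} / Corollary~\ref{cor:korn} applied on each $Q_k^+$ to $\overline u^Q-g-z$, exactly as in \eqref{eq:bitch3} and \eqref{eq:uqu}, so that oscillations of $\nabla\overline u$ are dominated by oscillations of $D\overline u$ plus the lower-order terms $\Phi_{\abs P}(\rho\mean{\abs{\overline u^1}}_{Q^+})$ controlled by Lemma~\ref{lot}; the transformation identities \eqref{sym.grad.over.u} then give $\nabla u\in\mathcal C^\psi(\overline\Omega)$. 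In particular $\nabla u\in\setBMO_\omega(\overline\Omega)$ follows, because $\mathcal C^\psi\subset\setBMO_\omega$ whenever $\psi(r)\le c\,\omega(r)$, which holds here since $\omega$ is non-decreasing and bounded, so $\psi(r)=\int_0^r\omega(s)\,ds/s$ need not be $\le c\omega(r)$ in general — and this is the one point requiring care.

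The main obstacle is precisely that last inclusion $\nabla u\in\setBMO_\omega$: it is \emph{not} automatic from $\nabla u\in\mathcal C^\psi$, because a Dini modulus $\omega$ can satisfy $\psi(r)/\omega(r)\to\infty$. The correct route is not to deduce $\setBMO_\omega$ from $\mathcal C^\psi$ but to read it off directly from Proposition~\ref{proportionality2}: the bound $\sup_k M^\sharp_{\omega,Q_k^+}(\es(D\overline u))<\infty$ says $\es(D\overline u)\in\setBMO_\omega^*$, and then the same Korn-transfer plus inversion of $\es$ described above — \emph{without} any telescoping — gives $\nabla u\in\setBMO_\omega$. So the argument splits cleanly: the Dini condition is used only to get the \emph{continuity} statement ($\mathcal C^\psi$) via the convergent telescope, while the $\setBMO_\omega$ statement is the bare (uniform, non-telescoped) oscillation bound already in hand. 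I would also note that continuity of $\nabla u$ up to $\partial\Omega$ then makes the boundary conditions \eqref{pss} hold pointwise, i.e. $[Du\,\nu]\cdot\tau=0=u\cdot\nu$ on $\partial\Omega$, recovering the classical sense from the $\overline{\setBMO}_\omega$-formulation, as remarked after Theorem~\ref{thm2}.
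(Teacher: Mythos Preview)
Your approach is correct and lands at the same conclusions, but the paper's proof is considerably shorter because it exploits one simplification you do not make explicit. The paper first cites Spanne~\cite{Spa65} directly (instead of redoing the telescope sum by hand) to get $\es(Du)\in\mathcal C^\psi(\overline\Omega)$ from $\es(Du)\in\setBMO_\omega$ plus the Dini condition. The crucial consequence is that $\es(Du)$, and hence $Du$, is \emph{bounded}. With $\norm{Du}_\infty<\infty$ in hand, the passage from $\es(Du)$ to $Du$ becomes a one-line pointwise estimate via~\eqref{eq:hammer}: choosing $P$ with $\es(P)=\mean{\es(Du)}_{Q\cap\Omega}$ one gets
\[
\abs{Du-P}\le c\,(\Phi^*)''(\norm{Du}_\infty)\,\abs{\es(Du)-\es(P)},
\]
because $\Phi''$ almost decreasing makes $\Phi''(\abs P+\abs{Du-P})\ge c/(\Phi^*)''(\norm{Du}_\infty)$. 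Then standard Korn (not the $\overline u^Q-g-z$ machinery) and the best-constant property give $\nabla u\in\setBMO_\omega$ directly, and a second application of Spanne gives $\nabla u\in\mathcal C^\psi$.

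Your inversion of $\es$ via shift-change and the arguments of~\cite[Section~5]{DieKapSch12} would work, but only because you already have $\es(D\overline u)\in\mathcal C^\psi\subset L^\infty$; without that boundedness the constant in the inversion (essentially $1/\Phi''(\abs P)$, which blows up as $\abs P\to\infty$ in the shear-thinning case) would not be uniform in the cube. You have the ingredient but do not isolate it---you attribute the delicacy to ``$\psi$ not a pure power'' rather than to the need for an $L^\infty$ bound. Your observation that $\nabla u\in\setBMO_\omega$ must come from the untelescoped oscillation bound and not from $\mathcal C^\psi$ is exactly right and matches the paper's logic.
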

 \begin{proof}
 Since $\omega$ satisfies the Dini condition, we find that $\es(Du),\ Du\in \mathcal{C}^{\psi}(\overline{\Omega})$ by \cite{Spa65}. We fix, for any given $Q$ the matrix $P$, such that $\es(P)= \mean{\es(Du))}_{Q\cap \Omega}$. Then \eqref{eq:hammer} implies
 \[
 \abs{Du-P}=\frac{\Phi''(\abs{P}+\abs{Du-P})}{\Phi''(\abs{P}+\abs{Du-P})} \abs{Du-P}\leq \frac{\abs{\es(Du)-\es(P)}}{\Phi''(\abs{P}+\abs{Du-P})}\leq c(\phi^*)''(\norm{Du}_\infty)\abs{\es(Du)-\es(P)},
 \]
 Now Korn's inequality and the best-constant property~\eqref{eq:bcp} imply
\[
\dashint_{Q\cap \Omega}\abs{\nabla u-\mean{\nabla u}_{Q}}^2\, dx \leq c\dashint_{Q\cap \Omega}\abs{D u-\mean{Du}_{Q}}^2\, dx\leq c((\phi^*)''(\norm{Du}_\infty))^2\dashint_{Q\cap \Omega}\abs{\es(D u)-\es(P)}^2\, dx.
\]
 This implies $\nabla u\in \setBMO_\omega(\Omega)$, which by the Dini condition implies that $\nabla u$ is bounded and continuous with modulus of continuity $\psi$.
 \end{proof}

\subsection{Proof of the main results}
\label{ssc:main}
The following lemma shows some invariances of the seminorm $\|\cdot\|_{\overline{\setBMO }_\omega(\Omega)}$.
\begin{lemma}\label{normvssemi} Let $\omega$ satisfy Assumption~\ref{ass:om} and $\Omega$ satisfy Assumption~\ref{ass:om}.
The seminorm $\|\cdot\|_{\overline{\setBMO }_\omega}(\Omega)$ is a norm in the space $\overline{\setBMO }_\omega(\Omega)\setminus I$ where $I$ is the identity matrix. 
\end{lemma}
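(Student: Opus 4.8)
The claim is that $\|\cdot\|_{\overline{\setBMO}_\omega}(\Omega)$, which is a priori only a seminorm, becomes a genuine norm once we quotient out the one-dimensional subspace spanned by the identity matrix $I$. Since we already know $\|\cdot\|_{\overline{\setBMO}_\omega(\Omega)}$ is a seminorm on a Banach space (homogeneity and the triangle inequality are built into the definition via the two suprema), the only thing to verify is the definiteness: if $F\in\overline{\setBMO}_\omega(\Omega;\setR^{2\times2})$ satisfies $\|F\|_{\overline{\setBMO}_\omega(\Omega)}=0$, then $F$ is a constant multiple of $I$ (and conversely every such multiple has vanishing seminorm). The plan is to exploit the two separate pieces of the seminorm: the interior $\setBMO_\omega$ part forces $F$ to be a constant matrix, and the boundary part $\sup_{x\in\partial\Omega}\sup_{r>0}\frac{1}{\omega(r)}\dashint_{Q_{x,r}}\chi_\Omega|[F\nu(x)]\cdot\tau(x)|\,dy$ then pins that constant matrix down to a multiple of $I$.

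First I would handle the interior part. If $\|F\|_{\setBMO_\omega(\Omega)}=0$ then for every cube $Q\subset\Omega$ we have $\dashint_Q|F-\mean F_Q|\,dx=0$, so $F$ agrees a.e.\ on $Q$ with its average; a standard connectedness/Lebesgue-point argument (cover $\Omega$ by overlapping cubes) shows $F$ equals a single constant matrix $F_0\in\setR^{2\times2}_{\sym}$ a.e.\ in $\Omega$. Conversely, any constant symmetric matrix has zero $\setBMO_\omega$ seminorm. So the interior part vanishes iff $F\equiv F_0$ is constant. Next I would use the boundary part: with $F\equiv F_0$ constant, the boundary condition reads $[F_0\,\nu(x)]\cdot\tau(x)=0$ for every $x\in\partial\Omega$ (the integral average of a constant is that constant, and $\omega>0$). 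Writing $F_0=\begin{pmatrix}a&b\\ b&d\end{pmatrix}$ and using an arbitrary unit vector $\nu=(\cos\phi,\sin\phi)$ with $\tau=(-\sin\phi,\cos\phi)$, the condition $[F_0\nu]\cdot\tau=0$ becomes $(d-a)\sin\phi\cos\phi+b(\cos^2\phi-\sin^2\phi)=0$, i.e.\ $\tfrac12(d-a)\sin2\phi+b\cos2\phi=0$. Since $\partial\Omega$ is a $\mathcal C^{1,1}$ curve bounding a domain in the plane, its unit normal $\nu(x)$ takes at least two distinct directions $\phi_1\neq\phi_2$ (indeed, for a bounded domain the Gauss map is surjective onto $S^1$); evaluating the linear relation at two such angles forces $d-a=0$ and $b=0$, hence $F_0=aI$.

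Combining the two steps: $\|F\|_{\overline{\setBMO}_\omega(\Omega)}=0$ forces $F=aI$ for some $a\in\setR$, which is exactly the statement that the seminorm descends to a norm on the quotient $\overline{\setBMO}_\omega(\Omega)\setminus I$ (more precisely, modulo $\setR I$). For the converse direction I would simply note that $F=aI$ has $\mean F_Q = aI$, hence zero oscillation, and $[aI\,\nu]\cdot\tau = a\,\nu\cdot\tau = 0$ since $\nu\perp\tau$, so the boundary part also vanishes; thus $\setR I$ is precisely the kernel of the seminorm.

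The only mild subtlety — the ``main obstacle'' such as it is — is the argument that the normal of a bounded $\mathcal C^{1,1}$ domain attains at least two (in fact all) directions: this is where the hypothesis that $\Omega$ is a genuine bounded domain (Assumption~\ref{ass:bound}) rather than, say, a half-plane, is used, and it is also why the statement quotients by $I$ and not by a larger space. Everything else is routine: the passage from ``$\dashint_Q|F-\mean F_Q|=0$ for all $Q$'' to ``$F$ constant'' is the elementary fact that a function with vanishing mean oscillation on every cube of a connected open set is a.e.\ constant, and the diagonalization of the boundary relation is a two-by-two linear algebra computation. I would present the proof in the two-step form above, stating the normal-direction fact as an immediate consequence of $\Omega$ being bounded with $\mathcal C^1$ boundary.
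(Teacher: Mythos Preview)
Your argument is correct for the lemma as literally stated: you identify the kernel of the seminorm as $\setR I$ by first using the $\setBMO_\omega$ part to force $F$ to be a constant matrix and then using the boundary part at two (in fact all) normal directions to force that constant to be a multiple of the identity. The linear-algebra computation and the surjectivity of the Gauss map for a bounded $\mathcal C^1$ domain are exactly the right ingredients.

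The paper, however, proves something strictly stronger, and it is this stronger statement that is actually used afterwards. Rather than only characterising the kernel, the paper establishes the quantitative Poincar\'e-type inequality
\[
\inf_{c\in\setR}\int_\Omega |F - cI|\,dx \;\le\; C\,\|F\|_{\overline{\setBMO}_\omega(\Omega)}.
\]
It does this by writing $F=\bigl(\begin{smallmatrix}f&g_1\\ g_2&h\end{smallmatrix}\bigr)$ and bounding each piece of $F-\mean{f}_\Omega I$ separately: the off-diagonal entries $g_1,g_2$ are controlled by choosing a boundary point with $\nu=(0,1)$ and taking a cube $Q_{x,R}\supset\Omega$ so that $\int_\Omega|g_1|\le c\|F\|_{\overline{\setBMO}_\omega}$; the term $f-\mean{f}_\Omega$ is controlled by the interior $\setBMO_\omega$ part; and $h-\mean{f}_\Omega$ is controlled via $|h-f|$ by choosing a boundary point with $\nu=\tfrac{1}{\sqrt2}(1,1)$. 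Your qualitative kernel argument follows immediately from this inequality, but not conversely; and it is precisely the quantitative bound that is invoked in Remark~\ref{normvssemi2} to conclude $\tfrac{1}{\omega(\diam\Omega)}\dashint_\Omega|F|\,dx\le c\|F\|_{\overline{\setBMO}_\omega(\Omega)}$ after normalising $F$ by a multiple of $I$. So while your proof is clean and correct for the statement, it would not suffice for the subsequent application without an additional open-mapping or closed-graph argument to upgrade to the quantitative form.
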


\begin{proof}
Without loss of generality we may assume that $\omega= 1$. Let $F\in \overline{\setBMO }(\Omega)$ be defined by $F = \left(\begin{matrix} f&g_1\\g_2&h\end{matrix}\right)$ where $f,\ g_1,\ g_2,\ h$ are scalar functions. We show that  
$$\inf_{c\in \mathbb R}\int_\Omega\left|\left(\begin{matrix}f-c&g\\g&h-c\end{matrix}\right)\right|\ {d}x\leq \int_{\Omega}\left|\left(\begin{matrix}f-\mean{f}_\Omega& g\\g&h-\mean{f}_\Omega\end{matrix}\right)\right|\ {d}x$$ 
is controlled by the $\overline{\setBMO }$ seminorm.

Let $x\in \partial\Omega$ be such that $\nu_x = (0,1)$ and $\tau_x = (1,0)$. Then
\begin{equation*}
\int_\Omega|g_1(y)|\ {d}y = \int_\Omega |[F(y)\nu_x]\cdot \tau_x|\ {d}y\leq c(\Omega)\frac 1{\omega(R)}\dashint_{Q_{x,R}}\chi_{\Omega}(y)|[F(y)\nu(x)]\cdot \tau(x)|\ {d}y\leq c\|F\|_{\overline{\setBMO }}(\Omega;\mathbb R^{2\times 2})
\end{equation*}
where $R$ is such that $Q_{x,R}\supset\Omega$. The estimate in $g_2$ is analogous.

Further, the estimate of $\dashint_{\Omega}|f-\mean{f}_\Omega|dx\leq \norm{f}_{\setBMO(\Omega)} $ follows by \cite[Thm. 5.23]{DiRS10} and Jensen's inequality. 
In order to deduce a bound of the last missing term, we take a point $x\in \partial \Omega$ such that $\nu_x = \frac 12 (\sqrt 2,\sqrt2)$ and $\tau_x = \frac 12(\sqrt2,-\sqrt2)$. It follows that
\begin{align*}
\int_\Omega |h-\mean{f}_\Omega|\ {d}y &\leq \int_\Omega |h-f|\ {d}y + \int_\Omega |f-\mean{f}_\Omega|\ {d}y\leq 2  \int_\Omega |[F(y)\nu(x)]\cdot \tau(x)|\ {d}y + c\|F\|_{\overline{\setBMO }(\Omega;\mathbb R^{2\times 2})}
\\ 
&\leq c \|F\|_{\overline{\setBMO }(\Omega;\mathbb R^{2\times 2})}.
\end{align*}
\end{proof}

\begin{remark} \label{normvssemi2}
If $u$ is a solution to the system \eqref{em}--\eqref{pss}, then it is also a solution to \eqref{em}--\eqref{pss} with right hand side $F- cI$ for $c\in \mathbb R$. Therefore, in the following we assume that $\mean{F_{1,1}}_\Omega=0$, which implies that 
$$
\frac{1}{\omega(\diam(\Omega))}\dashint_\Omega |F|\, dx \leq c \|F\|_{\overline{\setBMO }_\omega(\Omega)}.
$$
This is possible due to Lemma~\ref{normvssemi}. Moreover, using once more \cite[Th, 5.23 \& Prop. 6.1]{DiRS10}, we find
\begin{align*}
\frac{1}{\omega(\diam(\Omega))}(\Phi^*)^{-1}\bigg(\dashint_\Omega \Phi^*(|F|)\, dx\bigg)
&\leq
 \frac{1}{\omega(\diam(\Omega))}(\Phi^*)^{-1}\bigg(\dashint_{\Omega}\ \Phi^*(|F-\mean{F}_{\Omega}|)\, dx\bigg)+\frac{1}{\omega(\diam(\Omega))}\dashint_\Omega\abs{F}\, dx
\\
&\leq 
 c \|F\|_{\overline{\setBMO }_\omega(\Omega;\mathbb R^{2\times 2})}.
\end{align*}
\end{remark}
%
Before proving the main results we need he following observation.
{If $u,\pi$ satisfy \eqref{em} and \eqref{pss}, then we find that
\begin{align}
\label{eq:pressure}
\skp{\pi}{\Delta \psi}=\skp{F-\es(Du)}{\nabla^2\psi}\text{ for all }\psi\in \mathcal{C}^\infty(\Omega),\, \partial_\nu \psi=0\text{ on }\partial \Omega
\end{align}
This allows to transfer the global regularity of $Du$ to $\pi$ by the theory for Poisson problems.}
Since we wish to include local estimates, we introduce the following lemma:
\begin{lemma}
\label{lem:pr}
Let $\omega:(0,\infty)\mapsto(0,\infty)$ be a non-decreasing function, let $(u,\pi)$ be a weak solution to \eqref{em}--\eqref{pss}. 
If $F,\ \es(Du)\in \overline{\setBMO }_\omega(\Omega^+_r)$, then $\pi \in \setBMO_\omega(\Omega^+_r)$ and
$$
\|\pi\|_{\setBMO_\omega(\Omega^+_r)}\leq c \|\es(Du)\|_{\setBMO_\omega({\Omega^+_r};\mathbb R^{2\times 2})} + \|F\|_{\setBMO_\omega({\Omega^+_r};\mathbb R^{2\times 2})}.
$$
In case $\Phi''$ is increasing we find that if $\omega$ satisfies the Dini condition~\eqref{eq:dini}, $F\in \overline{\setBMO }_{\omega}(\Omega^+_r)$ and $\nabla u\in \overline{\setBMO }_{\omega}(\Omega^+_r)$, then $\pi\in{\setBMO }_{\omega}(\Omega^+_r)$ and
$$
\|\pi\|_{\setBMO_{\omega}(\Omega^+_r)}\leq c \Phi'(\|\nabla u\|_{\setBMO_\omega({\Omega^+_r};\mathbb R^{2\times 2})}) + \|F\|_{\setBMO_{\omega}({\Omega^+_r};\mathbb R^{2\times 2})}+\frac{c}{\omega(r)}\dashint_{\Omega^+_r}\Phi'(\abs{\nabla u})\, dx.
$$
\end{lemma}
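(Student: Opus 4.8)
The plan is to read \eqref{eq:pressure} as saying that, with $G:=F-\es(Du)$, the pressure is a very weak solution of the Neumann--Poisson problem $-\Delta\pi=-\diver\diver G$, i.e.\ $\skp{\pi}{\Delta\psi}=\skp{G}{\nabla^2\psi}$ for all admissible $\psi$. After the flattening of Section~\ref{flat} we may assume we work on a half-cube $\Omega^+_r=Q^+_r$ whose bottom face $\underline\partial Q^+_r$ carries the (flat) perfect slip condition, cf.\ \eqref{full.slip.flat}. I would then localise and distinguish interior cubes $Q$ (those with $2Q\subset Q^+_r$, where \eqref{eq:pressure} holds for all $\psi\in\mathcal{C}^\infty_0(2Q)$) from cubes centred on $\underline\partial Q^+_r$. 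For a boundary cube I would pass to the even reflection $\tilde\pi(x_1,x_2):=\pi(x_1,-x_2)$ for $x_2<0$ and to the reflection $\tilde G$ of $G$ that is even on the diagonal and odd off the diagonal. Splitting the test functions into their even and odd parts in $x_2$ (exactly as for the velocity in Section~\ref{sec:comp}), and using that $G_{1,2}=F_{1,2}-\es(Du)_{1,2}$ vanishes on $\underline\partial Q^+_r$ (by \eqref{full.slip.flat} and the assumption $F\in\overline{\setBMO}_\omega$), one checks that $\tilde\pi$ is an interior very weak solution of $\Delta\tilde\pi=\diver\diver\tilde G$ on the full cube $Q_r$; moreover Lemma~\ref{lem:machas} and \eqref{jnhs} identify $\|\tilde G\|_{\setBMO_\omega(Q_r)}$ with a constant times the relevant part of the seminorm of $G$. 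Thus both cases reduce to the following interior statement.

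\emph{Interior Poisson estimate:} if $\Delta v=\diver\diver H$ on $2Q$ then $\|v\|_{\setBMO_\omega(Q)}\le c\|H\|_{\setBMO_\omega(2Q)}$. I would prove it by the classical comparison argument. With $H_0:=\mean{H}_{2Q}$, let $v_1$ be the $L^2$ very weak solution of $\Delta v_1=\diver\diver\big((H-H_0)\chi_{2Q}\big)$ with zero boundary values, so that $\big(\dashint_{2Q}|v_1|^2\big)^{1/2}\le c\big(\dashint_{2Q}|H-H_0|^2\big)^{1/2}$ since $\diver\diver(-\Delta)^{-1}$ is an order–zero operator, and set $v_2:=v-v_1$, which is harmonic on $2Q$, hence $\dashint_{2\lambda Q}|v_2-\mean{v_2}_{2\lambda Q}|\le c\lambda\dashint_{2Q}|v_2-\mean{v_2}_{2Q}|$ for $\lambda\in(0,1)$. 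Combining, using that $\omega$ is doubling up to a constant by \eqref{ass:om} and John--Nirenberg to pass from the $L^2$ to the $L^1$ oscillation of $H$, one gets $M^\sharp_{\omega,\lambda Q}(v)\le c\lambda\,M^\sharp_{\omega,2Q}(v)+c\|H\|_{\setBMO_\omega(2Q)}$. Iterating over dyadic scales with the telescoping bookkeeping of \eqref{eq:iteration}--\eqref{iter}, on a finite range of scales and letting it tend to infinity as in the proof of Theorem~\ref{thm:localinc}, and choosing $\lambda$ small, yields $\|v\|_{\setBMO_\omega}\le c\|H\|_{\setBMO_\omega}$. Applied to $v=\pi$, $H=G$, this is the first claim $\|\pi\|_{\setBMO_\omega(\Omega^+_r)}\le c\|\es(Du)\|_{\setBMO_\omega}+c\|F\|_{\setBMO_\omega}$; only seminorms are compared, so no lower–order averages survive.

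For the second claim I would first invoke Proposition~\ref{pro:dini}: under the Dini condition \eqref{eq:dini} one has $\nabla u\in\mathcal{C}^{\psi}(\overline\Omega)$, in particular $\nabla u\in L^\infty(\Omega^+_r)$, so $\es(Du)$ is controlled pointwise. To bound $\|\es(Du)\|_{\setBMO_\omega}$ by $\nabla u$, fix a cube $Q$ and $P$ with $\es(P)=\mean{\es(Du)}_Q$; by \eqref{eq:bcp} and \eqref{eq:hammerd}, $\dashint_Q|\es(Du)-\mean{\es(Du)}_Q|\le c\dashint_Q(\Phi_{|P|})'(|Du-P|)\le c\Phi'(|P|)+c\dashint_Q\Phi'(|Du-P|)$ (using $\Delta_2$), while $\Phi'(|P|)=|\es(P)|\le\mean{|\es(Du)|}_Q\sim\mean{\Phi'(|Du|)}_Q$. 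For the first term, the Orlicz John--Nirenberg inequality (\cite[Lemma~6.1]{DieKapSch12}, as in \eqref{jnhs}) applied to $\nabla u\in\setBMO_\omega$ gives $\dashint_Q\Phi'(|Du-\mean{Du}_Q|)\le c\Phi'\big(\omega(\diam Q)\|\nabla u\|_{\setBMO_\omega}\big)$, and dividing by $\omega(\diam Q)\le1$ and using the convexity of $\Phi'$ in the shear thickening case ($\Phi'(\theta s)\le\theta\Phi'(s)$ for $\theta\le1$) this is $\le c\Phi'(\|\nabla u\|_{\setBMO_\omega})$. For $\frac1{\omega(\diam Q)}\mean{\Phi'(|Du|)}_Q$ one passes from $Q$ to the fixed scale $\Omega^+_r$ by \eqref{iter}, producing $\frac{c}{\omega(r)}\dashint_{\Omega^+_r}\Phi'(|\nabla u|)$ plus terms already absorbed into $\Phi'(\|\nabla u\|_{\setBMO_\omega})$ via Lemma~\ref{lem:Tpq}. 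Altogether $\|\es(Du)\|_{\setBMO_\omega}\le c\Phi'(\|\nabla u\|_{\setBMO_\omega})+\frac{c}{\omega(r)}\dashint_{\Omega^+_r}\Phi'(|\nabla u|)$, and substituting into the first claim gives the stated bound for $\|\pi\|_{\setBMO_\omega}$.

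I expect the main obstacle to be the boundary reflection step: one must verify that the even/odd reflection of $G=F-\es(Du)$ across the flat face really produces the right distributional identity for $\tilde\pi$ — this rests on the vanishing of $F_{1,2}$ and $\es(Du)_{1,2}$ on $\underline\partial Q^+_r$ and on matching the diagonal–oscillation / off–diagonal–average decomposition defining $\overline{\setBMO}^*_\omega$ through Lemma~\ref{lem:machas} — and, in the second part, the bookkeeping that combines Orlicz John--Nirenberg, the $\Delta_2$ and convexity properties of $\Phi'$ from Lemma~\ref{lem:Tpq} and \eqref{eq:typeT}, and the telescoping \eqref{iter}, so that exactly the average $\dashint_{\Omega^+_r}\Phi'(|\nabla u|)$ of the statement is produced and nothing worse.
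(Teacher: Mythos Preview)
Your treatment of the first assertion is essentially what the paper does: it cites \cite[Subsection~3.4]{DieKapSch14}, whose argument is precisely the interior Poisson comparison (harmonic splitting plus decay) combined with the even/odd reflection across the flat boundary. So that part is fine, if more explicit than necessary.

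For the second assertion, however, your splitting
\[
\dashint_Q\big(\Phi_{|P|}\big)'(|Du-P|)\;\le\; c\,\Phi'(|P|)\;+\;c\dashint_Q\Phi'(|Du-P|)
\]
is too wasteful and creates a term you cannot close. The quantity $(\Phi_{|P|})'(t)\sim\Phi''(|P|+t)\,t$ is \emph{linear} in the oscillation $t=|Du-P|$, but your bound replaces it by $\Phi'(|P|)$, which is of order one (the size of the solution) and carries no smallness in $t$. After dividing by $\omega(\diam Q)$, the term $\frac{1}{\omega(\diam Q)}\Phi'(|P|)\sim\frac{1}{\omega(\diam Q)}\mean{\Phi'(|Du|)}_Q$ blows up as $Q$ shrinks, because $\mean{\Phi'(|Du|)}_Q\to\Phi'(|Du(x_0)|)$, which is generically nonzero. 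The telescoping lemma \eqref{iter} does not help here: it controls $\mean{|f|}_{2^{-m}Q}$ by oscillations of $f$ plus $\mean{|f|}_Q$, but you would still have to divide the latter by $\omega(2^{-m})$, which is unbounded in $m$.

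The paper avoids this by never splitting off $\Phi'(|P|)$. Instead it uses the Dini condition (via Proposition~\ref{pro:dini}) only to obtain $\nabla u\in L^\infty(\Omega_r^+)$ with $\norm{\nabla u}_\infty\le c\norm{\nabla u}_{\setBMO_\omega}+c\mean{|\nabla u|}_{\Omega_r^+}$, and then keeps the sharp pointwise bound from \eqref{eq:hammer},
\[
\bigabs{\es(Du)-\es(P)}\;\sim\;\Phi''\big(|P|+|Du-P|\big)\,|Du-P|\;\le\;\Phi''\big(\norm{\nabla u}_\infty\big)\,|Du-P|,
\]
which is valid because $\Phi''$ is almost increasing. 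This preserves the oscillation factor $|Du-P|$, so dividing by $\omega(\diam Q)$ gives directly
\[
M^\sharp_{\omega,Q}\big(\es(Du)\big)\;\le\;c\,\Phi''\big(\norm{\nabla u}_\infty\big)\,\norm{\nabla u}_{\setBMO_\omega},
\]
uniformly in $Q$. Only at the very end is $\norm{\nabla u}_\infty$ expanded, and then $\Phi''(a+b)\,a\le\Phi'(a+b)\le c\Phi'(a)+c\Phi'(b)$ together with Jensen for the convex $\Phi'$ produces exactly the two terms $\Phi'(\norm{\nabla u}_{\setBMO_\omega})$ and $\frac{c}{\omega(r)}\dashint_{\Omega_r^+}\Phi'(|\nabla u|)$ (after the normalisation $\omega(r)=1$). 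The key point you are missing is to use the $L^\infty$ bound \emph{inside} $\Phi''$ rather than to break the shifted derivative into two pieces.
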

\begin{proof}
The first assertion follows by \cite[Subsection 3.4]{DieKapSch14}. The method there is applicable without any substantial changes. We are left to prove the second assertion. Due to our assumptions on $\Phi$ and $\omega$, we may assume without loss of generality that $\omega(r)=1$. Now, firstly, by Proposition~\ref{pro:dini}, we find that $\nabla u \in \mathcal{C}^\psi(Q\cap \Omega)$ with $\psi(r)=\int_0^r\frac{\omega(s)\, ds}{s}$. This implies
by the fact that $c\psi(\cdot)\geq \omega(\cdot)$ that
 \[
 \norm{\nabla u}_{L^\infty(\Omega_r^+;\mathbb R^{2\times 2})}\leq c\norm{\nabla u}_{\mathcal{C}^\psi(\Omega_r^+;\mathbb R^{2\times 2})}+c\abs{\mean{\nabla u}_{\Omega_r^+}}\leq c\norm{\nabla u}_{\setBMO_\omega(\Omega_r^+;\mathbb R^{2\times 2})}+c\abs{\mean{\nabla u}_{\Omega_r^+}}.
 \]
 Secondly, \eqref{eq:hammer} implies
   \[
\abs{\es(Du)-\es(P)}\sim \phi''(P+\abs{Du-P})\abs{Du-P}\leq \phi''(\norm{\nabla u}_{L^\infty(\Omega_r^+;\mathbb R^{2\times 2})})\abs{Du-P}
 \]
  for $\es(P)=\mean{\diag \es(Du)}_{Q\cap\Omega}$ and $Q\cap\Omega\subset \Omega_r^+$ arbitrary.
 This allows to deduce the second assertion from the first one.
\end{proof}

\begin{proof}[Proof of Theorem~\ref{thm:localdec}]
The estimate on $\es(Du)$ at the boundary follows by Proposition~\ref{proportionality2}, Lemma~\ref{lem:machas} and \eqref{eq:subest}. The interior estimates follow by~\cite[Theorem~1.1]{DieKapSch14}. 
 The estimate of the pressure follows by Lemma~\ref{lem:pr}. 
\end{proof}
The local theorem then implies the proof of the  global results by a covering argument.
\begin{lemma}
\label{lem:cover}
Let $\Omega$ satisfy Assumption~\ref{ass:bound}. Then there are $\set{x_k}_{k=1}^M\subset \partial\Omega$, $\set{x_l}_{l=1}^N\subset\Omega$ such that
\[
\overline{\Omega}\subset \bigcup_{l=1}^N Q_{\lambda r_\Omega/2}(x_l)\cup  \bigcup_{k=1}^M \Omega_{r_\Omega/2}^+(x_k),
\]
with $\Omega_{r_\Omega/2}^+(x_k):= \set{s\tau(x_k)+a\nu(x_k)\,:\,(s,a)\in (-r_\Omega,r_\Omega)\times (h_{x_k}(s),h_{x_k}(s)+r_\omega/2)}$. 
\end{lemma}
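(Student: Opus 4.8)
The plan is a routine two-stage compactness argument: first cover a relative neighbourhood of $\partial\Omega$ in $\overline\Omega$ by finitely many boundary patches $\Omega_{r_\Omega/2}^+(x_k)$ with $x_k\in\partial\Omega$, then cover the leftover compact set (which sits inside $\Omega$) by finitely many interior cubes.

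I would begin by recording the elementary geometric input. Applying Assumption~\ref{ass:bound} with $R=r_\Omega/2$, for every $x\in\partial\Omega$ the graph patch produced there is contained in $\Omega_{r_\Omega/2}^+(x)$: the tangential window $(-r_\Omega,r_\Omega)$ and the normal window $(h_{x}(\,\cdot\,),h_{x}(\,\cdot\,)+r_\Omega/2)$ appearing in the definition of $\Omega_{r_\Omega/2}^+(x)$ each contain the corresponding windows of the patch at scale $r_\Omega/2$, so that $\Omega\cap Q_{x,\lambda r_\Omega/2}\subset\Omega_{r_\Omega/2}^+(x)$; the boundary points of the patch lying on the graph itself are absorbed after an arbitrarily small enlargement, which does not affect the statement, so in fact $\overline\Omega\cap Q_{x,\lambda r_\Omega/2}\subset\Omega_{r_\Omega/2}^+(x)$. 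In particular each $\Omega_{r_\Omega/2}^+(x)$ contains a relative neighbourhood of $x$ in $\overline\Omega$.

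For the boundary stage, $\{Q_{x,\lambda r_\Omega/2}(x)\}_{x\in\partial\Omega}$ is an open cover of the compact set $\partial\Omega$, so there are $x_1,\dots,x_M\in\partial\Omega$ with $\partial\Omega\subset\mathcal N:=\bigcup_{k=1}^M Q_{x_k,\lambda r_\Omega/2}(x_k)$. The set $\mathcal N$ is open, contains $\partial\Omega$, and by the previous paragraph $\overline\Omega\cap\mathcal N\subset\bigcup_{k=1}^M\Omega_{r_\Omega/2}^+(x_k)$. For the interior stage, set $K:=\overline\Omega\setminus\mathcal N$; since $\mathcal N$ is open and $\overline\Omega$ is compact, $K$ is compact, and since $\partial\Omega\subset\mathcal N$ we get $K\subset\overline\Omega\setminus\partial\Omega=\Omega$. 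Thus $K$ is a compact subset of the open set $\Omega$, so the open cover $\{Q_{y,\lambda r_\Omega/2}(y)\}_{y\in K}$ by cubes centred at points of $\Omega$ admits a finite subcover with centres $x_1,\dots,x_N\in K\subset\Omega$, giving $K\subset\bigcup_{l=1}^N Q_{x_l,\lambda r_\Omega/2}(x_l)$. Combining, $\overline\Omega=(\overline\Omega\cap\mathcal N)\cup K\subset\bigcup_{l=1}^N Q_{x_l,\lambda r_\Omega/2}(x_l)\cup\bigcup_{k=1}^M\Omega_{r_\Omega/2}^+(x_k)$, which is the claim.

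There is essentially no hard step here. The only point requiring mild care is the bookkeeping in the second paragraph, matching the precise shape of $\Omega_{r_\Omega/2}^+(x)$ as written in the statement with the graph patch furnished by Assumption~\ref{ass:bound} at scale $r_\Omega/2$, including the harmless open/closed adjustment needed so that the boundary points of each patch are themselves covered.
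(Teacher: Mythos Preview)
Your proof is correct and uses essentially the same idea as the paper: invoke Assumption~\ref{ass:bound} to see that each boundary patch $\Omega_{r_\Omega/2}^+(x)$ contains $\overline\Omega\cap Q_{x,\lambda r_\Omega/2}$, and then extract a finite cover by compactness. The only cosmetic difference is that the paper runs a single compactness argument on $\overline\Omega$ (assigning to each $x\in\Omega$ either an interior cube or a boundary patch according to $d(x,\partial\Omega)$), whereas you split it into two stages, first covering $\partial\Omega$ and then the compact leftover $K\subset\Omega$; both organizations amount to the same thing.
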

\begin{proof}
 For $x\in \Omega$ with $d(x,\partial\Omega)\geq \lambda r_\Omega$, we define $Q_x=Q_{\lambda r_\Omega}(x)$. For $x\in \Omega$, with $d(x,\partial\Omega)\leq \lambda r_\Omega$, we find $y_x\in \partial \Omega$, such that 
\[
x\in Q_{\lambda r_\Omega/2}(y_x)\subset \Omega_{r_\Omega}(y_x):= \set{s\tau(y_x)+a\nu(y_x)\,:\,(s,a)\in (-r_\Omega,r_\Omega)\times (h_{y_x}(s),h_{y_x}(s)+r_\Omega/2)}.
\]
In this case we set $Q_x=\Omega_{r_\Omega}(y_x)$.
Since $\overline{\Omega}$ is compact and $\overline{\Omega}\subset \bigcup_{x\in \Omega} Q_x$, the result follows.
\end{proof}
\begin{proof}[ Theorem~\ref{thm1} -- Theorem~\ref{thm2}] Again we abbreviate the argument by setting $\omega(\diam(\Omega))=1$.
The a-priori estimate, which can be achieved by taking $u$ as a test-function in \eqref{rws}, and the stability of the pressure imply using also \eqref{normvssemi2}, that 
\begin{align}
\label{eq:a-prior}
\dashint_\Omega \Phi(\abs{Du})\, \dx+\dashint_\Omega\Phi^*(\abs{\pi})\, dx\leq c\dashint_\Omega \Phi^*(\abs{F})\, \dx
\leq c \Phi^*(\norm{F}_{\overline{\setBMO}(\Omega);\mathbb R^{2\times 2}}).
\end{align}
We cover the domain by Lemma~\ref{lem:cover}. We can apply Theorem~\ref{thm:localdec} or Theorem~\ref{thm:localinc} to estimate the oscillations on cubes with small side length. In particular we are left with cubes that have a side length $R\geq r_\Omega$. In this case, we simply use the fact, that for $g\in L^1(\Omega)$
\[
\dashint_{Q_{x,R}}\abs{g-\mean{g}_{Q_{x,R}}}\, dy\leq c\dashint_\Omega \abs{g}\, dy.
\]
where the constant depends on $\frac{\abs{\Omega}}{R^2}$.
In case $\Phi''$ is almost decreasing this implies
\[
\|\pi\|_{\setBMO_\omega(\Omega)}+\|\es(Du)\|_{\overline{\setBMO }_\omega(\Omega;\mathbb R^{2\times 2})}\leq c \|F\|_{\overline{\setBMO }_\omega(\Omega;\mathbb R^{2\times 2})} + c\langle \Phi'(\abs{\nabla  u})\rangle_{\Omega}+\frac{c}{(\diam\Omega)}\langle \abs{\Phi'( u)}\rangle_{\Omega}+c\langle (\abs{\pi})\rangle_{\Omega}.
\]
Hence, by Jensen's inequality,
 the a-priori estimate~\eqref{eq:a-prior}, Korn's inequality, Corollary~\ref{cor:korn} and Remark~\ref{normvssemi2}, we find Theorem~\ref{thm1} and Theorem~\ref{thm2}. The proof of Theorem~\ref{thm:inc1} and Theorem~\ref{thm:inc2} are analogous.
\end{proof}
\begin{proof}[Proof of Theorem~\ref{thm:0}]
 In case $p\geq 2$, the result is included in Theorem~\ref{thm:inc1}. In case $p\in (1,2)$ it follows from Proposition~\ref{pro:dini}. Indeed, $\omega(r) := r^\alpha$, $\alpha\in(0,\beta)$ and $\tilde\omega := r^{\alpha(p-1)}$ both satisfy the Dini condition~\eqref{eq:dini}. 
 \end{proof}
\begin{proof}[Proof of Corollary~\ref{cor:0} and Corollary~\ref{cor:1}]
First of all, the reason that there is no restriction on $\beta$ is due to the fact, that solutions to Stokes equations with homogeneous right hand side are known to be locally Lipschitz continuous and satisfy a decay property, see~\cite{GiaMod}. In particular Theorem~\ref{odlarse} is valid for $\gamma=1$. This implies that for the Stokes system Theorem~\ref{thm:inc1} holds for all $\beta\in (0,1)$.
The results now follows by Theorem~\ref{thm:inc1} and Theorem~\ref{thm:inc2}.
\end{proof}
In order to show that our results are optimal we use the fact that in the planar solutions to \eqref{emNS}  can be nicely characterized by biharmonic functions.
\subsection{Proof of Theorem~\ref{thm:sharp}}
\label{ssec:sharp}
Assume that $\Omega$ is simply connected and that  $w:\Omega \to \setR$ is a solution to the following steam equation
\begin{align*}
\label{eq:steam}
\begin{aligned}
\Delta^2 w =\curl \divergence F\text{ in }\Omega
\\
w=0=\Delta w\text{ on }\partial\Omega.
\end{aligned}
\end{align*}
It is well known that then $u=(-\partial_2 w,\partial_1 w)$ is a (local) solution to the planar Stokes equation. Indeed, it is obviously solenoidal and
$\curl(\Delta u-\divergence F)=0$, which implies that it is equal to a gradient (of a pressure). Actually $u$ is a solution to \eqref{emNS}.
Indeed, the boundary conditions are satisfied. Firstly, since $\tau=(-\nu_2,\nu_1)$ we find by the Dirichlet boundary condition of $w$ that
\[
u\cdot \nu =-\partial_{x_2} w \nu_1+\partial_{x_1} w \nu_2=-\partial_\tau w=0.
\]
Secondly, observe that for $x\in \partial \Omega$ we have $\Delta w(x)=0$ and so
\seb{
\[
\nabla u(x)=\begin{pmatrix}
-\partial_1\partial_2 w(x) &-\partial_2^2 w(x) 
\\
\partial_1^2 w(x) &\partial_1\partial_2 w(x) 
\end{pmatrix}
= \begin{pmatrix}
-\partial_1\partial_2 w &\partial_1^2 w(x) 
\\
\partial_1^2 w &\partial_1\partial_2 w (x)
\end{pmatrix}= D u(x).
\]
}
We calculate that
\[
[Du\,\nu]\cdot \tau=\partial_\nu u \cdot \tau= \begin{pmatrix}
-\partial_1\partial_2 w \nu_1-\partial_2^2 w \nu_2
\\
\partial_1^2 w \nu_1+\partial_1\partial_2 w \nu_2
\end{pmatrix}\cdot \tau
= \begin{pmatrix}
\partial_1\partial_2 w \nu_1 \nu_2+\partial_2^2 w \nu_2^2
+\partial_1^2 w \nu_1^2+\partial_1\seb{\partial_2 w} \nu_2\nu_1
\end{pmatrix}.
\]
Since we assume for $x\in \partial\Omega$ that $w(x)=0$ and so $\partial_\tau w(x)=0=\partial^2_\tau w(x)$. Hence $0=\Delta w(x)= \partial_\nu^2 w(x)$ and $\nabla w(x)=\partial_\nu w(x)\nu$. This implies for every boundary point $x\in \partial \Omega$ that
\begin{align*}
\partial_\nu u \cdot \tau 
&=
\nu_1\partial_1(\nu_2\partial_2 w+\nu_1\partial_1 w)+\nu_2\partial_2(\nu_2\partial_2 w+\nu_1\partial_1 w)-((\nu_1\partial_1+\nu_2\partial_2)\nu_1 \partial_1 w+(\nu_1\partial_1+\nu_2\partial_2)\nu_2 \partial_2 w)
\\
&=\partial_\nu^2 w -\nabla w \cdot \partial_{\nu}  \nu
\\
&= -\partial_\nu w \nu\cdot \partial_{\nu}  \nu =-\partial_\nu w\partial_\nu \frac{\abs{\nu}^2}{2}=0
\end{align*}
as $\abs{\nu}\equiv 1$.
The above construction allows for a one to one correspondence of counterexample for harmonic functions with Dirichlet boundary values to the Stokes system with perfect slip boundary condition; where the order of regularity is {\em one degree less} for the Stokes system with perfect slip boundary conditions: The regularity of the gradient of the velocity is the same as the regularity of the Hessian of the harmonic function. We apply this one to one correspondence here to the case of H\"older continuity. The following example implies Theorem~\ref{thm:sharp}. In particular, it implies due to the fact that local at the boundary estimates imply global estimates, that Theorem~\ref{thm:0}, Corollary~\ref{cor:0} and Corollary~\ref{cor:1} are sharp (for all admissible exponents $\beta<\beta_0$), but also the various estimates in Subsection~\ref{ssec:main}. 
\begin{example}
\label{ex:hoeld}
\seb{For $0<\alpha<\beta<1$ We show that there exists $\partial\Omega \in C^{2,\alpha}\setminus C^{2,\beta}$ and $u\not \in C^{1,\beta}_{\text{loc}}$ satisfying \eqref{emNS} locally with $F,\pi\equiv 0$.}

{\em Construction:}

In the following we assume that $\Omega$ is a tilted half-plane $\Omega:=\{x\in \setR^2\, :\, x_2\geq -\abs{x_1}^{2+\alpha}\}$. We consider the half space  $H^+=\{x\in \setR^2\, :\, x_2\geq 0\}$. We consider these sets as subset of complex numbers.

Then we know from the \seb{Riemann} mapping theorem, that there exists a holomorphic function
\[
z:\Omega\to H^+ \text{ such that }w:=Im(z)\in C^{2,\alpha}(\overline{\Omega}\cap B_1(0)),
\]
where the regularity follows by classical Schauder theory~\cite{Scha34}.
Since it is continuous we find that $w:\Omega\to [0,\infty)$ and $w(x)=0$ for all $x\in \partial \Omega$ and clearly also $\Delta w(x)=0$ for all $x\in \partial \Omega$. Since $w(x)\geq 0$, we find by Hopf's maximum principle that $\partial_\nu w(x)>0$ for all $x\in \partial \Omega$. In particular the solution has no critical point at the boundary. Hence for $x\in \partial\Omega$
\[
-\frac{\nabla w(x)}{\abs{\nabla w(x)}}=\nu(x)=\frac{1}{\sqrt{(\alpha+2)^2\abs{x_1}^{(\alpha+1)2}+1}}\begin{pmatrix}
(\alpha+2)\abs{x_1}^{\alpha}x_1
\\
-1
\end{pmatrix}
\]
But this implies that for $x\in \partial \Omega\cap B_1(0)$
\[
\abs{\partial_1w(x)-\partial_1w(0)}=\abs{\partial_1w(x)}\frac{\abs{\nabla w(x)}}{\abs{\nabla w(x)}}\geq \Big(\min_{\partial \Omega\cap B_1(0)}\abs{\nabla w}\Big)\frac{(\alpha+2)\abs{x_1}^{\alpha+1}}{\sqrt{(\alpha+2)^2\abs{x_1}^{(\alpha+1)2}+1}};
\]
next, by the mid-point theorem there exists $\xi\in [0,x_1]$, such that 
\begin{align*}
\abs{\partial_1w(x)-\partial_1w(0)}:&=\abs{\partial_1w(x_1,-\abs{x_1}^{\alpha+2})}=\abs{x_1}\abs{\partial_{x_1}\partial_1h(\xi,-\abs{\xi}^{\alpha+2})}
\\
&\leq \abs{x_1}\abs{\partial_1^2w(\xi,-\abs{\xi}^{\alpha+2})}+\abs{x_1}\abs{\xi}^{\alpha+1}\abs{\partial_2\partial_1w(\xi,-\abs{\xi}^{\alpha+2})}
\\&\leq \abs{x_1}\abs{\partial_1^2w(\xi,-\abs{\xi}^{\alpha+2})}+(\alpha+2)\abs{x_1}^{\alpha+2}\norm{\nabla^2 w}_{L^\infty(B_1(0)\cap \Omega)}.
\end{align*}
Consequently, for every $x_1\in [0,\frac12]$ there exists a $
\xi\in [0,x_1]$ such that
\[
\Big(\min_{\partial \Omega\cap B_1(0)}\abs{\nabla w}\Big)\frac{(\alpha+2)}{\sqrt{(\alpha+2)^2\abs{x_1}^{(\alpha+1)2}+1}}\leq \frac{\abs{\partial_1^2h(\xi,-\abs{\xi}^{\alpha+2})}}{\abs{x_1}^\alpha}+(\alpha +2)\abs{x_1}\norm{\nabla^2 w}_{L^\infty(B_1(0)\cap \Omega)}.
\]
Please observe that since $w\in C^{2,\alpha}$ we fnd that $\norm{\nabla^2 w}_{L^\infty(B_1(0)\cap \Omega)}<\infty$.
And so $w\not\in C^{2,2+\beta}(B\cap \Omega)$ for any $\beta>\alpha$.
 In particular we showed that for $\partial\Omega\in C^{2,\alpha}$ we have constructed a local solution to \eqref{emNS} such that $u=(-\partial_2w,\partial_1w)\not \in C^{1,\beta}(\overline{\Omega}\cap B_1(0))$ for all $\beta\in (\alpha,1)$.

 \end{example}
 \begin{example}
\label{ex:bmo}
\seb{For $0<\alpha<\beta<1$ We show that there exists $\partial\Omega \in C^{1,\alpha}\setminus C^{1,\beta}$ and $u\not \in C^{0,\beta}_{\text{loc}}$ (in particular $\nabla u\not\in BMO_{\text{loc}}$) satisfying \eqref{emNS} locally with $F,\pi\equiv 0$.}

{\em Construction:}

The example is analogous to Example~\ref{ex:hoeld}. We take the tilted half-plane $\Omega:=\{x\in \setR^2\, :\, x_2\geq -\abs{x_1}^{1+\alpha}\}$ and we denote again by $h$ the respective imaginary part of the Riemann mapping in the half space. Analogously we find for $x\in \partial \Omega\cap B_1(0)$
\[
\abs{\partial_1w(x)-\partial_1w(0)}=\abs{\partial_1w(x)}\frac{\abs{\nabla w(x)}}{\abs{\nabla w(x)}}\geq \Big(\min_{\partial \Omega\cap B_1(0)}\abs{\nabla w}\Big)\frac{(\alpha+1)\abs{x_1}^{\alpha}}{\sqrt{(\alpha+1)^2\abs{x_1}^{2\alpha}+1}}
\]
which implies that $w\not \in C^{1,\beta}$ and consequently $u\not \in C^{0,\beta}$.
\end{example}

Finally we claim that the above approach can be refined to get counterexamples on boundary assumptions in finer scales (e.g. on any modulus of continuity following the approach in~\cite{BreCiaDieSch19}).

Since we did not find a reference in the literature we provide a sketch of the proof why $\partial\Omega\in C^{1,1}$ implies that the Hessian of harmonic functions is in BMO as another justification for the optimality of the $C^{1,1}$-boundary values w.r.t BMO estimates.

\begin{remark}[BMO for harmonic functions in the plane]
\label{rem:BMO}
Since we did not find a citation for the optimal boundary regularity in order to get $\nabla^2w\in BMO(\Omega)$ for $w$ a harmonic functions with Dirichlet boundary condition. We conjecture here that it is $\partial\Omega\in C^{1,1}$, which is in agreement with our theory and with Example~\ref{ex:hoeld}.

We consider the local parametrization as in Subsection~\ref{flat}. Let $g\in \setBMO(\Omega)$. 
 Then, locally 
\[
\Delta w=g\text{ in }\Omega_R, \, w=0\text{ on }\underline{\partial}\Omega_R 
\]
 if and only if locally
 \[
 \divergence (H^T H \nabla w)=\tilde{g} \in B_R^+, \, w=0\text{ on }\underline{\partial}B_R^+. 
 \]
 Which implies
\[
 \divergence (H^T H \nabla \partial_1 w)= \divergence (((H^T)_x H+H^T H_x) \nabla  w +(\tilde{g},0))\text{ in } B_R^+, \, \partial_1 w=0\text{ on }\underline{\partial}B_R^+. 
 \] 
Since $\partial\Omega\in C^{1,1}$, we find that $H_x\in L^\infty$. Hence in this case the \Calderon-Zygmund theory implies $\nabla \partial_1 w\in BMO$. The equation then implies $\partial_2^2w\in BMO$. 
\end{remark}

\noindent
At the end we use the methodology above to give another example that might be of independent interest. We show that angles very close to the half space allow for irregular solutions; implying that Lipschitz regularity of any kind for the boundary is not enough to imply higher integrability for any exponent $q>2$:
\begin{example}
\label{rem:boundary} 
For every $\epsilon>0$ there is a corner domain $\Omega_{\delta(\epsilon)}$ with a Lipschitz constant $\delta(\epsilon)\to 0$ with $\delta\to 0$ and a solution $(u,\pi)$ to \eqref{emNS}, such that $\nabla u\chi_{\Omega_{\delta(\epsilon)}}\in L^2_{\textrm{loc}}\setminus L^{2+\epsilon}_{\textrm{loc}}(\setR^2)$.

{\em Construction:}

In the following we will use polar coordinates in our domain and Cartesian coordinates in the image space. Let $\beta\in (0,\pi)$ and consider the domain $\Omega_\beta=\{(r\cos(\theta), r\sin(\theta))\in \setR^2\,: 0<\theta<\beta\}$. It is a classical observation, that
\[
w(r,\theta)=r^\frac{\pi}{\beta}\sin\Big(\frac{\theta \pi}{\beta}\Big)
\]
is an irregular harmonic function that satisfies $w=0$ on $\partial\Omega_\beta$. Observe that $u=(-\partial_{x_2} w,\partial_{x_1} w)$ and pressure $p\equiv 0$ are a weak local solution to \eqref{emNS}. First we show that $\nabla u$ is (locally) in $L^2$. This is true since
 $\abs{\nabla u}\sim r^{\frac{\pi}{\beta}-2}$, we find that 
\[
\int_{\Omega_\beta\cap B_1(0)}\abs{\nabla u}^q\, dx \sim \int_0^1  r^{q\frac{\pi}{\beta}-2q+1}\, dr<\infty\text{ if and only if }q\frac{\pi}{\beta}-2q+1>-1.
\]
But this implies for $\beta\in (0,\pi)$, that
\[
q< \frac{2}{2-\frac{\pi}{\beta}}\to 2\text{ for }{\beta\to \pi}.
\]

\end{example}

\textbf{Acknowledgement:}
We thank the anonymous Referee for his most valuable suggestions that helped to improve the paper.
V.\  M\'acha thanks the GA\v CR project 16-03230S and RVO: 67985840. S. Schwarzacher thanks the support of the research support programs of Charles University: PRIMUS/19/SCI/01 and UNCE/SCI/023. S.\ Schwarzacher thanks the support of the program GJ17-01694Y of the Czech national grant agency (GA\v{C}R).





\begin{thebibliography}{99}
\parskip=0.7pt

\bibitem{adn}
\RMIauthor{Agmon, S. and Douglis, A. and Nirenberg, L.}
\RMIpaper{Estimates near the boundary for solutions of elliptic partial differential equations satisfying general boundary conditions. {I}}
\RMIjournal{Comm. Pure Appl. Math.} {\bf 12} (1959), 623--727


\bibitem{BeKaRu}
\RMIauthor{Beir\~{a}o da Veiga, H. and Kaplick\'{y}, P. and R\r{u}\v{z}i\v{c}ka, M.}
\RMIpaper{Boundary regularity of shear thickening flows}
\RMIjournal{J. Math. Fluid Mech.} {\bf 13} (2011), no. 3, 387--404

\bibitem{breit1}
\RMIauthor{Breit, D.}
\RMIbook{Existence theory for generalized {N}ewtonian fluids}
Elsevier/Academic Press, London 2017

\bibitem{bcDieKapSch14}
\RMIauthor{Breit, D. and Cianchi, A. and Diening, L. and Kuusi, T and Schwarzacher, S.}
\RMIpaper{The {$p$}-{L}aplace system with right-hand side in divergence form: inner and up to the boundary pointwise estimates}
\RMIjournal{Nonlinear Anal.} {\bf 153} (2017), 200-212


\bibitem{BreCiaDieKuuSch17}
\RMIauthor{Breit, D. and Cianchi, A. and Diening, L. and Kuusi, T and Schwarzacher, S.}
\RMIpaper{Pointwise {C}alder\'{o}n-{Z}ygmund gradient estimates for the {$p$}-{L}aplace system}
\RMIjournal{J. Math. Pures Appl. {9}} {\bf 114} (2018), 146--190

\bibitem{BreCiaDieKuuSch17nonlin}
\RMIauthor{Breit, D. and Cianchi, A. and Diening, L. and Schwarzacher, S.}
\RMIpaper{Global non-linear Schauder theory for systems}
\RMIjournal{to appear}

\bibitem{BreCiaDieSch19}
\RMIauthor{Breid, D. and Cianchi, A. and Diening, L. and Schwarzacher, S.}
\RMIpaper{Global {S}chauder estimates for the {$p$}-{L}aplace system}
\RMIjournal{to appear} ArXiv 1903.12496

\bibitem{BrFu}
\RMIauthor{Breit, D. and Fuchs, M.}
\RMIpaper{The nonlinear {S}tokes problem with general potentials having superquadratic growth}
\RMIjournal{J. Math. Fluid Mech.} {\bf 13} (2011), no. 3, 371--385


\bibitem{BucFeiNec10}
\RMIauthor{Bucur, D. and Feireisl, E. and Ne\v{c}asov\'{a}, \v{S}.}
\RMIpaper{Boundary behavior of viscous fluids: influence of wall roughness and friction-driven boundary conditions}
\RMIjournal{Arch. Ration. Mech. Anal.} {\bf 197} (2010), no. 1, 117--138

\bibitem{BucFei2}
\RMIauthor{Bucur, D. and Feireisl, E. and Ne\v{c}asov\'{a}, \v{S}. and Wolf, J.}
\RMIpaper{On the asymptotic limit of the {N}avier-{S}tokes system on domains with rough boundaries}
\RMIjournal{J. Differential Equations} {\bf 244} (2008), no. 11, 2890--2908

\bibitem{CalZyg56}
\RMIauthor{Calder\'{o}n, A. P. and Zygmund, A.}
\RMIpaper{On singular integrals}
\RMIjournal{Amer. J. Math.} {\bf 78} (1956), 289--309

\bibitem{Cam63}
\RMIauthor{Campanato, S.}
\RMIpaper{Propriet\`a di h\"{o}lderianit\`a di alcune classi di funzioni}
\RMIjournal{Ann. Scuola Norm. Sup. Pisa {3}} {\bf 17} (1963), 175--188

\bibitem{CiaSch18}
\RMIauthor{Cianchi, A. and Schwarzacher, S.}
\RMIpaper{Potential estimates for the {$p$}-{L}aplace system with data in divergence form}
\RMIjournal{J. Differential Equations} {\bf 265} (2018), no. 1, 478--499

\bibitem{dm}
\RMIauthor{DiBenedetto, E. and Manfredi, J.}
\RMIpaper{On the higher integrability of the gradient of weak solutions of certain degenerate elliptic systems}
\RMIjournal{Amer. J. Math.} {\bf 115} (1993), no. 5, 1107--1134

\bibitem{DieE08}
\RMIauthor{Diening, L. and Ettwein, F.}
\RMIpaper{Fractional estimates for non-differentiable elliptic systems with general growth}
\RMIjournal{Forum Math.} {\bf 20} (2008), no. 3, 523--556

\bibitem{DiRS10}
\RMIauthor{Diening, L. and R\r{u}\v{z}i\v{c}ka, M and Schumacher, K.}
\RMIpaper{A decomposition technique for {J}ohn domains}
\RMIjournal{Ann. Acad. Sci. Fenn. Math.} {\bf 35} (2010), no. 1, 87--114

\bibitem{DieKap12}
\RMIauthor{Diening, L. and Kaplick\'{y}, P.}
\RMIpaper{{$L^q$} theory for a generalized {S}tokes system}
\RMIjournal{Manuscripta Math.} {\bf 141} (2013), no. 1-2, 333--361

\bibitem{DieKapSch14}
\RMIauthor{Diening, L. and Kaplick\'{y}, P. and Schwarzacher, S.}
\RMIpaper{Campanato estimates for the generalized {S}tokes system}
\RMIjournal{Ann. Mat. Pura Appl. {4}} {\bf 193} (2014), no. 6, 1779--1794

\bibitem{DieKapSch12}
\RMIauthor{Diening, L. and Kaplick\'{y}, P. and Schwarzacher, S.}
\RMIpaper{B{MO} estimates for the {$p$}-{L}aplacian}
\RMIjournal{Nonlinear Anal.} {\bf 75} (2012), no. 2, 637--650


\bibitem{fuchs}
\RMIauthor{Fuchs, M.}
\RMIpaper{A note on non-uniformly elliptic {S}tokes-type systems in two variables}
\RMIjournal{J. Math. Fluid Mech.} {\bf 12} (2010), no. 2, 266--279

\bibitem{GiaMod}
\RMIauthor{Giaquinta, M. and Modica, G.}
\RMIpaper{Nonlinear systems of the type of the stationary {N}avier-{S}tokes system}
\RMIjournal{J. Reine Angew. Math.} {\bf 330} (1982), 173--214

\bibitem{GilT83}
\RMIauthor{Gilbarg, D. and Trudinger, N. S.}
\RMIbook{Elliptic partial differential equations of second order} Springer-Verlag, Berlin (1983)

\bibitem{giu}
\RMIauthor{Giusti, E.}
\RMIbook{Direct methods in the calculus of variations} World Scientific Publishing Co., Inc., River Edge, NJ (2003)

\bibitem{HenPie}
\RMIauthor{Henrot, A. and Pierre, M.}
\RMIbook{Variation et optimisation de formes} Springer, Berlin (2005)

\bibitem{Iwa98}
\RMIauthor{Iwaniec, T.}
\RMIpaper{The {G}ehring lemma}
In \RMIbook{Quasiconformal mappings and analysis ({A}nn {A}rpor, {MI}, 1995)}, Springer, New York (1998)

\bibitem{Iwa83}
\RMIauthor{Iwaniec, T.}
\RMIpaper{Projections onto gradient fields and {$L^{p}$}-estimates for degenerate elliptic operators}
\RMIjournal{Studia Math.} {\bf 75} (1983), no. 3, 293--312

\bibitem{JerKen95}
\RMIauthor{Jerison, D. and Kenig, C. E.}
\RMIpaper{The inhomogeneous {D}irighled problem in lipschitz domains}
\RMIjournal{Journal of Functional Analysis} {\bf 130} (1995), no. 1, 161--219

\bibitem{KapMalSta99}
\RMIauthor{Kaplick\'{y}, P. and M\'{a}lek, J. and Star\'{a}, J.}
\RMIpaper{{$C^{1,\alpha}$}-solutions to a class of nonlinear fluids in two dimensons---stationary {D}irichlet problem}
\RMIjournal{Zap. Nauchn. Sem. S.-Peterburg. Otde. Mat. Inst. Steklov. (POMI)} {\bf 259} (1999), 89--121, 297

\bibitem{KapMalSta02}
\RMIauthor{Kaplick\'{y}, P. and M\'{a}lek, J. and Star\'{a}, J.}
\RMIpaper{Global-in-time {H}\"{o}lder continuity of the velocity gradients for fluids with shear-dependent viscosities}
\RMIjournal{NoDEA Nonlinear Differential Equations Appl.} {\bf 9} (2002), no. 2, 175--195

\bibitem{kms2}
\RMIauthor{Kaplick\'{y}, P. and M\'{a}lek, J. and Star\'{a}, J.}
\RMIpaper{On global existence of smooth two-dimentional steady flows for a class of non-Newtonian fluids under various boundary conditions}
In \RMIbook{Applied nonlinear analysis}, 213--229, Kluwer/Plenum, New York (1999)

\bibitem{KapTic}
\RMIauthor{Kaplick\'{y}, P. and Tich\'{y}, J.}
\RMIpaper{Boundary regularity of flows under perfect slip boundary conditions}
\RMIjournal{Cent. Eur. J. Math.} {\bf 11} (2013), no. 7, 1243--1263

\bibitem{KocSol01}
\RMIauthor{Koch, H. and Solonnikov, V. A.}
\RMIpaper{{$L^{p}$}-estimates for a solution to the nonstationary {S}tokes equations}
\RMIjournal{J. Math. Sci. (New York)} {\bf 106} (2001), no. 3, 3042--3072

\bibitem{LadySolUr}
\RMIauthor{Lady\v{z}enskaja, O. A. and Solonnikov, V. A. and Uralceva, N. N.}
\RMIbook{Linear and quasilinear equations of parabolic type} American MAthematical Society, Providence, R. I. (1968)

\bibitem{Lad69}
\RMIauthor{Lady\v{z}enskaja, O. A.}
\RMIbook{The mathematical theory of viscous incompressible flow} Gordon and Breach, Science Publishers, New York-London-Paris (1969)

\bibitem{mt}
\RMIauthor{M\'{a}cha, V. and Tich\'{y}, J.}
\RMIpaper{Higher integrability of solutions to generalized {S}tokes system under perfect slip boundary conditions}
\RMIjournal{J. Math. Fluid Mech.} {\bf 16} (2014), no. 4, 823--845

\bibitem{malek}
\RMIauthor{M\'{a}lek, J.}
\RMIpaper{Mathematical properties of flows of incompressible power-law-like fluids that are described by implicit constitutive relations}
\RMIjournal{Electron. Trans. Numer. Anal.} {\bf 31} (2008), 110--125

\bibitem{MaNeRa}
\RMIauthor{M\'{a}lek, J. and Ne\v{c}as, J. and Rajagopal, K. R.}
\RMIpaper{Global analysis of the flows of fluids with pressure-dependent viscosities}
\RMIjournal{Arch. Ration. Mech. Anal.} {\bf 165} (2002), no. 3, 243--269

\bibitem{MalR05}
\RMIauthor{M\'{a}lek, J. and Rajagopal, K. R.}
\RMIpaper{Mathematical issues concerning the {N}avier-{S}tokes equations and some of its generalizations}
In \RMIbook{Evolutionary equations. {V}ol. {II}}, 371--459, Elsevier/North-Holland, Amsterdam (2005)

\bibitem{rr}
\RMIauthor{Rao, M. M. and Ren, Z. D.}
\RMIbook{Theory of {O}rlicz spaces} Marcel Dekker, Inc., New York (1991)

\bibitem{Rou13}
\RMIauthor{Roub\'{i}\v{c}ek, T.}
\RMIbook{Nonlinear partial differential equations with applications} Birkh\"{a}user/Springer Basel AG, Basel (2013)


\bibitem{Scha34}
\RMIauthor{Schauder, J.}
\RMIpaper{\"{U}ber lineare elliptische {D}ifferentialgleichungen zweiter {O}rdnung}
\RMIjournal{Math. Z.} {\bf 38} (1934), no. 1, 257--282

\bibitem{Sch14}
\RMIauthor{Schwarzacher, S.}
\RMIpaper{H\"{o}lder-{Z}ygmund estimates for degenerate parabolic systems}
\RMIjournal{J. Differential Equations} {\bf 256} (2014), no. 7, 2423--2448

\bibitem{Spa65}
\RMIauthor{Spanne, S.}
\RMIpaper{Some function spaces defined using the mean oscillation over cubes}
\RMIjournal{Ann. Scuola Norm. Sup. Pisa {3}} {\bf 19} (1965), 593--608



\bibitem{Ste93}
\RMIauthor{Stein, E. M.}
\RMIbook{Harmonic analysis: real-variable methods, orthogonality, and oscillatory integrals} Princeton University Press, Princeton, NJ (1993)

\bibitem{helena}
\RMIauthor{\v{S}vihlov\'{a}, H.}
\RMIpaper{Flow of biological fluids in patient specific geometries}
\RMIjournal{PhD thesis} Faculty of Mathematics and Physics, Charles University, Prague (2017)

\end{thebibliography}
\end{document}